\numberwithin{equation}{section} 
\renewcommand{\theequation}{\arabic{section}.\arabic{equation}}
\declaretheorem[name=Theorem,numberwithin=section]{thm} 
\newtheorem*{thm*}{Theorem}
\newtheorem*{define*}{Definition}
\newtheorem{define}[thm]{Definition}
\newtheorem*{lemma*}{Lemma}
\newtheorem{lemma}[define]{Lemma}
\newtheorem{corollary}[define]{Corollary}
\newtheorem*{algorithm*}{Algorithm}
\newtheorem*{construction*}{Construction}
\newtheorem*{prop*}{Proposition}
\newtheorem*{obs*}{Observation}
\newtheorem*{fact*}{Fact}
\newtheorem*{remark*}{Remark}
\newtheorem{remark}[define]{Remark}
\newtheorem*{quest*}{Question}
\newtheorem*{cor*}{Corollary}
\newtheorem{cor}[define]{Corollary}
\newtheorem*{conjecture*}{Conjecture}
\newtheorem*{question*}{Question}
\newtheorem*{example*}{Example}
\newcommand{\R}{\mathbb{R}}
\newcommand{\Q}{\mathbb{Q}}
\newcommand{\N}{\mathbb{N}}
\newcommand{\Lip}{\mathbb{L}}
\newcommand{\Span}{\operatorname{Span}}
\newcommand{\const}{2} 
\newcommand{\dist}{\operatorname{dist}}
\newcommand{\leb}{\mc{L}}
\newcommand{\SLA}{\operatorname{SLA}}
\newcommand{\skp}[1]{\langle{#1}\rangle}
\newcommand{\lip}{\operatorname{Lip}}
\newcommand{\inter}{\operatorname{Int}}
\newcommand{\diam}{\operatorname{diam}}
\newcommand{\abs}[1]{\left|#1\right|}
\newcommand{\norm}[1]{\left\|#1\right\|}
\newcommand{\supnorm}[1]{\left\|#1\right\|_{\infty}}
\newcommand{\opnorm}[1]{\left\|#1\right\|_{\operatorname{op}}}
\newcommand{\lnorm}[2]{\left\|#2\right\|_{#1}}
\newcommand{\mc}[1]{\mathcal{#1}}
\newcommand{\mb}[1]{\mathbf{#1}}
\newcommand{\set}[1]{\left\{#1\right\}}
\newcommand{\br}[1]{\left(#1\right)}
\newcommand{\bbr}[1]{\bigl(#1\bigr)}
\newcommand{\Bbr}[1]{\Bigl(#1\Bigr)}
\newcommand{\sqbr}[1]{\left[#1\right]}
\newcommand{\cl}[1]{\overline{#1}}
\newcommand{\upp}[1]{^{(#1)}}
\newcommand{\dow}[1]{_{#1}}
\newcommand{\supp}{\operatorname{supp}}
\newcommand{\wt}[1]{\widetilde{#1}}
\newcommand{\indi}[1]{{#1}_{1}}
\newcommand{\indii}[1]{{#1}_{2}}
\newcommand{\indiii}[1]{{#1}_{3}}
\DeclarePairedDelimiter{\inner}{\langle}{\rangle}
\newcommand{\id}{\operatorname{Id}}
\newcommand{\Ball}{\mathbb{B}}
\newcommand{\Balli}[1]{\mathbb{B}_{#1}}
\newcommand{\Sph}{\mathbb{S}}
\newcommand{\1}{\mathbbm{1}}
\newcommand{\eqc}{C}
\newcommand{\cyl}{\mathfrak{C}}
\newcommand{\bas}{\mathcal{W}}
\newcommand{\Gat}{G\^ ateaux }
\newcommand{\Fre}{Fr\' echet }
\newcommand{\myV}{H}
\newcommand{\Jint}{\int_{P_{n}}}
\newcommand{\Jintm}{\int_{P_{m}}}
\newcommand{\inside}[2]{{#1}_{#2}}
\newcommand{\lipone}{\lip_{1}}
\newcommand{\setG}{E}
\newcommand{\op}{T}
\newcommand{\vp}{\mb{v}_{P}}
\title{Extreme non-differentiability of typical Lipschitz mappings}
\author{Michael Dymond}
\address[M. Dymond]{School of Mathematics, University of Birmingham, Edgbaston, Birmingham, B15 2TT, U.K.}
\email{{\tt M.Dymond@bham.ac.uk}}
\address[M.D., former address]{Mathematisches Institut, Universit\"at Leipzig, PF 10 09 02, 04109 Leipzig, Germany}
\author{Olga Maleva}
\address[O. Maleva]{School of Mathematics, University of Birmingham, Edgbaston, Birmingham, B15 2TT, U.K.}
\email{\tt O.Maleva@bham.ac.uk}
\begin{document}
\begin{abstract} 
We show that no matter what subset of a normed space is given, a typical 1-Lipschitz mapping into a Banach space is non-differentiable at a typical point of the set in a very strong sense: the derivative ratio approximates, on arbitrary small scales, every  linear operator of norm at most 1. 

For subsets of finite-dimensional normed spaces which can be covered by a countable union of closed purely unrectifiable sets this extreme non-differenti\-a\-bility holds for a typical Lipschitz mapping at every point. 

Both results are new even for Lipschitz mappings with a finite-dimensional co-domain.
\end{abstract} 
\maketitle
\section{Introduction}
The purpose of this paper is to present a striking (non-)differentiability property of typical Lipschitz mappings. We show that a typical, in Baire category sense, $1$-Lipschitz mapping between a normed space $X$ and a Banach space $Y$ is not differentiable at a typical point of a given set $E\subseteq X$ in a most extreme way: its derivative ratios can approximate all linear operators $X\to Y$ of norm at most $1$. Moreover, if the dimension of $X$ is finite and $E$ is $F_\sigma$ purely unrectifiable, the above holds for all (not just residually many) points of $E$.

Differentiability of Lipschitz mappings is the focus of mathematical research in an array of settings including Euclidean spaces (see e.g.~\cite{fitzpatrick1984differentiation}, \cite{Preiss_1990}, \cite{Dore_Maleva1}, \cite{preiss_speight2013}, \cite{zahorski}), Hilbert and Banach spaces (see e.g.~\cite{benyamini1998geometric}, \cite{LPT}, \cite{Dore_Maleva3}) and geodesic metric spaces (see e.g.~\cite{Kirchheim94}, \cite{pinamonti_speight2017uds}). A starting point for these investigations is Rademacher's theorem, which guarantees that the set of non-differentiability points of 
a Lipschitz mapping $\R^d\to \R^l$ is of Lebesgue measure zero. Versions of Rademacher's Theorem are also available beyond finite-dimensional spaces: under reasonable assumptions on Banach spaces $X,Y$, a Lipschitz mapping $X\to Y$ is \Gat differentiable everywhere except an Aronszajn null set, see~\cite[Theorem~6.42]{benyamini1998geometric}. Furthermore, a celebrated result by Preiss~\cite{Preiss_1990} says that a Lipschitz function defined on a Banach space $X$ which is Asplund (i.e.\ every separable subspace has a separable dual) is differentiable on a dense subset of $X$. Thus, in many settings, Lipschitz mappings constitute a class of mappings which on the whole have good differentiability properties, but crucially have the flexibility of pathological behaviour on a null set. There are various notions of null or exceptional sets, to which this may refer, but even Lebesgue null sets in Euclidean spaces are a diverse class with important tools such as category and fractal dimension which distinguish between them.

Typical differentiability as an object of interest dates back to Banach's famous 1931 result~\cite[Satz~1]{Banach1931} that a typical continuous function on an interval is nowhere differentiable. 
	Such a result would be impossible for a typical Lipschitz mapping between Euclidean spaces, by Rademacher's theorem. The extent to which a typical Lipschitz mapping is differentiable has been investigated recently in~\cite{preiss_tiser94}, \cite{Loewen_Wang_typical_2000}, \cite{dymond_maleva_2020}, \cite{dymond2019typical}, \cite{merlo}.    
Recall also~\cite{Preiss_1990,Dore_Maleva2,Dore_Maleva3,dymond_maleva2016} that Banach spaces with separable dual of dimension $2$ or more have \textit{universal differentiability sets} (UDS) which are ``very small'' but contain a point of differentiability of every $\R$-valued Lipschitz function. 
One naturally asks how ``big'' is the set of points where a typical Lipschitz function is not differentiable. 
In a sense, an opposite of UDS are sets where a typical $1$-Lipschitz $\R$-valued function is nowhere differentiable.
In~\cite{preiss_tiser94}, the class of analytic subsets of $[0,1]\subseteq\R$ with this property has been shown to coincide with the $F_\sigma$ null sets, and~\cite{dymond_maleva_2020} extended this to the case of $[0,1]^d\subseteq \R^d$ by showing that the relevant condition must be ``$F_\sigma$ purely unrectifiable.''
Against this background, the present paper achieves the following significant advances:

\begin{itemize}
	\item 
	In Theorem~\ref{thm:typical_nonsep}, we establish that inside any given set $S$ the set of non-differentiability points of a typical Lipschitz mapping is a residual subset of $S$, no matter  on what (bounded, normed) domain (containing $S$) the space of Lipschitz mappings is considered.
	This holds for vector-valued ($Y$-valued) mappings. The norms on $X$ and $Y$ are arbitrary, as long as $Y$ is Banach.
	Non-differentiability can be strengthened to extreme non-differentiability which still holds at residually many points.
	\item In Theorem~\ref{thm:typical_nondiff-dash} we determine that inside any given $F_{\sigma}$ purely unrectifiable set, a typical $Y$-valued Lipschitz mapping is nowhere differentiable in the extreme sense.  
\end{itemize}

Versions of these two results were known only in the special case of functions from $\R^d$ to $\R$, see~\cite[Theorem~4]{Loewen_Wang_typical_2000} and~\cite{dymond_maleva_2020}. We point out straight away that typical behaviour of scalar-valued Lipschitz functions has no direct implications for vector-valued mappings, even from $\R^d$ to $\R^l$ when $l>1$; see~\cite[Theorem~6.1]{dymond_maleva_2020}. 
The results of the present paper also go significantly beyond the premises of~\cite{dymond_maleva_2020} by
allowing arbitrary norms into consideration and replacing directional non-differentiability by extreme non-differentiability. 

This article further acts as an Erratum to~\cite[Remarks~2.9~and~3.18]{dymond_maleva_2020}. The authors hereby retract these two remarks, which are shown to be invalid by the present article; see Corollary~\ref{cor}. We further note that the content of those two remarks does not affect any of the results or indeed anything at all in the rest of the paper~\cite{dymond_maleva_2020}. 

\subsection{Main results.}\label{sec:typ}
Given a topological space $Z$, we say that a typical element of $Z$ possesses a certain property if the collection of those elements of $Z$ having that property forms a residual subset of $Z$. If $Z$ is a complete metric space, then its residual subsets are dense in $Z$, by the Baire Category theorem, hence a condition satisfied by a typical element is satisfied by elements of a dense $G_{\delta}$ subset of the space.

It is therefore important to note that the meaning of `typical' depends on the ambient space. We thus need to clarify, as we do in Section~\ref{sec:Lip}, whether a typical Lipschitz mapping is to be understood relative to the space of all Lipschitz mappings, or only those with Lipschitz constant bounded by $L$ for certain $L>0$; whether the mappings are defined on the whole space or on a certain subset; and what topology (or metric) is used on the space of Lipschitz mappings. 

We presently state the first main result of this paper; see Sections~\ref{sec2.1} and~\ref{sec:Lip} for detailed explanation of the notation involved. 
\begin{thm}\label{thm:typical_nonsep}
	Let $X$ be a normed space, $Y$ be a Banach space, $W$ be a separable subspace of $\mc{L}(X,Y)$, $Q$ be a bounded subset of $X$ and $\setG\subseteq \inter Q$. Then there is a residual subset $\mc{F}$ of $(\lipone(Q,Y),\lnorm{\infty}{\cdot})$ such that for every $f\in \mc{F}$ the set
	\begin{equation*}
	\mc{N}_{f,W}:=\set{\mb{x}\in \setG\colon \mc{D}_{f}(\mb{x})\supseteq \Balli{W}}
	\end{equation*}
	is residual in $\setG$.
\end{thm}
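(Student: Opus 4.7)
\medskip
\noindent\textbf{Proof proposal.} My plan is a two-level Baire category argument of the type used in \cite{Loewen_Wang_typical_2000} and \cite{dymond_maleva_2020}, upgraded to the vector-valued operator setting. First, exploit separability of $W$: pick a countable dense sequence $(T_k)$ in $\Balli{W}$, together with, for each $k$, a countable family of test vectors $(v_{k,i})$ that suffices to certify membership of $T_k$ in $\mc{D}_f(\mb{x})$. The containment $\mc{D}_f(\mb{x}) \supseteq \Balli{W}$ then becomes a countable conjunction over $k, m, n \in \N$ of the following open-in-$\mb{x}$ condition: there exists $t \in (0, 1/n)$ with $\norm{f(\mb{x} + t v_{k,i}) - f(\mb{x}) - t T_k v_{k,i}} < t/m$ for all $i \leq m$. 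Consequently $\mc{N}_{f,W}$ is always $G_\delta$ in $\setG$, and its residuality is equivalent to density of each constituent open set.

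\medskip
Transposing the quantifier order, fix a countable base $\{U_j\}$ of open balls for $\setG$ and for each $(k, m, n, j)$ define
\[
\mc{V}_{k,m,n,j} := \set{ f \in \lipone(Q, Y) \colon \exists \mb{x} \in U_j \cap \setG, \; \exists t \in (0, 1/n), \; \norm{f(\mb{x} + t v_{k,i}) - f(\mb{x}) - t T_k v_{k,i}} < t/m, \; i \leq m }.
\]
Each $\mc{V}_{k,m,n,j}$ is sup-norm open by continuity of the evaluation maps, so the countable intersection $\mc{F} := \bigcap \mc{V}_{k,m,n,j}$ will be the required residual set as soon as each $\mc{V}_{k,m,n,j}$ is sup-norm dense in $\lipone(Q, Y)$.

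\medskip
\noindent\textbf{Local perturbation.} To establish density, given $f_0 \in \lipone(Q, Y)$ and $\delta > 0$, I would select $\mb{x} \in U_j \cap \setG$ and, using $\setG \subseteq \inter Q$, a radius $r \in (0, \delta)$ with $\cl{\Balli{X}(\mb{x}, r)} \subseteq U_j \cap Q$. Set $\phi(\mb{y}) := f_0(\mb{x}) + T_k(\mb{y} - \mb{x})$. Since $\opnorm{T_k} \leq 1$, $\phi$ is 1-Lipschitz on $X$, and $\phi(\mb{x}) = f_0(\mb{x})$, so the discrepancy $u := \phi - f_0$ is 2-Lipschitz, vanishes at $\mb{x}$, and satisfies $\norm{u(\mb{y})} \leq 2 \norm{\mb{y} - \mb{x}}$. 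Pick $t \in (0, 1/n)$ so small that every test point $\mb{x} + t v_{k,i}$ lies well inside $\Balli{X}(\mb{x}, r)$. The task reduces to producing $g \in \lipone(Q, Y)$ which coincides with $\phi$ on a small ball around $\mb{x}$ containing all test points, coincides with $f_0$ outside $\Balli{X}(\mb{x}, r)$, and satisfies $\lnorm{\infty}{g - f_0} \leq 2r < \delta$; then $g(\mb{x} + t v_{k,i}) - g(\mb{x}) = t T_k v_{k,i}$ exactly, so $g \in \mc{V}_{k,m,n,j}$.

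\medskip
\noindent\textbf{Main obstacle.} The hard step is constructing $g$ as \emph{exactly} 1-Lipschitz into a general Banach space $Y$: Kirszbraun-type extension theorems fail beyond Hilbert targets, so no off-the-shelf tool applies. I would attempt this via a radial blend $g(\mb{y}) = (1 - \beta(\mb{y})) f_0(\mb{y}) + \beta(\mb{y}) \phi(\mb{y})$, where $\beta(\mb{y}) = \alpha(\norm{\mb{y} - \mb{x}})$ for a piecewise-linear cut-off $\alpha$ equal to $1$ near $0$ and $0$ past $r$. The vanishing $u(\mb{x}) = 0$ together with the linear bound $\norm{u(\mb{y})} \leq 2\norm{\mb{y} - \mb{x}}$ confines the Lipschitz defect of this blend to a small controlled excess localized in the transition annulus; this excess can be pushed down to zero either by a scaling trick (first replace $f_0$ by $(1 - \varepsilon) f_0$ to manufacture Lipschitz slack, then rescale back at an $O(\varepsilon)$ sup-norm cost absorbed into $\delta$) or by iterating the local perturbation on a shrinking sequence of nested balls so that at each step only a vanishingly small mismatch needs to be rectified. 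Once this local perturbation is in place the density of each $\mc{V}_{k,m,n,j}$ follows, and the countable Baire intersection delivers the theorem.
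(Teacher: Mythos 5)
Your two-level Baire-category framework is the right shape, but two steps genuinely fail as written. First, the reduction of the condition $T_k\in\mc{D}_f(\mb{x})$ to a countable family of test vectors $(v_{k,i})$ is unavailable here: the theorem does \emph{not} assume $X$ separable, so no countable family of directions can be dense in $\cl{B}(\mb{0}_X,1)$, and the $2$-Lipschitz control on $\mb{u}\mapsto f(\mb{x}+\mb{u})-f(\mb{x})-T_k\mb{u}$ cannot propagate smallness from finitely many test points to the whole ball over which the supremum in~\eqref{eq:Dfx} is taken. Consequently $\bigcap_{k,m,n}O_{k,m,n}(f)$ can strictly contain $\mc{N}_{f,W}$, and its residuality tells you nothing about $\mc{N}_{f,W}$. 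The repair is simply to keep the full-ball supremum: the set $\set{\mb{x}\in\setG\colon\exists\,r<1/n,\ \sup_{\lnorm{X}{\mb{u}}\le r}\lnorm{Y}{f(\mb{x}+\mb{u})-f(\mb{x})-T_k\mb{u}}<r/m}$ is still open both in $\mb{x}$ and, with respect to $\lnorm{\infty}{\cdot}$, in $f$, by the same $2$-Lipschitz bound, so the $G_\delta$-decomposition and the open-dense scheme both survive without any test-vector reduction. (The paper sidesteps this entirely by running a Banach--Mazur game, for which openness is never checked explicitly, but the substance is the same.)

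Second, and more substantively, the naive radial blend $g=(1-\beta)f_0+\beta\phi$ does not have a vanishingly small Lipschitz excess. Writing the increment as a convex combination of increments of $f_0$ and $\phi$ plus the cross term $(\beta(\mb{y}_1)-\beta(\mb{y}_2))\,u(\mb{y}_2)$, the cross term has norm up to $\tfrac{2b}{b-a}\lnorm{X}{\mb{y}_1-\mb{y}_2}$, where $a<b$ are the inner and outer radii of the transition annulus, and $\tfrac{2b}{b-a}\ge 2$ for every choice of $0<a<b$. So the excess never tends to $0$: shrinking $a$ relative to $b$ does not help, the rescaling trick would need a factor near $1/3$ (which cannot simply be ``scaled back''), and iterating on nested annuli only repeats the same loss. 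Lemma~\ref{lemma:general} achieves excess $\tfrac{a}{b-a}\to 0$ (for $a\ll b$) by a qualitatively different device: it blends in the \emph{domain} rather than between mappings. Concretely, in Lemma~\ref{lemma:PlayerII_nonsep-1} the radial map $\Phi(\beta,s,\mb{0}_X,\id_X)$ is used to \emph{pre-compose}, $g_0(\mb{z})=f(\mb{x}+\Phi(\mb{z}-\mb{x}))$, so that $g_0$ is constant on $\cl{B}(\mb{x},\beta)$ at a Lipschitz cost of only $1+\tfrac{\beta}{s-\beta}$; the linear piece $T\Psi(\cdot-\mb{x})$ is then added only inside that ball, where $g_0$ is already constant, so the two Lipschitz constants combine by a maximum rather than a sum; and the final rescaling by $\tfrac{s-\beta}{s}$ brings the constant to exactly $1$ while $T$ is pre-scaled so that the resulting derivative on $\cl{B}(\mb{x},\alpha)$ is exactly $L$. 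This domain-reparametrisation step is the missing ingredient in your perturbation.
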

Here $\Balli{W}$ is the closed unit ball of $W$, $\mc{L}(X,Y)$ is the space of bounded linear operators $X\to Y$ and $\mc{D}_{f}(\mb{x})$ is the collection of those operators which, in a specific sense, behave like a derivative of $f$ at $\mb{x}$; for the precise definition of $\mc{D}_{f}(\mb{x})$ see~\eqref{eq:Dfx} below. The inclusion $\mc{D}_{f}(\mb{x})\supseteq \Balli{W}$ for non-trivial $W$ and points $\mb{x}\in \mc{N}_{f,W}$ in Theorem~\ref{thm:typical_nonsep} implies, in particular, that $\mc{N}_{f,W}$ is contained in the set of G\^ateaux non-differentiability points of $f$. However, this condition should be interpreted as a very strong form of non-differentiability. We elaborate on this presently.

A particularly strong failure of differentiability  of a mapping at a point, considered in~\cite[Theorem~1.9]{maleva_preiss2018}, happens when many different linear mappings simultaneously behave like a derivative of the mapping. Note that if $f$ is G\^ateaux differentiable then either $\mc{D}_{f}(\mb{x})$ is empty, or it is the singleton set containing only the G\^ateaux derivative of $f$; if $f$ is 
Fr\'echet differentiable at $\mb{x}$, then $\mc{D}_{f}(\mb{x})=\set{Df(\mb{x})}$.
Accordingly, the size of the set $\mc{D}_{f}(\mb{x})$ is a measure of the severity of non-differentiability of $f$ at $\mb{x}$. In the case when $\mc{L}(X,Y)$ is separable (for example, when $X$ is finite-dimensional and $Y$ is separable), the most extreme form of non-differentiability of a $1$-Lipschitz $f$ at $\mb{x}$ occurs if $\mc{D}_{f}(\mb{x})=\Balli{\mc{L}(X,Y)}$, the closed unit ball of $\mc{L}(X,Y)$. 

When $\mc{L}(X,Y)$ is non-separable, it is impossible to achieve $\mc{D}_f(\mb{x})=\Balli{\mc{L}(X,Y)}$, as we show, in Lemma~\ref{lemma:Df_separable} below, that $\mc{D}_{f}(\mb{x})$ is always separable. Qualitatively, the strongest form of non-differentiability of a $1$-Lipschitz $f$ that may hold in such case is $\mc{D}_{f}(\mb{x})\supseteq\Balli{W}$ for an infinite-dimensional, separable subspace $W$ of $\mc{L}(X,Y)$. For any such $W$, 
this is what we achieve for a typical $1$-Lipschitz $f$.

In light of~\cite[Theorem~2.2]{dymond_maleva_2020}, concerning non-differentiability of a typical (real-valued) Lipschitz function at every point of an arbitrary fixed $F_\sigma$ purely unrectifiable subset of $\R^{d}$, one asks if the conclusion of Theorem~\ref{thm:typical_nonsep}, where the settings are much more general, can be strengthened for such sets. We answer this for finite-dimensional domains in the affirmative in our second main result: \begin{restatable}{thm}{typicalnondiff}\label{thm:typical_nondiff-dash}
	Let $X$ be a finite-dimensional normed space, $Y$ be a Banach space, $W$ be a separable subspace of $\mc{L}(X,Y)$, $Q\subseteq X$ be bounded and $E\subseteq \inter(Q)$ be an $F_\sigma$ purely unrectifiable set. 
	Then $\mc{D}_{f}(\mb{x})\supseteq\Ball_{W}$ for a typical $f\in (\lipone(Q,Y),\lnorm{\infty}{\cdot})$ and every $\mb{x}\in E$.
\end{restatable}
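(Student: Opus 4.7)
The plan is to upgrade the Baire argument of Theorem~\ref{thm:typical_nonsep} so that the strong non-differentiability condition is forced at \emph{every} point of $E$, not merely on a residual subset, by parameterising the argument differently. The two essential inputs are that $X$ is finite-dimensional (so closed bounded pieces of $E$ are compact, which will underwrite openness) and that $E$ is $F_{\sigma}$ purely unrectifiable (which will underwrite density).

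\textbf{Setup and reduction.} Choose a countable dense set $\set{T_j}_{j\in \N}\subseteq \Balli{W}$ using separability of $W$, and decompose $E=\bigcup_{k\in\N} F_k$ with each $F_k$ closed in $X$. As $E\subseteq \inter Q$ is bounded and $X$ is finite-dimensional, each $F_k$ is compact; moreover, $F_k$ inherits pure unrectifiability from $E$. For $j,m,n,k\in\N$, let $\mc{G}_{j,m,n,k}$ denote the set of $f\in \lipone(Q,Y)$ such that for every $\mb{x}\in F_k$ there exists $r\in (0,1/n)$ for which the rescaled difference quotient $\mb{u}\mapsto r^{-1}\bbr{f(\mb{x}+r\mb{u})-f(\mb{x})}$, restricted to the unit ball of $X$, is within $1/m$ of $T_j|_{\Ball}$ in sup norm. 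Unwinding the definition of $\mc{D}_{f}(\mb{x})$ given in \eqref{eq:Dfx}, one checks that $\bigcap_{j,m,n,k}\mc{G}_{j,m,n,k}$ is exactly the set of $f\in\lipone(Q,Y)$ with $\mc{D}_{f}(\mb{x})\supseteq \Balli{W}$ for every $\mb{x}\in E$. By the Baire category theorem it therefore suffices to show each $\mc{G}_{j,m,n,k}$ is dense and open in $(\lipone(Q,Y),\lnorm{\infty}{\cdot})$.

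\textbf{Openness.} Fix $f\in \mc{G}_{j,m,n,k}$ and for each $\mb{x}\in F_k$ a witnessing scale $r(\mb{x})\in (0,1/n)$. For any fixed $r$, the condition ``the scale-$r$ difference quotient of $g$ at $\mb{y}$ is within $1/m$ of $T_j$'' is jointly continuous in $(g,\mb{y})\in (\lipone(Q,Y),\lnorm{\infty}{\cdot})\times F_k$ (continuity in $g$ is Lipschitz with constant $2/r$, continuity in $\mb{y}$ follows from continuity of $f$). By compactness of $F_k$, finitely many such pairs $(\mb{x}_i, r(\mb{x}_i))$ cover $F_k$, and the finite intersection of the associated sup-norm neighbourhoods of $f$ lies entirely in $\mc{G}_{j,m,n,k}$.

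\textbf{Density and main obstacle.} Given $g\in \lipone(Q,Y)$ and $\varepsilon>0$, approximate first by $(1-\eta)g$, which has Lipschitz constant at most $1-\eta$ and differs from $g$ by at most $\eta\supnorm{g}<\varepsilon/2$; then add a Lipschitz perturbation $h$ of Lipschitz constant at most $\eta$ and sup norm less than $\varepsilon/2$, chosen so that $(1-\eta)g+h\in \mc{G}_{j,m,n,k}$. The construction of $h$ parallels the density lemma in the scalar setting of~\cite{dymond_maleva_2020}: one covers $F_k$ by finitely many regions in each of which $F_k$, being purely unrectifiable, can be enclosed in a thin neighbourhood, and in each region the perturbation is an affine-like $Y$-valued piece imitating $T_j$ at a chosen uniform small scale $r<1/n$, glued by a Lipschitz partition of unity. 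The entirety of the difficulty lives here: producing a \emph{single} sup-small, Lipschitz-small perturbation whose scale-$r(\mb{x})$ difference quotient at \emph{every} point of the uncountable compact set $F_k$ is uniformly within $1/m$ of $T_j$ over the whole unit ball of $X$. Pure unrectifiability of $F_k$ is exactly the input that makes this feasible, by preventing $F_k$ from following any Lipschitz graph along which the local perturbations could constructively accumulate; and the need to control an entire $Y$-valued image rather than a directional derivative is the principal extra burden compared with the scalar argument of~\cite{dymond_maleva_2020}.
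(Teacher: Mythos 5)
Your overall framework---decompose $E=\bigcup_k F_k$ into compacta, encode the conclusion as a countable intersection $\bigcap_{j,m,n,k}\mc{G}_{j,m,n,k}$, and show each $\mc{G}_{j,m,n,k}$ is open and dense---is sound and close in spirit to the paper's Banach--Mazur game (both establish residuality; the game simply lets Player~II react at each stage, which is what drives the inductive construction). The openness argument is essentially correct once the inequality defining $\mc{G}_{j,m,n,k}$ is taken strict and the witnessing scales are kept below $\dist(F_k,\partial Q)$: for fixed $r$ the defining quantity is jointly continuous in $(g,\mb{y})$, and compactness of $F_k$ gives a finite subcover.

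The density step, however, contains a genuine error. You propose to approximate $g$ by $(1-\eta)g+h$ with $\lip(h)\le\eta$, but no such $h$ can work when $\eta$ is small. Take $g\equiv\mb{0}_Y$: membership of $(1-\eta)g+h=h$ in $\mc{G}_{j,m,n,k}$ requires, at every $\mb{x}\in F_k$ and some $r<1/n$, that $\lnorm{Y}{h(\mb{x}+\mb{v})-h(\mb{x})-T_j\mb{v}}\le r/m$ for all $\lnorm{X}{\mb{v}}\le r$, which forces $\lip(h)\ge\opnorm{T_j}-2/m$; since $(T_j)$ is dense in $\Balli{W}$, $\opnorm{T_j}$ can be arbitrarily close to $1$. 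More generally, after smoothing $g$ near $F_k$, its scale-$r$ behaviour at $\mb{x}$ is governed by $L:=(1-\eta)Dg(\mb{x})$, so the perturbation must supply $(T_j-L)\mb{v}$ to leading order, and $\opnorm{T_j-L}$ can approach $2$. This is exactly the obstacle the paper's Section~\ref{sec:nondiff-pu} is built to overcome. There, Lemma~\ref{lemma:pu_mapping_with_prescribed_derivative} constructs perturbations of Lipschitz constant $\approx\cyl(T)+\theta$ (not $\eta$), supported near a compact purely unrectifiable set, with derivative $\approx T$ on an open neighbourhood of it; Lemma~\ref{lemma:bmgame_step} then \emph{cancels} the ambient derivative $Df(\mb{x}_k)$ and \emph{replaces} it with $T$ near $E$ using a partition of unity and the alternating choices $T\dow{2k-1}=-Df(\mb{x}_k)$, $T\dow{2k}=T$. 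The resulting map stays $1$-Lipschitz not because the perturbation is small, but because its derivative becomes a convex combination $a_jDf(\mb{x})+b_jT+P_j(\mb{x})$ with $a_j,b_j\ge 0$, $a_j+b_j\le 1$, $\opnorm{Df},\opnorm{T}<1$ and $\opnorm{P_j}$ small. You correctly flag that ``the entirety of the difficulty lives here,'' but the $\eta$-small additive perturbation you propose cannot close it, and the sketch of the covering argument gives no mechanism for controlling the interference between the finitely many local pieces at the uncountably many points of $F_k$---that control is precisely what pure unrectifiability delivers through Theorem~\ref{thm:compact_pu} and Lemma~\ref{lemma:one_direction_steep_function}.
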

Theorem~\ref{thm:typical_nondiff-dash} also strengthens previous results in this direction obtained in~\cite[Theorem~2.7]{dymond_maleva_2020} and in~\cite{merlo}: significant gains being that it caters for infinite-dimensional spaces $Y$, any norms on $X$ and $Y$, as long as $Y$ is Banach, and the derivative ratios `see' all possible linear operators $T\in W$ on arbitrarily small scales. 

\section{Preliminaries and Notation.}
\subsection{General notation and differentiability notions.}\label{sec2.1}
Given a normed vector space $X$, we let $\Balli{X}$ denote its closed unit ball and $\mathbb{S}_X$ its unit sphere. An open ball in $X$ with centre $\mb{x}$ and radius $r$ will be written as $B_{X}(\mb{x},r)$ and for closed balls we write $\cl{B}_{X}$ instead of $B_{X}$. The origin in $X$ will be denoted by $\mb{0}_{X}$. If $Y$ is an additional normed vector space, we let $\mc{L}(X,Y)$ denote the space of bounded linear operators $X\to Y$. The operator norm on $\mc{L}(X,Y)$ is denoted by $\opnorm{-}$. For a subset $Q$ of a topological space, we let $\inter Q$ denote the interior of $Q$. 
For a mapping $f\colon Q\subseteq X\to Y$ and $\mb{x}\in \inter Q$ we let 
\begin{equation}\label{eq:Dfx}
\mc{D}_{f}(\mb{x}):=\set{\op\in \mc{L}(X,Y)\colon \liminf_{r\to 0+}\sup_{\mb{u}\in\cl{B}(\mb{0}_{X},r)}\frac{\lnorm{Y}{f(\mb{x}+\mb{u})-f(\mb{x})-\op \mb{u}}}
	{r}=0}.	
\end{equation}
Observe that if $f$ is $1$-Lipschitz, we have $\mc{D}_{f}(\mb{x})\subseteq \Balli{\mc{L}(X,Y)}$ for every $\mb{x}\in \inter Q$.  

Finally, we will refer to a subset $\Gamma$ of a metric space $(M,d)$ as \emph{uniformly separated} if $\inf\set{d(\mb{x},\mb{y})\colon \mb{x},\mb{y}\in\Gamma,\,\mb{x}\neq \mb{y}}>0$. For such a set $\Gamma$ and $s>0$, we call $\Gamma$ \emph{$s$-separated} if $\inf\set{d(\mb{x},\mb{y})\colon \mb{x},\mb{y}\in\Gamma,\,\mb{x}\neq \mb{y}}\geq s$.

\subsection{Optimality of Theorem~\ref{thm:typical_nonsep} and comparison with previous results.}\label{sec:2.2}
Taking $W$ in Theorems~\ref{thm:typical_nonsep} and~\ref{thm:typical_nondiff-dash} as any non-trivial, separable subspace of $\mc{L}(X,Y)$ ensures that the set $\mc{N}_{f,W}$ is contained in the set of points of \Gat non-differenti\-a\-bility of $f$. Moreover, when $\mc{L}(X,Y)$ is itself separable, we may take $W=\mc{L}(X,Y)$. In this case we get that for a typical $f\in \lipone(Q,Y)$ the set $\mc{D}_{f}(\mb{x})$ is maximum possible, as it is equal to $\Balli{W}=\Balli{\mc{L}(X,Y)}$, at a typical point $\mb{x}$ of $\setG$ (and at every $\mb{x}\in E$ in the premises of Theorem~\ref{thm:typical_nondiff-dash}). In the following lemma we show that it is not possible to omit the separability condition on $W$ in Theorems~\ref{thm:typical_nonsep} and~\ref{thm:typical_nondiff-dash}. 

\begin{lemma}\label{lemma:Df_separable}
	Let $X$ and $Y$ be normed spaces, $Q\subseteq X$, $\mb{x}\in \inter Q$ and $f\colon Q\to Y$ be a mapping. Then the set $\mc{D}_{f}(\mb{x})$ is separable and closed in $(\mc{L}(X,Y),\opnorm{\cdot})$.
\end{lemma}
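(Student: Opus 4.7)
First I would dispose of closedness, which is routine: writing $\omega_T(r):=\sup_{\mb{u}\in\cl{B}(\mb{0}_X,r)}\lnorm{Y}{f(\mb{x}+\mb{u})-f(\mb{x})-T\mb{u}}/r$, the triangle inequality gives $\omega_T(r)\le\omega_{T_n}(r)+\opnorm{T_n-T}$ whenever $T_n\to T$ in operator norm. Taking $\liminf_{r\to 0+}$ on both sides and then letting $n\to\infty$ forces $T\in\mc{D}_f(\mb{x})$ as soon as each $T_n$ is.

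For separability the central observation I would prove is a stability estimate: if $T_1,T_2\in\mc{L}(X,Y)$ and scales $r_1\le r_2$ satisfy $\omega_{T_i}(r_i)<\varepsilon$ for $i=1,2$, then for $\mb{u}\in\cl{B}(\mb{0}_X,r_1)$
\[
\lnorm{Y}{(T_1-T_2)\mb{u}}\le\lnorm{Y}{f(\mb{x}+\mb{u})-f(\mb{x})-T_1\mb{u}}+\lnorm{Y}{f(\mb{x}+\mb{u})-f(\mb{x})-T_2\mb{u}}\le\varepsilon(r_1+r_2),
\]
whence $\opnorm{T_1-T_2}\le\varepsilon(1+r_2/r_1)$. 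Thus, provided the two witnessing scales stay within a bounded multiplicative ratio, the operators themselves are automatically close.

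Given this estimate, I would write $\mc{D}_f(\mb{x})=\bigcap_{n\ge 1}F_n$ with $F_n:=\set{T\in\mc{L}(X,Y)\colon\exists\,r\in(0,1/n),\,\omega_T(r)<1/n}$, and for each $n$ split the admissible scale range dyadically:
\[
F_n^{(k)}:=\set{T\in\mc{L}(X,Y)\colon\exists\,r\in[2^{-k-1}/n,\,2^{-k}/n)\text{ with }\omega_T(r)<1/n},\qquad k\ge 0.
\]
By the stability estimate applied with $\varepsilon=1/n$ and $r_2/r_1<2$, each $F_n^{(k)}$ has operator-norm diameter at most $3/n$. Since $F_n=\bigcup_{k\ge 0}F_n^{(k)}$, so is $\mc{D}_f(\mb{x})=\bigcup_{k\ge 0}(\mc{D}_f(\mb{x})\cap F_n^{(k)})$; picking one point of $\mc{D}_f(\mb{x})$ from each non-empty piece yields a countable $(3/n)$-net of $\mc{D}_f(\mb{x})$ inside itself. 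Taking the union over $n\in\N$ furnishes the required countable dense subset.

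The main obstacle is the scale mismatch built into the definition: different operators in $\mc{D}_f(\mb{x})$ may realise the liminf along entirely disjoint sequences of radii, and there need not be a single scale on which two prescribed members of $\mc{D}_f(\mb{x})$ simultaneously approximate $f$ well. The dyadic decomposition circumvents this by fixing only an approximate scale, which is precisely the regime in which the two-operator comparison retains its sharpness up to a universal constant.
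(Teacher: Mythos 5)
Your proof is correct. The closedness argument is essentially the same as the paper's, just written compactly in terms of the modulus $\omega_T(r)$ and $\liminf$.

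For separability you take a recognisably different route. Both arguments rest on the same quantitative core — two operators approximating $f$ well at a common (or comparable) scale must be close in operator norm — but they deploy it differently. The paper fixes a rational scale $q$ and a tolerance $1/n$, picks $T_{q,n}$ as a near-infimizer of $\sup_{\lnorm{X}{\mb{u}}\le q}\lnorm{Y}{f(\mb{x}+\mb{u})-f(\mb{x})-T\mb{u}}/q$ over all $T\in\mc{L}(X,Y)$, and shows every $T_0\in\mc{D}_f(\mb{x})$ is approximated by some $T_{q,n}$; this only ever compares two operators at the \emph{same} scale $q$, and the resulting dense set $\set{T_{q,n}}$ need not lie inside $\mc{D}_f(\mb{x})$. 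You instead bin the witnessing scales dyadically into $F_n^{(k)}$, prove a two-scale stability estimate $\opnorm{T_1-T_2}\le\varepsilon(1+r_2/r_1)$ that remains useful whenever $r_2/r_1$ is bounded, conclude that each $\mc{D}_f(\mb{x})\cap F_n^{(k)}$ has diameter at most $3/n$, and extract a countable net from inside $\mc{D}_f(\mb{x})$ by choosing one representative per nonempty piece. What the paper's approach buys is slight simplicity (no two-scale comparison, no dyadic bookkeeping). What your approach buys is a more structural conclusion — it writes $\mc{D}_f(\mb{x})$ as a countable union of small-diameter pieces and produces a dense subset of $\mc{D}_f(\mb{x})$ itself, rather than an external approximating family — and it avoids invoking the infimum over all of $\mc{L}(X,Y)$ or choosing approximate minimizers. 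Both are complete and correct.
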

\begin{proof}
	Let $r>0$ be small enough so that $B_{X}(\mb{x},r)\subseteq Q$. For each rational $q\in \Q\cap (0,r)$ and each $n\in\N$ choose ${\op}_{q,n}\in{\mc{L}(X,Y)}$ such that
	\begin{multline*}
	\sup_{\mb{u}\in \cl{B}_{X}(\mb{0}_{X},q)}\frac{\lnorm{Y}{f(\mb{x}+\mb{u})-f(\mb{x})-{\op}_{q,n}\mb{u}}}{q}\\
	\leq \inf_{{\op}\in\mc{L}(X,Y)}\sup_{\mb{u}\in \cl{B}_{X}(\mb{0}_{X},q)}\frac{\lnorm{Y}{f(\mb{x}+\mb{u})-f(\mb{x})-{\op}\mb{u}}}{q}+\frac{1}{n}.
	\end{multline*}
	We show that $\mc{D}_{f}(\mb{x})\subseteq \cl{\set{T_{q,n}\colon q\in \Q\cap (0,r),\, n\in\N}}$, where the closure is taken with respect to the operator norm. 
	
	Indeed, consider arbitrary ${\op}_0\in \mc{D}_{f}(\mb{x})$ and $\varepsilon>0$. Let $n>3/\varepsilon$ and choose $q\in\Q\cap (0,r)$ so that
	\begin{equation*}
	\sup_{\mb{u}\in\cl{B}_{X}(\mb{0}_X,q)}\frac{\lnorm{Y}{f(\mb{x}+\mb{u})-f(\mb{x})-{\op}_0\mb{u}}}{q}\leq \frac{\varepsilon}{3}.
	\end{equation*}  
	Then, for every $\mb{u}\in \cl{B}_{X}(\mb{0}_X,q)$ we have
	\begin{align*}
	\lnorm{Y}{({\op}_{q,n}-{\op}_0)\mb{u}}&\leq \lnorm{Y}{{\op}_{q,n}\mb{u}+f(\mb{x})-f(\mb{x}+\mb{u})}+\lnorm{Y}{f(\mb{x}+\mb{u})-f(\mb{x})-{\op}_0\mb{u}}\\
	&\leq \frac{\varepsilon q}{3}+\frac{q}{n}+\frac{\varepsilon q}{3}\leq \varepsilon q,
	\end{align*}
	which implies $\opnorm{{\op}_{q,n}-{\op}_0}\leq \varepsilon$.
	
	To show that $\mc{D}_{f}(\mb{x})$ is closed, assume $T_k\in\mc{D}_{f}(\mb{x})$ converge in the operator norm to $T_0$. Fix an arbitrary $\varepsilon>0$ and choose $n\ge1$ and $0<\rho<\varepsilon$ such that $\opnorm{T_n-T_0}<\varepsilon/2$ and $\frac1\rho\lnorm{Y}{f(\mb{x}+\mb{u})-f(\mb{x})-T_n\mb{u}}<\varepsilon/2$ whenever $\lnorm{X}{\mb{u}}\le \rho$. Then for all $\mb{u}\in \cl B_X(\mb{0}_X,\rho)$
	\[
	\tfrac1\rho\lnorm{Y}{f(\mb{x}+\mb{u})-f(\mb{x})-T_0\mb{u}}\le
	\tfrac1\rho\lnorm{Y}{f(\mb{x}+\mb{u})-f(\mb{x})-T_n\mb{u}}+\opnorm{T_n-T_0}\tfrac{\lnorm{X}{\mb{u}}}{\rho}<\varepsilon.
	\]
	From the arbitrariness of $\varepsilon>0$ we conclude $T_0\in\mc{D}_f(\mb{x})$.	
\end{proof}

Let us record a simple comparison, in the case of real-valued $f$, of the set $\mc{D}_{f}(\mb{x})$ with the Dini subgradient $\hat{\partial}f(\mb{x})$ of $f$ at $\mb{x}$, considered in~\cite{Loewen_Wang_typical_2000}. Let $f\colon X\to \R$ be a function, $\mb{x}\in X$ and for each $\mb{v}\in X$ consider the lower Dini directional derivative
\begin{equation*}
f_{+}(\mb{x};\mb{v}):=\liminf_{t\to 0+}\frac{f(\mb{x}+t\mb{v})-f(\mb{x})}{t}.
\end{equation*}
The Dini subgradient of $f$ at $\mb{x}$ is then defined by
\begin{equation*}
\hat{\partial}f(\mb{x}):=\set{\mb{x}^{*}\in X^{*}\colon f_{+}(\mb{x};\mb{v})\geq \inner{\mb{x}^{*},\mb{v}}\,\forall \mb{v}\in X}.
\end{equation*}
\begin{lemma}\label{lemma:Dini}
	Let $X$ be a normed space, $f\colon X\to \R$ be a $1$-Lipschitz function, $\mb{x},\mb{v}\in X$ and $\mb{y}^{*},\mb{z}^{*}\in \mc{D}_{f}(\mb{x})$ 
	be such that
	$\mb{z}^{*}(\mb{v})<0<\mb{y}^{*}(\mb{v})$. Then $\hat{\partial}f(\mb{x})=\emptyset$.
\end{lemma}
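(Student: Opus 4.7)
My plan is to argue by contradiction: assume some $\mb{x}^*\in \hat{\partial}f(\mb{x})$ exists, and use the pair $\mb{y}^*, \mb{z}^*\in \mc{D}_f(\mb{x})$ to force $\mb{x}^*(\mb{v})$ to be simultaneously negative and positive.

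The first step extracts a one-sided Dini bound from membership in $\mc{D}_f(\mb{x})$. Given any $T\in \mc{D}_f(\mb{x})$ and any nonzero $\mb{v}\in X$, the $\liminf$ in the definition~\eqref{eq:Dfx} provides a sequence $r_k\to 0+$ along which $\sup_{\|\mb{u}\|\le r_k}|f(\mb{x}+\mb{u})-f(\mb{x})-T\mb{u}|/r_k \to 0$. Choosing $\mb{u}_k = (r_k/\|\mb{v}\|)\mb{v}$ gives $t_k:=r_k/\|\mb{v}\|\to 0+$ with
\[
\frac{f(\mb{x}+t_k\mb{v})-f(\mb{x})}{t_k}\;\longrightarrow\; T(\mb{v}),
\]
so $f_+(\mb{x};\mb{v})\le T(\mb{v})$. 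Applying this both to $\mb{z}^*$ in the direction $\mb{v}$ and to $\mb{y}^*$ in the direction $-\mb{v}$ yields
\[
f_+(\mb{x};\mb{v})\le \mb{z}^*(\mb{v})<0 \quad\text{and}\quad f_+(\mb{x};-\mb{v})\le -\mb{y}^*(\mb{v})<0.
\]

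Now suppose $\mb{x}^*\in \hat{\partial}f(\mb{x})$. The defining inequalities in the directions $\mb{v}$ and $-\mb{v}$ give
\[
\mb{x}^*(\mb{v})\le f_+(\mb{x};\mb{v})\le \mb{z}^*(\mb{v})<0, \qquad -\mb{x}^*(\mb{v})\le f_+(\mb{x};-\mb{v})\le -\mb{y}^*(\mb{v})<0.
\]
The first forces $\mb{x}^*(\mb{v})<0$ and the second forces $\mb{x}^*(\mb{v})>0$, a contradiction. Hence $\hat{\partial}f(\mb{x})=\emptyset$.

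I do not expect a genuine obstacle here: the only non-trivial point is the observation that the $\liminf$ clause in~\eqref{eq:Dfx} is strong enough to pin down the difference quotient along a \emph{single} sequence of scales (which is all that is needed to bound the lower Dini derivative from above), without any uniformity in $t\to 0+$. The $1$-Lipschitz hypothesis is not used in the argument beyond the tacit fact that $\mc{D}_f(\mb{x})\subseteq \Balli{\mc{L}(X,\R)}$ keeps the relevant quantities finite.
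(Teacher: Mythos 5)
Your proof is correct and follows essentially the same route as the paper's: deduce $f_+(\mb{x};\mb{v})\le\mb{z}^*(\mb{v})<0$ and $f_+(\mb{x};-\mb{v})\le-\mb{y}^*(\mb{v})<0$ from membership in $\mc{D}_f(\mb{x})$, then observe these two strict negativities are incompatible with the defining inequalities of $\hat{\partial}f(\mb{x})$. The only difference is that you spell out explicitly the step the paper treats as immediate — that the $\liminf$ in the definition of $\mc{D}_f(\mb{x})$ pins down the difference quotient along a sequence of scales, yielding $f_+(\mb{x};\mb{v})\le T(\mb{v})$ — which is a reasonable thing to make explicit.
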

\begin{proof}
	Observe that $\mb{z}^{*}\in\mc{D}_{f}(\mb{x})$ and $\mb{z}^{*}(\mb{v})<0$ implies $f_{+}(\mb{x};\mb{v})\leq \mb{z}^{*}(\mb{v})<0$. Similarly $\mb{y}^{*}\in\mc{D}_{f}(\mb{x})$ and $\mb{y}^{*}(-\mb{v})<0$ implies $f_{+}(\mb{x};-\mb{v})<0$. Thus, we have $f_{+}(\mb{x};\mb{v})<0$ and $f_{+}(\mb{x};-\mb{v})<0$, which implies $\hat{\partial}f(\mb{x})=\emptyset$.
\end{proof}
\begin{remark}
	
	Note that, by the Hahn-Banach theorem, given any $\mb{v}\in X\setminus\set{\mb{0}_X}$, we may choose the functionals $\mb{y}^{*},\mb{z}^*$ of norm $1$ such that $\mb{y}^*(\mb{v})=\lnorm{X}{\mb{v}}$ and $\mb{z}^*(\mb{v})=-\lnorm{X}{\mb{v}}$. Therefore, Lemma~\ref{lemma:Dini} may be applied whenever $\mc{D}_{f}(\mb{x})\supseteq \mathbb{S}_{X^{*}}$, the unit sphere of $X^*$.
	
	It can also be easily seen from the example of $f(\mb{x})=-\norm{\mb{x}}\colon X\to\R$ that a $1$-Lipschitz function may have
	$\mc{D}_f(\mb{0}_X)=\hat{\partial}f(\mb{0}_X)=\emptyset$. Hence, there is no reverse implication to the statement of Lemma~\ref{lemma:Dini}: emptiness of the Dini subgradient $\hat{\partial}f(\mb{x})$ does not imply any type of largeness of the set $\mc{D}_{f}(\mb{x})$.
	
	Thus Theorem~\ref{thm:typical_nonsep}, which proves $\mc{D}_{f}(\mb{x})\supseteq \Balli{X^{*}}\supseteq \mathbb{S}_{X^{*}}$ is stronger than~\cite[Theorem~4]{Loewen_Wang_typical_2000} even in the case when $X$ is finite-dimensional and $Y=\R$.
\end{remark}

\begin{remark}
	We also note that Theorem~\ref{thm:typical_nonsep} is stronger than results obtained in~\cite{Loewen_Wang_typical_2000} in another aspect. The paper~\cite{Loewen_Wang_typical_2000}
	requires $\setG$ to be equal to the whole space $X$ while Theorem~\ref{thm:typical_nonsep} allows any $E\subseteq\inter(Q)$. This is crucial in order to refute~\cite[Remarks~2.9~and~3.18]{dymond_maleva_2020} discussed in the Introduction. 
	We do that in the next corollary. 
\end{remark}
\begin{corollary}\label{cor}
	Let $d\ge1$ and $S\subseteq(0,1)^d$ be arbitrary.
	Then there is a residual subset $\mc{F}$ of $\lipone([0,1]^d,\R)$ such that for every $f\in \mc{F}$ the set
	of non-differentiability points of $f$ in $S$ is residual in $S$.	
\end{corollary}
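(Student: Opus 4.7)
The plan is to deduce Corollary~\ref{cor} as a direct specialisation of Theorem~\ref{thm:typical_nonsep}. I would set $X=\R^d$ (equipped with any convenient norm, say Euclidean), $Y=\R$, $Q=[0,1]^d$, $\setG=S$ and $W=\mc{L}(\R^d,\R)=(\R^d)^*$. The inclusion $S\subseteq (0,1)^d=\inter Q$ is given, $Q$ is bounded, $Y$ is Banach, and $W$ is finite-dimensional, hence trivially a separable subspace of $\mc{L}(X,Y)$. All hypotheses of Theorem~\ref{thm:typical_nonsep} are therefore satisfied.

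Applying Theorem~\ref{thm:typical_nonsep} with these parameters produces a residual subset $\mc{F}$ of $(\lipone([0,1]^d,\R),\lnorm{\infty}{\cdot})$ such that for every $f\in\mc{F}$ the set
\[
\mc{N}_{f,W}=\set{\mb{x}\in S\colon \mc{D}_f(\mb{x})\supseteq \Balli{(\R^d)^*}}
\]
is residual in $S$. It then remains only to verify that every point of $\mc{N}_{f,W}$ is a point of non-differentiability of $f$. This is the one-line observation already recorded in Section~\ref{sec:typ}: if $f$ is Fr\'echet differentiable at $\mb{x}$ then $\mc{D}_f(\mb{x})=\set{Df(\mb{x})}$ is a singleton, which for $d\geq 1$ cannot contain the uncountable set $\Balli{(\R^d)^*}$. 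Consequently, the set of non-differentiability points of $f$ in $S$ is a superset of $\mc{N}_{f,W}$ and is therefore itself residual in $S$.

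I anticipate no substantive obstacle: the corollary is essentially an instantiation of Theorem~\ref{thm:typical_nonsep} together with the one-line uniqueness-of-derivative argument above. The feature of Theorem~\ref{thm:typical_nonsep} that makes the deduction possible — and the feature whose absence in~\cite{Loewen_Wang_typical_2000} prevents a comparable deduction there, as flagged in the preceding remark — is that it allows the locus $E$ of extreme non-differentiability to be an arbitrary subset of $\inter(Q)$ rather than the whole domain; this is precisely what is needed to accommodate the arbitrary $S\subseteq(0,1)^d$ appearing in the hypothesis of the corollary.
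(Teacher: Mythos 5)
Your proof is correct and coincides with the paper's own: both apply Theorem~\ref{thm:typical_nonsep} with $X=\R^d$, $Y=\R$, $W=\mc{L}(\R^d,\R)$, $Q=[0,1]^d$, $\setG=S$. The extra sentence you add — that $\mc{D}_f(\mb{x})\supseteq\Balli{(\R^d)^*}$ forces non-differentiability at $\mb{x}$ since $\mc{D}_f(\mb{x})$ would otherwise be at most a singleton — is exactly the observation already recorded in Section~\ref{sec:typ} and Section~\ref{sec:2.2}, so there is no gap.
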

\begin{proof}
	Apply Theorem~\ref{thm:typical_nonsep} to $X=\R^d$ equipped with the Euclidean norm, $Y=\R$, $W=\mc{L}(\R^d,\R)$, $Q=[0,1]^d$ and $\setG=S$.
\end{proof}

\subsection{Lipschitz mappings.}\label{sec:Lip}
Given a metric space $Q$ and a Banach space $Y$, we denote by $\lipone(Q,Y)$ the set of Lipschitz mappings $f\colon Q\to Y$ with $\lip(f)\leq 1$.  
If $Q$ is a bounded metric space, then $\lipone(Q,Y)$ is a closed subset of the Banach space $C_b(Q,Y)$ of $Y$-valued continuous bounded functions on $Q$, with the norm 
\begin{equation*}
\lnorm{\infty}{f}= \sup\set{\lnorm{Y}{f(\mb{x})}\colon \mb{x}\in Q}.
\end{equation*}
We require completeness of $(\lipone(Q,Y),\lnorm{\infty}{\cdot})$ in order for residual subsets of $\lipone(Q,Y)$ to be dense in $\lipone(Q,Y)$, see Section~\ref{sec:typ}.

Note that if $X$ is a normed space and $Q\subseteq X$ is not bounded, one could still consider the space $\lipone(Q,Y)$ 
as a complete metric space with metric
\[
\rho(f,g)=\sum_{n=1}^{\infty}2^{-n}
\min\set{1,\lnorm{\infty}{
		f|_{Q\cap n\Balli{X}}-g|_{Q\cap n\Balli{X}}
	}
}. 
\]
This is the approach chosen in~\cite{Loewen_Wang_typical_2000}. In the present work, we elect to work only with bounded $Q$ in order to be consistent with the papers~\cite{preiss_tiser94} and~\cite{dymond_maleva_2020}. However, we note that
the proofs given in the present paper may be easily 
modified to obtain the same results for $\lipone(Q,Y)$ in the case $Q$ is unbounded.
\subsection{The Banach-Mazur Game}\label{sec:BMgame}

To prove that a set is residual, i.e.\ a complement of the set of first Baire category, we will make use of the Banach-Mazur game, see~\cite[8.H]{kechris2012classical}.

Given a topological space $Z$ and its subset $H\subseteq Z$, the Banach-Mazur game in $Z$ with target $H$ is played by two players, Player~I and Player~II, as follows: The game starts by Player~I selecting a non-empty open subset $U_{1}$ of $Z$. Player~II must then respond by nominating a non-empty open subset $V_{1}$ of $Z$ with $V_{1}\subseteq U_{1}$. In the $k$-th round of the game, with $k\geq 2$, Player~I chooses a non-empty open set $U_{k}\subseteq V_{k-1}$ and Player~II returns a non-empty open set $V_{k}\subseteq U_{k}$. Thus, a run of the game is described by an infinite sequence of open sets
\begin{equation*}
U_{1}\supseteq V_{1}\supseteq U_{2}\supseteq V_{2}\supseteq \ldots\supseteq U_{k}\supseteq V_{k}\supseteq\ldots,
\end{equation*}
where the sets $U_{k}$ are the choices of Player~I and the sets $V_{k}$ are those of Player~II. Player~II wins the game if 
\begin{equation*}
\bigcap_{k\in\N}V_{k}\subseteq H.
\end{equation*}
Otherwise Player~I wins.

The Banach-Mazur game can be used to determine whether a subset of a topological space is residual. More precisely, for any non-empty topological space $Z$ and any subset $H$ of $Z$ it holds that $H$ is a residual subset of $Z$ if and only if Player~II has a winning strategy in the Banach Mazur game in $Z$ with target set $H$; see~\cite[Theorem~8.33]{kechris2012classical}.

In the case that $Z$ is a metric space (as will be the case in our setting), open balls may be used in place of the open sets $U_{k}$ and $V_{k}$ above, see also~\cite[Theorem~3.16]{dymond_maleva_2020}. Thus, the moves of Player~I and Player~II effectively become a choice of pairs $(\mb{x},r)$ where $\mb{x}\in Z$ prescribes the centre of the ball and $r>0$ the radius. In the special case when Player~II is always able to ensure that the intersection $\bigcap_{k\in\N} V_k=\bigcap_{k\in\N}B(y_k,s_k)$ of their choices is a singleton $y_0\in Z$, to conclude that Player~II wins it would be enough to verify $y_0\in H$ for any run of the game.

\section{A typical Lipschitz mapping is extremely non-differentiable at a typical point of a set}
In this section we prove Theorem~\ref{thm:typical_nonsep}. For the proof of subsequent auxiliary lemmata we follow the convention that the infimum of the empty set is $+\infty$. We also note that in any normed space a bounded non-empty subset has a non-empty boundary.

The next lemma is a generalisation of~\cite[Lemma~3.1]{mynewpaper} for normed spaces instead of convex sets.
\begin{lemma}\label{lemma:general}
	Let $X$ and $Z$  be normed spaces, $0<a<b$, and let $f_1,f_2\colon X\to Z$ be Lipschitz mappings such that $\lip(f_1)+\lip(f_2)\le1$ and $f_1(\mb{0}_{X})=f_2(\mb{0}_{X})=\mb{0}_{Z}$. 
	Then there exists a Lipschitz mapping $\Phi=\Phi(a,b,f_1,f_2)\colon X\to Z$ such that
	\begin{enumerate}[(i)]
		\item\label{Phi1a} $\Phi(\mb{x})=f_1(\mb{x})$ whenever $\lnorm{X}{\mb{x}}\le a$.
		\item\label{Phi2a} $\Phi(\mb{x})=f_2(\mb{x})$ whenever $\lnorm{X}{\mb{x}}\ge b$.
		\item\label{Phi3a} $\lip(\Phi)\leq 1+\frac{a}{b-a}$.
		\item \label{Phi5}
		If $f_1=\mb{0}_{Z}$ is the constant $\mb{0}_{Z}$ mapping, then 
		$\lnorm{Z}{\Phi(\mb{x})-f_2(\mb{x})}\le a\lip(f_2)$ for all $\mb{x}\in X$. 
		\item\label{Phi4a}
		If $f_2=\mb{0}_{Z}$ is the constant $\mb{0}_{Z}$ mapping, then
		$\lnorm{Z}{\Phi(\mb{x})}\leq b\lip(f_1)$ for all $\mb{x}\in X$.
	\end{enumerate}
	
\end{lemma}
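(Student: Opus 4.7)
The plan is to define $\Phi$ piecewise with a radial interpolation designed to make properties (iv) and (v) automatic, leaving only the Lipschitz estimate (iii) as the real work. I would take $\Phi(\mb{x})=f_1(\mb{x})$ when $\lnorm{X}{\mb{x}}\le a$, $\Phi(\mb{x})=f_2(\mb{x})$ when $\lnorm{X}{\mb{x}}\ge b$, and in the annulus interpolate in such a way that the ``correction'' relative to $f_2$ sees only values of $f_1,f_2$ inside the small ball. Concretely, letting $\lambda\colon[0,\infty)\to[0,1]$ be the piecewise affine function equal to $1$ on $[0,a]$, to $(b-r)/(b-a)$ on $[a,b]$, and to $0$ on $[b,\infty)$, and letting $\pi_a(\mb{x}):=\mb{x}\min\set{1,a/\lnorm{X}{\mb{x}}}$ denote the radial retraction onto $\cl{B}_X(\mb{0}_X,a)$, set
\[
\Phi(\mb{x}):=f_2(\mb{x})+\lambda(\lnorm{X}{\mb{x}})\bbr{f_1(\pi_a(\mb{x}))-f_2(\pi_a(\mb{x}))}.
\]
Since $\lambda=1$ and $\pi_a=\id$ on $\cl{B}_X(\mb{0}_X,a)$ and $\lambda=0$ on $X\setminus B_X(\mb{0}_X,b)$, properties (i) and (ii) are immediate.

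Properties (v) and (iv) are then direct substitutions. For (v), assuming $f_1=\mb{0}_Z$, we have $\Phi(\mb{x})-f_2(\mb{x})=-\lambda(\lnorm{X}{\mb{x}})\,f_2(\pi_a(\mb{x}))$, so $\lnorm{Z}{\Phi(\mb{x})-f_2(\mb{x})}\le\lip(f_2)\lnorm{X}{\pi_a(\mb{x})}\le a\lip(f_2)$ using $\lambda\le 1$. For (iv), assuming $f_2=\mb{0}_Z$, we get $\lnorm{Z}{\Phi(\mb{x})}\le\lip(f_1)\lnorm{X}{\pi_a(\mb{x})}\le a\lip(f_1)\le b\lip(f_1)$ in the annulus, while on $\cl{B}_X(\mb{0}_X,a)$ one has $\lnorm{Z}{f_1(\mb{x})}\le \lnorm{X}{\mb{x}}\lip(f_1)\le b\lip(f_1)$ and outside $B_X(\mb{0}_X,b)$ we have $\Phi=\mb{0}_Z$.

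The main obstacle is (iii). Writing $h:=(f_1-f_2)\circ\pi_a$, the natural decomposition
\[
\Phi(\mb{x})-\Phi(\mb{y})=\bbr{f_2(\mb{x})-f_2(\mb{y})}+\lambda(\lnorm{X}{\mb{x}})\bbr{h(\mb{x})-h(\mb{y})}+\bbr{\lambda(\lnorm{X}{\mb{x}})-\lambda(\lnorm{X}{\mb{y}})}h(\mb{y})
\]
gives an outer term bounded by $\lip(f_2)\lnorm{X}{\mb{x}-\mb{y}}$ and a ``weight-variation'' term bounded by $\tfrac{a(\lip(f_1)+\lip(f_2))}{b-a}\lnorm{X}{\mb{x}-\mb{y}}\le\tfrac{a}{b-a}\lnorm{X}{\mb{x}-\mb{y}}$. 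The middle term is the delicate piece because $\pi_a$ is only $2$-Lipschitz in a general normed space, so naively $\lip(h)\le 2(\lip(f_1)+\lip(f_2))$, which is off by a factor of $2$ from the target $b/(b-a)=1+a/(b-a)$. I expect to absorb this factor by doing case analysis according to where $\mb{x},\mb{y}$ sit (both in $\cl{B}_X(\mb{0}_X,a)$, where $\pi_a$ is the identity and $\lip(h)\le\lip(f_1)+\lip(f_2)\le 1$; both outside $B_X(\mb{0}_X,b)$, where $\lambda(\lnorm{X}{\mb{x}})=\lambda(\lnorm{X}{\mb{y}})=0$ kills the middle and third terms; and the annulus case) and, in the annulus case, exchanging the evaluation point via the identity $h(\mb{y})=h(\mb{x})+(h(\mb{y})-h(\mb{x}))$ to rewrite the middle term with $(f_1-f_2)(\pi_a(\mb{x}))$ in place of $h(\mb{y})$. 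Assuming WLOG $\lnorm{X}{\mb{x}}\le\lnorm{X}{\mb{y}}$, this manoeuvre exploits $\lnorm{Z}{(f_1-f_2)(\pi_a(\mb{x}))}\le(\lip(f_1)+\lip(f_2))\min\set{\lnorm{X}{\mb{x}},a}\le a$ together with the tight inequality $\lambda(\lnorm{X}{\mb{y}})+\lnorm{X}{\mb{x}}/(b-a)\le b/(b-a)$ to give exactly the required Lipschitz constant $1+a/(b-a)$; should the straightforward argument still fall short in the annulus case, a minor modification of the construction (e.g.\ replacing $\pi_a$ by a retraction chosen to fit the norm of $X$) will close the gap.
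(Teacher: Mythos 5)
Your interpolation is genuinely different from the paper's, but it does not in fact satisfy property~(iii): the Lipschitz constant of your $\Phi$ can be arbitrarily close to $2$ independently of $a,b$, so the claimed bound $\lip(\Phi)\le 1+\frac{a}{b-a}$ fails whenever $b>2a$. Take $X=Z=\R^2$ with the \emph{Euclidean} norm (so that $\pi_a$ is $1$-Lipschitz), $a=1$, $b=3$, $f_1=\mb{0}_Z$; the target constant is $3/2$. Fix a small $\varepsilon>0$, set $r=1+\varepsilon$, pick $\mb{x},\mb{y}$ on the sphere of radius $r$ at distance $\lnorm{X}{\mb{x}-\mb{y}}=\varepsilon$, and write $\mb{x}'=\pi_1(\mb{x})=\mb{x}/r$, $\mb{y}'=\pi_1(\mb{y})=\mb{y}/r$. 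The assignment $f_2(\mb{0}_X)=0$, $f_2(\mb{x})=\varepsilon$, $f_2(\mb{y})=0$, $f_2(\mb{x}')=0$, $f_2(\mb{y}')=\varepsilon/r$ satisfies the $1$-Lipschitz condition on every pair of these five points: the tight pairs $(\mb{x},\mb{y})$, $(\mb{x},\mb{x}')$, $(\mb{x}',\mb{y}')$ all have distance equal to the value gap, the mixed pair $(\mb{x},\mb{y}')$ has distance $\varepsilon\sqrt{(2+\varepsilon)/(1+\varepsilon)}$ but value gap only $\varepsilon^2/(1+\varepsilon)$, and the remaining pairs are easier. Extending by McShane to a $1$-Lipschitz $f_2\colon\R^2\to\R$, you get
\[
\Phi(\mb{x})-\Phi(\mb{y})
=\bbr{f_2(\mb{x})-f_2(\mb{y})}-\lambda(r)\bbr{f_2(\mb{x}')-f_2(\mb{y}')}
=\varepsilon+\lambda(r)\frac{\varepsilon}{r}
=\varepsilon\br{1+\frac{\lambda(r)}{r}},
\]
and this tends to $2\varepsilon$ as $\varepsilon\to0^+$, so $\lnorm{Z}{\Phi(\mb{x})-\Phi(\mb{y})}/\lnorm{X}{\mb{x}-\mb{y}}\to 2>3/2$.

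This also shows that the obstacle you diagnosed — $\pi_a$ being only $2$-Lipschitz in a general normed space — is not the real problem: the example above is Euclidean, where the radial retraction is a contraction, yet the Lipschitz constant still blows up to $2$. What goes wrong is that the differences $f_2(\mb{x})-f_2(\mb{y})$ and $-\lambda\bigl(f_2(\pi_a\mb{x})-f_2(\pi_a\mb{y})\bigr)$ in your decomposition need not cancel: for $\mb{x},\mb{y}$ just outside the inner sphere, where $\lambda\approx 1$, an adversarial $f_2$ makes both differences nearly equal to $\lnorm{X}{\mb{x}-\mb{y}}$ and of the same sign, and no other retraction onto $\cl{B}_{X}(\mb{0}_X,a)$ can prevent this, since any such map must act essentially as the identity just outside the inner sphere. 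Tightening the estimates therefore cannot close the gap; the interpolation itself must be changed. The paper's formula evaluates $f_1,f_2$ at $\mb{x}$ itself and uses the non-affine scalar weight $\frac{b(\lnorm{X}{\mb{x}}-a)}{\lnorm{X}{\mb{x}}(b-a)}$ for $f_2$, chosen precisely so that its radial rate of change, paired with the a priori bound $\lnorm{Z}{f_2(\mb{x})}\le\lip(f_2)\lnorm{X}{\mb{x}}$, combines with the weight itself to give exactly $\bigl(1+\frac{a}{b-a}\bigr)\lip(f_2)$; summing the two contributions and using $\lip(f_1)+\lip(f_2)\le1$ then yields~(iii).
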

\begin{proof}
	Define $\Phi\colon X\to Z$ by 
	\begin{equation*}
	\Phi(\mb{x})=\begin{cases}
	f_1(\mb{x}) & \text{ if }\lnorm{X}{\mb{x}}\le a ,\\
	\displaystyle\frac{b-\lnorm{X}{\mb{x}}}{b-a}f_1(\mb{x})+
	\frac{b\bigl(\lnorm{X}{\mb{x}}-a\bigr)}{\lnorm{X}{\mb{x}}(b-a)}f_2(\mb{x})
	& \text{ if }a<\lnorm{X}{\mb{x}}<b,\\
	f_2(\mb{x}) & \text{ if }\lnorm{X}{\mb{x}}\ge b.
	\end{cases}
	\end{equation*}
	Clearly, $\Phi$ satisfies~\eqref{Phi1a} and~\eqref{Phi2a}.
	Observe that $\Phi$ is a continuous mapping $X\to Z$. 
	Moreover, since $f_1,f_2\in\lipone$, in order to show that $\Phi$ is Lipschitz and to check~\eqref{Phi3a}, it is enough to verify 
	\begin{equation}\label{eq:Lip_bound}
	\lnorm{Z}{\Phi(\mb{y})-\Phi(\mb{x})}\le {\left(1+\frac{a}{b-a}\right)}\lnorm{X}{\mb{y}-\mb{x}}
	\end{equation}
	whenever $\lnorm{X}{\mb{x}},\lnorm{X}{\mb{y}}\in (a,b)$. To show this, fix such $\mb{x},\mb{y}\in X$ and note first that
	\begin{equation}\label{eq.Phi-b}
	\begin{aligned}
	\lnorm{Z}{\Phi(\mb{y})-\Phi(\mb{x})}&\le
	\lnorm{Z}{
		\tfrac{b-\lnorm{X}{\mb{y}}}{b-a}f_1(\mb{y})-
		\tfrac{b-\lnorm{X}{\mb{x}}}{b-a}f_1(\mb{x})
	}\\
	&\hphantom{AAAAAAA}+
	\lnorm{Z}{
		\tfrac{b(\lnorm{X}{\mb{y}}-a)}{\lnorm{X}{\mb{y}}(b-a)}f_2(\mb{y})
		-
		\tfrac{b(\lnorm{X}{\mb{x}}-a)}{\lnorm{X}{\mb{x}}(b-a)}f_2(\mb{x})
	}.
	\end{aligned}	
	\end{equation}	
	In several estimates that follow we will use that 
		\begin{equation}\label{eq:radial_lip_est}
		\lnorm{Z}{f_{i}(\mb{u})}\leq \lip(f_{i})\lnorm{X}{\mb{u}} \qquad\text{for all $\mb{u}\in X$ and $i=1,2$.}
		\end{equation}
		 This holds due to the condition $f_{i}(\mb{0}_{X})=\mb{0}_{Z}$ for $i=1,2$. Assuming without loss of generality that $\lnorm{X}{\mb{y}}\ge\lnorm{X}{\mb{x}}$, the first term of~\eqref{eq.Phi-b} is  bounded above by 
	\begin{align*}	
	&\frac{b-\lnorm{X}{\mb{y}}}{b-a}
	\lnorm{Z}{
		f_1(\mb{y})-f_1(\mb{x})}
	+
	\abs{\frac{b-\lnorm{X}{\mb{y}}}{b-a}-\frac{b-\lnorm{X}{\mb{x}}}{b-a}}
	\lnorm{Z}
	{f_1(\mb{x})}
	\\
	\\
	\le
	&\frac{b-\lnorm{X}{\mb{y}}}{b-a}
	\lip(f_1)\lnorm{X}{\mb{y}-\mb{x}}
	+
	\frac{\lnorm{X}{\mb{y}-\mb{x}}}{b-a}\lip(f_1)\lnorm{X}{\mb{x}}
	\\
	\\
	=
	&\frac{b-\lnorm{X}{\mb{y}}+\lnorm{X}{\mb{x}}}{b-a}\lip(f_1)\lnorm{X}{\mb{y}-\mb{x}}
	\le
	\left(1+\frac{a}{b-a}\right)\lip(f_1)\lnorm{X}{\mb{y}-\mb{x}}
	.
	\end{align*}	
	The second term of~\eqref{eq.Phi-b} is bounded above by
	\begin{align*}
	&\frac{b\bigl(\lnorm{X}{\mb{y}}-a\bigr)}{\lnorm{X}{\mb{y}}(b-a)}
	\lnorm{Z}{f_2(\mb{y})-f_2(\mb{x})}+
	\abs{\frac{b\bigl(\lnorm{X}{\mb{y}}-a\bigr)}{\lnorm{X}{\mb{y}}(b-a)}-
		\frac{b\bigl(\lnorm{X}{\mb{x}}-a\bigr)}{\lnorm{X}{\mb{x}}(b-a)}
	}\lnorm{Z}{f_2(\mb{x})}\\
	\le 
	&\frac{b\bigl(\lnorm{X}{\mb{y}}-a\bigr)}{\lnorm{X}{\mb{y}}(b-a)}
	\lip(f_2)\lnorm{X}{\mb{y}-\mb{x}}
	+\frac{ab}{\lnorm{X}{\mb{x}}\lnorm{X}{\mb{y}}(b-a)}\lnorm{X}{\mb{x}-\mb{y}}\lip(f_2)\lnorm{X}{\mb{x}}\\
	=
	&\left(1+\frac{a}{b-a}\right)\lip(f_2)\lnorm{X}{\mb{y}-\mb{x}}
	.
	\end{align*}
Summing the derived upper bounds for the two terms of~\eqref{eq.Phi-b} establishes~\eqref{eq:Lip_bound}.
	
	Finally, 
	if $f_1=\mb{0}_{Z}$, then 
	\begin{equation*}
		\lnorm{Z}{\Phi(\mb{x})-f_2(\mb{x})}	= \begin{cases}
				\lnorm{Z}{f_2(\mb{x})} & \text{if $\lnorm{X}{x}\leq a$},\\
				\frac{a(b-\lnorm{X}{\mb{x}})}{(b-a)\lnorm{X}{\mb{x}}}\lnorm{Z}{f_2(\mb{x})} & \text{if $a<\lnorm{X}{x}<b$},\\
				0 & \text{if $\lnorm{X}{x}\geq b$},
			\end{cases}
	\end{equation*}
 and if $f_2=\mb{0}_{Z}$, then 	\begin{equation*}
		\lnorm{Z}{\Phi(\mb{x})}=\begin{cases}
			\lnorm{Z}{f_1(\mb{x})} & \text{if $\lnorm{X}{x}\leq a$},\\
			\frac{b-\lnorm{X}{\mb{x}}}{b-a}\lnorm{Z}{f_1(\mb{x})} & \text{if $a<\lnorm{X}{x}<b$},\\
			0 & \text{if $\lnorm{X}{x}\geq b$}.
		\end{cases}
	\end{equation*}
Applying~\eqref{eq:radial_lip_est} to these formulae, we verify~\eqref{Phi5} and~\eqref{Phi4a}.
\end{proof}

The following lemma provides a construction which will be used to define a winning strategy for Player~II in the Banach-Mazur game in Lemma~\ref{lemma:BMgame1}. 
The property~\eqref{eq:g_formula} of $g$ ensures that this new $1$-Lipschitz mapping ``sees'' $L$ as its derivative in a small neighbourhood of the given set $\Gamma$.
\begin{lemma}\label{lemma:PlayerII_nonsep-1}
	Let $X$ and $Y$ be normed spaces and $Q\subseteq X$ be a bounded set with $\inter Q\neq \emptyset$.  Let $r\in (0,1)$, $L\in\mc{L}(X,Y)$ with $\opnorm{L}\leq 1-r$ and $f\in \lipone(Q,Y)$.  Let $\emptyset\ne\Gamma\subseteq \inter Q$ be a uniformly separated set with 
	\begin{equation}\label{eq:Gamma-cond-1}
	\inf_{\mb{x}\in\Gamma}\dist_{X}(\mb{x},\partial Q)>0. 
	\end{equation}
	Then there exist $\alpha\in(0,r)$ and $g\in\lipone(Q,Y)$ such that $\supnorm{g-f}<r$ and
	\begin{equation}\label{eq:g_formula}
	g(\mb{x}+\mb{u})=g(\mb{x})+L\mb{u}\qquad\text{for all }\mb{x}\in\Gamma\text{ and all }\mb{u}\in \cl{B}(\mb{0}_{X},\alpha).
	\end{equation}
\end{lemma}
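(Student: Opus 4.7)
The plan is to modify $f$ on disjoint small neighborhoods of the points of $\Gamma$. Since $\Gamma$ is uniformly separated and stays away from $\partial Q$, we may choose a common ball radius $\beta>0$ so that the closed balls $\cl{B}_{X}(\mb{x},\beta)$ for $\mb{x}\in\Gamma$ are pairwise disjoint and contained in $\inter Q$. Specifically, set $s:=\inf\set{\lnorm{X}{\mb{x}-\mb{y}}\colon \mb{x}\neq \mb{y}\in\Gamma}>0$ and $d:=\inf_{\mb{x}\in\Gamma}\dist_{X}(\mb{x},\partial Q)>0$, and pick $\beta>0$ with $\beta<\min\set{s/2,d,r/4}$; then fix $\alpha\in(0,r\beta/2)$, so in particular $\alpha<r$ as required.

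Next, for each $\mb{x}\in\Gamma$ I introduce $\tilde f_{\mb{x}}(\mb{u}):=f(\mb{x}+\mb{u})-f(\mb{x})$, which is $1$-Lipschitz with $\tilde f_{\mb{x}}(\mb{0}_X)=\mb{0}_Y$. The heart of the proof is to produce a Lipschitz function $\Phi_{\mb{x}}\colon X\to Y$ with $\Phi_{\mb{x}}(\mb{u})=L\mb{u}$ for $\lnorm{X}{\mb{u}}\le\alpha$ and $\Phi_{\mb{x}}(\mb{u})=\tilde f_{\mb{x}}(\mb{u})$ for $\lnorm{X}{\mb{u}}\ge\beta$, while still achieving $\lip(\Phi_{\mb{x}})\le 1$ and a sup-norm bound $\lnorm{\infty}{\Phi_{\mb{x}}-\tilde f_{\mb{x}}}\le O(\beta)$. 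This is the place where Lemma~\ref{lemma:general} enters. A direct application of that lemma to the pair $f_1(\mb{u})=L\mb{u}$, $f_2(\mb{u})=\tilde f_{\mb{x}}(\mb{u})$ does not satisfy the hypothesis $\lip(f_1)+\lip(f_2)\le 1$, because $(1-r)+1=2-r>1$. The plan is therefore to exploit the slack $r>0$ coming from $\opnorm{L}\le 1-r$: one applies Lemma~\ref{lemma:general} either in two concentric stages, first replacing $\tilde f_{\mb{x}}$ on a medium ball by a function of Lipschitz constant at most $r$ that still agrees with $\tilde f_{\mb{x}}$ on the outer sphere, and then interpolating between $L\mb{u}$ and this reduced function (whose Lipschitz constants now sum to $1$), or else through a single refined interpolation tailored to the linearity of $L$ and property~\eqref{Phi4a}. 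In either case the parameter condition $\alpha\le r\beta$ (or similar) ensures the resulting Lipschitz constant is exactly~$1$.

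Once $\Phi_{\mb{x}}$ is in hand I define globally
\begin{equation*}
g(\mb{y}):=\begin{cases} f(\mb{x})+\Phi_{\mb{x}}(\mb{y}-\mb{x}) & \text{if }\mb{y}\in\cl{B}_{X}(\mb{x},\beta)\text{ for some }\mb{x}\in\Gamma,\\ f(\mb{y}) & \text{otherwise.} \end{cases}
\end{equation*}
Disjointness of the balls makes $\mb{x}$ unique, and the identity $\Phi_{\mb{x}}(\mb{u})=\tilde f_{\mb{x}}(\mb{u})$ on $\partial\cl{B}_{X}(\mb{0},\beta)$ yields continuity across the boundary, where both definitions produce $f(\mb{y})$. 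The condition $g(\mb{x}+\mb{u})=f(\mb{x})+L\mb{u}$ on $\cl{B}_{X}(\mb{0},\alpha)$ gives $g(\mb{x})=f(\mb{x})$, hence $g(\mb{x}+\mb{u})=g(\mb{x})+L\mb{u}$ there, as required.

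Finally I verify $g\in\lipone(Q,Y)$ and $\lnorm{\infty}{g-f}<r$. The Lipschitz bound reduces to a ball-by-ball check: inside each $\cl{B}_{X}(\mb{x},\beta)$ it follows from $\lip(\Phi_{\mb{x}})\le 1$, outside from $f\in\lipone$, and between one ball and the exterior from agreement at the boundary. The sup-norm estimate comes from $\lnorm{Y}{g(\mb{y})-f(\mb{y})}=\lnorm{Y}{\Phi_{\mb{x}}(\mb{y}-\mb{x})-\tilde f_{\mb{x}}(\mb{y}-\mb{x})}$ on each ball, bounded by a constant multiple of $\beta$, which is $<r$ by choice of $\beta$. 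The main obstacle is the construction of $\Phi_{\mb{x}}$ with Lipschitz constant~$1$ despite the raw sum of Lipschitz constants exceeding~$1$; this is where the slack $r$ in $\opnorm{L}\le 1-r$ and a careful, possibly multi-stage, application of Lemma~\ref{lemma:general} are essential.
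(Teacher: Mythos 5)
Your proposal has a genuine gap: the function $\Phi_{\mb{x}}$ you call ``the heart of the proof'' cannot exist. You insist on $\Phi_{\mb{x}}(\mb{u})=L\mb{u}$ for $\lnorm{X}{\mb{u}}\le\alpha$, $\Phi_{\mb{x}}(\mb{u})=\tilde f_{\mb{x}}(\mb{u})$ for $\lnorm{X}{\mb{u}}\ge\beta$, and $\lip(\Phi_{\mb{x}})\le 1$, but already in $X=Y=\R$ with $f(\mb{z})=\mb{z}$ (so $\tilde f_{\mb{x}}(\mb{u})=\mb{u}$) and $L=(1-r)\id$ one has
\begin{equation*}
\frac{\abs{\Phi_{\mb{x}}(\beta)-\Phi_{\mb{x}}(\alpha)}}{\beta-\alpha}
=\frac{\beta-(1-r)\alpha}{\beta-\alpha}>1,
\end{equation*}
no matter how you choose $0<\alpha<\beta$. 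The same obstruction blocks both of your fallback plans. The ``first stage'' you describe---replacing $\tilde f_{\mb{x}}$ by an $r$-Lipschitz function that still agrees with $\tilde f_{\mb{x}}$ on the sphere $\partial B(\mb{0},\beta)$---is also impossible: in the example above $\tilde f_{\mb{x}}(\beta\mb{v})-\tilde f_{\mb{x}}(-\beta\mb{v})=2\beta$, so any map agreeing with $\tilde f_{\mb{x}}$ on the sphere has Lipschitz constant at least $1$ there. Being $1$-Lipschitz is already the minimum compatible with the boundary data, and you need strictly less to absorb the linear piece $L$.

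The reason the paper's proof does not hit this wall is that it gives up exactly the feature your construction relies on, namely $g\equiv f$ outside $\bigcup_{\mb{x}\in\Gamma}B_X(\mb{x},\beta)$. The paper first forms $g_1$, which near each $\mb{x}\in\Gamma$ looks like a \emph{scaled-up} operator $T=\tfrac{s}{s-\beta}L$ on $\cl{B}(\mb{0},\alpha)$ and is constant (the value $f(\mb{x})$) between radii $\beta$ and $s$, at the price of $\lip(g_1)\le\tfrac{s}{s-\beta}>1$; it then sets $g=\tfrac{s-\beta}{s}g_1$, which rescales the \emph{entire} mapping (including its values far from $\Gamma$) back into $\lipone$ and simultaneously brings $T$ down to exactly $L$. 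The slack $\opnorm{L}\le 1-r$ is used to ensure $\opnorm{T}\le 1$ (via $\beta\le rs$), not to make a direct $1$-Lipschitz interpolation between $L$ and $\tilde f_{\mb{x}}$ possible. With your global definition of $g$, which pins $g=f$ outside each small ball, this global rescaling move is unavailable, and some such move is essential: as the one-dimensional example shows, no local patch inside a single ball can do the job.

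Your auxiliary points (disjointness of the balls, continuity/Lipschitz matching across sphere boundaries, the sup-norm estimate in terms of $\beta$) would be fine if the central $\Phi_{\mb{x}}$ existed, so the proof structure is not salvageable by tweaking constants; the obstruction is to the existence of $\Phi_{\mb{x}}$ itself. You also under-specify the size of $\beta$: the sup-norm bound ultimately has to control $\diam Q$ (see the paper's $\beta\le rs/(4(1+\diam Q))$), not merely $\beta<r/4$, once the global rescaling enters.
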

\begin{proof}
	The approach we take to modify the mapping $f$ to arrive at $g$ is similar to that taken in~\cite[Lemma~3.3]{mynewpaper}.
	
	The conclusion of this lemma is valid for $f$ if and only if it is valid for any mapping of the form $f+p$, where $p\colon Q\to Y$ is a constant mapping. Therefore, we may assume that $\mb{0}_{Y}\in f(\Gamma)$. Lipschitz mappings $h\colon Q\to Y$ with the property $\mb{0}_{Y}\in h(Q)$ satisfy $\lnorm{Y}{h(\mb{x})}\leq \lip(h)\diam Q$ for all $\mb{x}\in Q$. This fact will be used later in the proof.
	
	Fix $s\in (0,1)$ small enough so that $\Gamma$ is $4s$-separated and the infimum of~\eqref{eq:Gamma-cond-1} is at least $4s$. Let 
	\begin{equation}\label{eq:beta0}
	\beta=\beta(r,s,Q)\in (0,s/2)
	\end{equation}
	be a parameter depending only on $r, s$ and $Q$ which will be determined at the end of the proof in~\eqref{eq:alpha_beta}. We define first a mapping $g_{0}\colon Q\to Y$ by
	\begin{equation*}
	g_{0}(\mb{z})=\begin{cases}
	f(\mb{z}) & \text{ if }\mb{z}\in Q\setminus \bigcup_{\mb{x}\in\Gamma}B_{X}(\mb{x},s),\\
	f(\mb{x}+\Phi(\mb{z}-\mb{x})) & \text{ if }\mb{z}\in B_{X}(\mb{x},s)\text{ and }\mb{x}\in\Gamma,
	\end{cases}
	\end{equation*}
	where $\Phi:=\Phi(\beta,s,\mb{0}_{X},\id_X)\colon X\to X$ is the mapping given by Lemma~\ref{lemma:general} applied to $X$, $Z=X$, $a=\beta$, $b=s$, $f_{1}=\mb{0}_{X}$ (the constant mapping $X\to X$ with value $\mb{0}_{X}$) and $f_{2}=\id_{X}$. Here we used Lemma~\ref{lemma:general}\eqref{Phi4a} to conclude $\lnorm{X}{\Phi(\mb{z}-\mb{x})}\le\beta<s$, so that $\mb{x}+\Phi(\mb{z}-\mb{x})\in Q$ whenever $\mb{x}\in\Gamma$ and $\mb{z}\in B_{X}(\mb{x},s)$. Using again $\beta<s$ and Lemma~\ref{lemma:general}\eqref{Phi1a}, we note that
	\begin{equation}\label{eq:g0_const}
	g_{0}(\mb{z})=g_{0}(\mb{x})=f(\mb{x})\qquad \text{whenever }
	\mb{x}\in \Gamma\text{ and }\mb{z}\in \cl{B}_{X}(\mb{x},\beta).
	\end{equation}
	It follows, in particular,
	\[
	\mb{0}_{Y}\in f(\Gamma)=g_{0}(\Gamma).
	\]
	Further, we note that Lemma~\ref{lemma:general}\eqref{Phi3a} and~\eqref{Phi5} imply $g_0$ is Lipschitz,
	\begin{equation*}
	\lip(g_{0})\leq 1+\frac{\beta}{s-\beta}
	\qquad 
	\text{and}
	\qquad 
	\supnorm{g_{0}-f}\leq \beta.
	\end{equation*}

	Let $T=T(r,s,Q,L)\in\mc{L}(X,Y)$ be a linear operator 
	with $\opnorm{T}\leq 1$. This operator will be used in construction of the target $1$-Lipschitz mapping $g$ such that~\eqref{eq:g_formula} is satisfied with a multiple of $T$ instead of $L$; this will determine how $T$ is defined, see~\eqref{eq:choice_T}. The choice of $T$ depends on $L,s$ and $\beta=\beta(r,s,Q)$, and we note that $L,Q,r,s$ are fixed from the start. Next we let $\alpha=\alpha(r,s,Q)\in (0,\beta)$ be a further parameter to be determined later in the proof in~\eqref{eq:alpha_beta} and define $g_{1}\colon Q\to Y$ by
	\begin{equation*}
	g_{1}(\mb{z})=\begin{cases}
			g_{0}(\mb{z}), & \text{ if }\mb{z}\in Q\setminus \bigcup_{\mb{x}\in\Gamma}B_{X}(\mb{x},\beta),\\
			g_0({\mb{x}}) +T(\Psi(\mb{z}-\mb{x})), & \text{ if }\mb{z}\in B_{X}(\mb{x},\beta)\text{ and }\mb{x}\in \Gamma,
	\end{cases}
\end{equation*}
	where $\Psi:=\Phi(\alpha,\beta,\id_{X},\mb{0}_{X})\colon X\to X$ is the mapping given by Lemma~\ref{lemma:general} applied to $X$, $Z=X$, $a=\alpha$, $b=\beta$, $f_{1}=\id_{X}$ and $f_{2}=\mb{0}_{X}$. The properties of $g_0$, $\Psi$ and~\eqref{eq:g0_const} ensure that $g_{1}$ is Lipschitz. We may estimate its Lipschitz constant as
	\begin{multline*}
	\lip(g_{1})\leq \max\set{\lip(g_{0}),1+\frac{\alpha}{\beta-\alpha}}\\
	\leq\max\set{1+\frac{\beta}{s-\beta}, 1+\frac{\alpha}{\beta-\alpha}}\leq 1+\frac{\beta}{s-\beta}
	=\frac{s}{s-\beta},
	\end{multline*}
	where the penultimate inequality is achieved by imposing the condition
	\begin{equation}\label{eq:alpha_condition}
	\alpha\leq \frac{\beta^{2}}{s}.
	\end{equation} 
	Moreover, we have 
	\begin{equation*}
	\supnorm{g_{1}-g_{0}}\leq \beta\qquad\text{and}\qquad \mb{0}_{Y}\in g_{0}(\Gamma)=g_{1}(\Gamma).
	\end{equation*}
	To verify the former, it is enough to observe, using~\eqref{eq:g0_const}, $\opnorm{T}\leq 1$, $\lip(\id_X)=1$ and Lemma~\ref{lemma:general}\eqref{Phi4a}, that for any $\mb{z}\in B_{X}(\mb{x},\beta)$ with $\mb{x}\in \Gamma$ 
	\begin{multline*}
	\lnorm{Y}{g_1(\mb{z})-g_0(\mb{z})}
	=\lnorm{Y}{g_0(\mb{x})+T(\Psi(\mb{z}-\mb{x}))-g_0(\mb{z})}\\
	=\lnorm{Y}{T(\Psi(\mb{z}-\mb{x}))}
	\leq \lnorm{X}{\Psi(\mb{z}-\mb{x})}
	\leq  \beta.
	\end{multline*}
	Finally we set 
	\begin{equation*}
	g=\frac{s-\beta}{s}\cdot g_{1},
	\end{equation*}
	so that $g\in\lipone (Q,Y)$ and
	\begin{equation*}
	\supnorm{g-g_{1}}\leq \frac{\beta \lip(g_{1})\diam Q}{s}\leq \frac{\beta \diam Q}{s-\beta}\leq \frac{2\beta \diam Q}{s},
	\end{equation*}
	using $\beta<s/2$ from~\eqref{eq:beta0}. We conclude that
	\begin{equation*}
	\supnorm{g-f}\leq \supnorm{g-g_{1}}+\supnorm{g_{1}-g_{0}}+\supnorm{g_{0}-f}
	\leq \frac{2\beta\diam Q}{s}+2\beta\leq \frac{4\beta(1+\diam Q)}{s}.
	\end{equation*}
	Thus, we achieve $\supnorm{g-f}\leq r$ by imposing the condition
	\begin{equation}\label{eq:beta_condition1}
	\beta \leq \frac{rs}{4(1+\diam Q)}.
	\end{equation}
	We are now ready to make the choice of linear operator $T=T(r,s,Q,L)\in \mc{L}(X,Y)$ with $\opnorm{T}\leq 1$. Indeed, the choice 
	\begin{equation}\label{eq:choice_T}
	T=\frac{s}{s-\beta}L, 
	\end{equation}
	establishes~\eqref{eq:g_formula}. 
	We note that the condition~\eqref{eq:beta_condition1} imposed on $\beta$ implies $\beta\leq rs$, which together with $\opnorm{L}\leq 1-r$ gives $\opnorm{T}\leq 1$. 
	
	It only remains to note that the choices
	\begin{equation}\label{eq:alpha_beta}
	\beta =\frac{rs}{4(1+\diam Q)},\qquad \alpha=\frac{r^{2}s}{16(1+\diam Q)^{2}}
	\end{equation}
	satisfy the required conditions~\eqref{eq:beta0}, \eqref{eq:alpha_condition} and~\eqref{eq:beta_condition1}.
\end{proof}

\begin{lemma}\label{lemma:Gamma1}
	Let $X$ be a normed space, $Q$ be a bounded subset of $X$ and $\setG\subseteq \inter Q$. Then there exists a sequence $(\Gamma_{k})_{k\in\N}$ of nested sets $\Gamma_k\subseteq \Gamma_{k+1}\subseteq \setG$ such that the union $\bigcup_{k\ge 1}\Gamma_k$ is dense in $\setG$ and each set $\Gamma_{k}$ satisfies the hypothesis of Lemma~\ref{lemma:PlayerII_nonsep-1}, that is, 
	$\Gamma_k$ satisfies~\eqref{eq:Gamma-cond-1} and is $\delta_k$-separated for some $\delta_k>0$.
\end{lemma}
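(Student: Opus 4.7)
The plan is to construct the sequence $(\Gamma_k)$ by a straightforward induction, using Zorn's lemma at each step to obtain a maximal separated set, and exploiting maximality to get density.

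First, I would reduce to the case $Q \ne \emptyset$ (otherwise $\setG \subseteq \inter Q = \emptyset$ and we may take each $\Gamma_k = \emptyset$). Since $Q$ is bounded and non-empty, the quoted fact from the paper guarantees $\partial Q \ne \emptyset$, so $\dist_X(\mb{x},\partial Q)$ is a finite positive number for every $\mb{x} \in \inter Q$. This allows me to stratify $\setG$ by introducing
\[
E_k := \set{\mb{x} \in \setG \colon \dist_X(\mb{x},\partial Q) \ge 1/k}, \qquad k \in \N,
\]
which are nested, $E_k \subseteq E_{k+1}$, with $\bigcup_{k \ge 1} E_k = \setG$ (every $\mb{x} \in \setG \subseteq \inter Q$ lies in $E_k$ for sufficiently large $k$ since $\inter Q \subseteq X \setminus \partial Q$ and we may pick $r>0$ with $B_X(\mb{x},r) \subseteq Q$, yielding $\dist_X(\mb{x},\partial Q) \ge r$).

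Next I would set $\delta_k := 1/k$ and build the $\Gamma_k$ inductively. For the base step, let $\Gamma_1$ be any maximal $\delta_1$-separated subset of $E_1$ (existence by Zorn's lemma applied to the partially-ordered family of $\delta_1$-separated subsets of $E_1$, ordered by inclusion; chains have unions that remain $\delta_1$-separated). For the inductive step, assume $\Gamma_{k-1}$ is a $\delta_{k-1}$-separated subset of $E_{k-1}$. Since $\delta_k \le \delta_{k-1}$ and $E_{k-1} \subseteq E_k$, $\Gamma_{k-1}$ is automatically a $\delta_k$-separated subset of $E_k$, and I again apply Zorn's lemma to extend it to a $\delta_k$-separated subset $\Gamma_k \subseteq E_k$ that is maximal with respect to this property.

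Finally I would verify the required properties. The nesting $\Gamma_{k-1} \subseteq \Gamma_k \subseteq \setG$ and the $\delta_k$-separation are immediate from the construction. The condition~\eqref{eq:Gamma-cond-1} holds because $\Gamma_k \subseteq E_k$ forces $\inf_{\mb{x} \in \Gamma_k} \dist_X(\mb{x},\partial Q) \ge 1/k > 0$. For density of $\bigcup_k \Gamma_k$ in $\setG$: given $\mb{x} \in \setG$ and $\varepsilon > 0$, choose $k$ large enough so that $\mb{x} \in E_k$ and $1/k < \varepsilon$; by maximality of $\Gamma_k$ as a $\delta_k$-separated subset of $E_k$, there must exist $\mb{y} \in \Gamma_k$ with $\lnorm{X}{\mb{x}-\mb{y}} < \delta_k < \varepsilon$ (otherwise $\Gamma_k \cup \{\mb{x}\}$ would still be $\delta_k$-separated, contradicting maximality). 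Since there is essentially no real obstacle here — Zorn's lemma handles the set-theoretic step, and the rest is bookkeeping — the only care needed is ensuring the parameter choices so that each $\Gamma_k$ simultaneously sits in $E_k$ and contains $\Gamma_{k-1}$, which is arranged by the monotonicity of both $\delta_k$ and $E_k$.
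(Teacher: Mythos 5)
Your proof is correct and takes essentially the same approach as the paper: stratify $\setG$ by distance to $\partial Q$, take nested maximal $\delta_k$-separated subsets via Zorn, and use maximality to get density. The only cosmetic differences are your choice of $\delta_k = 1/k$ instead of the paper's $2^{-k}$ and your more explicit invocation of Zorn's lemma.
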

\begin{proof}
	If $\setG=\emptyset$, let $\Gamma_k=\emptyset$ for all $k\in\mathbb N$.
	
	Assume $\setG\ne\emptyset$.
	Let $\setG_k=\set{\mb{x}\in \setG\colon\dist_{X}(\mb{x},\partial Q)\geq 2^{-k}}$. 
	Since $\setG\subseteq\inter Q$, we have that $\bigcup_{k\ge1}\setG_k=\setG$. Let $n\ge1$ be the smallest index such that $\setG_n\ne\emptyset$.  Set $\Gamma_k=\emptyset$ for any $0\le k\le n-1$. For any $k\ge n$, let us make an inductive choice of $\Gamma_k\supseteq \Gamma_{k-1}$ to
	be a non-empty maximal $2^{-k}$-separated subset of $\setG_k$. 
	Since for any $k\ge n$ the set $\Gamma_k\ne\emptyset$ is a $2^{-k}$-net of $\setG_k$,  we  conclude that $\bigcup_{k\ge n}\Gamma_k=\bigcup_{k\ge 1}\Gamma_k$ is dense in $\setG$.	
\end{proof}
The following lemma is the final step allowing us to prove Theorem~\ref{thm:typical_nonsep}. It shows that every bounded linear operator $L$ with $\opnorm{L}<1$ behaves like a derivative of a typical $1$-Lipschitz function, at a typical point of $\setG$.
\begin{lemma}\label{lemma:BMgame1}
	Let $X$ be a normed space and $Y$ be a Banach space, $Q$ be a bounded subset of $X$ with non-empty interior, $\setG\subseteq \inter Q$ and $\op\in \mc{L}(X,Y)$ with $\opnorm{\op}<1$. Then there is a residual subset $\mc{H}_{\op}$ of $(\lipone(Q,Y),\lnorm{\infty}{\cdot})$ such that for every $f\in \mc{H}_{\op}$ the set
	\begin{equation*}
	\mc{P}_{\op,f}:=\set{\mb{x}\in \setG\colon \op\in \mc{D}_{f}(\mb{x})}
	\end{equation*}
	is residual in $\setG$, where $\mc{D}_{f}(\mb{x})$ is defined according to~\eqref{eq:Dfx}.
\end{lemma}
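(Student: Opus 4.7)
The plan is to reduce Lemma~\ref{lemma:BMgame1} to two claims: that $\mc{P}_{T,f}$ is automatically a $G_\delta$ subset of $\setG$ for every $f \in \lipone(Q,Y)$, and that $\mc{P}_{T,f}$ is dense in $\setG$ for a residual collection of $f$. Combining the two yields the residuality of $\mc{P}_{T,f}$ in $\setG$.

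For the $G_\delta$ claim, one may rewrite $T \in \mc{D}_f(\mb{x})$ as: for every $n \in \N$ there exists $r \in \Q \cap (0, 1/n)$ with $\cl{B}_{X}(\mb{x}, r) \subseteq Q$ and $\sup_{\mb{u} \in \cl{B}_{X}(\mb{0}_X, r)} \lnorm{Y}{f(\mb{x}+\mb{u}) - f(\mb{x}) - T\mb{u}} < r/n$. The continuity of $f$ and the openness of $\inter Q$ make each such condition open in $\setG$, so $\mc{P}_{T,f}$ is a countable intersection of open subsets of $\setG$. It therefore suffices to prove that $\mc{H}_{T} := \{f \in \lipone(Q,Y) : \mc{P}_{T,f} \text{ is dense in } \setG\}$ is residual in $(\lipone(Q,Y), \lnorm{\infty}{\cdot})$.

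I would establish this via the Banach--Mazur game with target $\mc{H}_T$. Apply Lemma~\ref{lemma:Gamma1} to fix a nested sequence $(\Gamma_k)_{k\in\N}$ of uniformly separated subsets of $\setG$, each satisfying the hypothesis of Lemma~\ref{lemma:PlayerII_nonsep-1}, with $\bigcup_k \Gamma_k$ dense in $\setG$. In the $k$-th round, given Player~I's ball $U_k = B(f_k, R_k)$, Player~II chooses $r_k < \min\{R_k/2,\, 1 - \opnorm{T},\, 2^{-k}\}$ and invokes Lemma~\ref{lemma:PlayerII_nonsep-1} with $L = T$, $\Gamma = \Gamma_k$, $r = r_k$, $f = f_k$ to obtain $g_k \in \lipone(Q,Y)$ with $\lnorm{\infty}{g_k - f_k} < r_k$ and a scale $\alpha_k > 0$ such that $g_k(\mb{x} + \mb{u}) = g_k(\mb{x}) + T\mb{u}$ for every $\mb{x} \in \Gamma_k$ and $\mb{u} \in \cl{B}_{X}(\mb{0}_X, \alpha_k)$. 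Player~II then plays $V_k := B(g_k, s_k)$ with $s_k$ small enough that $V_k \subseteq U_k$ and $4k\, s_k < \alpha_k$.

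For any $f^* \in \bigcap_k V_k$ the estimate $\lnorm{\infty}{f^* - g_k} \leq s_k$ yields, for each $\mb{x} \in \Gamma_k$ and $\mb{u} \in \cl{B}_{X}(\mb{0}_X, \alpha_k)$,
\begin{equation*}
\lnorm{Y}{f^*(\mb{x}+\mb{u}) - f^*(\mb{x}) - T\mb{u}} \leq 2 s_k < \frac{\alpha_k}{2k},
\end{equation*}
so the ratio over $\alpha_k$ is at most $1/(2k)$. Because the $\Gamma_k$ are nested, every $\mb{x} \in \bigcup_k \Gamma_k$ lies in $\Gamma_k$ for all large $k$, and since $r_k \to 0$ forces $\alpha_k \to 0$ through the explicit formula~\eqref{eq:alpha_beta}, the liminf defining membership in $\mc{D}_{f^*}(\mb{x})$ vanishes, putting $T$ there. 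Hence $\mc{P}_{T,f^*} \supseteq \bigcup_k \Gamma_k$ is dense in $\setG$ and $f^* \in \mc{H}_T$, so Player~II wins. The main subtlety I anticipate is ensuring $\alpha_k \to 0$, so that the approximation holds at arbitrarily small scales rather than one fixed scale: this is delivered by insisting $r_k \to 0$ throughout the game, which is automatic because Player~I's radius in round $k+1$ cannot exceed Player~II's preceding radius $s_k$, which Player~II is free to drive to zero.
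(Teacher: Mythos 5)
Your proof is correct, and while it shares the same core machinery as the paper (Lemma~\ref{lemma:Gamma1} for the nested $\Gamma_k$, Lemma~\ref{lemma:PlayerII_nonsep-1} for Player~II's moves, and the Banach--Mazur game), it takes a genuinely cleaner route at the verification step. The paper constructs a tailor-made residual set $J:=\setG\cap\bigcap_n\bigcup_{k\geq n}\bigcup_{\mb{x}\in\Gamma_k}B_X(\mb{x},s_k)$ and shows $J\subseteq\mc{P}_{\op,g}$ for the limit $g$, which requires a triangle-inequality estimate passing from an arbitrary $\mb{x}\in J$ to a nearby $\mb{x}_k\in\Gamma_k$. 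Your observation that $\mc{P}_{\op,f}$ is \emph{automatically} a relative $G_\delta$ of $\setG$ for every $f\in\lipone(Q,Y)$ is not made in the paper, and it lets you reduce the target of the game to mere density; then one only needs to verify $\op\in\mc{D}_{f^*}(\mb{x})$ at points of the countable dense set $\bigcup_k\Gamma_k$, where $g_k(\mb{x}+\mb{u})=g_k(\mb{x})+\op\mb{u}$ holds exactly and the estimate $\lnorm{Y}{f^*(\mb{x}+\mb{u})-f^*(\mb{x})-\op\mb{u}}\leq 2s_k$ is immediate. In short, the paper bundles the topological (``$G_\delta$'') and metric (``dense'') parts of residuality inside the game, while you factor out the topological part as a standing fact about $\mc{P}_{\op,f}$. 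One small point to make fully rigorous: the fixed-radius condition ``$\cl{B}_X(\mb{x},r)\subseteq Q$'' is not an open condition on $\mb{x}$ in general; replace it by the open condition $\dist_X(\mb{x},X\setminus\inter Q)>2r$, which implies it, and note that $r\mapsto\Delta(\mb{x},r):=\sup_{\lnorm{X}{\mb{u}}\leq r}\lnorm{Y}{f(\mb{x}+\mb{u})-f(\mb{x})-\op\mb{u}}$ and $\mb{x}\mapsto\Delta(\mb{x},r)$ are both $2$-Lipschitz where defined, which yields both the openness of each $A_{n,r}$ and the licence to restrict to rational $r$ in the $G_\delta$ decomposition.
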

\begin{proof}
We assume that $\setG\ne \emptyset$.	
Let 
	\begin{equation*}
	\mc{H}_{\op}:=\set{f\in\lipone(Q,Y)\colon \text{ the set }\set{\mb{x}\in \setG\colon \op\in \mc{D}_{f}(\mb{x})}\text{ is residual in $\setG$}}.
	\end{equation*}
	We prove that the set $\mc{H}_{\op}$ is residual in $\lipone(Q,Y)$ by describing a winning strategy for Player~II in the relevant Banach-Mazur game in $(\lipone(Q,Y),\lnorm{\infty}{\cdot})$ with the target $\mc{H}_\op$,
	in which Player I's choices are balls $B(f_k,r_k)$ and Player II’s choices
	are balls $B(g_k,s_k)$; see subsection~\ref{sec:BMgame} for details on the Banach-Mazur game.  Here and throughout the proof, given a mapping $\phi\in\lipone(Q,Y)$ and $\rho>0$ we abbreviate the notation $B_{\lipone(Q,Y)}(\phi,\rho)$, for the open ball in the metric space $(\lipone(Q,Y),\lnorm{\infty}{\cdot})$ with centre $\phi$ and radius $\rho$, to $B(\phi,\rho)$. 
	
	Before the game starts, let Player~II prepare by fixing a nested sequence $(\Gamma_{k})_{k\in \N}$ of sets $\Gamma_k\subseteq \Gamma_{k+1}\subseteq \setG$, given by Lemma~\ref{lemma:Gamma1}. 
	
	Let $k\in\N$, assume that $k-1$ rounds of the game have already completed, giving $f_i$, $r_i$, $g_i$ and $s_i$ for $i\leq k-1$ and let $f_{k}\in\lip_{1}(Q,Y)$ and $r_{k}>0$ denote the $k$-th move of Player~I. 
	Since nothing prevents Player~II from acting as if the radius $r_{k}$ was replaced
	by a smaller radius $\tilde r_{k}>0$,  
	we may assume that
	\begin{equation}\label{eq:xk-fk}
	r_{k}\leq 2^{-k}(1-\opnorm{\op}),
	\quad\text{in addition to}\quad 
\cl{B}(f_{k},r_{k})\subseteq B(g_{k-1},s_{k-1})\text{ if $k\geq 2$.}
\end{equation}
In order to define their response, Player~II applies Lemma~\ref{lemma:PlayerII_nonsep-1} to find a mapping $g_{k}\in \lipone(Q,Y)$ and $\alpha_{k}\in(0,r_k)$ satisfying $\supnorm{g_k-f_k}<r_{k}$ and
	\begin{equation}\label{eq:gk_formula-1}
	g_{k}(\mb{x}+\mb{u})=
	g_k(\mb{x})+\op\mb{u},\qquad  \text{whenever } 
	\mb{x}\in\Gamma_k\,\text{ and }\mb{u}\in\cl{B}(\mb{0}_{X},\alpha_{k}).\\
	\end{equation}
	Finally, Player~II chooses 
	\begin{equation}\label{eq:sk1}
	0<s_{k}<\alpha_k/(4k)
	\end{equation}
	small enough so that 
	\begin{equation}\label{eq:sk2}
	\cl{B}(g_{k},s_{k})\subseteq B(f_{k},r_{k}) 
	\end{equation}
	and declares $g_{k}\in\lipone(Q,Y)$ and $s_{k}>0$ as their $k$-th move.
	
	Due to the conditions~\eqref{eq:xk-fk}  and~\eqref{eq:sk2}, the intersection
	\begin{equation*}
	\bigcap_{k=1}^{\infty}B(f_{k},r_{k})=\bigcap_{k=1}^{\infty}B(g_{k},s_{k})
	\end{equation*}
	is a singleton set containing only the Lipschitz mapping $g:=\lim_{k\to \infty}g_{k}\in\lipone(Q,Y)$.
		
	To complete the proof, we show that Player~II wins the game, that is, that $g\in \mc{H}_{T}$, see subsection~\ref{sec:BMgame}. Consider the sequence $U_{k}:=\bigcup_{\mb{x}\in\Gamma_{k}} B_{X}(\mb{x},s_{k})$ of open sets in $X$
	and the set $J:=\setG\cap \bigcap_{n=1}^{\infty}\bigcup_{k=n}^{\infty}U_{k}\subseteq\setG$. Clearly, $J$ is a relatively $G_{\delta}$ subset of $\setG$. Moreover, for each $n\ge1$, $\bigcup_{k\ge n}U_{k}\supseteq \bigcup_{k\ge n}\Gamma_{k}=\bigcup_{k\ge 1}\Gamma_{k}$, as $\Gamma_k$ are nested, and the latter is a dense subset of $\setG$ by Lemma~\ref{lemma:Gamma1}; thus $J\supseteq \bigcup_{k\ge 1}\Gamma_{k}$ is dense in $\setG$.
	We conclude that $J$ is a relatively residual subset of $\setG$.
	
	To prove $g\in\mathcal{H}_T$, we verify $J\subseteq\{\mb{x}\in E\colon T\in D_g({x})\}$. Let $\mb{x}\in J$ and $\varepsilon>0$. Choose $k\in\N$ with $k\geq {1}/{\varepsilon}$ such that $\mb{x}\in U_{k}$ and $\alpha_{k}<\varepsilon$. Let $\mb{x}_{k}\in\Gamma_{k}$ be such that $\mb{x}\in B_{X}(\mb{x}_{k},s_{k})$; let  $\mb{u}\in\cl{B}_{X}(\mb{0}_{X},\alpha_{k})$ be arbitrary. Then, applying~\eqref{eq:gk_formula-1}, we get
	$g_{k}(\mb{x}_{k}+\mb{u})=g_{k}(\mb{x}_{k})+\op\mb{u}$. 
	Using this identity, we derive
	\begin{multline*}
	\lnorm{Y}{g(\mb{x}+\mb{u})-g(\mb{x})-\op\mb{u}}\leq\lnorm{Y}{g(\mb{x}+\mb{u})-g_{k}(\mb{x}+\mb{u})}+\lnorm{Y}{g_{k}(\mb{x}+\mb{u})-g_{k}(\mb{x}_{k}+\mb{u})}\\
	+\lnorm{Y}{g_{k}(\mb{x}_{k}+\mb{u})-g_{k}(\mb{x}_{k})-\op\mb{u}}+\lnorm{Y}{g_{k}(\mb{x}_{k})-g_{k}(\mb{x})}+\lnorm{Y}{g_{k}(\mb{x})-g(\mb{x})}\\
	\leq  2\supnorm{g_{k}-g}+2\lnorm{X}{\mb{x}_{k}-\mb{x}}+0\leq 4
	s_k\leq \frac{\alpha_{k}}{k},
	\end{multline*}	
	where the last inequality is due to Player~II's choice~\eqref{eq:sk1} of $s_{k}$. This argument verifies
	\[
	\sup_{\mb{u}\in\cl{B}_{X}(\mb{0}_X,\alpha_{k})}\frac{\lnorm{Y}{g(\mb{x} + \mb{u})-g(\mb{x})-\op\mb{u}}}{\alpha_k}
	\le\frac1k\leq \varepsilon
	.
	\]
	and subsequently $\op\in\mc{D}_{g}(\mb{x})$. 
\end{proof}
We are now ready to prove Theorem~\ref{thm:typical_nonsep}.
\begin{proof}[Proof of Theorem~\ref{thm:typical_nonsep}]
	Fix a dense sequence $(\op_{n})_{n\in\N}$ in the closed unit ball $\Balli{W}$ with $\opnorm{\op_{n}}<1$ for all $n\in\N$. 
	Let the sets $\mc{H}_{\op_{n}}\subseteq \lipone(Q,Y)$ be given by the conclusion of Lemma~\ref{lemma:BMgame1}.
	Define a residual subset $\mc{F}=\bigcap_{n\in\N}\mc{H}_{\op_{n}}$ of $\lipone(Q,Y)$ 	
	and let $f\in\mc{F}$ be arbitrary.
	Let the sets $\mc{P}_{\op_{n},f}$ be given by the conclusion of Lemma~\ref{lemma:BMgame1}, 
	consider the residual subset $\mc{P}_f=\bigcap_{n\in\N}\mc{P}_{\op_{n},f}$ of $\setG$
	and let $\mb{x}\in\mc{P}_f$ be arbitrary. Since $T_n\in \mc{D}_{f}(\mb{x})$ for all $n\ge1$ and	
	$\mc{D}_{f}(\mb{x})$ is closed in $(\mc{L}(X,Y),\opnorm{\cdot})$, by Lemma~\ref{lemma:Df_separable},  we have $\mc{D}_{f}(\mb{x})\supseteq \overline{\set{T_n\colon n\in\N}}^{\opnorm{\cdot}}= \Balli{W}$.
\end{proof}

\section{Sets in which a typical Lipschitz mapping is everywhere extremely non-differentiable}\label{sec:nondiff-pu}
In this section we prove our second main result, Theorem~\ref{thm:typical_nondiff-dash}. Some of the proofs which appear in this section follow the scheme employed in~\cite[Sections~2,3]{maleva_preiss2018}, yet a lot of intricate work is required to make the arguments work in the much more general situation where the norm on $X$ is no longer Euclidean, the domain is not the whole space and the mappings are $Y$-valued for an arbitrary Banach space~$Y$.

\begin{lemma}\label{lemma:one_direction_steep_function}
	Let $X$ be a normed space, $G\subseteq X$, $P\in X^{*}$ be a norm-attaining functional, $\vp\in \Sph_{X}$ be such that $P(\vp)=\lnorm{X^{*}}{P}$
	and $\alpha\in (0,1)$. Suppose that the quantity
	\begin{equation*}
		\begin{split}
			\xi(G,P,\alpha)&:=\sup\left\{\mc{H}^{1}\br{G\cap \gamma(\R)}\colon \gamma\in \lip(\R,X),\right.\\
			&\qquad\qquad\qquad \left.P\br{\gamma'(t)}\geq \alpha\lnorm{X}{\gamma'(t)}\lnorm{X^{*}}{P}\text{ whenever $\gamma'(t)$ exists}\right\}
		\end{split}
	\end{equation*}
	is finite. Then there exists a function $g\colon X\to\R$ such that
	\begin{enumerate}[(i)]
		\item\label{gsmall} $0\leq g(\mb{x})\leq \lnorm{X^{*}}{P}\xi(G,P,\alpha)$ for all $\mb{x}\in X$.
		\item\label{gperp} $\abs{g(\mb{x}+\mb{y})-g(\mb{x})}\leq \frac{\alpha\lnorm{X^{*}}{P}}{1-\alpha}\lnorm{X}{\mb{y}}$ for all $\mb{x}\in X$ and $\mb{y}\in \ker P$.
		\item\label{gdecay} For every pair $\mb{x},\mb{w}\in X$ there exists $\lambda=\lambda(\mb{x},\mb{w})\in[0,1]$ such that 
		\begin{equation*}
			\abs{g(\mb{x}+\mb{w})-(g(\mb{x})+\lambda P(\mb{w}))}\leq \frac{2\alpha \lnorm{X^{*}}{P}}{1-\alpha}\lnorm{X}{\mb{w}}.
		\end{equation*}
		Moreover, $\lambda(\mb{x},\mb{w})=1$ if either $P=0$ or $P\neq 0$ and $\sqbr{\mb{x},\mb{x}+\frac{P(\mb{w})}{\lnorm{X^{*}}{P}}\vp}\subseteq G$.
		\item\label{glip}\label{last}
		The function $g\colon X\to\R$ is $\br{1+\frac{2\alpha}{1-\alpha}}\lnorm{X^*}{P}$-Lipschitz. 
	\end{enumerate}
\end{lemma}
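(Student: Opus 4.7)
If $P = 0$, the plan is to set $g \equiv 0$: properties (i)--(iv) then hold trivially with $\lambda = 1$.

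For $P \neq 0$, my plan is to define $g(\mb{x})$ as the supremum, over finite ``walks'' in $X$ ending at $\mb{x}$ consisting of alternating vertical segments in direction $\vp$ (required to lie in $G$) and horizontal shifts in $\ker P$, of the quantity
\begin{equation*}
\|P\|_{X^*} \sum_{i} l_i - \tfrac{\alpha\|P\|_{X^*}}{1-\alpha}\sum_{j}\|\mb{y}_j\|_X,
\end{equation*}
where $l_i \geq 0$ are the vertical lengths and $\mb{y}_j \in \ker P$ are the horizontal shifts. The trivial empty walk at $\mb{x}$ shows $g(\mb{x}) \geq 0$, and I will verify the four properties in turn.

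Property (ii) I expect to establish by appending a single horizontal shift $\mb{y} \in \ker P$ to a near-optimal walk for $g(\mb{x})$, and invoking symmetry to obtain both directions of the inequality. For the $\lambda = 1$ case of (iii), I would extend a near-optimal walk for $g(\mb{x})$ by the vertical segment $[\mb{x}, \mb{x}+a\vp] \subseteq G$ (with $a = P(\mb{w})/\|P\|_{X^*}$) followed by the horizontal shift $\mb{w} - a\vp \in \ker P$, using $\|\mb{w}-a\vp\|_X \leq 2\|\mb{w}\|_X$ to obtain the lower bound; a symmetric truncation construction (removing $|a|$ units of vertical progress from a near-optimal walk for $g(\mb{x}+\mb{w})$ and appending the horizontal shift $a\vp-\mb{w}$) will yield the matching upper bound, with the degenerate case where a walk lacks sufficient vertical progress being handled via the trivial bound from (i). For general $\lambda \in [0, 1]$, an appropriate $\lambda$ will exist by interpolation between the $\lambda = 0$ and $\lambda = 1$ bounds. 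Finally, property (iv) follows from (iii) by
\begin{equation*}
\abs{g(\mb{x}+\mb{w}) - g(\mb{x})} \leq \abs{P(\mb{w})} + \tfrac{2\alpha\|P\|_{X^*}}{1-\alpha}\|\mb{w}\|_X \leq \tfrac{(1+\alpha)\|P\|_{X^*}}{1-\alpha}\|\mb{w}\|_X.
\end{equation*}

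The hardest part will be the upper bound in (i), which requires showing $\sum_i l_i \leq \xi(G, P, \alpha)$ for every admissible walk. My plan is to merge the walk into a single cone-admissible Lipschitz curve by replacing each horizontal shift $\mb{y}_j$ with a slanted segment in direction $\vp + c_j \mb{y}_j$, where $c_j = (1-\alpha)/(\alpha\|\mb{y}_j\|_X)$ is chosen so that $\|\vp + c_j \mb{y}_j\|_X \leq 1 + c_j\|\mb{y}_j\|_X = 1/\alpha$; this direction saturates the cone condition while maintaining $P$-value $\|P\|_{X^*}$. The resulting concatenation is an injective cone-admissible Lipschitz curve (injectivity follows from strict monotonicity of its $P$-coordinate) whose intersection with $G$ contains the original vertical segments, hence has $\mc{H}^1$-measure at least $\sum_i l_i$; the defining property of $\xi(G,P,\alpha)$ then yields the bound. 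This construction also explains the appearance of $\frac{\alpha\|P\|_{X^*}}{1-\alpha}$ as the penalty coefficient in the definition of $g$: it is exactly the ``cost in $P$-advance'' wasted per unit horizontal displacement enforced by the geometry of the cone.
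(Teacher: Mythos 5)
Your proposed definition of $g$ does not work: requiring the vertical segments of a walk to lie entirely in $G$ makes $g$ too small at points lying above $G$ in the $\vp$-direction, and the resulting $g$ need not even be continuous. Take $X=\R^2$ with the Euclidean norm, $G$ the segment from the origin to $\vp=(0,1)$, and $P(u_1,u_2)=u_2$, so that $\lnorm{X^*}{P}=1$ and $\xi(G,P,\alpha)=1<\infty$ for every $\alpha\in(0,1)$. A walk ending at $(0,c)$ with $c>1$ can contain no vertical segment at all, since such a segment lies in $G$ and so ends at $P$-coordinate at most $1<c$, while horizontal shifts leave the $P$-coordinate unchanged; hence $g((0,c))=0$ for all $c>1$, whereas $g((0,1))=1$ via the walk consisting of $G$ itself. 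So $g$ jumps from $1$ to $0$, contradicting~\eqref{glip}, and for $\alpha<1/3$ and $\mb{x}=\mb{w}=\vp$ there is no $\lambda\in[0,1]$ satisfying the inequality in~\eqref{gdecay}. The paper's definition~\eqref{eq:defg} circumvents exactly this: the supremum runs over \emph{all} cone-admissible Lipschitz curves $\gamma$, only $\mc{H}^1\br{G\cap\gamma((-\infty,b])}$ is counted (so $\gamma$ may freely travel outside $G$), and the curve may overshoot to $\mb{x}+s\vp$ with $s\ge 0$ at a cost of $-s$. To repair your construction you would at least need to drop the requirement that vertical pieces lie in $G$ and instead count only the $\mc{H}^1$-measure of their intersection with $G$.

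There is also a flaw in your upper-bound argument for~\eqref{gsmall}. Replacing a horizontal shift $\mb{y}_j$ by a slanted segment in direction $\vp+c_j\mb{y}_j$ advances the merged curve an extra $\frac{\alpha}{1-\alpha}\lnorm{X}{\mb{y}_j}$ in the $\vp$-direction, so every subsequent piece is displaced upward relative to the original walk; the image of the merged curve therefore does \emph{not} contain the original vertical segments, and the claim $\mc{H}^1\br{G\cap\gamma(\R)}\ge\sum_i l_i$ is false. Indeed the end of one vertical segment and the start of the next share the same $P$-value, so no curve with strictly increasing $P$-coordinate can traverse both in full. One can still recover the weaker bound $\mc{H}^1\br{G\cap\gamma(\R)}\ge\sum_i l_i-\tfrac{\alpha}{1-\alpha}\sum_j\lnorm{X}{\mb{y}_j}$, which suffices for~\eqref{gsmall}, but this requires tracking the accumulated vertical offsets across the walk rather than the asserted containment.
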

\begin{proof}
	If $P=0$ we may take $g$ as the constant zero function $X\to \R$. So we may assume $P\neq 0$.
	
	Observe that if $g\colon X\to\R$ satisfies conditions~\eqref{gsmall}--\eqref{last} for the functional $P=\frac{Q}{\lnorm{X^{*}}{Q}}$, for some $Q\in X^{*}\setminus \set{0}$, then the function $\lnorm{X^{*}}{Q}g\colon X\to \R$ satisfies the conditions~\eqref{gsmall}--\eqref{last} for $P=Q$. Hence, we may assume that $\lnorm{X^{*}}{P}=1$.
	
	Define $g\colon X\to \R$ by
	\begin{equation}\label{eq:defg}
		\begin{split}
			g(\mb{x}) &= \sup\left\{ \mc{H}^{1} \br{G\cap \gamma((-\infty,b])}-s\colon \gamma\in \lip(\R,X),\, b\in\R,\,s\geq 0,\,\right. \\
			&\left. \gamma(b)=\mb{x}+s \vp,\, P\br{\gamma'(t)}\geq \alpha\lnorm{X}{\gamma'(t)}\text{ whenever $\gamma'(t)$ exists} \right\}
		\end{split}
	\end{equation}
	We will now check that $g$ satisfies~\eqref{gsmall}--\eqref{last} and, in addition, the following two properties:
	\begin{enumerate}[(A)]
		\item\label{ggengrow} $g(\mb{x})\leq g(\mb{x}+r\vp)\leq g(\mb{x})+r$ whenever $\mb{x}\in X$ and $r\geq 0$.  
		\item\label{ggrow} $g(\mb{x}')-g(\mb{x})=t$ whenever  $\mb{x}'-\mb{x}=t\vp$, $t\in\R$ and $[\mb{x},\mb{x}']\subseteq G$.
	\end{enumerate}
	
	The fact that $g$ is well-defined and the first inequality of~\eqref{gsmall} are witnessed by the triple $\gamma(t)=\mb{x}$ for all $t\in\R$, $b=0$ and $s=0$. The second inequality of~\eqref{gsmall} is immediate from $s\ge0$ for any admissible triple $(\gamma,b,s)$ in the definition of $g(\mb{x})$. 
	
	In the proof of remaining parts we will use the following notation. If $\mb{z}\in X$ and $\eta>0$ then by $(\gamma_{\mb{z}},s_{\mb{z}},b_{\mb{z}})$ we denote an admissible triple in the sense of~\eqref{eq:defg}, i.e.\ such that $\gamma_{\mb{z}}\in\lip(\R,X)$, $b_{\mb{z}}\in\R$, $s_{\mb{z}}\ge0$, $\gamma_{\mb{z}}(b_{\mb{z}})=\mb{z}+s_{\mb{z}}\vp$, $P(\gamma_{\mb{z}}'(t))\ge\alpha\lnorm{X}{\gamma_{\mb{z}}'(t)}$ whenever $\gamma_{\mb{z}}'(t)$ exists, with the additional property that 
	\begin{equation}\label{eq:approx_sup}
		\mc{H}^{1}\br{G\cap \gamma_{\mb{z}}\bigl((-\infty,b_{\mb{z}}]\bigr)}-s_{\mb{z}}>g(\mb{z})-\eta.
	\end{equation}
	If, in addition, $\mb{u}\in\Sph_{X}$ and $P(\mb{u})\ge\alpha$,  we define 
	\begin{equation}\label{eq:curve_mod}
		\gamma_{\mb{z},\mb{u}}(t)
		=\begin{cases}
			\gamma_{\mb{z}}(t) & \text{if }t\leq b_{\mb{z}},\\
			\gamma_{\mb{z}}(b_{\mb{z}})+(t-b_{\mb{z}})\mb{u} & \text{if }t>b_{\mb{z}}.
		\end{cases}
	\end{equation}
	In both cases we suppress $\eta$ in the notation although the objects we define also depend on $\eta$. 	Note that $\gamma_{\mb{z},\mb{u}}(t)\in\lip(\R,X)$ and $P(\gamma_{\mb{z},\mb{u}}'(t))\ge
	\alpha\lnorm{X}{\gamma_{\mb{z},\mb{u}}'(t)}$ whenever $\gamma_{\mb{z},\mb{u}}'(t)$ exists. In particular, we have that
	$P\circ\gamma_{\mb{z},\mb{u}}$ is monotone increasing and hence 
	\begin{equation}\label{eq:curve_hausd}
		\gamma_{\mb{z},\mb{u}}\br{(-\infty,b_{\mb{z}}]}\cap\gamma_{\mb{z},\mb{u}}\br{[b_{\mb{z}},+\infty)}=\{\gamma_{\mb{z},\mb{u}}(b_{\mb{z}})\} 
		.
	\end{equation}
	
	To prove~\eqref{gperp}, consider an arbitrary $\mb{x}\in X$. Since~\eqref{gperp} is trivially satisfied for $\mb{y}=\mb{0}_X$, assume $\mb{y}\in \ker P\setminus\{\mb{0}_X\}$. Fix any $\eta\in(0,1-\alpha)$, an admissible triple $(\gamma_{\mb{x}},s_{\mb{x}},b_{\mb{x}})$
	and $\beta\in (0,\infty)$ so that $\mb{u}:={\frac{\alpha}{1-\eta} \vp+\beta \frac{\mb{y}}{\lnorm{X}{\mb{y}}}}\in\Sph_{X}$ noting
	\begin{equation}\label{eq:beta}
		\beta \geq 1-\frac{\alpha}{1-\eta}.
	\end{equation}		
	Consider the mapping $\indi{\gamma}=\gamma_{\mb{x},\mb{u}}$, as defined in~\eqref{eq:curve_mod}, and let 
	\begin{equation*}
		\indi{s}:=s_{\mb{x}}+\frac{\alpha\lnorm{X}{\mb{y}}}{(1-\eta)\beta},\qquad \indi{b}:=b_{\mb{x}}+\frac{\lnorm{X}{\mb{y}}}{\beta}.
	\end{equation*}
	Observe that $\indi{s}\geq 0$, $\indi\gamma(\indi{b})=\gamma_{\mb{x}}(b_{\mb{x}})+\frac{\alpha\lnorm{X}{\mb{y}} }{\beta(1-\eta)}\vp+\mb{y}
	=
	\mb{x}+\mb{y}+s_1\vp$ and $P(\mb{u})=\frac{\alpha}{1-\eta}>\alpha$, thus
	we conclude that
	\begin{multline*}
		g(\mb{x}+\mb{y})\geq \mc{H}^{1}\br{G\cap\indi{\gamma}\br{(-\infty,\indi{b}]}}-\indi{s}\\
		\geq \mc{H}^{1}\br{G\cap\gamma_{\mb{x}}\br{(-\infty,b_{\mb{x}}]}}-s_{\mb{x}}-\frac{\alpha\lnorm{X}{\mb{y}}}{(1-\eta)\beta}>g(\mb{x})-\eta-\frac{\alpha\lnorm{X}{\mb{y}}}{1-\eta-\alpha},
	\end{multline*}
	using~\eqref{eq:approx_sup} and~\eqref{eq:beta} for the last inequality.
	Letting $\eta\to 0$, we obtain $g(\mb{x}+\mb{y})\geq g(\mb{x})-\frac{\alpha\lnorm{X}{\mb{y}}}{1-\alpha}$ and applying the above argument to the pair $\mb{\wt{x}}:=\mb{x}+\mb{y}$ and $\mb{\wt{y}}:=-\mb{y}$ in place of $\mb{x}$ and $\mb{y}$ delivers the reverse inequality $g(\mb{x})\geq g(\mb{x}+\mb{y})-\frac{\alpha\lnorm{X}{\mb{y}}}{1-\alpha}$. This finishes the proof of~\eqref{gperp}.
	
	We now turn our attention to the two additional properties. For~\eqref{ggengrow}, fix $\mb{x}\in X$ and $r\geq 0$. Let $\eta>0$ be arbitrary, and consider again $(\gamma_{\mb{x}},b_{\mb{x}},s_{\mb{x}})$; let  $\indii{\gamma}=\gamma_{\mb{x},\vp}$ be defined by~\eqref{eq:curve_mod}. 
	
	If $s_{\mb{x}}>r\ge0$, we note $\gamma_{\mb{x}}(b_{\mb{x}})=(\mb{x}+r\vp)+
	(s_{\mb{x}}-r)\vp$, so by~\eqref{eq:approx_sup}
	\begin{equation}\label{eq:pfC_s_more_r}
		g(\mb{x}+r\vp)\geq 
		\mc{H}^{1}\br{G\cap\gamma_{\mb{x}}((-\infty,b_{\mb{x}}])}-(s_{\mb{x}}-r)>g(\mb{x})-\eta+r\ge g(\mb{x})-\eta.
	\end{equation}
	If $0\le s_{\mb{x}}\leq r$, consider 
	$\indii{b}=b_{\mb{x}}+(r-s_{\mb{x}})\ge b_{\mb{x}}$ and $\indii{s}=0$ to deduce
	\begin{equation}\label{eq:gamma2}
		\indii{\gamma}(\indii{b})
		=\gamma_{\mb{x}}(b_{\mb{x}})+(r-s_{\mb{x}})\vp
		=\mb{x}+s_{\mb{x}}\vp+(r-s_{\mb{x}})\vp
		=\mb{x}+r\vp+\indii{s}\vp.
	\end{equation}
	Therefore by~\eqref{eq:approx_sup}
	\begin{multline}\label{eq:pfC_s_less_r}
		g(\mb{x}+r\vp)\geq \mc{H}^{1}\br{G\cap \indii{\gamma}((-\infty,\indii{b}])}-\indii{s}=\mc{H}^{1}\br{G\cap \indii{\gamma}((-\infty,\indii{b}])}
		\\
		\geq \mc{H}^{1}\br{G\cap \indii\gamma((-\infty,{b}_{\mb{x}}])}-s_{\mb{x}}
		=
		\mc{H}^{1}\br{G\cap \gamma_{\mb{x}}((-\infty,{b}_{\mb{x}}])}-s_{\mb{x}}\ge g(\mb{x})-\eta.
	\end{multline}
	In either case, letting $\eta\to 0$, we establish the first inequality of~\eqref{ggengrow}. 
	To prove the second inequality of~\eqref{ggengrow}, let $\eta>0$ be arbitrary and consider
	$(\indiii{\gamma},\indiii{s},\indiii{b})=
	(\gamma_{\mb{x}+r\vp},s_{\mb{x}+r\vp},b_{\mb{x}+r\vp})$.
	We observe 	 $\indiii{\gamma}(\indiii{b})=\mb{x}+(r+\indiii{s})\vp$, so using~\eqref{eq:approx_sup} we get 
	\begin{equation*}
		g(\mb{x})\geq \mc{H}^{1}\br{G\cap \indiii{\gamma}((-\infty,\indiii{b}])}-(r+\indiii{s})
		>g(\mb{x}+r\vp)-\eta-r,
	\end{equation*}
	which implies the second inequality of~\eqref{ggengrow} when we let $\eta\to 0$. 
		
	Assume $\mb{x},\mb{x}'\in X$ satisfy the conditions of~\eqref{ggrow}.  
	We may assume without loss of generality that $\mb{x}'-\mb{x}=r\vp$, where $r\geq 0$. In light of~\eqref{ggengrow}, it is now enough to show $g(\mb{x}+r\vp)\ge g(\mb{x})+r$.
	Let us again fix an arbitrary $\eta>0$ and consider $(\gamma_{\mb{x}},b_{\mb{x}},s_{\mb{x}})$, $\indii{\gamma}=\gamma_{\mb{x},\vp}$ and $b_2=b_{\mb{x}}+(r-s_{\mb{x}})$. If $s_{\mb{x}}>r$, then~\eqref{eq:pfC_s_more_r} applies to give the desired inequality in the limit $\eta\to0$. If $0\le s_{\mb{x}}\le r$, we may improve~\eqref{eq:pfC_s_less_r}. Indeed, 
	using the first line of~\eqref{eq:pfC_s_less_r} for the first inequality, followed by $\indii\gamma\br{(b_{\mb{x}},\indii{b}]}=[\mb{x}+s_{\mb{x}}\vp,\mb{x}+r\vp]\subseteq[\mb{x},\mb{x}+r\vp]\subseteq G$ and~\eqref{eq:curve_hausd} for the equality, we conclude that
	\begin{equation*}
		g(\mb{x}+r\vp)
		\geq \mc{H}^{1}\br{G\cap \indii{\gamma}((-\infty,\indii{b}])}
		=\mc{H}^{1}\br{G\cap \gamma_{\mb{x}}((-\infty,{b}_{\mb{x}}])}	+(r-s_{\mb{x}})	
		>g(\mb{x})-\eta+r,
	\end{equation*}	
	which implies $g(\mb{x}+r\vp)\geq g(\mb{x})+r$ if we let $\eta\to0$.	
	
	To prove part~\eqref{gdecay}, assume that $\mb{x},\mb{w}\in X$ are given,
	consider $\mb{y}:=\mb{w}-P(\mb{w})\vp\in \ker P$ and note that $\lnorm{X}{\mb{y}}\leq 2\lnorm{X}{\mb{w}}$.
	If $P(\mb{w})=0$, then
	$\mb{w}=\mb{y}$ and, setting $\lambda(\mb{x},\mb{w}):=1$, we have that the inequality of~\eqref{gdecay} immediately follows from~\eqref{gperp}. We may therefore assume $P(\mb{w})\ne0$.  
	In this case we let
	\begin{equation*}
		\lambda(\mb{x},\mb{w}):=
		\frac{g(\mb{x}+P(\mb{w})\vp)-g(\mb{x})}{P(\mb{w})},
	\end{equation*}
	and observe that $\lambda(\mb{x},\mb{w})\in [0,1]$, as $g|_{\mb{x}+\R\vp}$ is a $1$-Lipschitz increasing function by~\eqref{ggengrow}, with $\lambda(\mb{x},\mb{w})=1$ if $\sqbr{\mb{x},\mb{x}+\frac{P(\mb{w})}{\lnorm{X^{*}}{P}}\vp}\subseteq G$, by~\eqref{ggrow}. We now use~\eqref{gperp} to complete verification of~\eqref{gdecay}:
	\begin{multline*}
		\abs{g(\mb{x}+\mb{w})-g(\mb{x})-\lambda(\mb{x},\mb{w}) P(\mb{w})}
		= 
		\abs{g(\mb{x}+\mb{w})-g(\mb{x}+P(\mb{w})\vp)}
		\\
		\leq \frac{\alpha}{1-\alpha}\lnorm{X}{\mb{y}}\leq \frac{2\alpha}{1-\alpha}\lnorm{X}{\mb{w}}.
	\end{multline*}
	
	To establish~\eqref{glip}, we note that~\eqref{gdecay} implies that for any $\mb{x},\mb{w}\in X$ \begin{equation*}
		\abs{g(\mb{x}+\mb{w})-g(\mb{x})}
		\leq
		\frac{2\alpha}{1-\alpha}\lnorm{X}{\mb{w}}+\abs{\lambda} \lnorm{X}{\mb{w}}
		\leq
		\br{1+\frac{2\alpha}{1-\alpha}}\lnorm{X}{\mb{w}}. 
		\qedhere	\end{equation*}
\end{proof}

The next lemma says that for a given compact purely unrectifiable subset $E$ of a finite-dimensional normed space $X$, and a given bounded linear operator $T$ on $X$ there exists a Lipschitz $g\colon X\to T(X)$ which, on a neighbourhood of the set $E$, has derivative approximately equal to $T$ and everywhere outside this neighbourhood has derivative zero. In other words, there are Lipschitz mappings which are constant except on a small neighbourhood of the given compact purely unrectifiable set, where their derivative is approximately whatever bounded linear operator you wish. It is natural to ask what is the optimal Lipschitz constant with which such a mapping $g\colon X\to T(X)$ can be found. The optimal result that could be hoped for is clearly $\lip(g)\leq \opnorm{T}+\theta$ for an arbitrarily small error term $\theta>0$ and indeed, in all `classical' settings (for example, when the norm on $T(X)$ is Euclidean), this is what Lemma~\ref{lemma:pu_mapping_with_prescribed_derivative} achieves; see Remark~\ref{rem:cyl_constant}. However, in the general setting, we identify a constant $\cyl(T)\geq \opnorm{T}$, so that the Lipschitz constant of $g$ may be bounded above by $\cyl(T)+\theta$. It would be of interest to determine whether this upper bound is optimal.

\begin{define}\label{def:cyl_constant}
	Let $X$ and $Y$ be normed vector spaces and $T\in\mc{L}(X,Y)$ be a nonzero bounded linear operator of finite rank $l$. We associate to $T$ the constant
	\begin{equation*}
	\cyl(T):=\min\set{\max_{1\leq j\leq l}\opnorm{\sum_{i=1}^{j}\mb{w}_{i}^{*}\circ T(\cdot)\,\mb{w}_{i}}\colon \mb{w}_{1},\ldots,\mb{w}_{l}\text{ is a basis of $T(X)$}}.
	\end{equation*}
	In the above $\mb{w}_{1}^{*},\ldots,\mb{w}_{l}^{*}\in T(X)^{*}$ is the basis of $T(X)^*$ dual to the basis $\mb{w}_{1},\ldots,\mb{w}_{l}$ of $T(X)$ so that, in particular, $T=\sum_{i=1}^{l}\mb{w}_{i}^{*}\circ T(\cdot)\,\mb{w}_{i}$. We also define $\cyl(\mb{0}_{\mc{L}(X,Y)})=0$.
\end{define}
\begin{remark}\label{rem:cyl_constant}
	\begin{enumerate}[(i)]
		\item \label{rem:cyl_constant:well}
		The constant $\cyl(T)$ is well-defined, that is, the minimum of the above set exists, due to Lemma~\ref{lemma:cyl_constant}.
		\item \label{rem:cyl_constant:ineq}
		The identity  $T=\sum_{i=1}^{l}(\mb{w}_{i}^{*}\circ T)(\cdot)\,\mb{w}_{i}$ for any choice of $\mb{w}_{1},\ldots,\mb{w}_{l}$ and $\mb{w}_{1}^{*},\ldots,\mb{w}_{l}^{*}$ as above implies $\cyl(T)\ge\opnorm{T}$.
		\item For any normed space $X$ and $(Y,\lnorm{Y}{\cdot})=(\ell_p,\lnorm{q}{\cdot})$, $1\le p\le q\le\infty$ (in particular, Hilbert and finite-dimensional Euclidean), and for every $T\in\mc{L}(X,Y)$, one has
		$\cyl(T)=\|T\|$. However for any $Y$ of dimension at least $3$ there is a norm $\lnorm{Y}{\cdot}$ so that for $X=Y$ and $\lnorm{X}{\cdot}=\lnorm{Y}{\cdot}$ one has $\cyl(\id_X)>1$.
	\end{enumerate}
\end{remark}
\begin{lemma}\label{lemma:pu_mapping_with_prescribed_derivative}
	Let $X$ and $Y$ be normed spaces, where $X$ is finite-dimensional.
	Let  $E\subseteq U\subseteq X$ be sets, where $E$ is compact and purely unrectifiable and $U$ is open, $\theta>0$, $T\in\mc{L}(X,Y)$. Then there exist a Lipschitz mapping $g\colon X\to T(X)$ and an open subset $H$ of $X$ such that the following statements hold:
	\begin{enumerate}[(a)]
		\item\label{support_in_U}\label{der_zero_outside} $\supp g\subseteq U$ and $Dg(\mb{x})=0$ for all $\mb{x}\in X\setminus U$. 
		\item\label{small} $\sup_{\mb{x}\in X}\lnorm{Y}{g(\mb{x})}\leq \theta$.
		\item\label{H_in_between_E_and_U} $E\subseteq H\subseteq \cl{H}\subseteq U$.
		\item\label{Dg_close_to_T} $\opnorm{Dg(\mb{x})-T}\leq\theta$ for Lebesgue almost all $\mb{x}\in H$.	
		\item\label{lipTglobal} $\lip(g)\leq \cyl(T)+\theta$, where the constant $\cyl(T)$ is given by Definition~\ref{def:cyl_constant}.
	\end{enumerate}
\end{lemma}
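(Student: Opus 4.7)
The case $T = \mb{0}_{\mc{L}(X,Y)}$ is trivial via $g \equiv \mb{0}_Y$, $H = U$, so assume $T \neq 0$. Set $l := \operatorname{rank} T \geq 1$ and fix a basis $\mb{w}_1, \ldots, \mb{w}_l$ of $T(X)$ realising the minimum in Definition~\ref{def:cyl_constant}, with dual functionals $\mb{w}_i^{*} \in T(X)^{*}$. Define $P_i := \mb{w}_i^{*} \circ T \in X^{*}$, so that $T = \sum_{i=1}^l P_i(\cdot)\mb{w}_i$ and each partial operator $T_j := \sum_{i=1}^j P_i(\cdot)\mb{w}_i$ satisfies $\opnorm{T_j} \leq \cyl(T)$. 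Since $\dim X < \infty$, each nonzero $P_i$ attains its norm at some $\mb{v}_i \in \Sph_X$. Fix a small $\alpha \in (0,1)$ to be pinned down at the end, and, using compactness of $E$, choose nested open sets
\begin{equation*}
U \supseteq V_1 \supseteq \overline{V_1} \supseteq V_2 \supseteq \cdots \supseteq V_l \supseteq \overline{V_l} \supseteq W \supseteq E,
\end{equation*}
with $W$ to be declared as the output neighbourhood $H$.

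For each $1 \leq i \leq l$, I manufacture a compact set $G_i \subseteq V_i$ which is a finite union of closed segments of length $2\rho$ parallel to $\mb{v}_i$, arranged so that $G_i$ contains every segment $\mb{x} + [-\rho,\rho]\mb{v}_i$ with $\mb{x} \in W$. The quantitative content, and the technical heart of the argument, is that by pure unrectifiability of $E$ used along the fixed direction $\mb{v}_i$ the segments can be chosen so that $\xi(G_i, P_i, \alpha)$ is finite and below any prescribed $\eta_i > 0$: an $\alpha$-admissible curve is monotone in $P_i$, so its total intersection with a thin union of short $\mb{v}_i$-segments covering a purely unrectifiable set is controlled by the total $\mb{v}_i$-extent of the cover, which pure unrectifiability permits us to make arbitrarily small. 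Apply Lemma~\ref{lemma:one_direction_steep_function} to $(G_i, P_i, \alpha, \mb{v}_i)$ to obtain $\tilde g_i \colon X \to \R$ of amplitude at most $M_i := \lnorm{X^{*}}{P_i}\xi(G_i, P_i, \alpha) \leq \lnorm{X^{*}}{P_i}\eta_i$, then multiply by a smooth cutoff $\chi_i$ equal to $1$ on $\overline{V_{i+1}}$ (with the convention $V_{l+1} := W$) and supported in $V_i$, producing $g_i := \chi_i \tilde g_i$ with $\supp g_i \subseteq V_i$. Finally set $g := \sum_{i=1}^l g_i\, \mb{w}_i$ and $H := W$.

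Properties (a)--(c) follow from the construction (nested supports, amplitudes $M_i$ small, $W \subseteq \overline{V_l} \subseteq U$). For (d), on $H$ every cutoff is $1$ and every segment $\mb{x} + [-\rho, \rho]\mb{v}_i$ lies in $G_i$, so Lemma~\ref{lemma:one_direction_steep_function}(iii) applies with $\lambda = 1$ to each coordinate, yielding $\lnorm{Y}{g(\mb{x}+\mb{u}) - g(\mb{x}) - T\mb{u}} \leq \tfrac{2\alpha}{1-\alpha}\bigl(\sum_i \lnorm{X^{*}}{P_i}\lnorm{Y}{\mb{w}_i}\bigr)\lnorm{X}{\mb{u}}$ and hence $\opnorm{Dg - T} \leq \theta$ for $\alpha$ small. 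The crucial estimate (e) uses the nested supports: at any $\mb{x} \in X$, let $j(\mb{x}) \in \{0, 1, \ldots, l\}$ be the largest index with $\mb{x} \in V_i$ (zero if none); then $g_i \equiv 0$ near $\mb{x}$ for $i > j(\mb{x})$, the cutoff $\chi_i$ equals $1$ near $\mb{x}$ for $i < j(\mb{x})$, and only $\chi_{j(\mb{x})}$ may transition. Consequently $Dg(\mb{x})$ is, modulo the Lemma~\ref{lemma:one_direction_steep_function}(iv) error $\tfrac{2\alpha}{1-\alpha}\sum_i \lnorm{X^{*}}{P_i}\lnorm{Y}{\mb{w}_i}$ and the cutoff overhead $\sum_i M_i/\dist(\overline{V_{i+1}}, \partial V_i)$, a convex combination of $T_{j(\mb{x})-1}$ and $T_{j(\mb{x})}$ (with $T_0 := 0$), whose operator norm is bounded by convexity by $\max_j \opnorm{T_j} \leq \cyl(T)$.

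The main obstacle is the coupled calibration of $\alpha$, the segment length $\rho$, the amplitudes $M_i$ and the transition widths $\dist(\overline{V_{i+1}}, \partial V_i)$: all error terms must simultaneously fall below $\theta$ while the pure unrectifiability extraction delivers $\eta_i$ sufficiently small; shrinking $\rho$ lowers $M_i$ but may proliferate segments and push the cover out of $V_i$, while tighter nesting forces smaller transition widths demanding yet smaller $M_i$. The technically delicate step is the quantitative use of pure unrectifiability in a single fixed direction $\mb{v}_i$, showing that $E$ admits covers by arbitrarily short $\mb{v}_i$-segments with arbitrarily small total $\mb{v}_i$-extent; once this is secured, the Lipschitz bookkeeping closes via the convex combinations of partial operators and the minimality of $\cyl(T)$ over bases.
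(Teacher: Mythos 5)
The central problem is the construction of the sets $G_i$, which is internally inconsistent. You want $G_i$ to be a \emph{finite union of closed segments} of length $2\rho$ parallel to $\mb{v}_i$, so $G_i$ is $\mc{H}^1$-finite and in particular Lebesgue null; but you also want $G_i$ to contain $\mb{x}+[-\rho,\rho]\mb{v}_i$ for every $\mb{x}$ in the open set $W\supseteq E$, so in particular $G_i\supseteq W$, a set with nonempty interior. No such $G_i$ exists. What the lemma genuinely requires is the existence, for each $\varepsilon>0$, of an \emph{open} $G\supseteq E$ with $\xi(G,P_i,\alpha)\leq\varepsilon$. This is not a covering argument: covering $E$ itself by segments of small total length is easy ($E$ is Lebesgue null), but a monotone admissible curve can re-enter an open neighbourhood of $E$ many times, and the ``total $\mb{v}_i$-extent'' of a segment cover of $E$ does not bound $\mc{H}^1(G\cap\gamma(\R))$ when $G$ is open. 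The result you actually need is the uniform pure-unrectifiability theorem of Alberti and Marchese, stated in the paper as Theorem~\ref{thm:compact_pu}; your sketch skips it and replaces it, as written, with something false.

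There is also a secondary mismatch: for the convexity argument in part~\eqref{lipTglobal} you need $\lambda=1$ in Lemma~\ref{lemma:one_direction_steep_function}\eqref{gdecay} for every $i<j(\mb{x})$, which requires $G_i$ to absorb $\rho$-segments based at every point of the larger shell $V_{i+1}$, not merely at points of $W$. The fix is to reverse the order of construction: obtain $G_i$ first as an open Alberti--Marchese neighbourhood of $E$ via Theorem~\ref{thm:compact_pu}, and only then fit $V_{i+1}$ inside it so that $V_{i+1}+[-\rho,\rho]\mb{v}_i\subseteq G_i$. With these changes, the architecture you describe --- a $\cyl(T)$-realising basis, $g=\sum_i g_i\mb{w}_i$ built from Lemma~\ref{lemma:one_direction_steep_function}, nested cutoffs forcing $Dg$ outside $H$ to be (up to small error) a convex combination of the partial operators $\sum_{i\le j}\mb{w}_i^*\circ T(\cdot)\mb{w}_i$ --- does mirror the paper, with your single cutoff per index $i$ replacing the paper's locally finite partitions of unity $(\varphi_k^{(i)})_k$; but the missing Alberti--Marchese input is the load-bearing ingredient of the lemma, not a technicality to be calibrated away.
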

\begin{proof}
	The statement of the lemma is clear if $T=0$, so assume, without loss of generality, that $T\ne0$. 
	Also, although the proof below will work independently of whether $E$ is an empty or a non-empty set, it might be worth mentioning that in case $E=\emptyset$, it would be enough to take $H=\emptyset$ and $g\equiv0$.
	
	Let $1\le l\leq \dim X$ denote the rank of $T$ and $\mb{w}_{1},\ldots,\mb{w}_{l}\in T(X)\subseteq Y$ be a basis of $T(X)$ for which \begin{equation*}\max_{1\leq i\leq l}\opnorm{\sum_{i=1}^{j}\mb{w}_{i}^{*}\circ T(\cdot)\,\mb{w}_{i}}= \cyl(T),
	\end{equation*}
	where we adopt the notation of Definition~\ref{def:cyl_constant}; let $T_{i}=\mb{w}_{i}^{*}\circ T\in X^*$ for each $1\le i\le l$, so that 
\begin{equation}\label{eq:T}
	\sum_{i=1}^l T_i(\cdot)\mb{w}_i=T(\cdot).
\end{equation}
Let $U_0$ be an open set given by Remark~\ref{rem:lip-loc_glob}, satisfying $E\subseteq U_0\subseteq \cl{U_0}\subseteq U$ and $\partial U_0$ has Lebesgue measure zero. For each $i=1,\ldots,l$ we will construct sequences of smooth functions $\varphi_{k}\upp{i}\colon X\to \R$, positive numbers $\varepsilon_{k}\upp{i}$, sets $G_{k}\upp{i}\subseteq X$ and Lipschitz functions $g_{k}\upp{i}\colon X\to \R$ respectively, as well as positive integers $K_{i}\in\N$ and open sets $U_{i}\subseteq X$  such that the following conditions hold for each $i=1,2,\ldots,l$:
	\begin{enumerate}[(A)]
		\item\label{varphiik} $(\varphi_{k}\upp{i})_{k\in\N}$ is a smooth, locally finite partition of unity with supports contained in $U_{i-1}$.
		\item\label{epsik} For each $k\ge1$, we have $\varepsilon_k\upp{i}\in(0,1)$ and
		\begin{equation*}
		\sum_{k\in\N}\frac{\varepsilon_{k}\upp{i}\br{1+\lip(\varphi_{k}\upp{i})}}{1-\varepsilon_{k}\upp{i}}
		\leq
		\frac{\theta}{4l\br{1+{\lnorm{X^*}{T_{i}}}
			}\br{1+\lnorm{Y}{\mb{w}_{i}}}}.
		\end{equation*}
		\item\label{Gik} 
		For each $k\ge1$ the set $G_{k}\upp{i}$ is open and satisfies 
		\begin{equation*}
		E\subseteq G_{k}\upp{i}\subseteq U_{i-1}
		\end{equation*}
		and
		\begin{equation*}
		\begin{split}
		&\sup\left\{\mc{H}^{1}\br{G\upp{i}_{k}\cap \gamma(\R)}\colon \gamma\in \lip(\R,X),\,\right.\\ &\qquad\qquad\qquad \left.T_{i}\br{\gamma'(t)}\geq \varepsilon_{k}\upp{i}\lnorm{X}{\gamma'(t)}\lnorm{X^{*}}{T_{i}}\text{ whenever $\gamma'(t)$ exists}\right\}\leq \varepsilon_{k}\upp{i}.
		\end{split}
		\end{equation*}
		\item\label{gik} For each $k\geq 1$ the function $g_{k}\upp{i}\colon X\to\R$ satisfies
		the following conditions:
		\begin{enumerate}[(D1)]
			\item\label{gik-small}
			$g\upp{i}_{k}$ is Lipschitz and $0\leq g\upp{i}_{k}(\mb{x})\leq \lnorm{X^*}{T_i}\varepsilon_{k}\upp{i}$ for all $\mb{x}\in X$,
			
			\item\label{gik-lam} 
			For every $\mb{x},\mb{w}\in X$ and every $k\ge1$ there exists $\lambda_{k}\upp{i}=\lambda_{k}\upp{i}(\mb{x},\mb{w})\in [0,1]$ such that
			\begin{equation*}
			\abs{g_{k}\upp{i}(\mb{x}+\mb{w})-g_{k}\upp{i}(\mb{x})-\lambda_{k}\upp{i} T_{i}(\mb{w})}\leq \frac{\const\varepsilon_{k}\upp{i}\lnorm{X^{*}}{T_{i}}}{1-\varepsilon_{k}\upp{i}}\lnorm{X}{\mb{w}}.
			\end{equation*}
			\item\label{gik-lam-one} 	
			Whenever $B(\mb{x},r)\subseteq G_{k}\upp{i}$ and  $\lnorm{X}{\mb{w}}<r$, the inequality~\eqref{gik-lam} is satisfied with $\lambda_{k}\upp{i}(\mb{x},\mb{w})=1$.		
		\end{enumerate}	
		\item\label{KiUi} $K_{i}\in\N$ and $U_{i}$ is an open subset of $X$ such that $\partial U_{i}$ has Lebesgue measure zero,
		\begin{equation*}
		\supp(\varphi_{k}\upp{i})\cap U_i=\emptyset
		\quad\text{for all $k>K_i$}\qquad\text{and}\qquad
		E\subseteq U_{i}\subseteq\cl{U_{i}}\subseteq U_{i-1}\cap \bigcap_{k=1}^{K_{i}}G_{k}\upp{i}.
		\end{equation*}
	\end{enumerate}
	Suppose that $1\le j\le l$ and that we have constructed the above listed objects of levels $i=1,\ldots,j-1$ such that conditions~\eqref{varphiik}--\eqref{KiUi} are satisfied for $i=1,\ldots,j-1$. In the case $j=1$ no objects are yet constructed and all conditions are vacuous and therefore satisfied.
	
	We then proceed to construct the objects of level $j$ as follows: First we choose the sequences $(\varphi_{k}\upp{j})_{k\in\N}$, $(\varepsilon_{k}\upp{j})_{k\in\N}$ and $(G_{k}\upp{j})_{k\in\N}$, in that order, arbitrarily subject to the conditions~\eqref{varphiik}, \eqref{epsik} and~\eqref{Gik} respectively for $i=j$. To choose $G_{k}\upp{j}$ as in~\eqref{Gik} we are using that compact purely unrectifiable sets are uniformly purely unrectifiable. To make this more precise, the existence of the sets $G_{k}\upp{j}$ is given by a result of Alberti, Marchese~\cite{alberti2016differentiability}; see also Theorem~\ref{thm:compact_pu} in Appendix~\ref{sec:misc}. If $1\leq j\leq l$ with $T_{j}=0$, we let $g_{k}\upp{j}\colon X\to\R$ be defined as the constant $0$ function for every $k\in\N$. Observe that all parts of~\eqref{gik} are then trivially satisfied for such $j$. For the remaining $1\leq j\leq l$, those with $T_{j}\neq 0$, 
	we let $g_{k}\upp{j}\colon X\to\R$ for each $k\in\N$ be given by the conclusion of Lemma~\ref{lemma:one_direction_steep_function} for $G=G_{k}\upp{j}$, $P=T_{j}$ and $\mb{v}_{T_j}=\mb{v}_{j}\in \Sph_{X}$ is any vector satisfying the condition $T_{j}(\mb{v}_{j})=\lnorm{X^{*}}{T_{j}}$, 
	and $\alpha=\varepsilon_{k}\upp{j}$. Then Lemma~\ref{lemma:one_direction_steep_function} provides all of the stated properties in~\eqref{gik} for $i=j$. 
	In particular $\lambda_{k}\upp{j}(\mb{x},\mb{w})$ in~\eqref{gik-lam} can be chosen as $\lambda(\mb{x},\mb{w})$ from Lemma~\ref{lemma:one_direction_steep_function}\eqref{gdecay}. Note that $\lambda_{k}\upp{j}(\mb{x},\mb{w})=1$ if the additional conditions of~\eqref{gik-lam-one} are satisfied, since $\lnorm{X}{\frac{T_i(\mb{w})}{\lnorm{X^*}{T_i}}\mb{v}_{j}}\le\lnorm{X}{\mb{w}}$ for all $\mb{w}\in X$. Finally, we let $K_{j}$ and $U_{j}$ with $\partial U_{j}$ of Lebesgue measure $0$ be the pair given by the conclusion of Lemma~\ref{lemma:commpact_partition_unity} applied to $X$, $E$, $V=U_{j-1}$, $(G_{k})_{k\in\N}:=(G_{k}\upp{j})_{k\in\N}$ and $(\varphi_{k})_{k\in\N}:=(\varphi_{k}\upp{j})_{k\in\N}$. Such choice of $K_{j}$ and $U_{j}$ ensures that~\eqref{KiUi} is satisfied for $i=j$. This completes the construction of all above mentioned objects for levels $i=1,\ldots,l$ so that conditions~\eqref{varphiik}--\eqref{KiUi} are satisfied for $i=1,\ldots,l$.	
	
	We define the mapping $g\colon X\to T(X)$ by
	\begin{equation}\label{eq:def_g}
	g(\mb{x})=\sum_{i=1}^{l}\sum_{k\in\N}\varphi_{k}\upp{i}(\mb{x})g_{k}\upp{i}(\mb{x})\mb{w}_{i}
	\end{equation}
	and put $H:=U_{l}$. Then we have 
	\begin{equation}\label{eq:supp}
	\supp g \subseteq U_0\subseteq U\qquad\text{and}\qquad Dg(\mb{x})=0 \quad\text{for all $\mb{x}\in X\setminus \cl{U_{0}}\supseteq X\setminus U$.}
	\end{equation}
	due to~\eqref{varphiik} and the fact, coming from~\eqref{KiUi}, that all $U_{i}$'s are contained in $U_{0}\subseteq\cl{U_{0}}\subseteq U$. Moreover,
	\begin{equation*}
	\lnorm{Y}{g(\mb{x})}\leq \sum_{i=1}^{l}\sum_{k\in\N}\abs{g_{k}\upp{i}(\mb{x})}\lnorm{Y}{\mb{w}_{i}}\leq\sum_{i=1}^{l}\sum_{k\in\N}\lnorm{X^*}{T_i}\varepsilon_{k}\upp{i}\lnorm{Y}{\mb{w}_{i}}\leq  \theta
	\end{equation*}
	by the inequalities~\eqref{gik-small} and~\eqref{epsik}. This establishes~\eqref{support_in_U} and~\eqref{small}. Part~\eqref{H_in_between_E_and_U} is clear from the choice $H=U_{l}$, \eqref{KiUi} and $\cl{U_0}\subseteq U$.
	
	In order to show that $g$ is Lipschitz, we argue first that $g$ is a locally Lipschitz mapping. Fix an arbitrary $\mb{x}\in X$. For each $i=1,\dots,l$ the collection $(\varphi_{k}\upp{i})_{k\in\N}$ forms a locally finite partition of unity, hence there is an open ball $B=B(\mb{x},r)$ and an index $n\in\N$ such that $\sum_{k\in\N}\varphi_{k}\upp{i}(\mb{y})g_{k}\upp{i}(\mb{y})=\sum_{k=1}^{n}\varphi_{k}\upp{i}(\mb{y})g_{k}\upp{i}(\mb{y})
	$ for all $\mb{y}\in B$ and all $1\le i\le l$. Since by~\eqref{varphiik} all $\varphi_{k}\upp{i}$ are Lipschitz and bounded on $B$, and by~\eqref{gik-small} each $g_{k}\upp{i}$ is a Lipschitz bounded function as well, we conclude that $g|_B$ is Lipschitz too. We have thus established that $g$ is locally Lipschitz on $X$.

	We now derive  bounds on the norm in $Y$ of vectors of the form $g(\mb{x}+\mb{z})-g(\mb{x})$, aiming to get an estimate with the Lipschitz constant given in~\eqref{lipTglobal}. We will approximate this vector closely with an appropriate linear mapping evaluated at $\mb{z}$. The appropriate linear mapping to use will be determined by which sets in the nested sequence $U_{0}\supseteq U_{1}\supseteq \ldots\supseteq U_{l}=H$ contain the segment $[\mb{x},\mb{x}+\mb{z}]$.
	
	For any $\mb{x},\mb{z}\in X$ and any $\lambda\in\R$ we may use~\eqref{gik-small} and~\eqref{varphiik} to write
	\begin{align}\label{eq:slope_bound_g}
	&\abs{\br{\varphi_{k}\upp{i}(\mb{x}+\mb{z})g_{k}\upp{i}(\mb{x}+\mb{z})-\varphi_{k}\upp{i}(\mb{x})g_{k}\upp{i}(\mb{x})}-\lambda\varphi_{k}\upp{i}(\mb{x})T_{i}(\mb{z})}\notag\\
	&\leq \abs{\varphi_{k}\upp{i}(\mb{x})}\abs{\br{g_{k}\upp{i}(\mb{x}+\mb{z})-g_{k}\upp{i}(\mb{x})}-\lambda T_{i}(\mb{z})}+\abs{g_{k}\upp{i}(\mb{x}+\mb{z})}\abs{\varphi_{k}\upp{i}(\mb{x}+\mb{z})-\varphi_{k}\upp{i}(\mb{x})}\notag\\
	&\leq 
	\abs{\br{g_{k}\upp{i}(\mb{x}+\mb{z})-g_{k}\upp{i}(\mb{x})}-\lambda T_{i}(\mb{z})}+\lnorm{X^*}{T_i}\varepsilon\upp{i}_{k}\lip(\varphi\upp{i}_{k})\lnorm{X}{\mb{z}}
	.
	\end{align}
	Observe that the set $U_0\setminus \bigcup_{j=1}^{l}\partial U_{j}$ may be partitioned as a union of open sets
	\begin{equation*}
	U_0\setminus \bigcup_{j=1}^{l}\partial U_{j}=H\cup\bigcup_{j=1}^{l}(U_{j-1}\setminus \cl{U_{j}}).
	\end{equation*}
	Therefore, to estimate the Lipschitz constant of $g$ locally at every point of this set, we may consider an arbitrary $\mb{x}\in U_0\setminus \bigcup_{j=1}^{l}\partial U_{j}$ and distinguish two cases:
	\begin{equation}\label{eq:cases}
	\mb{x}\in U(\mb{x}):=H=U_{l}\quad \text{ or }\quad \mb{x}\in U(\mb{x}):= U_{j-1}\setminus \cl{U_{j}}\text{ for some $j\in\set{1,\ldots,l}$.}
	\end{equation}
Let $r=r(\mb{x})>0$ be sufficiently small so that $B_{X}(\mb{x},r)\subseteq U(\mb{x})$ and let $\mb{z}\in B_{X}(\mb{0}_X,r)$ be arbitrary. In the former case of~\eqref{eq:cases}, we have $\mb{x}\in B_X(\mb{x},r)\subseteq H\subseteq U_i\subseteq G_k\upp{i}$, for every $i=1,\ldots,l$ and $1\le k\le K_i$, by~\eqref{KiUi}, so we may apply~\eqref{gik-lam-one} and~\eqref{eq:slope_bound_g} with $\lambda=1$ to derive 	
	\begin{multline*}
	\abs{\br{\varphi_{k}\upp{i}(\mb{x}+\mb{z})g_{k}\upp{i}(\mb{x}+\mb{z})-\varphi_{k}\upp{i}(\mb{x})g_{k}\upp{i}(\mb{x})}-\varphi_{k}\upp{i}(\mb{x})T_{i}(\mb{z})}\\
	\leq\br{\frac{\const\varepsilon_{k}\upp{i}\lnorm{X^{*}}{T_{i}}}{1-\varepsilon_{k}\upp{i}}+\lnorm{X^*}{T_i}\varepsilon_{k}\upp{i}\lip(\varphi_{k}\upp{i})}\lnorm{X}{\mb{z}}
	\text{ for every }1\le i\le l\text{ and }1\le k\le K_i
	.
	\end{multline*}
	We also recall that for every $1\le i\le l$ and $k>K_i$, ~\eqref{KiUi} implies
	\begin{equation*}
	\abs{\br{\varphi_{k}\upp{i}(\mb{x}+\mb{z})g_{k}\upp{i}(\mb{x}+\mb{z})-\varphi_{k}\upp{i}(\mb{x})g_{k}\upp{i}(\mb{x})}-\varphi_{k}\upp{i}(\mb{x})T_{i}(\mb{z})}=0
	,
	\end{equation*}
	since both $\mb{x},\mb{x}+\mb{z}\in H\subseteq U_i$.
	Multiplying the expressions under the modulus sign on the left-hand side by $\mb{w}_i$ and summing over respective ranges for $k\ge1$ and then over $i=1,\ldots,l$ we obtain, according to~\eqref{eq:def_g}, \eqref{eq:T} and $\sum_{k=1}^\infty\varphi_{k}\upp{i}(\mb{x})=1$,
	\begin{align*}
	\lnorm{Y}{g(\mb{x}+\mb{z})-g(\mb{x})-T(\mb{z})}&\leq\sum_{i=1}^{l}\sum_{k=1}^{K_i}\br{\frac{\const\varepsilon_{k}\upp{i}\lnorm{X^{*}}{T_{i}}}{1-\varepsilon_{k}\upp{i}}+\lnorm{X^*}{T_i}\varepsilon_{k}\upp{i}\lip(\varphi_{k}\upp{i})}\lnorm{Y}{\mb{w}_{i}}\lnorm{X}{\mb{z}}\\
	&\leq \theta\lnorm{X}{\mb{z}},
	\end{align*}
	where the last inequality holds due to~\eqref{epsik}. Using that $g\colon X\to T(X)$ is a locally Lipschitz mapping between finite-dimensional spaces, and hence $Dg(\mb{u})$ exists for almost all $\mb{u}\in X$, and the fact that the
	inequality above has been obtained for an arbitrary pair of $\mb{x}\in H=U_{l}$ and $\mb{z}\in B_{X}(\mb{0}_X,r)$, we establish~\eqref{Dg_close_to_T}. We also note that the last inequality implies
	\begin{equation}\label{eq:lip_g_H}
	\lnorm{Y}{g(\mb{x}+\mb{z})-g(\mb{x})}\leq \br{\opnorm{T}+\theta}\lnorm{X}{\mb{z}}
	\end{equation}
	in the first case from~\eqref{eq:cases}.
	
	In the remaining case from~\eqref{eq:cases}, $\mb{x}\in U_{j-1}\setminus \cl{U_{j}}$, $1\le j\le l$, note that $\mb{x},\mb{x}+\mb{z}\in B_{X}(\mb{x},r)\subseteq U_{j-1}\setminus \cl{U_{j}}$. Then, by~\eqref{varphiik} and~\eqref{KiUi}, we have that $\varphi\upp{i}_{k}(\mb{x})=\varphi\upp{i}_{k}(\mb{x}+\mb{z})=0$ for all $i\geq j+1$ and $k\ge1$. Therefore,
	\begin{equation}\label{eq:ibeyondj}
	\abs{\br{\varphi_{k}\upp{i}(\mb{x}+\mb{z})g_{k}\upp{i}(\mb{x}+\mb{z})-\varphi_{k}\upp{i}(\mb{x})g_{k}\upp{i}(\mb{x})}}=0 \qquad\text{for all $i\geq j+1$ and $k\ge1$.}
	\end{equation}
	For $i=j$ and each $k\in\N$ we may consider the quantity $\lambda\upp{j}_{k}=\lambda\upp{j}_{k}(\mb{x},\mb{z})$ from~\eqref{gik-lam} and apply inequality~\eqref{gik-lam} in~\eqref{eq:slope_bound_g} with $\lambda=\lambda\upp{j}_{k}$ to get, for all $k\ge1$,
	\begin{multline}\label{eq:iequalj}
	\abs{\br{\varphi_{k}\upp{j}(\mb{x}+\mb{z})g_{k}\upp{j}(\mb{x}+\mb{z})-\varphi_{k}\upp{j}(\mb{x})g_{k}\upp{j}(\mb{x})}-\lambda\upp{j}_{k}\varphi_{k}\upp{j}(\mb{x})T_{j}(\mb{z})}\\
	\leq \br{\frac{\const\varepsilon\upp{j}_{k}\lnorm{X^{*}}{T_{j}}}{1-\varepsilon\upp{j}_{k}}+\lnorm{X^*}{T_j}\varepsilon_{k}\upp{j}\lip(\varphi\upp{j}_{k})}\lnorm{X}{\mb{z}}.\\
	\end{multline}
	For $i=1,\ldots,j-1$ we again use $\lnorm{X}{\mb{z}}<r$ and $[\mb{x},\mb{x}+\mb{z}]\subseteq B_{X}(\mb{x},r)\subseteq  U_{j-1}\subseteq U_i\subseteq \bigcap_{k=1}^{K_{i}}G\upp{i}_{k}$ by~\eqref{KiUi}, to conclude, by~\eqref{gik-lam-one} and~\eqref{eq:slope_bound_g} with $\lambda=1$, that
	\begin{multline}\label{eq:ibeforej1}
	\abs{\br{\varphi_{k}\upp{i}(\mb{x}+\mb{z})g_{k}\upp{i}(\mb{x}+\mb{z})-\varphi_{k}\upp{i}(\mb{x})g_{k}\upp{i}(\mb{x})}-\varphi_{k}\upp{i}(\mb{x})T_{i}(\mb{z})}\\
	\leq\br{\frac{\const\varepsilon_{k}\upp{i}\lnorm{X^{*}}{T_{i}}}{1-\varepsilon_{k}\upp{i}}+\lnorm{X^*}{T_i}\varepsilon_{k}\upp{i}\lip(\varphi_{k}\upp{i})}\lnorm{X}{\mb{z}}  
	\text{ for $i=1,\ldots,j-1$ and $k=1,\ldots,K_{i}$.}
	\end{multline}
	Moreover, from~\eqref{KiUi} it follows that $\varphi\upp{i}_{k}|_{U_{j-1}}$ is constant $0$ for each $i=1,\ldots,j-1$ and $k>K_{i}$. Therefore
	\begin{multline}\label{eq:ibeforej2}
	\abs{\br{\varphi_{k}\upp{i}(\mb{x}+\mb{z})g_{k}\upp{i}(\mb{x}+\mb{z})-\varphi_{k}\upp{i}(\mb{x})g_{k}\upp{i}(\mb{x})}-\varphi_{k}\upp{i}(\mb{x})T_{i}(\mb{z})}=0 \\ \text{for $i=1,\ldots,j-1$ and $k>K_{i}$.}
	\end{multline}
	Setting $\lambda\upp{j}(\mb{x},\mb{z}):=\sum_{k\in\N}\lambda\upp{j}_{k}(\mb{x},\mb{z})\varphi_{k}\upp{j}(\mb{x})\in [0,1]$ and using $\sum_{k=1}^{\infty}\varphi_k\upp{i}(\mb{x})=1$ for each $1\le i\le j-1$, from~\eqref{varphiik}, we may now 
	multiply the expressions under the modulus sign on the left-hand sides of~\eqref{eq:ibeyondj}, \eqref{eq:iequalj}, \eqref{eq:ibeforej1} and~\eqref{eq:ibeforej2} by $\mb{w}_i$ and sum over respective ranges of $k\ge1$ and then over $i=1,\ldots,l$, to obtain
	\begin{multline*}
	\lnorm{Y}{g(\mb{x}+\mb{z})-g(\mb{x})
		-\br{\sum_{i=1}^{j-1}T_{i}(\mb{z})\mb{w}_{i}+\lambda\upp{j}(\mb{x},\mb{z})T_{j}(\mb{z})\mb{w}_{j}}}\\
	\leq \sum_{i=1}^{j}\sum_{k=1}^{\infty}\br{\frac{\const\varepsilon_{k}\upp{i}\lnorm{X^{*}}{T_{i}}}{1-\varepsilon_{k}\upp{i}}+\lnorm{X^*}{T_i}\varepsilon_{k}\upp{i}\lip(\varphi_{k}\upp{i})}\lnorm{Y}{\mb{w}_{i}}\lnorm{X}{\mb{z}}\leq \theta\lnorm{X}{\mb{z}},
	\end{multline*}
	where the last inequality holds due to~\eqref{epsik}. Therefore, in the latter case of~\eqref{eq:cases},
	\begin{align}
	\lnorm{Y}{g(\mb{x}+\mb{z})-g(\mb{x})}&\leq \br{\opnorm{\sum_{i=1}^{j-1}T_{i}(\cdot)\,\mb{w}_{i}+\lambda\upp{j}(\mb{x},\mb{z})T_{j}(\cdot)\,\mb{w}_{j}}+\theta}\lnorm{X}{\mb{z}} \nonumber \\
	&\leq \br{\max\set{\opnorm{\sum_{i=1}^{j-1}T_{i}(\cdot)\,\mb{w}_{i}},\opnorm{\sum_{i=1}^{j}T_{i}(\cdot)\,\mb{w}_{i}}}+\theta}\lnorm{X}{\mb{z}}
	\nonumber\\
	&\leq(\cyl(T)+\theta)\lnorm{X}{\mb{z}}
	, \label{eq:lip_g_nH}
	\end{align}
	where the penultimate inequality is due to $\lambda\upp{j}(\mb{x},\mb{z})\in[0,1]$ and the convexity of the $\opnorm{\cdot}$ norm.
	Combining~\eqref{eq:lip_g_H} and~\eqref{eq:lip_g_nH}, see also Remark~\ref{rem:cyl_constant}\eqref{rem:cyl_constant:ineq}, we conclude that the inequality 
	\begin{equation*}
	\lnorm{Y}{g(\mb{x}+\mb{z})-g(\mb{x})}\leq (\cyl(T)+\theta)\lnorm{X}{\mb{z}}
	\end{equation*}
	holds in all cases from~\eqref{eq:cases} covering all $\mb{x}\in U_0\setminus \bigcup_{j=1}^{l}\partial U_{j}$ and $\mb{z}\in B(\mb{0}_X,r(\mb{x}))$. Together with~\eqref{eq:supp}, this proves that $g$ is locally $(\cyl(T)+\theta)$-Lipschitz on the complement of the closed Lebesgue null set $\bigcup_{j=0}^{l}\partial{U_{j}}$, hence $\opnorm{Dg(\mb{x})}\le\cyl(T)+\theta$ for almost all $\mb{x}\in X$. Since we have already established that $g$ is also locally Lipschitz on $X$, we deduce~\eqref{lipTglobal}, that $g\colon X\to Y$ is a $(\cyl(T)+\theta)$-Lipschitz mapping; see Corollary~\ref{cor:lip-loc_glob}.
\end{proof}
In the following lemma we will consider mappings defined on $Q\subseteq X$. Note that although our final aim, Theorem~\ref{thm:typical_nondiff-dash}, is to prove typical non-differentiability within the space of $1$-Lipschitz mappings defined on a bounded subset $Q$ of $X$, Lemma~\ref{lemma:C1_der_close_to_psiT} works for any $Q\subseteq X$, in particular, $Q=X$. Also, in the proof of Lemma~\ref{lemma:C1_der_close_to_psiT} we will require a familiar type of smooth approximation result for Lipschitz mappings; the formal statement and proof of this result appears later on, in Section~\ref{sec:SA}.
\begin{lemma}\label{lemma:C1_der_close_to_psiT}
	Let $X$ and $Y$ be normed spaces, where $X$ is finite-dimensional. Let a closed subset $Q\subseteq X$ contain an open set $V$,  $T\in\mc{L}(X,Y)$, $g\colon Q\to W$ be a Lipschitz mapping, where $W\supseteq T(X)$ is a finite-dimensional subspace of $Y$, functions $\psi\colon V\to \R$, $\xi\colon V\to[0,\infty)$ be continuous and bounded on $V$, and $\theta>0$. Assume further that 
	\begin{equation}\label{eq:condition_Dg}
	\opnorm{Dg(\mb{x})-\psi(\mb{x})T}\le\xi(\mb{x})
	\end{equation} 
	for almost all $\mb{x}\in V$. Then there exists a 
	Lipschitz mapping $f\colon Q\to W$ such that
	\begin{enumerate}[(a)]
		\item\label{f=g_outsideH} $f(\mb{x})=g(\mb{x})$ whenever $\mb{x}\in Q\setminus \{\mb{y}\in V\colon \xi(\mb{y})>0\}$.
		\item\label{f_close_to_g} $\lnorm{Y}{f(\mb{x})-g(\mb{x})}\leq \theta$ for all $\mb{x}\in Q$.
		\item\label{f_C1_on_H} $f\in C^{1}(V,Y)$.
		\item\label{Df_close_to_psiT_n} $\opnorm{Df(\mb{x})-\psi(\mb{x})T}\leq \xi(\mb{x})(1+\theta)$ for all $\mb{x}\in V$. 
	\end{enumerate}
\end{lemma}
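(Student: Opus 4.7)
The plan is to build $f$ via a variable-scale mollification over the open set $U:=\{\mb{x}\in V\colon \xi(\mb{x})>0\}$ glued to $g$ outside $U$, using the standard smooth approximation of Lipschitz maps referenced later in Section~\ref{sec:SA}. First I would note that~\eqref{f=g_outsideH} forces $f=g$ on $V\setminus U$ (and on $Q\setminus V$), so the entire modification happens inside $U$. The key preliminary observation is that on the open set $V\setminus \cl{U}$, $\xi$ vanishes identically, and hence $Dg(\mb{x})=\psi(\mb{x})T$ for almost every such $\mb{x}$; since $\psi(\cdot)T$ is continuous on $V$, standard arguments (integration along lines) force $g$ itself to already lie in $C^1(V\setminus\cl{U},W)$ with $Dg=\psi T$ there.

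Next, I would choose a locally finite cover of $U$ by open balls $B_n=B_X(\mb{x}_n,r_n)$ with $2\cl{B}_n\subseteq U$, together with a subordinate smooth partition of unity $(\phi_n)_n$ satisfying $\sum_n\phi_n=1$ on $U$. The radii $r_n$ are selected small enough that $\xi$ and $\psi$ each oscillate by at most $\theta\xi(\mb{x}_n)/4(1+\opnorm{T})$ on $2B_n$. Applying the smooth approximation result of Section~\ref{sec:SA} locally on each $B_n$, I would produce $g_n\in C^1(X,W)$ with $\lnorm{\infty}{g_n-g}\leq \sigma_n$ and
\begin{equation*}
\opnorm{Dg_n(\mb{x})-\psi(\mb{x})T}\leq \br{1+\tfrac{\theta}{2}}\xi(\mb{x})\qquad\text{for all }\mb{x}\in B_n,
\end{equation*}
where the tolerance $\sigma_n>0$ is chosen so that $\sigma_n\sup_{\mb{x}\in X}\opnorm{D\phi_n(\mb{x})}\le \theta 2^{-n}\xi(\mb{x}_n)$ and $\sigma_n\leq \theta 2^{-n}$. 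The derivative estimate for $g_n$ is achievable by choosing the mollification radius much smaller than the oscillation scales of $\xi$ and $\psi$ on $2B_n$.

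I would then define $f:=g$ on $Q\setminus U$ and $f:=\sum_n\phi_n g_n$ on $U$, noting that the latter equals $g+\sum_n\phi_n(g_n-g)$ so the two definitions patch continuously. Condition~\eqref{f=g_outsideH} holds immediately, and~\eqref{f_close_to_g} follows from $\lnorm{Y}{f(\mb{x})-g(\mb{x})}\le\sum_n\phi_n(\mb{x})\sigma_n\leq \theta$. For~\eqref{Df_close_to_psiT_n}, on $U$ the identity $\sum_nD\phi_n=0$ produces
\begin{equation*}
Df(\mb{x})=\sum_n\phi_n(\mb{x})Dg_n(\mb{x})+\sum_nD\phi_n(\mb{x})\otimes(g_n(\mb{x})-g(\mb{x})),
\end{equation*}
whose first term is within $(1+\theta/2)\xi(\mb{x})$ of $\psi(\mb{x})T$ (convex combination) and whose second term is bounded in operator norm by $\sum_n\opnorm{D\phi_n(\mb{x})}\sigma_n\leq \theta\xi(\mb{x})/2$ thanks to the choice of $\sigma_n$ and local finiteness.

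The hard part will be verifying~\eqref{f_C1_on_H}, i.e.\ $C^1$ regularity of $f$ across $\partial U\cap V$. On $U$ itself, $f$ is locally a finite sum of $C^1$ mappings, and on the open set $V\setminus \cl{U}$ the initial observation gives $f=g\in C^1$. At a point $\mb{x}_0\in \partial U\cap V$, $\xi(\mb{x}_0)=0$, continuity of $\xi$, and the hypothesis $\opnorm{Dg-\psi T}\leq \xi$ a.e.\ combine to show $g$ is itself Fr\'echet differentiable at $\mb{x}_0$ with $Dg(\mb{x}_0)=\psi(\mb{x}_0)T$. The tight choice of $\sigma_n$ (and $\mathrm{supp}\,\phi_n\subset B_n\subset U$) ensures that the correction $f-g$ is $o(\lnorm{X}{\mb{z}})$ as $\mb{z}\to \mb{0}_X$ at $\mb{x}_0$, giving Fr\'echet differentiability of $f$ at $\mb{x}_0$ with the same derivative. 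Continuity of $Df$ at $\mb{x}_0$ follows from the estimate in~\eqref{Df_close_to_psiT_n} together with the continuity of $\psi$ and the vanishing of $\xi$ at $\mb{x}_0$. Finally, the Lipschitz property of $f$ follows from~\eqref{Df_close_to_psiT_n}, Corollary~\ref{cor:lip-loc_glob}, and boundedness of $\psi T$ and $\xi$ on $V$.
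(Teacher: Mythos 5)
Your construction (local variable-scale mollification, partition of unity on $U:=\{\mb{x}\in V\colon\xi(\mb{x})>0\}$, gluing to $g$ outside $U$) is the same as the paper's, and the verification of~\eqref{f=g_outsideH}, \eqref{f_close_to_g} and the derivative estimate on $U$ are all fine. The problem is the final paragraph: your verification of $C^{1}$-regularity across $\partial U\cap V$ has a genuine gap.

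You claim that "the tight choice of $\sigma_n$ (and $\supp\phi_n\subset B_n\subset U$) ensures that the correction $f-g$ is $o(\lnorm{X}{\mb{z}})$ as $\mb{z}\to\mb{0}_X$" at $\mb{x}_0\in\partial U\cap V$. This is not true. The best you can extract from $\sigma_n\le\theta 2^{-n}\xi(\mb{x}_n)$ and the oscillation control on $2B_n$ is $\lnorm{Y}{f(\mb{z})-g(\mb{z})}\lesssim\theta\,\xi(\mb{z})$ for $\mb{z}\in U$ near $\mb{x}_0$. Since $\xi$ is continuous with $\xi(\mb{x}_0)=0$, this gives $f-g\to 0$, but not $o(\lnorm{X}{\mb{z}-\mb{x}_0})$: take, say, $\xi(\mb{z})=\abs{\log\lnorm{X}{\mb{z}-\mb{x}_0}}^{-1}$, which is continuous and vanishes at $\mb{x}_0$ but satisfies $\xi(\mb{z})/\lnorm{X}{\mb{z}-\mb{x}_0}\to\infty$. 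So the direct pointwise Fr\'echet argument at boundary points cannot close. The fix is to avoid pointwise differentiability at boundary points entirely and instead argue as the paper does: set $\Phi(\mb{x}):=Df(\mb{x})$ for $\mb{x}\in U$ and $\Phi(\mb{x}):=\psi(\mb{x})T$ for $\mb{x}\in V\setminus U$; show $\Phi$ is continuous on $V$ (using your derivative estimate on $U$, continuity of $\psi,\xi$, and $\xi=0$ on $V\setminus U$); show $Df=\Phi$ a.e.\ on $V$ (on $U$ this is clear, and on $V\setminus U$ it follows from $f=g$ there, Theorem~\ref{lemma:dsty_pt}\eqref{dervs_equal_ae}, and~\eqref{eq:condition_Dg}); then invoke Theorem~\ref{lemma:der_equal_cts_ae_implies_evrywh} to conclude $f\in C^{1}(V,Y)$ with $Df=\Phi$. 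That is exactly the route the paper takes. A secondary point: your appeal to Corollary~\ref{cor:lip-loc_glob} to prove $f$ is globally Lipschitz is not applicable as stated, since $Q$ and $U$ need not be convex; the paper instead combines a local bound on $\opnorm{Df}$ over $U$ with Lemma~\ref{lemma:awkward_lipschitz_bound}, which patches a locally Lipschitz estimate on the open set $U$ with the Lipschitz estimate for $g$ on $Q\setminus U$.
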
	
\begin{proof}
	The statement of the lemma is trivial if $W=\{\mb{0}_{Y}\}$. Also, if the open set $U:=\{\mb{x}\in V\colon \xi(\mb{x})>0\}$ is empty, then $Dg$ coincides, almost everywhere on $V$, with a continuous mapping $\psi T$   by~\eqref{eq:condition_Dg}. Therefore
		$g\in C^1(V,Y)$ and $Dg(\mb{x})=\psi(\mb{x})T$ for all $\mb{x}\in V$ (see Theorem~\ref{lemma:der_equal_cts_ae_implies_evrywh}), so $f:=g$ satisfies conditions~\eqref{f=g_outsideH}--\eqref{Df_close_to_psiT_n}. Hence assume, without loss of generality, that $W\ne\{\mb{0}_{Y}\}$ and $U\ne\emptyset$. 

	Let $\mb{w}_{1},\ldots,\mb{w}_{l}\in Y$ be a basis of $W$, such that $\lnorm{Y}{\mb{w}_i}=1$ for all $1\le i\le l$ and $\mb{w}_{1}^{*},\ldots,\mb{w}_{l}^{*}\in W^*$ be the corresponding biorthogonal functionals; let $g_i=\mb{w}_i^*\circ g$, so that $g(\mb{x})=\sum_{i=1}^lg_i(\mb{x})\mb{w}_i$ for all $\mb{x}\in Q$. We also fix a basis of $X$ and use this to identify $X$ with $\R^{\dim X}$ in the standard way. This identification allows us to define the Lebesgue measure on $X$.  Accordingly all integrals on subsets of $X$ which appear in this proof should be understood via this identification.  Let also $\eqc>0$ be a constant of equivalence between the norm $\lnorm{X}{\cdot}$ and the Euclidean norm $\lnorm{\text{E}}{\cdot}$ on $\R^{\dim X}=X$ so that for all $\mb{z}\in X$
	\begin{equation}\label{eq:eqc}
	\frac1\eqc\lnorm{\text{E}}{\mb{z}}\leq\lnorm{X}{\mb{z}}\le \eqc\lnorm{\text{E}}{\mb{z}}.
	\end{equation}  
	
	Let $\rho\colon X\to [0,\infty)$ denote the standard smooth (Euclidean) mollifier in $\R^{\dim X}=X$ and for $\varepsilon>0$ let $\rho_{\varepsilon}(\mb{x})\coloneq\varepsilon^{-\dim X}\rho(\mb{x}/\varepsilon)$. In what follows we consider, for each $\mb{x}\in U$ and $\varepsilon\in(0,\varepsilon(\mb{x}))$, where $\varepsilon(\mb{x})>0$ is such that $\mb{x}+\mb{y}\in U$ for any $\lnorm{\text{E}}{\mb{y}}\le\varepsilon(\mb{x})$, the convolution   
	\begin{equation*}
	g* \rho_{\varepsilon}(\mb{x})=		\sum_{i=1}^l(g_i*\rho_{\varepsilon})(\mb{x})\mb{w}_i=\sum_{i=1}^l\br{\int_{\lnorm{\text{E}}{\mb{y}}\le\varepsilon}g_i(\mb{x-y})\rho_{\varepsilon}(\mb{y})\,d\mb{y}}\mb{w}_i,
	\end{equation*}
	where the integration is with respect to the Lebesgue measure. Note that $g*\rho_{\varepsilon}(\mb{x})\in W$ for any $\varepsilon\in(0,\varepsilon(\mb{x}))$. We also let $\inside{U}{\varepsilon}=\set{\mb{x}\in U\colon \dist_{\lnorm{\text{E}}{\cdot}}(\mb{x},X\setminus U)> \varepsilon}$, for $\varepsilon>0$.
	
	Note that for any $\varepsilon>0$ the convolution $g*\rho_{\varepsilon}$ is defined on $\inside{U}{\varepsilon}$, belongs to $\lip(\inside{U}{\varepsilon},Y)\cap C^1(\inside{U}{\varepsilon},Y)$ and approximates $g$ well: for any $\mb{x}\in\inside{U}{\varepsilon}$
	\begin{align}
	\lnorm{Y}{g* \rho_{\varepsilon}(\mb{x})-g(\mb{x})}
	&= \lnorm{Y}{\sum_{i=1}^l
		(g_i*\rho_{\varepsilon}(\mb{x})-g_i(\mb{x}))\mb{w}_i 
	} \nonumber \\
	&= \lnorm{Y}{\sum_{i=1}^l
		\br{\int_{\lnorm{\text{E}}{\mb{y}}\le\varepsilon}\br{	g_i(\mb{x}-\mb{y})-g_i(\mb{x})}\rho_{\varepsilon}(\mb{y})\,d\mb{y}}\mb{w}_i
	} \nonumber\\
	&\le			
	\sum_{i=1}^{l}\int_{\lnorm{\text{E}}{\mb{y}}\le\varepsilon}\abs{g_{i}(\mb{x}-\mb{y})-g_{i}(\mb{x})}\rho_{\varepsilon}(\mb{y})\,d\mb{y}
	\leq l\max_{1\le i\le l}\lnorm{X^*}{\mb{w}_i^*}\eqc\lip(g)\varepsilon. \label{eq:convo_approx}
	\end{align}
		
	We note, for future reference, that using $D(g_i*\rho_\varepsilon)(\mb{x})
	=(Dg_i*\rho_\varepsilon)(\mb{x})$ and~\eqref{eq:condition_Dg}, we have for all $\varepsilon>0$, $\mb{x}\in \inside{U}{\varepsilon}$ and $\lnorm{X}{\mb{v}}\le1$
\begin{align}
	&{D(g* \rho_{\varepsilon})(\mb{x})(\mb{v})-\psi(\mb{x})T(\mb{v})}\label{eq:der} ={
		\sum_{i=1}^{l}D(g_i*\rho_\varepsilon)(\mb{x})(\mb{v})\mb{w}_i-\psi(\mb{x})T(\mb{v})
	} \\
	&
	={ 
		\sum_{i=1}^{l}(Dg_i*\rho_\varepsilon)(\mb{x})(\mb{v})\mb{w}_i-\psi(\mb{x})T(\mb{v})
	}\notag
	={
		\int\limits_{\lnorm{\text{E}}{\mb{y}}\le\varepsilon}\br{ 
			Dg(\mb{x}-\mb{y})(\mb{v})-\psi(\mb{x})T(\mb{v})}\rho_{\varepsilon}(\mb{y})\,d\mb{y}
	}.  \notag
\end{align}
We now use~\eqref{eq:condition_Dg} to estimate the norm of~\eqref{eq:der}  from above as
\begin{align} 
	&\lnorm{Y}{D(g* \rho_{\varepsilon})(\mb{x})(\mb{v})-\psi(\mb{x})T(\mb{v})}\notag\\	
	&\le\int\limits_{\lnorm{\text{E}}{\mb{y}}\le\varepsilon}
	\left(
	\opnorm{Dg(\mb{x}-\mb{y})-\psi(\mb{x}-\mb{y})T}
	+
	\abs{\psi(\mb{x}-\mb{y})-\psi(\mb{x})}
	\opnorm{T}
	\right)
	\rho_{\varepsilon}(\mb{y})\,d\mb{y}
	\notag \\
	&\leq
	\xi(\mb{x})+
	\int\limits_{\lnorm{\text{E}}{\mb{y}}\le\varepsilon}
	\br{\abs{
			\xi(\mb{x}-\mb{y})-\xi(\mb{x})
		}
		+\abs{\psi(\mb{x}-\mb{y})-\psi(\mb{x})}\opnorm{T}}
	\rho_{\varepsilon}(\mb{y})\,d\mb{y}
	\notag \\
	&\le 
	\xi(\mb{x})+
	\sup_{\lnorm{\text{E}}{\mb{y}}\le\varepsilon}
	\br{\abs{
			\xi(\mb{x}-\mb{y})-\xi(\mb{x})
		}
		+\abs{\psi(\mb{x}-\mb{y})-\psi(\mb{x})}\opnorm{T}}
	.\label{eq:convo_deriv}
\end{align}
	Moreover, for any compact set $\emptyset\ne K\subseteq U$, we may use that $\xi$ and $\psi$ are continuous on $U$ and $\xi>0$ on $U$ to choose $\delta_{K}>0$ sufficiently small so that for all $\varepsilon\in (0,\delta_{K})$ we have $K\subseteq \inside{U}{\varepsilon}$ and 
	\begin{equation*}
	\sup_{\mb{x}\in K}\sup_{\lnorm{E}{\mb{y}}\le\varepsilon}\br{\abs{
			\xi(\mb{x}-\mb{y})-\xi(\mb{x})
		}
		+\abs{\psi(\mb{x}-\mb{y})-\psi(\mb{x})}\opnorm{T}}\leq \frac{\theta}{2}\min_{\mb{z}\in K}\xi(\mb{z}).
	\end{equation*}
	Combining this with~\eqref{eq:convo_deriv} we get
	\begin{equation}\label{eq:convo_deriv_final}
	\lnorm{Y}{D(g* \rho_{\varepsilon})(\mb{x})(\mb{v})-\psi(\mb{x})T(\mb{v})}\leq \xi(\mb{x})\br{1+\frac{\theta}{2}}
	\end{equation}
	for any compact $K\subseteq U$, $\mb{x}\in K$ and $\varepsilon\in (0,\delta_{K})$.
	
	Let $(\varphi_{k})_{k\in\N}$ be a smooth, locally finite partition of unity on $U=\{\mb{x}\in V\colon \xi(\mb{x})>0\}$ and for each $k\in\N$ set
	\begin{multline}\label{eq:thetak}
	\theta_{k}:=\frac{\theta\min_{\mb{z}\in \supp\varphi_{k}}\xi(\mb{z})}{2^{k}\br{1+\lip(\varphi_{k})}},\quad \varepsilon_{k}:=\frac{1}{2}\min\set{\delta_{\supp \varphi_{k}}, \frac{\theta_{k}}{l w C\br{\lip(g)+1}}},\quad A_{k}:=U_{\varepsilon_{k}},\\ h_{k}(\mb{x})=\begin{cases}
	g* \rho_{\varepsilon_{k}}(\mb{x}) & \text{if }\mb{x}\in A_{k},\\
	\mb{0}_{Y} & \text{if }\mb{x}\in U\setminus A_{k}
	\end{cases}\quad \text{so that }h_{k}\in\SLA(g,A_{k},Y,\theta_k),
	\end{multline}
	where $\eqc$ is fixed in~\eqref{eq:eqc}, $w=\max_{1\le i\le l}\lnorm{X^*}{\mb{w}_i^*}$ and $\SLA(g,A_{k},Y,\theta_{k})$ is defined as the class of smooth Lipschitz mappings $A_{k}\to Y$ which approximate $g$ uniformly within error $\theta_{k}$; a precise description of this class is given in Definition~\ref{def:sla}, and $h_k\in\SLA(g,A_{k},Y,\theta_k)$ follows from~\eqref{eq:convo_approx} and the choice of $\varepsilon_k$ above. The desired mapping $f$ may now be defined by $\sum_{k\in\N}\varphi_{k}h_{k}$ in $U$ and set equal to $g$ in $Q\setminus U$; then a `smooth approximation result' for mappings of this form will establish the remaining properties of $f$. We postpone this result, Lemma~\ref{lemma:SLA}, until Section~\ref{sec:SA}. We now describe precisely how to apply Lemma~\ref{lemma:SLA}.
	Let $h:=g$, $P(\mb{x}):=\psi(\mb{x})T\in\mc{L}(X,Y)$ and $\eta(\mb{x}):=\xi(\mb{x})\br{1+\frac{\theta}{2}}$ for each $\mb{x}\in U$. The conditions of Lemma~\ref{lemma:SLA}, including the additional condition of~\eqref{SLAii}, are now satisfied, by the definition of $U$ and~\eqref{eq:convo_deriv_final}, and we may let $f\coloneq \tilde{h}$ be given by the conclusion of Lemma~\ref{lemma:SLA}. Then conditions~\eqref{f=g_outsideH}, \eqref{f_close_to_g} of the present lemma and $f\in C^1(U,Y)$, which is a part of~\eqref{f_C1_on_H}, are satisfied. Since all values of $g$ and of $h_k$ are in $W$, we get $f\colon Q\to W$. Moreover, Lemma~\ref{lemma:SLA}\eqref{SLAii}, \eqref{eq:thetak} and~\eqref{eq:convo_deriv_final} give, for any $\mb{x}\in U$,
	\begin{multline}\label{Df-ineq}
	\opnorm{Df(\mb{x})-\psi(\mb{x})T}=\opnorm{D\tilde{h}(\mb{x})-P(\mb{x})}\\
	\leq \eta(\mb{x})+\sum_{k\in\N}\lip(\varphi_{k})\1_{\supp\varphi_{k}}(\mb{x})\theta_{k}\leq \xi(\mb{x})\br{1+\theta},
	\end{multline}
	which proves the inequality of~\eqref{Df_close_to_psiT_n} for all $\mb{x}\in U$.
	This implies, in particular, that for all $\mb{x}\in U$
	\begin{equation*}
	\opnorm{Df(\mb{x})}
	\le \psi(\mb{x})\opnorm{T}+\xi(\mb{x})	\br{1+\theta}\le 
	L:=\Psi\opnorm{T}+\Xi\br{1+\theta},
	\end{equation*}
	where $\Psi,\Xi>0$ are upper bounds of bounded functions $\psi,\xi$ respectively. Hence we conclude that $f$ is locally $L$-Lipschitz on $U$, and so using~\eqref{f=g_outsideH}, $g\in\lip(Q,W)$ and Lemma~\ref{lemma:awkward_lipschitz_bound}, we obtain $f\in\lip(Q,W)$.
	We also note from~\eqref{Df-ineq} that the mapping $\Phi\colon V\to\mc{L}(X,Y)$ defined by 
	\begin{equation*}
	\Phi(\mb{x})=\begin{cases}
	Df(\mb{x}) & \mb{x}\in U,\\
	\psi(\mb{x})T & \mb{x}\in V\setminus U
	\end{cases}
	\end{equation*}
	is continuous on $V$, as $U=\{\mb{x}\in V\colon \xi(\mb{x})>0\}$ and $\xi$ is continuous on $V$. We may now apply Theorem~\ref{lemma:dsty_pt}\eqref{dervs_equal_ae} to Lipschitz mappings $f,g\colon V\subseteq X\to W$ between finite-dimensional spaces to conclude that $Df$ and $Dg$ exist and are equal almost everywhere on $\{\mb{x}\in V\colon f(\mb{x})=g(\mb{x})\}\supseteq V\setminus U$. This, together with~\eqref{eq:condition_Dg} and the definitions of $\Phi$ and $U$ implies $Df(\mb{x})=\Phi(\mb{x})$ for almost every $\mb{x}\in V$. Therefore, by Theorem~\ref{lemma:der_equal_cts_ae_implies_evrywh}, $f\in C^1(V,Y)$ and $Df(\mb{x})=\Phi(\mb{x})$ for all $\mb{x}\in V$.
	This verifies~\eqref{f_C1_on_H} and, together with~\eqref{Df-ineq}, implies~\eqref{Df_close_to_psiT_n}.
\end{proof}
\begin{lemma}\label{lemma:psi}
	Let $X$ and $Y$ be normed spaces, where $X$ is finite-dimensional. Let $E\subseteq X$ be compact and purely unrectifiable, $\eta>0$, function $\varphi\colon X\to [0,1]$ be continuous and $T\in\mc{L}(X,Y)$. Then there exist a Lipschitz mapping $f\colon X\to T(X)$, a function $\psi\colon X\to [0,1]$ and an open set $H\subseteq X$ such that the following statements hold:
	\begin{enumerate}[(i)]
		\item\label{H_theta_close_to_E} $E\subseteq H\subseteq B_{X}(E,\eta)$.
		\item\label{f_lip_and_C1} $f\in \lip(X,Y)\cap C^{1}(H,Y)$.
		\item\label{f_small_and_supp_contained} $\sup_{\mb{x}\in X} \lnorm{Y}{f(\mb{x})}\leq \eta$ and
		$\supp f\subseteq \supp \varphi$.
		\item\label{Df_close_to_psiT} $\opnorm{Df(\mb{x})-\psi(\mb{x})T}\leq \eta$ for Lebesgue almost every $\mb{x}\in X$.
		\item\label{psi_smaller_than_phi_and_sometimes_equal} $0\leq \varphi(\mb{x})\1_{H}(\mb{x})\leq\psi(\mb{x})\leq \varphi(\mb{x})\1_{B_{X}(E,\eta)}(\mb{x})$ for all $\mb{x}\in X$.  
		\item\label{psi_continuous} $\psi(\mb{x})=\varphi(\mb{x})$ for all $\mb{x}\in H$, and so $\psi|_H$ is continuous.
	\end{enumerate}
\end{lemma}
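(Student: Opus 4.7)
The approach combines Lemma~\ref{lemma:pu_mapping_with_prescribed_derivative}, which yields a Lipschitz mapping whose derivative is close to the constant operator $T$ on a neighborhood of $E$, with a multiplication by a Lipschitz approximation of a continuous cutoff $\chi\varphi$ to convert this constant approximation into the variable form $\psi(\mb{x})T$; Lemma~\ref{lemma:C1_der_close_to_psiT} is then used to upgrade from Lipschitz to $C^1$ on the neighborhood $H$.

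Fix small auxiliary parameters $\theta_1,\theta_2,\theta_3>0$ to be tuned at the end. Apply Lemma~\ref{lemma:pu_mapping_with_prescribed_derivative} to $E$ with open neighborhood $U=B_X(E,\eta)$ and parameter $\theta_1$, obtaining a Lipschitz map $g_0\colon X\to T(X)$ and an open set $H^\ast\supseteq E$ with $\overline{H^\ast}\subseteq U$, $\supp g_0\subseteq U$, $\sup_X\lnorm{Y}{g_0}\le\theta_1$, $\opnorm{Dg_0-T}\le\theta_1$ a.e.\ on $H^\ast$, and $\lip(g_0)\le\cyl(T)+\theta_1$. Select nested open sets $E\subseteq H\subseteq\overline{H}\subseteq H_1\subseteq\overline{H_1}\subseteq H^\ast$ together with a continuous Urysohn function $\chi\colon X\to[0,1]$ equal to $1$ on $\overline{H}$ and $0$ outside $H_1$, and set $\psi:=\chi\varphi$. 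Then $\psi$ is continuous, $\psi=\varphi$ on $H$, $\psi=0$ outside $H_1\subseteq B_X(E,\eta)$, $0\le\psi\le\varphi$, and $\supp\psi\subseteq\supp\varphi$. Next, choose a Lipschitz $\tilde\psi\colon X\to[0,1]$ with $\supnorm{\tilde\psi-\psi}\le\theta_2$ and $\supp\tilde\psi\subseteq H^\ast\cap\supp\varphi$ (obtained by mollifying $\psi$ at a sufficiently fine scale so the smoothed function has support inside $H^\ast$, and multiplying by an additional Lipschitz cutoff of $\supp\varphi$ to force vanishing outside). Set $g:=\tilde\psi g_0$. This is Lipschitz, $\sup\lnorm{Y}{g}\le\theta_1$, $\supp g\subseteq\supp\varphi$, and by the product rule combined with the $g_0$-bounds,
\[
\opnorm{Dg(\mb{x})-\psi(\mb{x})T}\le\tilde\psi(\mb{x})\opnorm{Dg_0(\mb{x})-T}+|\tilde\psi(\mb{x})-\psi(\mb{x})|\opnorm{T}+\lip(\tilde\psi)\lnorm{Y}{g_0(\mb{x})}
\]
holds for a.e.\ $\mb{x}\in H^\ast$ and is zero outside $H^\ast$ (where $\tilde\psi$ and $\psi$ both vanish); the right-hand side is at most $\theta_3$ once $\theta_1$ is chosen small relative to $1/\lip(\tilde\psi)$ and $\theta_2$ small relative to $1/\opnorm{T}$. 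Apply Lemma~\ref{lemma:C1_der_close_to_psiT} with $Q=X$, $V=H$, this $g$, $\psi|_H=\varphi|_H$, and a continuous $\xi\colon H\to[0,\infty)$ dominating the above quantity on $H$ and satisfying $\xi(1+\theta_3)\le\eta$. The result is $f\in\lip(X,Y)\cap C^1(H,Y)$ with $f=g$ outside $\{\mb{y}\in H:\xi(\mb{y})>0\}$ and $\opnorm{Df-\varphi T}\le\eta$ pointwise on $H$.

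The main technical obstacle is enforcing $\supp f\subseteq\supp\varphi$ together with the pointwise derivative bound on $X\setminus H$, particularly at points of $E$ lying on $\partial\{\varphi>0\}$. The support issue is resolved by the careful design of $\tilde\psi$ to have $\supp\tilde\psi\subseteq\supp\varphi$, yielding $\supp g\subseteq\supp\varphi$, and by choosing $\xi$ to vanish on $H\setminus\supp\varphi$, so that by Lemma~\ref{lemma:C1_der_close_to_psiT}(a) the modification $f-g$ is supported inside $\supp\varphi$; the sub-case $E\not\subseteq\supp\varphi$ is split off by observing that on a neighborhood of $E\setminus\supp\varphi$ one has $\varphi\equiv 0$ and may simply set $f=0$, $\psi=0$ there. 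The six listed conclusions then follow: (i) and (v) from the nesting $E\subseteq H\subseteq H_1\subseteq H^\ast\subseteq B_X(E,\eta)$ and from $\psi=\chi\varphi$; (vi) from $\chi=1$ on $H$; (ii) from Lemma~\ref{lemma:C1_der_close_to_psiT} combined with the Lipschitz regularity of $\tilde\psi g_0$; (iii) from $\sup\lnorm{Y}{f}\le\theta_1+\theta_3\le\eta$ and the support design; and (iv) by combining the pointwise bound on $H$ from Lemma~\ref{lemma:C1_der_close_to_psiT} with the a.e.\ bound $\opnorm{Dg-\psi T}\le\theta_3\le\eta$ outside $H$.
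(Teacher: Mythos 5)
There is a genuine gap in the construction $g:=\tilde\psi\,g_0$, stemming from a circular dependency between the parameter $\theta_1$ and the Lipschitz constant of the cutoff $\tilde\psi$. Your product-rule estimate contains the rank-one term $\lip(\tilde\psi)\,\lnorm{Y}{g_0(\mb{x})}$, and you control it by demanding ``$\theta_1$ small relative to $1/\lip(\tilde\psi)$.'' But $\tilde\psi$ must be supported inside $H^\ast$, where $H^\ast$ is the open set \emph{output} by Lemma~\ref{lemma:pu_mapping_with_prescribed_derivative}, and that lemma gives no lower bound on how thick $H^\ast$ is around $E$: the sets $G_k^{(i)}$ supplied by uniform pure unrectifiability (Theorem~\ref{thm:compact_pu}) can force $H^\ast$ to be a very thin neighbourhood of $E$. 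Consequently, the smallest admissible mollification scale (hence $\lip(\tilde\psi)$) is only determined \emph{after} $\theta_1$ is committed and Lemma~\ref{lemma:pu_mapping_with_prescribed_derivative} has been applied; you cannot then go back and shrink $\theta_1$, because that re-runs the lemma and may shrink $H^\ast$ further, forcing $\lip(\tilde\psi)$ to grow again, with no guarantee that this loop stabilises. A secondary issue is the claim that one may ``choose $\xi$ to vanish on $H\setminus\supp\varphi$'': the term $D\tilde\psi\otimes g_0$ in $Dg$ need not tend to zero as one approaches $H\cap\partial(\supp\varphi)$ from inside $\supp\varphi$ (only $\tilde\psi$ and $\psi$ tend to zero there, not $D\tilde\psi$), so a continuous $\xi$ that dominates $\opnorm{Dg-\psi T}$ a.e.\ and also vanishes on the adjacent open set $H\setminus\supp\varphi$ need not exist.

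The paper's own proof is structurally different precisely to avoid this blowup: rather than one application of Lemma~\ref{lemma:pu_mapping_with_prescribed_derivative} followed by multiplication by a Lipschitz cutoff, it averages $g=\frac1k\sum_{i=2}^{k-1}g_i$ where each $g_i$ comes from Lemma~\ref{lemma:pu_mapping_with_prescribed_derivative} applied to the smaller compact set $E\cap\{\varphi\ge i/k\}$ inside the nested open set $G_i=H_{i-1}\cap\{\varphi>(i-1)/k\}$. The ``weight'' $\psi(\mb{x})$ then arises combinatorially, as roughly $j(\mb{x})/k$ where $j(\mb{x})$ counts how many of the $k$ layers contain $\mb{x}$, and no Lipschitz cutoff is multiplied onto $g_0$ at all. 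This keeps $\lip(g)\le\cyl(T)+\theta$ without any factor depending on the thickness of the neighbourhoods produced by Lemma~\ref{lemma:pu_mapping_with_prescribed_derivative}, and it delivers the bound $\opnorm{Dg-\psi T}\lesssim C/k$ a.e.\ directly, which is then fed into Lemma~\ref{lemma:C1_der_close_to_psiT}. If you want to salvage your approach, you would need either a quantitative lower bound on $H^\ast$ in Lemma~\ref{lemma:pu_mapping_with_prescribed_derivative} (not available) or a way to bound $\lip(\tilde\psi)\sup\lnorm{Y}{g_0}$ that does not force $\theta_1$ to depend on $\tilde\psi$; absent that, the layered averaging seems essential.
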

\begin{proof}
	The statement of the lemma is clear for $T=0$, so assume, without loss of generality, that $T\ne0$ is such that $\opnorm{T}\le1$. We may do so as if $\opnorm{T}>1$ and the conclusion of this lemma holds for operators of norm less than or equal to $1$, we let $f_1,\psi$ and $H$ correspond to $E$, $\eta_1=\eta/\opnorm{T}$, $\varphi$ and $T_1=T/\opnorm{T}$ and define $f=\opnorm{T}f_1$.
	
	Let $C=\cyl(T)+5$, where $\cyl(T)$ is the constant given by Definition~\ref{def:cyl_constant},
	\begin{equation}\label{eq:def_eta_k}
	\theta:=\min\set{1,\eta/5}
	,
	\qquad k\in\N\cap\Bigl[\frac{C}{\theta},\frac{2C}{\theta}\Bigr],
	\end{equation}
	$G_{1}:=H_{1}:=B_{X}(E,\eta)$ and note that $E\subseteq H_{1}$. For each $i=2,\ldots,k-1$, whenever the open set $H_{i-1}$ containing compact $E\cap\set{\mb{x}\in X\colon\varphi(\mb{x})\geq \frac{i}{k}}$ has been defined, let 
	\begin{equation}\label{eq:defGi}
	G_{i}:=H_{i-1}\cap \set{\mb{x}\in X\colon\varphi(\mb{x})>\frac{i-1}{k}}. 
	\end{equation}
	Next apply Lemma~\ref{lemma:pu_mapping_with_prescribed_derivative} to $X$, $Y$, the compact, purely unrectifiable subset $E\cap\set{\mb{x}\in X\colon\varphi(\mb{x})\geq i/k}$ of the open set $U=G_{i}$, $\theta$ and $T$ to obtain a mapping $g_{i}\in \lip_{C}(X,T(X))$, as $C\geq \cyl(T)+\theta$,
	and an open set $H_{i}\subseteq X$ such that
	\begin{enumerate}[(a)]
		\item\label{gi_support} $\supp g_{i}\subseteq G_{i}$ and $Dg_i(\mb{x})=0$ for any $\mb{x}\notin G_i$.
		\item\label{gi_small} 
		$\sup_{\mb{x}\in X} \lnorm{Y}{g_{i}(\mb{x})}\leq \theta$.
		\item\label{Hi_between_E_and_Gi} $E\cap\set{\mb{x}\in X\colon\varphi(\mb{x})\geq \frac{i}{k}}\subseteq
		H_{i}\subseteq \cl{H_{i}}\subseteq G_{i}$. 	
		\item\label{Dgi_close_to_T} $\opnorm{Dg_{i}(\mb{x})-T}\leq \theta$ for Lebesgue almost all $\mb{x}\in H_{i}$.
	\end{enumerate}
Note that~\eqref{Hi_between_E_and_Gi} implies that  $E\cap\set{\mb{x}\in X\colon\varphi(\mb{x})\geq \frac{i+1}{k}}\subseteq H_i$ and hence we construct inductively mappings $g_{2},\ldots,g_{k-1}\in \lip_C(X,T(X))$ and nested sequences of open sets $H_{1}\supseteq\ldots\supseteq H_{k-1}$ and $G_{1}\supseteq\dots\supseteq G_{k-1}$ such that properties~\eqref{gi_support}--\eqref{Dgi_close_to_T} hold for every $2\le i\le k-1$.
	Consider $g:=\frac{1}{k}\sum_{i=2}^{k-1}g_{i}\in \lip_C(X,T(X))$. Properties~\eqref{gi_support} and~\eqref{gi_small} of $g_{2},\ldots,g_{k-1}$, and the nested property of $G_i$ lead to 
	\begin{equation}\label{eq:g_small}
	\sup_{\mb{x}\in X}\lnorm{Y}{g(\mb{x})}\leq \theta,
	\end{equation}
	and
	\begin{equation}\label{eq:supp_g_contained}
	\supp g\subseteq G_{2}\subseteq \supp\varphi.
	\end{equation}
	Define $j\colon G_{1}\to\set{1,\ldots,k-1}$ by
	\begin{equation}\label{eq:defjx}
	j(\mb{x})=\max\set{j\in\set{1,\ldots,k-1}\colon \mb{x}\in G_{j}}
	\end{equation}
	and note that for all $\mb{x}\in G_1$ we have
	$\varphi(\mb{x})\ge\frac{j(\mb{x})-1}{k}$
	due to the non-negativity of $\varphi$ when $j(\mb{x})=1$ and to the definitions~\eqref{eq:defGi} and~\eqref{eq:defjx} when $j(\mb{x})>1$. Observe also that whenever all $g_i$ are differentiable at $\mb{x}\in G_{1}$,
	\begin{equation}\label{eq:jx_form_of_g}
	Dg(\mb{x})=\frac{1}{k}\sum_{i=2}^{j(\mb{x})}Dg_{i}(\mb{x}),
	\end{equation}
	because of~\eqref{gi_support}, and the definitions of $g$ and $j(\mb{x})$. 
	Let $\psi\colon X\to [0,1]$ be given by
	\begin{equation}\label{eq:defpsi}
	\psi(\mb{x})=
	\begin{cases}
	\min\set{\frac{j(\mb{x})+2}{k},\varphi(\mb{x})},&\text{if }\mb{x}\in G_{1};\\
	0,&\text{otherwise}
	.
	\end{cases}
	\end{equation}
	Then the last inequality of~\eqref{psi_smaller_than_phi_and_sometimes_equal} is satisfied.
	Note also for future reference that for every $\mb{x}\in G_{1}$ we have 
	\begin{equation}\label{eq:jx_bounds_on_psi}
	\frac{j(\mb{x})-1}{k}
	\le\psi(\mb{x})
	\le\frac{j(\mb{x})+2}{k},
	\end{equation}
	using $\varphi(\mb{x})\ge\frac{j(\mb{x})-1}{k}$.
	Define an open set $H\subseteq X$ by
	\begin{equation*}
	H:= \bigcup_{j=1}^{k-1} S_j,
	\text{ where }
	S_j=\set{\mb{x}\in H_{j}\colon \varphi(\mb{x})<\frac{j+2}{k}}.
	\end{equation*}
	The second inclusion of~\eqref{H_theta_close_to_E}, $H\subseteq B_X(E,\eta)=H_{1}$, is now clear, due to the nested property of $H_i$. We prove the first inclusion: Let $\mb{x}\in E$. Then there is an $m\in\set{0,\ldots,k-1}$ such that $\frac{m}{k}\leq \varphi(\mb{x})\leq \frac{m+1}{k}$. If $m\leq 1$, then since $\mb{x}\in E\subseteq H_1$ and $\varphi(\mb{x})\le\frac2k<\frac3k$, we have $\mb{x}\in S_1$; for $2\le m\le k-1$ we have $\mb{x}\in E\cap \set{\mb{x}\in X\colon\varphi(\mb{x})\geq m/k}\subseteq H_{m}$ by~\eqref{Hi_between_E_and_Gi} and $\varphi(\mb{x})\leq\frac{m+1}{k}<\frac{m+2}{k}$ so $\mb{x}\in S_m$. This proves the first inclusion of~\eqref{H_theta_close_to_E}.
	
	We now verify~\eqref{psi_continuous} and the remaining inequalities of~\eqref{psi_smaller_than_phi_and_sometimes_equal}. Note first that from $H_1=G_1$, \eqref{eq:defjx} and~\eqref{Hi_between_E_and_Gi} we have, for any $1\leq j\leq k-1$, that $\mb{x}\in H_{j}$ implies $j(\mb{x})\geq j$. Fix any $\mb{x}\in H\subseteq G_{1}$ and $j\in\set{1,\ldots,k-1}$ such that $\mb{x}\in S_{j}$; then $\varphi(\mb{x})< \frac{j+2}{k}\le\frac{j(\mb{x})+2}{k}$ thus, by~\eqref{eq:defpsi}, $\psi(\mb{x})=\varphi(\mb{x})$ proving~\eqref{psi_continuous}, which together with~\eqref{eq:defpsi} gives the first two inequalities of~\eqref{psi_smaller_than_phi_and_sometimes_equal}. 
	
	Let now $\mb{x}\in G_{2}$ be any point such that all $g_i$ are differentiable at $\mb{x}$ and for any $1\le i\le k-1$ such that $\mb{x}\in H_i$ we also have the inequality~\eqref{Dgi_close_to_T}. 
	We remark that
	almost all points of $G_{2}$ are such.
	Thus, whenever $1\leq i\leq j(\mb{x})-1$, we have, from~\eqref{eq:defjx} and~\eqref{eq:defGi}, that $\mb{x}\in G_{j(\mb{x})}\subseteq H_{j(\mb{x})-1}\subseteq H_i$, hence the inequality~\eqref{Dgi_close_to_T} applies.
Therefore, we now verify that 
for almost all $\mb{x}\in G_2$,
	\begin{equation}
	\label{eq:initialDg-psiT}	
	\begin{split}
	&\opnorm{Dg(\mb{x})-\psi(\mb{x})T}
	=\opnorm{\tfrac{1}{k}\sum_{i=2}^{j(\mb{x})}Dg_{i}(\mb{x})-\psi(\mb{x})T} 
	\\
	&\le	
	\tfrac{1}{k}\sum_{i={2}}^{j(\mb{x})-1}\opnorm{Dg_{i}(\mb{x})-T}+	\tfrac1k\opnorm{Dg_{j(\mb{x})}(\mb{x})}+\abs{\tfrac{j(\mb{x})-2}{k}-\psi(\mb{x})}\opnorm{T}
	\\
	&\leq \theta+\tfrac{C}{k}+\tfrac{4}{k}
	\le\tfrac{4C}{k}
	\le 4C\min\set{\varphi(\mb{x}),\tfrac1k}.
	\end{split}	
	\end{equation}
The first equality follows from~\eqref{eq:jx_form_of_g}; the first inequality of the last line is guaranteed by~\eqref{Dgi_close_to_T}, $g_{j(\mb{x})}\in\lip_C(X,T(X))$ and~\eqref{eq:jx_bounds_on_psi}; finally, we use~\eqref{eq:def_eta_k}, $\mb{x}\in G_2$ and~\eqref{eq:defGi} for the remaining inequalities. 
	
	Using~\eqref{gi_support} of the present proof, the nested property of $G_i$ and~\eqref{eq:jx_form_of_g}, we conclude that $Dg(\mb{x})=0$ for every $\mb{x}\in X\setminus G_{2}$. 
	From~\eqref{eq:defGi} and the already verified~\eqref{psi_smaller_than_phi_and_sometimes_equal} we also have that $0\le\psi(\mb{x})\le\varphi(\mb{x})\1_{H_1}(\mb{x})\le\frac1k$ for all $\mb{x}\in X\setminus G_{2}$. Hence, for every $\mb{x}\in X\setminus G_{2}$ we have $\opnorm{Dg(\mb{x})-\psi(\mb{x})T}=\psi(\mb{x})\opnorm{T}\le\psi(\mb{x})\le\min\set{\varphi(\mb{x}),\frac1k}$. This, together with~\eqref{eq:initialDg-psiT} and $C>1$, establishes
	\begin{equation}\label{eq:Dg-psiT}
	\opnorm{Dg(\mb{x})-\psi(\mb{x})T}\leq 
	4C\min\Bigl\{\varphi(\mb{x}),\frac1k\Bigr\}
	\quad\text{for almost all }
	\mb{x}\in X.
	\end{equation}
	
	Let $f\colon  X\to T(X)\subseteq Y$ be the Lipschitz mapping given by the conclusion of Lemma~\ref{lemma:C1_der_close_to_psiT} applied to $X$, $Y$, $Q=X$, $V=H$, $T$, $W=T(X)$, $g\in\lip(X,T(X))$, continuous bounded functions $\psi|_H$, $\xi(\mb{x})=4C\min\Bigl\{\varphi(\mb{x}),\frac1k\Bigr\}$ for $\mb{x}\in H$ and $\frac{\theta}{4C}$, where the validity of~\eqref{eq:condition_Dg} is guaranteed by~\eqref{eq:Dg-psiT}. Note that from Lemma~\ref{lemma:C1_der_close_to_psiT}\eqref{f_C1_on_H} we have $f\in C^1(H,Y)$, so~\eqref{f_lip_and_C1} is satisfied.
	
	Combining Lemma~\ref{lemma:C1_der_close_to_psiT}\eqref{f_close_to_g} with~\eqref{eq:g_small} and~\eqref{eq:def_eta_k}, we deduce for every $\mb{x}\in Q=X$
	\begin{equation*}
	\lnorm{Y}{f(\mb{x})}
	\leq 
	\lnorm{Y}{f(\mb{x}) - g(\mb{x})}
	+\lnorm{Y}{g(\mb{x})}
	\le 
	2\theta \leq \eta.
	\end{equation*}
	Moreover, we deduce from Lemma~\ref{lemma:C1_der_close_to_psiT}\eqref{f=g_outsideH} that $f(\mb{x})>0$ implies either $g(\mb{x})=f(\mb{x})>0$ or $\xi(\mb{x})>0$. In both cases we conclude that $\mb{x}\in\supp(\varphi)$, in the former case due to~\eqref{eq:supp_g_contained} and in the latter due to the definition of $\xi$. This establishes~\eqref{f_small_and_supp_contained} of the present lemma. Finally, we verify the inequality of~\eqref{Df_close_to_psiT} for almost every $\mb{x}\in X$. 	
	First, for every $\mb{x}\in H$, we may apply Lemma~\ref{lemma:C1_der_close_to_psiT}\eqref{Df_close_to_psiT_n}, $\xi(\mb{x})\le4C/k$ and~\eqref{eq:def_eta_k} to obtain \begin{equation*}
	\opnorm{Df(\mb{x})-\psi(\mb{x})T}
	\le	\xi(\mb{x})\br{1+\frac{\theta}{4C}}\le\xi(\mb{x})+\theta\le\frac{4C}k+\theta\le5\theta\le\eta.
	\end{equation*}
	Further, by Lemma~\ref{lemma:C1_der_close_to_psiT}\eqref{f=g_outsideH} the set $X\setminus H$ is contained in the set where $f$ and $g$, Lipschitz mappings $X\to T(X)$ between finite-dimensional spaces, coincide. Since $Df=Dg$ almost everywhere in the latter set (see Theorem~\ref{lemma:dsty_pt}), we have $Df=Dg$ almost everywhere in $X\setminus H$.
	Hence, by~\eqref{eq:Dg-psiT} and~\eqref{eq:def_eta_k}, we have for almost every $\mb{x}\in X\setminus H$
	\begin{equation*}
	\opnorm{Df(\mb{x})-\psi(\mb{x})T}
	=\opnorm{Dg(\mb{x})-\psi(\mb{x})T}
	\le	
	\frac{4C}{k}
	\le 4\theta\le\eta.\qedhere
	\end{equation*}
\end{proof}

\begin{lemma}\label{lemma:approximation}
	Let $X$ and $Y$ be normed spaces, $E\subseteq U\subseteq Q\subseteq X$, where $E$ is compact and $U$ is open, $\eta>0$ and $g\colon Q\to Y$ be a mapping with $g\in C^{1}(U,Y)$. Then there exists $\delta\in (0,\eta)$ such that for every $h\colon Q\to Y$ with 
	\begin{equation}\label{eq:h_close_to_g}
	\sup_{\mb{z}\in Q}\lnorm{Y}{h(\mb{z})-g(\mb{z})}\leq \eta \delta/4,
	\end{equation}
	every $\mb{x}\in E$ and every $\mb{y}\in X$ with $\lnorm{X}{\mb{y}}\leq \delta$ we have
	\begin{equation*}
	\lnorm{Y}{h(\mb{x}+\mb{y})-h(\mb{x})-Dg(\mb{x})(\mb{y})}\leq \eta\delta.
	\end{equation*}
\end{lemma}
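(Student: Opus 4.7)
The conclusion is a quantitative approximation statement which splits into two standard pieces: a $C^{1}$ linearisation estimate for $g$ near $E$, and a uniform closeness estimate carrying that control over to $h$. The plan is to exploit the uniform continuity of $Dg$ on a suitable compact neighbourhood of $E$ to handle the $g$-term, then use the hypothesis $\supnorm{h-g}\le\eta\delta/4$ to transfer to $h$.

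\textbf{Step 1: setting up a compact tube around $E$.} Since $E\subseteq U$ is compact and $U$ is open, I would first choose $\delta_0>0$ such that the closed $\delta_0$-tube
\begin{equation*}
K:=\set{\mb{x}+\mb{y}\colon \mb{x}\in E,\,\lnorm{X}{\mb{y}}\leq \delta_0}\subseteq U.
\end{equation*}
Because $g\in C^{1}(U,Y)$, the derivative map $Dg\colon U\to (\mc{L}(X,Y),\opnorm{\cdot})$ is continuous, hence uniformly continuous on the compact set $K$. Thus I can pick $\delta_1\in(0,\delta_0]$ so that $\opnorm{Dg(\mb{z})-Dg(\mb{x})}\leq \eta/2$ whenever $\mb{x}\in E$, $\mb{z}\in K$ and $\lnorm{X}{\mb{z}-\mb{x}}\leq \delta_1$. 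Define $\delta:=\min\set{\delta_1,\eta/2}\in(0,\eta)$.

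\textbf{Step 2: the $C^{1}$ linearisation bound for $g$.} For any $\mb{x}\in E$ and $\mb{y}\in X$ with $\lnorm{X}{\mb{y}}\leq \delta$, the whole segment $\set{\mb{x}+t\mb{y}\colon t\in[0,1]}$ lies in $K\subseteq U$, so the standard mean value inequality for $C^{1}$ mappings between normed spaces (applied to the auxiliary map $t\mapsto g(\mb{x}+t\mb{y})-tDg(\mb{x})(\mb{y})$) yields
\begin{equation*}
\lnorm{Y}{g(\mb{x}+\mb{y})-g(\mb{x})-Dg(\mb{x})(\mb{y})}\leq \sup_{t\in[0,1]}\opnorm{Dg(\mb{x}+t\mb{y})-Dg(\mb{x})}\,\lnorm{X}{\mb{y}}\leq \frac{\eta}{2}\delta.
\end{equation*}

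\textbf{Step 3: transferring to $h$ and combining.} Assuming $h$ satisfies~\eqref{eq:h_close_to_g}, the triangle inequality gives
\begin{equation*}
\lnorm{Y}{(h(\mb{x}+\mb{y})-h(\mb{x}))-(g(\mb{x}+\mb{y})-g(\mb{x}))}\leq 2\supnorm{h-g}\leq \frac{\eta\delta}{2}.
\end{equation*}
Combining this with the estimate from Step 2 yields
\begin{equation*}
\lnorm{Y}{h(\mb{x}+\mb{y})-h(\mb{x})-Dg(\mb{x})(\mb{y})}\leq \frac{\eta\delta}{2}+\frac{\eta\delta}{2}=\eta\delta,
\end{equation*}
as required. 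There is no real obstacle here; the only point to be slightly careful about is verifying that the mean value inequality applies in a $Y$-valued setting with the given norm on $\mc{L}(X,Y)$, which is standard (and can be reduced to the scalar case by Hahn--Banach on $Y$ if desired).
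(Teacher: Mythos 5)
Your Steps 2 and 3 are correct and match the paper's argument: the paper likewise bounds $\lnorm{Y}{g(\mb{x}+\mb{y})-g(\mb{x})-Dg(\mb{x})(\mb{y})}\le\tfrac{\eta}{2}\lnorm{X}{\mb{y}}$ for $\mb{x}\in E$, $\lnorm{X}{\mb{y}}\le\delta$, and then transfers to $h$ by the triangle inequality using $\supnorm{h-g}\le\eta\delta/4$. Both of these steps are unproblematic.

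However, Step 1 contains a genuine gap. You claim that the tube
$K=\set{\mb{x}+\mb{y}\colon \mb{x}\in E,\,\lnorm{X}{\mb{y}}\leq\delta_0}$
is compact and deduce uniform continuity of $Dg$ on $K$. But $K=E+\cl{B}_X(\mb{0}_X,\delta_0)$ is compact only when $\cl{B}_X(\mb{0}_X,\delta_0)$ is compact, i.e.\ only when $X$ is finite-dimensional (indeed, if $E+\cl{B}_X(\mb{0}_X,\delta_0)$ were compact then, fixing $\mb{x}_0\in E$, the closed subset $\mb{x}_0+\cl{B}_X(\mb{0}_X,\delta_0)$ would be compact). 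The lemma as stated allows $X$ to be an arbitrary normed space, so the compactness of the tube cannot be assumed, and a closed bounded set in an infinite-dimensional space gives no uniform continuity in general. The paper circumvents this by arguing via a finite subcover of the \emph{compact set $E$ itself} (this is Lemma~\ref{lemma:C1_compact_uniform}): for each $\mb{x}\in E$ one picks $\delta_{\mb{x}}>0$ with $B_{X}(\mb{x},2\delta_{\mb{x}})\subseteq U$ and $\opnorm{Dg(\mb{z})-Dg(\mb{x})}$ small on that ball; finitely many balls $B_{X}(\mb{x}_i,\delta_{\mb{x}_i})$ then cover $E$, and one sets $\delta=\min_i\delta_{\mb{x}_i}$. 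Replacing your tube-compactness claim with this finite-subcover argument repairs the proof; the rest of your reasoning then goes through unchanged.
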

\begin{proof}
	Because $g$ is a $C^{1}$ smooth mapping and $E$ is compact, we have that $g$ is uniformly \Fre differentiable on $E$; see Lemma~\ref{lemma:C1_compact_uniform}. In other words, we may choose $\delta\in(0,\eta)$ small enough so that for all $\mb{x}\in E$ and all $\mb{y}\in X$ with $\lnorm{X}{\mb{y}}\leq \delta$ one has $B(\mb{x},\delta)\subseteq Q$ and
	\begin{equation*}
	\lnorm{Y}{g(\mb{x}+\mb{y})-g(\mb{x})-Dg(\mb{x})(\mb{y})}\leq \tfrac{\eta}{2}\lnorm{X}{\mb{y}}.
	\end{equation*}
	The conclusion of the lemma follows immediately.
\end{proof}
\begin{lemma}\label{lemma:sequence}
	Let $X$ and $Y$ be normed spaces, where $X$ is finite-dimensional.
	Let	$E\subseteq H_{0}\subseteq Q\subseteq X$ be sets, where $E$ is compact and purely unrectifiable, and $H_{0}$ is open. Let $f\dow{0}\in \lip(Q,Y)\cap C^1(H_{0},Y)$
	and $\eta\in (0,1)$. For each $k\in\N$ let $T\dow{k}\in \mc{L}(X,Y)$, $\varphi_{k}\in C(X,[0,1])$ and $\theta_k>0$. Then there is a sequence of sets $H_{j}\subseteq H_0$, Lipschitz mappings $f\dow{j}\colon Q\to Y$ and functions 
	$\psi_{j}\colon X\to [0,1]$ such that for each $j\ge1$
	\begin{enumerate}[(i)]
		\item\label{Hjs} $H_{j}$ is open, $E\subseteq H_{j}\subseteq H_{j-1}$ and $f\dow{j}\in \lip(Q, Y)\cap C^1({H}_{j}, Y)$.
		\item\label{conv} $\sup_{\mb{x}\in Q}\lnorm{ Y}{f\dow{j}(\mb{x})-f\dow{j-1}(\mb{x})}\leq \theta_j$ and $f\dow{j}(\mb{x})=f\dow{j-1}(\mb{x})$ whenever $\varphi_{j}(\mb{x})=0$.
		\item\label{psij} $\1_{H_{j}}(\mb{x})\varphi_{j}(\mb{x})\leq \psi_{j}(\mb{x})\leq \1_{H_{j-1}}\varphi_{j}(\mb{x})$ for all $\mb{x}\in X$.
		\item\label{fj_diff_ae} $f\dow{j}$ is differentiable Lebesgue a.e. on $H_{0}$.
		\item\label{der} $\opnorm{Df\dow{j}(\mb{x})-L}\leq \opnorm{Df\dow{0}(\mb{x})+\sum_{k=1}^{j}\psi_k(\mb{x})T\dow{k}-L}+\eta$ for any $L\in\mc{L}(X, Y)$ and Lebesgue almost every $\mb{x}\in H_{0}$.
	\end{enumerate}
\end{lemma}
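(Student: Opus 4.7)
I would prove the lemma by induction on $j$, at each step invoking Lemma~\ref{lemma:psi} to produce the perturbation. Fix auxiliary positive numbers $(\eta_j)_{j\in\N}$ with $\sum_{j=1}^{\infty}\eta_j\le \eta$, for example $\eta_j=\eta/2^j$.

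Given $f_{j-1}\in\lip(Q,Y)\cap C^{1}(H_{j-1},Y)$ (with $f_0$, $H_0$ as in the hypothesis), first use compactness of $E\subseteq H_{j-1}$ inside the open set $H_{j-1}$ to choose a positive $\tau_j\le\min\{\theta_j,\eta_j\}$ small enough that $B_X(E,\tau_j)\subseteq H_{j-1}$. Apply Lemma~\ref{lemma:psi} with this $E$, parameter $\tau_j$, continuous $\varphi_j$ and operator $T_j$ to obtain a Lipschitz mapping $\tilde f\colon X\to T_j(X)\subseteq Y$ with $\tilde f\in C^{1}(\tilde H,Y)$, a function $\tilde\psi\colon X\to[0,1]$ and an open set $\tilde H$ with $E\subseteq\tilde H\subseteq B_X(E,\tau_j)\subseteq H_{j-1}$. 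Set
\[
f_j:=f_{j-1}+\tilde f,\qquad \psi_j:=\tilde\psi,\qquad H_j:=\tilde H.
\]

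Properties (i)--(iv) are then direct verifications. Openness of $H_j$ and $E\subseteq H_j\subseteq H_{j-1}$ are built in; $f_j\in\lip(Q,Y)\cap C^{1}(H_j,Y)$ as a sum of two mappings with the relevant regularity on $H_j$. The bound $\sup_{Q}\lnorm{Y}{f_j-f_{j-1}}=\sup_{X}\lnorm{Y}{\tilde f}\le\tau_j\le\theta_j$ is immediate from Lemma~\ref{lemma:psi}(iii), while the identity $f_j=f_{j-1}$ on $\{\varphi_j=0\}$ has to be read off the proof of Lemma~\ref{lemma:psi} itself: the building blocks $g_i$ there are supported inside $\{\varphi_j>(i-1)/k\}$ and the final $C^{1}$ smoothing only alters them where the continuity weight $\xi$ is positive, i.e.\ where $\varphi_j>0$. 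Condition (iii) combines $\psi_j=\tilde\psi\ge\varphi_j\1_{\tilde H}=\varphi_j\1_{H_j}$ with $\psi_j\le\varphi_j\1_{B_X(E,\tau_j)}\le\varphi_j\1_{H_{j-1}}$ from Lemma~\ref{lemma:psi}(v) and the choice of $\tau_j$. Property (iv) is Rademacher applied to $f_j$ on the finite-dimensional $X$.

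The real content lies in (v), which I would deduce from the stronger inductive estimate
\[
\opnorm{Df_j(\mb{x})-L}\le\opnorm{Df_0(\mb{x})+\sum_{k=1}^{j}\psi_k(\mb{x})T_k-L}+\sum_{k=1}^{j}\eta_k
\]
valid for all $L\in\mc{L}(X,Y)$ and Lebesgue almost every $\mb{x}\in H_0$. The inductive step writes $Df_j=Df_{j-1}+D\tilde f$ a.e., applies the induction hypothesis with $L$ replaced by $L-\psi_j(\mb{x})T_j$, and absorbs the defect $\opnorm{D\tilde f-\psi_j T_j}\le\tau_j\le\eta_j$ supplied by Lemma~\ref{lemma:psi}(iv) through one triangle inequality. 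Because $\sum_k\eta_k\le\eta$, this yields (v). The principal obstacle to anticipate is exactly this bookkeeping: each application of Lemma~\ref{lemma:psi} injects a fresh error into the derivative approximation, and the tolerances $\tau_j$ passed to it must be chosen summably so that these errors accumulate to no more than the single threshold $\eta$ fixed in the statement.
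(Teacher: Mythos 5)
Your proof follows essentially the same route as the paper's: build $f_j = f_{j-1} + \tilde f_j$ inductively, at each step invoking Lemma~\ref{lemma:psi} with a summable tolerance $\tau_j\le\min\{\theta_j,\eta_j\}$ (with $\sum\eta_j\le\eta$) chosen so that $B_X(E,\tau_j)\subseteq H_{j-1}$, and set $\psi_j,H_j$ to be the objects returned by that lemma. Your inductive derivation of~\eqref{der} via the shifted operator $L-\psi_j(\mb{x})T\dow{j}$ is a correct repackaging of the paper's one-shot telescoping estimate, and your remark that the second half of~\eqref{conv} does not literally follow from $\supp \tilde f_j\subseteq\supp\varphi_j$ (as stated in Lemma~\ref{lemma:psi}\eqref{f_small_and_supp_contained}) but requires the stronger fact $\{\tilde f_j\ne\mb{0}\}\subseteq\{\varphi_j>0\}$, visible from the proof of Lemma~\ref{lemma:psi}, is a legitimate and well-taken observation.

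There is, however, one concrete gap: for~\eqref{fj_diff_ae} you write ``Rademacher applied to $f_j$ on the finite-dimensional $X$,'' but $f_j$ takes values in $Y$, which is an arbitrary (possibly infinite-dimensional) normed space, where Rademacher's theorem is not available. The a.e.\ differentiability must instead be obtained by decomposing $f_j = f\dow{0} + \sum_{k=1}^{j}\tilde f_k$, where $f\dow{0}\in C^1(H_0,Y)$ is differentiable everywhere on $H_0$ and the Lipschitz mapping $\sum_{k=1}^{j}\tilde f_k$ has range inside the finite-dimensional subspace $\Span\bigl(\bigcup_{k=1}^{j}T\dow{k}(X)\bigr)$, to which classical Rademacher applies. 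Equivalently, one can run the same induction you already set up: $f_{j-1}$ is a.e.\ differentiable on $H_0$ by the inductive hypothesis, and $\tilde f_j\colon X\to T\dow{j}(X)$ is Lipschitz with finite-dimensional codomain, hence a.e.\ differentiable, so $f_j$ is. You have all the ingredients for this — you explicitly construct $\tilde f_j$ with range in $T\dow{j}(X)$ — so it is a gap in the stated justification rather than in the underlying construction, but as written the sentence is false and needs to be replaced by this argument (which is precisely what the paper does by maintaining the invariant~\eqref{eq:fj-f0_finite_dim}).
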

\begin{proof}
	Suppose $j\geq 1$ and an open set $H_{j-1}\supseteq E$ and a Lipschitz mapping $f\dow{j-1}\in \lip(Q,Y)\cap C^1({H}_{j-1}, Y)$ have already been defined so that for $i=j-1$
	\begin{align}\label{eq:fj-f0_finite_dim}
	&(f\dow{i}-f\dow{0})(\mb{x})
	=\sum_{k=1}^i g\dow{k}(\mb{x})
	\quad
	\text{for all }\mb{x}\in Q,\\		
	&\text{where }
	\sum_{k=1}^i g\dow{k}\colon
	X\to\Span\br{\bigcup_{k=1}^{i}T\dow{k}(X)}
	\text{\ is a Lipschitz mapping }
	.
	\notag 
	\end{align}
	Here we interpret an empty sum as zero, an empty union as the empty set and the linear span of the empty set as $\set{0}$. Thus the conditions are met for $j=1$. Then we choose
	\begin{equation}\label{eq:etaj}
	0<\eta_{j}\leq \min\{2^{-j}\eta,\theta_{j}\}
	\end{equation}
	small enough so that $\cl{B}_{X}(E,\eta_{j})\subseteq H_{j-1}$ and apply Lemma~\ref{lemma:psi} to $X$, $Y$, $E$, $\eta_j$, $\varphi_j$, and $T\dow{j}$ to get a mapping $g\dow{j}\colon X\to T\dow{j}(X)$, a function $\psi_{j}\colon X\to [0,1]$ and an open set $H_{j}\subseteq X$ with the following properties:
	\begin{enumerate}[(a)]
		\item\label{Hj_in_between} $E\subseteq H_{j}\subseteq B_{X}(E,\eta_{j})\subseteq \cl{B}_{X}(E,\eta_{j})\subseteq H_{j-1}$.
		\item\label{gj_C1} $g\dow{j}\in \lip(X, Y)\cap C^1({H}_{j}, Y)$.
		\item\label{gj_small} $\sup_{\mb{x}\in X}\lnorm{ Y}{g\dow{j}(\mb{x})}\leq \eta_{j}$ and $\supp g\dow{j}\subseteq \supp \varphi_{j}$.
		\item\label{gj_der} $\opnorm{Dg\dow{j}(\mb{x})-\psi_j(\mb{x})T\dow{j}}\leq \eta_{j}$ for Lebesgue almost every $\mb{x}\in X$.
		\item\label{psij_phij} $0\leq \psi_{j}(\mb{x})\leq \1_{B_{X}(E,\eta_{j})}\varphi_{j}(\mb{x})$ for all $\mb{x}\in X$ and $\psi_{j}(\mb{x})=\varphi_{j}(\mb{x})$ for all $\mb{x}\in H_{j}$.
	\end{enumerate}
	Let $f\dow{j}:=f\dow{j-1}+g\dow{j}$; note that $f\dow{j}$ is defined on $Q$ and that~\eqref{eq:fj-f0_finite_dim} extends to $i=j$. Now we have that~\eqref{Hjs} and~\eqref{conv} are implied by~\eqref{Hj_in_between}, \eqref{gj_C1} and~\eqref{gj_small}, and~\eqref{psij} follows from~\eqref{psij_phij} and~\eqref{Hj_in_between}. Property~\eqref{fj_diff_ae} of $f\dow{j}$ follows from~\eqref{eq:fj-f0_finite_dim}, the finite-dimensionality of $X\supseteq Q$ and of all $T\dow{k}(X)$ in~\eqref{eq:fj-f0_finite_dim}, Rademacher's theorem and $f\dow{0}\in C^1(H_{0},Y)$. Finally, we check~\eqref{der}. Let $j\in\N$ and $\mb{x}$ be any point from $H_{0}$ lying in the intersection of the full measure sets corresponding to the mappings $g\dow{1},\ldots,g\dow{j}$ given by~\eqref{gj_der}. Let $L\in \mc{L}(X, Y)$ be arbitrary. Then we have
	\begin{multline*}
	\opnorm{Df\dow{j}(\mb{x})-L}=\opnorm{Df\dow{0}(\mb{x})+\sum_{k=1}^{j}Dg\dow{k}(\mb{x})-L}\\
	\le\opnorm{Df\dow{0}(\mb{x})+\sum_{k=1}^{j}\psi_{k}(\mb{x})T\dow{k}-L}+\sum_{k=1}^{j}\opnorm{Dg\dow{k}(\mb{x})-\psi_{k}(\mb{x})T\dow{k}}\\
	\leq \opnorm{Df\dow{0}(\mb{x})+\sum_{k=1}^{j}\psi_{k}(\mb{x})T\dow{k}-L}+\eta,
	\end{multline*}
	where, to get the last inequality, we applied~\eqref{gj_der} and~\eqref{eq:etaj}.
\end{proof}
\begin{lemma}\label{lemma:bmgame_step}
	Let $X$ and $Y$ be normed spaces, where $X$ is finite-dimensional. Let $E\subseteq H \subseteq Q\subseteq X$ be sets, where $E$ is compact and purely unrectifiable, $H$ is open and $Q$ is bounded and closed. Let $\theta>0$ and $f\in\lip_1(Q,Y)\cap C^1(H,Y)$ and $T\in\mc{L}(X,Y)$ be such that $\lip(f),\opnorm{T}<1$. Then there exist an open set $U\subseteq X$, a function $g\in \lip_1(Q,Y)\cap C^1(U,Y)$ and a positive number $\delta\in(0,\theta)$ such that
	\begin{enumerate}[(i)]
		\item\label{U_between_E_and_H} $E\subseteq U\subseteq H$
		\item\label{g_theta_close} $\sup_{\mb{x}\in Q}\lnorm{Y}{g(\mb{x})-f(\mb{x})}\leq \theta$
		\item\label{slopes_for_nearby} For every mapping $h\colon X\to Y$ with $\sup_{\mb{x}\in X}\lnorm{Y}{h(\mb{x})-g(\mb{x})}\leq \theta\delta/8$, every $\mb{x}\in E$ and every $\mb{y}\in X$ with $\norm{\mb{y}}_{X}\leq \delta$ we have
		\begin{equation*}
		\norm{h(\mb{x}+\mb{y})-h(\mb{x})-T(\mb{y})}_{Y}\leq \theta\delta.
		\end{equation*}
	\end{enumerate}
\end{lemma}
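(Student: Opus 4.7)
The plan is to modify $f$ into a new $1$-Lipschitz mapping $g$ on $Q$ with $Dg\approx T$ on a neighborhood of $E$ and $\lnorm{\infty}{g-f}\le\theta$; conclusion~\eqref{slopes_for_nearby} will then follow by applying Lemma~\ref{lemma:approximation} to $g$ and replacing $Dg(\mb{x})$ by $T$ via the triangle inequality.

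First I would fix small $\varepsilon,\eta>0$ with $\varepsilon+\eta\le\theta/2$ and $\max\{\lip(f),\opnorm{T}+\varepsilon\}+\eta<1$. Since $f\in C^{1}(H,Y)$ and $E\subseteq H$ is compact, $Df$ is uniformly continuous on $E$, so I cover $E$ by finitely many open sets $V_{1},\ldots,V_{N}\subseteq H$ on each of which $Df$ oscillates by at most $\varepsilon$, pick $\mb{x}_{j}\in E\cap V_{j}$, set $A_{j}:=Df(\mb{x}_{j})$ and $T\dow{j}:=T-A_{j}$, and build a continuous partition of unity $(\varphi_{j})$ with $\supp\varphi_{j}\subseteq V_{j}$ and $\sum_{j}\varphi_{j}\equiv 1$ on some open $W\supseteq E$. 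Then I apply Lemma~\ref{lemma:sequence} with $H_{0}:=H$, $f\dow{0}:=f$, and data $(T\dow{j},\varphi_{j},\theta_{j},\eta)$, where $\theta_{j}>0$ satisfy $\sum_{j}\theta_{j}\le\theta/2$; setting $g:=f\dow{N}$ and $U:=H_{N}$, conclusions~\eqref{U_between_E_and_H} and~\eqref{g_theta_close} are immediate from Lemma~\ref{lemma:sequence}\eqref{Hjs} and the telescoping sup-norm bound.

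The heart of the argument is checking $g\in\lipone(Q,Y)$ and $\opnorm{Dg(\mb{x})-T}\le\theta/2$ for every $\mb{x}\in E$. Using Lemma~\ref{lemma:sequence}\eqref{der} with $L\in\{0,T\}$ and the identity
\begin{equation*}
Df(\mb{x})+\sum_{k}\psi_{k}(\mb{x})T\dow{k}=(1-s(\mb{x}))Df(\mb{x})+\sum_{k}\psi_{k}(\mb{x})\bigl(Df(\mb{x})-A_{k}\bigr)+s(\mb{x})T,
\end{equation*}
with $s(\mb{x}):=\sum_{k}\psi_{k}(\mb{x})\in[0,1]$, I observe that $\psi_{k}(\mb{x})\ne 0$ forces $\mb{x}\in\supp\varphi_{k}\subseteq V_{k}\subseteq H$ and hence $\opnorm{Df(\mb{x})-A_{k}}\le\varepsilon$. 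For $L=0$ this yields $\opnorm{Dg(\mb{x})}\le\max\{\lip(f),\opnorm{T}+\varepsilon\}+\eta<1$ for a.e.\ $\mb{x}\in H$, while off $H$ we have $Dg=Df$ a.e.\ with $\opnorm{Df}\le\lip(f)<1$. For $L=T$ and $\mb{x}\in E\subseteq W$, where $\sum_{k}\varphi_{k}(\mb{x})=1$ and $\psi_{k}(\mb{x})=\varphi_{k}(\mb{x})$ by Lemma~\ref{lemma:psi}\eqref{psi_continuous} (using $E\subseteq H_{N}\subseteq H_{k}$), the inner expression collapses to $\sum_{k}\varphi_{k}(\mb{x})(Df(\mb{x})-A_{k})$ of norm $\le\varepsilon$, and continuity on $H_{N}$ of both sides of the inequality upgrades ``a.e.'' to pointwise, giving $\opnorm{Dg(\mb{x})-T}\le\varepsilon+\eta\le\theta/2$ on $E$.

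To finish, I would apply Lemma~\ref{lemma:approximation} to $g$, $E$, $U$ with $\eta':=\theta/2$, obtaining $\delta\in(0,\theta/2)\subseteq(0,\theta)$ such that $\lnorm{\infty}{h-g}\le\theta\delta/8$ implies $\lnorm{Y}{h(\mb{x}+\mb{y})-h(\mb{x})-Dg(\mb{x})\mb{y}}\le\theta\delta/2$ for $\mb{x}\in E$ and $\lnorm{X}{\mb{y}}\le\delta$; the triangle inequality combined with $\opnorm{Dg(\mb{x})-T}\le\theta/2$ then yields~\eqref{slopes_for_nearby}. The main obstacle is the upgrade from an a.e.\ derivative bound to the global estimate $\lip(g)\le 1$ on the possibly non-convex $Q$; this rests on the fact that $g$ coincides with $f$ outside the closed set $\bigcup_{j}\supp\varphi_{j}\subseteq H$ and on Lemma~\ref{lemma:awkward_lipschitz_bound}, exactly as invoked in the proof of Lemma~\ref{lemma:C1_der_close_to_psiT}.
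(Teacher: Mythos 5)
Your proof is correct and reaches the conclusion by the same two main tools as the paper---Lemma~\ref{lemma:sequence} followed by Lemma~\ref{lemma:approximation}---but it packages the application of Lemma~\ref{lemma:sequence} more efficiently. The paper works with a locally finite, possibly countably infinite, partition of unity $(\gamma_k)$ subordinated to small balls $B_X(\mb{x}_k,\tau)$ and feeds Lemma~\ref{lemma:sequence} the \emph{pairs} $T_{2k-1}=-Df(\mb{x}_k)$, $T_{2k}=T$ with $\varphi_{2k-1}=\varphi_{2k}=\gamma_k$; it then passes to the limit $g=\lim_j f\dow{2j}$, verifies $a_{k_{\mb{x}}}=0$, $b_{k_{\mb{x}}}=1$ on $E$, and carefully builds an open $U=\bigcup_{\mb{x}\in E}V_{\mb{x}}$ on which $g$ is $C^1$. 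You instead exploit compactness to take a \emph{finite} cover $V_1,\ldots,V_N$ of $E$ and a single operator $T\dow{j}:=T-Df(\mb{x}_j)$ per index. This makes $g:=f\dow{N}$ a finite modification of $f$, so $g\in\lip(Q,Y)\cap C^1(H_N,Y)$ is immediate from Lemma~\ref{lemma:sequence}\eqref{Hjs} with no limiting argument and no need to assemble $U$ from local patches; you can simply set $U:=H_N$. The identity $Df+\sum_k\psi_k T\dow{k}=(1-s)Df+\sum_k\psi_k(Df-A_k)+sT$ with $s=\sum_k\psi_k\in[0,1]$ then plays exactly the role of the paper's $a_j Df + b_j T + P_j$ decomposition, delivering $\lip(g)\le 1$ (via $L=0$ in Lemma~\ref{lemma:sequence}\eqref{der} together with $g=f$ off $H$ and Lemma~\ref{lemma:awkward_lipschitz_bound}) and $\opnorm{Dg(\mb{x})-T}\le\varepsilon+\eta$ on $E$ (via $L=T$ and the a.e.-to-everywhere upgrade by continuity of $Dg$ and of $\psi_k|_{H_N}=\varphi_k|_{H_N}$ on the open set $H_N\cap W$). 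Two small citation slips: the identity $\psi_k(\mb{x})=\varphi_k(\mb{x})$ for $\mb{x}\in H_N\subseteq H_k$ comes from Lemma~\ref{lemma:sequence}\eqref{psij}, not Lemma~\ref{lemma:psi}\eqref{psi_continuous}; and when invoking the oscillation bound $\opnorm{Df(\mb{x})-A_k}\le\varepsilon$ you should make explicit that $\mb{x}_k\in V_k$ together with $\mb{x}\in\supp\varphi_k\subseteq V_k$ is what gives it. Neither affects the validity of the argument, which is a genuine and tidy simplification of the paper's proof.
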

\begin{proof}
	Choose $\rho>0$ so that $\cl{B}_{X}(E,\rho)\subseteq H$. Next, exploit the uniform continuity of the partial derivatives of $f$ on the compact set $\cl{B}_{X}(E,\rho)$ to find $\tau\in(0,\rho)$ such that
	\begin{equation*}
	\mb{x},\mb{y}\in \cl{B}_{X}(E,\rho),\, \norm{\mb{y}-\mb{x}}_{X}\leq 2\tau\qquad \Rightarrow\qquad \opnorm{Df(\mb{y})-Df(\mb{x})}\leq \zeta,
	\end{equation*}
	where 
	\begin{equation}\label{eq:zeta}
	\zeta:=\min\left\{\frac{1-\max\set{\lip(f),\opnorm{T}}}{3},\frac{\theta}{4}\right\}.
	\end{equation}
	Let $(\gamma_{k})_{k\in\N}$ be a locally finite, smooth partition of unity subordinated to the family $\set{B_{X}(\mb{x},\tau)\colon \mb{x}\in E}$ and for each $k\in\N$ choose $\mb{x}_{k}\in E$ such that $\supp \gamma_{k}\subseteq B_{X}(\mb{x}_{k},\tau)$. Apply Lemma~\ref{lemma:sequence} to $X,\ Y,\ E,\ H_0=H,\ Q,\ f\dow{0}=f,\ \eta=\zeta$, $T\dow{2k}=T$, $T\dow{2k-1}=-Df(\mb{x}_{k})$, $\varphi_{2k-1}=\varphi_{2k}=\gamma_{k}$ and $\theta_k=2^{-k}\theta$ for each $k\in\N$
	to obtain sequences of open sets $(H_{j})_{j\in\N}$, Lipschitz mappings $(f\dow{j}\colon Q\to Y)_{j\in\N}$ and functions $(\psi_{j}\colon X\to[0,1])_{j\in\N}$. 
	By Lemma~\ref{lemma:sequence}\eqref{conv} and by the choice of $\theta_j$, the sequence of Lipschitz mappings $(f\dow{j})_{j\in\N}$ converges, in $(C(Q),\lnorm{\infty}{\cdot})$, to a continuous mapping $g\colon Q\to Y$ satisfying~\eqref{g_theta_close} of the present lemma. For each $\mb{x}\in H$ and for each $j\ge1$ we may write
	\begin{equation}\label{eq:def_aj_bj}
	Df(\mb{x})+\sum_{m=1}^{2j}\psi_{m}(\mb{x})T\dow{m}=a_jDf(\mb{x})+b_jT+P_j(\mb{x}),
	\end{equation} 
	where
	\begin{equation}\label{eq:a_j_b_j}
	a_j=1-\sum_{m=1}^{j}\psi_{2m-1}(\mb{x}),
	\qquad 
	b_j=\sum_{m=1}^{j}\psi_{2m}(\mb{x}) 
	\end{equation}
	and
	\begin{equation*}
	P_j(\mb{x})=\sum_{m=1}^{j}\psi_{2m-1}(\mb{x})(Df(\mb{x})-Df(\mb{x}_{m}))\in \mc{L}(X,Y).
	\end{equation*} 
	We now derive a bound on $\opnorm{P_j(\mb{x})}$. From Lemma~\ref{lemma:sequence}\eqref{psij},
\begin{equation*}
\opnorm{P_j(\mb{x})}\le\sum_{m=1}^{j}\varphi_{2m-1}(\mb{x})\opnorm{Df(\mb{x})-Df(\mb{x}_{m})}=\sum_{m=1}^{j}\gamma_{m}(\mb{x})\opnorm{Df(\mb{x})-Df(\mb{x}_{m})}. 
\end{equation*}
If the $m$th term of the latter sum is not equal to $0$, then 
$\mb{x}\in \supp \gamma_{m}\subseteq B_{X}(\mb{x}_{m},\tau)$ and therefore the choice of $\tau$ implies an upper bound of $\gamma_m(\mb{x})\zeta$ for the $m$th term. Using that $\gamma_m$ is a partition of unity, we conclude that 
	\begin{equation}\label{eq:norm_Pj}
	\opnorm{P_j(\mb{x})}\leq 
	\sum_{m=1}^{j}\gamma_{m}(\mb{x})\zeta	
	\leq \zeta.
	\end{equation}
	To estimate the norms of remaining terms on the right hand side of~\eqref{eq:def_aj_bj} we first show that
	\begin{equation}\label{eq:bounds_aj_bj}
	a_{j},b_{j}\geq0, \qquad a_{j}+b_{j}\leq 1\qquad \text{for all $j\in\N$.}
	\end{equation}
	It is easy to see $b_j\ge0$ as $\psi_{2m}(\mb{x})\ge0$ for all $m$ and all $\mb{x}$. Moreover, $a_j\ge0$ holds because $\gamma_m$ form a partition of unity, hence $\sum_{m=1}^j\psi_{2m-1}(\mb{x})\le\sum_{m=1}^j\varphi_{2m-1}(\mb{x})=\sum_{m=1}^j\gamma_{m}(\mb{x})\le1$. To see $a_j+b_j\le1$, we 
	use both inequalities of Lemma~\ref{lemma:sequence}\eqref{psij} and $\varphi_{2m}(\mb{x})=\varphi_{2m-1}(\mb{x})$ for each $m\ge1$ to obtain from~\eqref{eq:a_j_b_j}
	\begin{equation*}
	a_j+b_j
	=1+\sum_{m=1}^j\br{\psi_{2m}(\mb{x})-\psi_{2m-1}(\mb{x})}
	\le
	1+	\sum_{m=1}^j\1_{H_{2m-1}}(\mb{x})\br{\varphi_{2m}(\mb{x})-\varphi_{2m-1}(\mb{x})}
	=1.
	\end{equation*}
	Next, we apply 
	Lemma~\ref{lemma:sequence}\eqref{fj_diff_ae} and~\eqref{der} with $L=0$, followed by~\eqref{eq:def_aj_bj}, \eqref{eq:bounds_aj_bj}, \eqref{eq:norm_Pj} and~\eqref{eq:zeta}
	to get, for Lebesgue almost all $\mb{x}\in H$,
	\begin{align*}
	\opnorm{Df\dow{2j}(\mb{x})}
	&\leq \opnorm{Df(\mb{x})+\sum_{m=1}^{2j}\psi_{m}(\mb{x})T\dow{m}}+\zeta
	=\opnorm{a_jDf(\mb{x})+b_jT+P_j(\mb{x})}+\zeta
	\\
	&\leq a_j\opnorm{Df(\mb{x})}+b_j\opnorm{T}+\opnorm{P_j(\mb{x})}+\zeta\\
	&\leq a_j\lip(f)+b_j\opnorm{T}+2\zeta 
	\leq \max\set{\lip(f),\opnorm{T}}+2\zeta\leq 1.
	\end{align*}
	Since $\opnorm{Df\dow{2j}(\mb{x})}\le1$ for almost every $\mb{x}\in H$ and $f\dow{2j}\in\lip(Q,Y)$ we have that $f\dow{2j}$ is $1$-Lipschitz on each open ball $B\subseteq H$; see Corollary~\ref{cor:lip-loc_glob}. Hence $f\dow{2j}$ is locally $1$-Lipschitz on the open set $H$. Moreover, $f\dow{2j}|_{Q\setminus H}=f\dow{0}=f$ by Lemma~\ref{lemma:sequence}\eqref{conv} and the fact that $\supp \varphi_{2m-1}=\supp\varphi_{2m}=\supp \gamma_{m}\subseteq B_{X}(\mb{x}_{m},\tau)\subseteq B_{X}(E,\rho)\subseteq H$ for each $1\le m\le j$. Therefore $\lip(f\dow{2j}|_{Q\setminus H})\leq \lip(f|_{Q\setminus H})\leq 1$. Since $Q$ is closed, it follows that $f\dow{2j}\in\lip_1(Q,Y)$; see Lemma~\ref{lemma:awkward_lipschitz_bound}. Since $j\in\N$ was arbitrary, we deduce that $g=\lim f\dow{2j}\in\lip_1(Q,Y)$ too. 
	
	As $(\gamma_k)_{k\in\N}$ is a locally finite partition of unity,
	for each $\mb{x}\in E$ there exist an open set $U_{\mb{x}}\subseteq H$ and a number $k_{\mb{x}}\in\N$ such that $\mb{x}\in U_{\mb{x}}$ and $\supp \gamma_{k}\cap U_{\mb{x}}=\emptyset$ for all $k>k_{\mb{x}}$. For each $\mb{x}\in E$ let $V_{\mb{x}}=U_{\mb{x}}\cap {H}_{k_{\mb{x}}}$, where ${H}_{k_{\mb{x}}}$ is defined by Lemma~\ref{lemma:sequence}\eqref{Hjs}. Then $V_{\mb{x}}$ is an open set containing ${\mb{x}}$ and contained in $U_{\mb{x}}\subseteq H$.By the second half of Lemma~\ref{lemma:sequence}\eqref{conv}, $g|_{V_{\mb{x}}}=f\dow{k}|_{V_{\mb{x}}}=f\dow{k_{\mb{x}}}|_{V_{\mb{x}}}$ for every $\mb{x}\in E$ and $k\ge k_{\mb{x}}$. Therefore,
	\begin{equation}\label{eq:Dg}
	Dg({\mb{x}})=Df\dow{k}({\mb{x}}) 
	\text{ for all }
	{\mb{x}}\in E \text{ and } k\ge k_{\mb{x}}
	\end{equation}
	and $g\in C^1(U,Y)$, where the open set
	\begin{equation*}
	U:=\bigcup_{\mb{x}\in E}V_{\mb{x}}
	\end{equation*}
	satisfies $E\subseteq U\subseteq H$.
	Note now that~\eqref{U_between_E_and_H} of the present lemma is satisfied and that we have established $g\in \lip_1(Q,Y)\cap C^1(U,Y)$.
	
	Apply now Lemma~\ref{lemma:approximation}
	to $X$, $Y$, $E\subseteq U\subseteq Q$, the mapping $g\colon Q\to Y$ and 
	$\eta=\theta/2$, to find $\delta\in(0,\theta/2)$ such that whenever $h\colon Q\to Y$ satisfies 
	$\sup_{\mb{x}\in Q}\lnorm{Y}{h(\mb{x})-g(\mb{x})}\leq \theta \delta/8$, for
	every $\mb{x}\in E$ and every $\mb{y}\in X$ with $\lnorm{X}{\mb{y}}\leq \delta$ we have
	\begin{equation}\label{eq:lemma_approximation}
	\lnorm{Y}{h(\mb{x}+\mb{y})-h(\mb{x})-Dg(\mb{x})(\mb{y})}\leq \theta\delta/2.
	\end{equation}
	Our aim now is to replace $Dg(\mb{x})$, for $\mb{x}\in E$, by $T$ in~\eqref{eq:lemma_approximation}, to get~\eqref{slopes_for_nearby}.
	For this, fix $\mb{x}\in E$ and use $E\subseteq H_{j}$ for all $j\ge0$ (from Lemma~\ref{lemma:sequence}\eqref{Hjs}) and Lemma~\ref{lemma:sequence}\eqref{psij}  to conclude that 
	\begin{equation}\label{eq:psi=phi_on_E}
	\psi_i(\mb{x})=\varphi_i(\mb{x}) \qquad \text{for all $i\geq 1$.}
	\end{equation}
	Hence, letting $j=k_{\mb{x}}$ in~\eqref{eq:a_j_b_j} we get, using $\sum_{m=1}^{k_{\mb{x}}}\gamma_{m}(\mb{x})=\sum_{m=1}^{\infty}\gamma_{m}(\mb{x})
	=1$, 
	\begin{align*}
	a_{k_{\mb{x}}}
	&=1-\sum_{m=1}^{k_{\mb{x}}}\psi_{2m-1}(\mb{x})
	=1-\sum_{m=1}^{k_{\mb{x}}}\varphi_{2m-1}(\mb{x})
	=1-\sum_{m=1}^{k_{\mb{x}}}\gamma_{m}(\mb{x})
	=0,\\
	\text{and}\\
	b_{k_{\mb{x}}}
	&=\sum_{m=1}^{k_{\mb{x}}}\psi_{2m}(\mb{x})=\sum_{m=1}^{k_{\mb{x}}}\varphi_{2m}(\mb{x})
	=\sum_{m=1}^{k_{\mb{x}}}\gamma_{m}(\mb{x})
	=1.
	\end{align*}
	Substituting, for $\mb{x}\in E$, $a_{k_{\mb{x}}}=0$, $b_{k_{\mb{x}}}=1$ and~\eqref{eq:psi=phi_on_E} into~\eqref{eq:def_aj_bj} we obtain
	\begin{equation*}
	Df(\mb{x})+\sum_{m=1}^{2{k_{\mb{x}}}}\varphi_{m}(\mb{x})T\dow{m}-T
	=P_{k_{\mb{x}}}(\mb{x}).	
	\end{equation*}
	Thus by~\eqref{eq:Dg}, Lemma~\ref{lemma:sequence}\eqref{der} applied to $L=T$, \eqref{eq:psi=phi_on_E}, \eqref{eq:norm_Pj} and, finally,~\eqref{eq:zeta}  
	\begin{align*} 
	\opnorm{Dg(\mb{x)}-T}
	&=\opnorm{Df\dow{2k_{\mb{x}}}(\mb{x})-T}
	\le
	\opnorm{Df(\mb{x})+\sum_{m=1}^{2k_{\mb{x}}}\varphi_{m}(\mb{x})T\dow{m}-T
	}+\zeta\\
	&=\opnorm{P_{k_{\mb{x}}}}+\zeta
	\le2\zeta\le\theta/2,
	\end{align*}
	which together with~\eqref{eq:lemma_approximation}, implies~\eqref{slopes_for_nearby}.
\end{proof}
We are now ready to prove Theorem~\ref{thm:typical_nondiff-dash} which we restate again. 
\typicalnondiff*
\begin{proof}
	Let $\Lip=\lip_1(Q,Y)$. 	
	Since $Y$ is Banach, we may assume without loss of generality that $Q$ is closed; the mapping $\iota\colon \lip_{1}(\cl{Q},Y)\to\Lip$, $f\mapsto f|_{Q}$ defines a surjective isometry $\lip_{1}(\cl{Q},Y)\to \Lip$ and so a set $\mc{R}\subseteq \lip_{1}(\cl{Q},Y)$ is residual in $\lip_{1}(\cl{Q},Y)$ if and only if $\iota(\mc{R})$ is residual in $\Lip$. Note also that $\mc{D}_{f}(\mb{x})\supseteq \Ball_{W}$ for $f\in\lip_{1}(\cl{Q},Y)$ and $\mb{x}\in E$ if and only if $\mc{D}_{\iota(f)}(\mb{x})\supseteq \Ball_{W}$. 
	
	Using that an intersection of countably many residual subsets of $(\Lip,\lnorm{\infty}{\cdot})$ is residual, we may assume without loss of generality that $E$ is a compact purely unrectifiable set.
	Further, since $\mc{D}_{f}(\mb{y})$ is closed by Lemma~\ref{lemma:Df_separable} and contained in $\Ball_{\mc{L}(X,Y)}$ for every $f\in\Lip$ and $\mb{y}\in \inter Q$, and $\inter\Ball_{\mc{L}(X,Y)}$ contains a countable subset dense in $\Ball_{W}$, it suffices to prove that the set $S_{L}=\{f\in\Lip
	\colon L\in\mc{D}_{f}(\mb{x})\text{ for each }\mb{x}\in E\}$ is residual for each $L\in \inter \Ball_{\mc{L}(X,Y)}$, i.e.\ $S_L$ is residual in $(\Lip,\lnorm{\infty}{\cdot})$ whenever $\opnorm{L}<1$. So, fixing $L\in{\mc{L}(X,Y)}$ with $\opnorm{L}<1$, we now describe a winning strategy for Player~II for the Banach-Mazur game in $(\Lip,\lnorm{\infty}{\cdot})$ with the target set $S=S_{L}$. 
	
	In the $n$th round of the game, Player~I and Player~II will construct open balls $B_\Lip(f_n,r_n)$ and $B_\Lip(g_n,d_n)$ respectively, centred at $f_n,g_n\in\Lip$, such that
	\begin{equation*}
	B_\Lip(f_{n+1},r_{n+1})\subseteq B_\Lip(g_n,d_n)\subseteq B_\Lip(f_n,r_n) 
	\text{ for each }n\ge1.
	\end{equation*}
	We define Player II's winning strategy as follows. Let $n\ge1$ be fixed and assume that Player~I has made their $n$th move $B_\Lip(f_n,r_n)$. Let $f_{n}\upp{1}\in B_{\Lip}(f_{n},r_{n})$ be such that $\lip(f_{n}\upp{1})<1$; such $f_{n}\upp{1}$ may be taken of the form $qf_{n}$ for $q\in (0,1)$ chosen sufficiently close to $1$. Next, we apply a smoothing result for Lipschitz mappings on finite-dimensional spaces, Lemma~\ref{lemma:smooth_PlayerI's_mapping_around_E}, to $f=f_{n}\upp{1}\in B_{\Lip}(f_{n},r_{n})$ to find an open set $H\subseteq X$ with $E\subseteq H\subseteq Q$ and a mapping 
	$f_n\upp{2}\in \lip(Q,Y)\cap C^1(H,Y)$ such that $\lip(f_n\upp{2})<1$, thus $f_n\upp{2}\in\Lip$, and $f_n\upp{2}\in B_{\Lip}(f_{n},r_{n})$. Fix any \[0<\eta_n<\min\{2^{-n},r_n/3,1-\lip(f_n\upp{2}),1-\opnorm{L},r_n-\lnorm{\infty}{f_n-f_n\upp{2}}\} \]
	and apply Lemma~\ref{lemma:bmgame_step} with $X$, $Y$, $E$, $H$, $Q$, $\theta=\eta_n$,  $f=f_n\upp{2}$ and $T=L$ to find
	$g_n\coloneqq g\in\lip_1(Q,Y)$ and $\delta_n\coloneqq \delta$. Then $g_n\in B_{\Lip}(f_n\upp{2},\eta_n)\subseteq B_{\Lip}(f_n,r_n)$. Fix $d_n\in(0,\theta\delta/8)=(0,\eta_n\delta_n/8)$ such that $\cl{B}_\Lip(g_n,d_n)\subseteq B_\Lip(f_n,r_n)$. Player~II plays $B_\Lip(g_n,d_n)$ as their $n$th move. 
	
	Player II's strategy ensures that $\cl{B}_{\Lip}(g_{i},d_{i+1})\subseteq B_{\Lip}(g_{i},d_{i})$ for all $i\in \N$, where $d_{i}\to 0$ as $i\to\infty$. Therefore, the intersection $\bigcap_{n=1}^\infty {B}_\Lip(g_n,d_n)$ is a single function $h\in\Lip$. We now show that $h\in S_L$, i.e.\ $L\in\mc{D}_{h}(\mb{x})$ for each $\mb{x}\in E$. 
	
	For an arbitrary $\varepsilon>0$ let $n\ge1$ be such that $\eta_n<\varepsilon$. Since $h\in B_\Lip(g_n,d_n)\subseteq B_\Lip(g_n,\eta_n\delta_n/8)$, we conclude, by Lemma~\ref{lemma:bmgame_step}\eqref{slopes_for_nearby}, that for all $\mb{x}\in E$
	\[
	\sup_{\mb{y}\in\cl{B}_{X}(\mb{0}_X,\delta_n)}
	\frac{\lnorm{Y}{h(\mb{x}+\mb{y})-h(\mb{x})-L(\mb{y})
		}
	}{\delta_n}	
	\le\eta_n<\varepsilon.
	\]
	Thus for all $\mb{x}\in E$
	\[
	\liminf_{\delta\to0+}
	\sup_{\mb{y}\in\cl{B}_{X}(\mb{0}_X,\delta)}
	\frac{\lnorm{Y}{h(\mb{x}+\mb{y})-h(\mb{x})-L(\mb{y})
		}
	}{\delta}	=0,
	\]
	and so $L\in\mc{D}_{h}(\mb{x})$. Thus $h\in S_L$, which finishes the proof that $S_L$ is residual in $\Lip$.
\end{proof}
\section{Smooth approximation of Lipschitz mappings.}\label{sec:SA}
This section is devoted to results which guarantee existence of smooth approximations of Lipschitz mappings. In particular, in Lemma~\ref{lemma:SLA} we show that it is possible to `assemble' such an approximation from many pieces related to partition of unity, and in Theorem~\ref{thm:johanis} we extend a result of~\cite{johanis2003approximation}, in order to make it applicable in more general settings, in particular in the proof of Theorem~\ref{thm:typical_nondiff-dash}. 
\begin{define}\label{def:sla}
	Let $X,Y$ be normed spaces, $U\subseteq X$ be open, $h\in\lip(U,Y)$, $\theta>0$. We define the set $\SLA(h,U,Y,\theta)$ of smooth Lipschitz approximations of $h$ over $U$ with error $\theta$ as the collection of mappings
	$g\in\lip(U,Y)\cap C^1(U,Y)$ such that
	$\lnorm{Y}{g(\mb{x})-h(\mb{x})}\le\theta$ for every $\mb{x}\in U$.
\end{define}
\begin{lemma}\label{lemma:SLA}
	Let $X$ and $Y$ be normed spaces. Let $Q\subseteq X$ be closed, $h\in\lip(Q,Y)$
	and $U\subseteq Q$ be open and such that it admits a locally finite, $C^{1}$-smooth partition of unity $(\varphi_{k})_{k\in\N}$ with $\supp(\varphi_k)\subseteq U$ for all $k\ge1$. Let $\theta>0$ and $\theta_k>0$ be such that $\sum_{k\ge1}(1+\lip(\varphi_k))\theta_k\le\theta$.
	Let $A_k$ be open sets such that $\supp(\varphi_k)\subseteq A_k\subseteq U$ for all $k\ge1$, and for each $k\ge1$, let  
	$h_k\colon U\to Y$ be a mapping such that $h_{k}\in \SLA(h,A_{k},Y,\theta_{k})$. Then the mapping $\tilde{h}\colon Q\to Y$,
		\begin{equation}\label{eq:def_H}
			\tilde{h}(\mb{x})=\begin{cases}
				\sum_{k\in\N}\varphi_{k}(\mb{x})h_{k}(\mb{x}) & \text{if }\mb{x}\in U,\\
				h(\mb{x}) & \text{if }\mb{x}\in Q\setminus U
			\end{cases}
		\end{equation}
has the following properties.
	\begin{enumerate}[(i)]
		\item\label{SLAi} We have
 $\tilde{h}\in C(Q,Y)\cap C^1(U,Y)$ and $\lnorm{Y}{\tilde{h}(\mb{x})-h(\mb{x})}\le\theta$ for all $\mb{x}\in Q$, and 
		$\lip(\tilde{h})\le\max(\lip(h),\theta+\sup_{k\ge1}\lip(h_{k}|_{A_k}))$.
		In particular, if $\sup_{k\ge1}\lip(h_{k}|_{A_k})$ is finite, then $\tilde{h}\in\lip(Q,Y)$.
		\item\label{SLAii} If, additionally, for each $\mb{x}\in U$ there is $P(\mb{x})\in \mc{L}(X,Y)$ and $\eta(\mb{x})>0$ such that 
		\begin{equation*}
		\opnorm{Dh_{k}(\mb{x})-P(\mb{x})}\leq \eta(\mb{x})\qquad \text{for all }\mb{x}\in A_{k}\text{ and }k\in\N, 
		\end{equation*}
		then 
		$\opnorm{D\tilde{h}(\mb{x})-P(\mb{x})}\leq \eta(\mb{x})+\sum_{k\in\N}\lip(\varphi_{k})\1_{\supp (\varphi_{k})}(\mb{x})\theta_{k}$ for all $\mb{x}\in U$.
	\end{enumerate}
\end{lemma}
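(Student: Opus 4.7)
The proof rests on four ingredients: the partition-of-unity identities $\sum_k\varphi_k\equiv 1$ and $\sum_k D\varphi_k\equiv 0$ on $U$, local finiteness of $(\varphi_k)$, the pointwise bound $\lnorm{Y}{h_k-h}\le\theta_k$ on $A_k$ coming from the $\SLA$-hypothesis, and the containment $\supp(\varphi_k)\subseteq A_k$, which upgrades any pointwise statement ``$\varphi_k(\mb{x})\ne 0$'' to ``$\mb{x}\in A_k$''.

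First I would verify the uniform approximation bound. For $\mb{x}\in U$, $\sum_k\varphi_k(\mb{x})=1$ allows the decomposition $\tilde h(\mb{x})-h(\mb{x})=\sum_k\varphi_k(\mb{x})(h_k(\mb{x})-h(\mb{x}))$; any summand with $\varphi_k(\mb{x})\ne 0$ has $\mb{x}\in\supp(\varphi_k)\subseteq A_k$, so its norm is bounded by $\varphi_k(\mb{x})\theta_k$, yielding $\lnorm{Y}{\tilde h(\mb{x})-h(\mb{x})}\le\sum_k\varphi_k(\mb{x})\theta_k\le\sum_k\theta_k\le\theta$. On $Q\setminus U$ the bound is trivial.

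Next I would establish $C^1$-smoothness on $U$ and compute $D\tilde h$. Local finiteness of $(\varphi_k)$, together with the fact that each $\varphi_k h_k$ is $C^1$ on $U$ (being $C^1$ where $h_k$ is defined smoothly, i.e.\ on $A_k\supseteq\supp(\varphi_k)$, and vanishing outside $\supp(\varphi_k)$), makes $\tilde h$ locally a finite sum of $C^1$ mappings on $U$, hence $C^1$. The product rule, applied at any $\mb{v}\in X$, gives
\begin{equation*}
D\tilde h(\mb{x})(\mb{v})=\sum_k D\varphi_k(\mb{x})(\mb{v})\,h_k(\mb{x})+\sum_k\varphi_k(\mb{x})\,Dh_k(\mb{x})(\mb{v}).
\end{equation*}
Differentiating $\sum_k\varphi_k\equiv 1$ yields $\sum_k D\varphi_k(\mb{x})(\mb{v})=0$, so $h_k(\mb{x})$ in the first sum may be replaced by $h_k(\mb{x})-h(\mb{x})$ without change. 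In both sums only terms with $\mb{x}\in\supp(\varphi_k)\subseteq A_k$ contribute, so the $\SLA$-hypothesis applies. The first sum contributes at most $\sum_k\lip(\varphi_k)\1_{\supp(\varphi_k)}(\mb{x})\theta_k\le\theta$, and the second is bounded by $\sum_k\varphi_k(\mb{x})\opnorm{Dh_k(\mb{x})}\le L:=\sup_k\lip(h_k|_{A_k})$, using that openness of $A_k$ forces $\opnorm{Dh_k(\mb{x})}\le\lip(h_k|_{A_k})$ at any point of $A_k$. Hence $\opnorm{D\tilde h(\mb{x})}\le L+\theta$ throughout $U$, so $\tilde h$ is locally $(L+\theta)$-Lipschitz on $U$.

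For the global Lipschitz bound in~\eqref{SLAi}, since $\tilde h=h$ on the closed set $Q\setminus U$ with $h\in\lip(Q,Y)$ and $\tilde h$ is locally $(L+\theta)$-Lipschitz on $U$, Lemma~\ref{lemma:awkward_lipschitz_bound} assembles these two estimates into $\lip(\tilde h)\le\max\{\lip(h),L+\theta\}$ on $Q$; continuity on $Q$ then follows a fortiori. For~\eqref{SLAii} I would reuse the decomposition above, rewriting the second sum as $\sum_k\varphi_k(\mb{x})(Dh_k(\mb{x})-P(\mb{x}))$, whose operator norm is at most $\eta(\mb{x})$ because $\sum_k\varphi_k(\mb{x})=1$ and the additional hypothesis applies termwise on $A_k\supseteq\supp(\varphi_k)$; the first sum still contributes $\sum_k\lip(\varphi_k)\1_{\supp(\varphi_k)}(\mb{x})\theta_k$. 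The main obstacle is precisely the passage from a pointwise derivative bound on the open set $U$ to a global Lipschitz estimate on $Q$: since $U$ need not be geodesically convex, this does not follow from mean-value reasoning inside $U$ alone, and Lemma~\ref{lemma:awkward_lipschitz_bound}---which exploits the agreement of $\tilde h$ with the already globally Lipschitz $h$ on $Q\setminus U$---is what reconciles the local information on $U$ with the required global Lipschitz bound on $Q$.
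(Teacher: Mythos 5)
Your proposal follows the paper's plan closely and is correct in its treatment of the uniform approximation bound, the $C^{1}$-smoothness of $\tilde h$ on $U$, the derivative bounds via $\sum_{k}D\varphi_{k}\equiv 0$, and part~\eqref{SLAii}. However, there is one genuine gap: you invoke Lemma~\ref{lemma:awkward_lipschitz_bound} to obtain the global Lipschitz bound and then state ``continuity on $Q$ then follows a fortiori.'' This is circular. Lemma~\ref{lemma:awkward_lipschitz_bound} has continuity of the mapping on $Q$ among its \emph{hypotheses} (its proof sends a point of $U$ along a ray to a boundary point $\mb{b}\in Q\setminus U$ and needs $f$ to behave continuously at $\mb{b}$ in order to glue the two one-sided estimates), so you cannot first deduce Lipschitzness from the lemma and then read off continuity as a corollary. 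Knowing only that $\tilde h$ is locally Lipschitz on the open set $U$ and agrees with the Lipschitz mapping $h$ on $Q\setminus U$ does not force continuity at points of $\partial U\cap Q$: infinitely many supports $\supp\varphi_{k}$ may accumulate at such a point, so local finiteness gives you no control there, and the uniform estimate $\lnorm{Y}{\tilde h-h}\le\theta$ only gives boundedness of the discrepancy, not its vanishing.

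The missing step is precisely what the paper proves at length: for $\mb{x}_{0}\in\partial U\cap Q$ and an arbitrary tolerance, choose $N$ so that $\sum_{k>N}\theta_{k}$ is small (possible because $\sum_{k}(1+\lip(\varphi_{k}))\theta_{k}\le\theta<\infty$), then choose $\eta>0$ small enough that $B_{X}(\mb{x}_{0},\eta)\cap\supp\varphi_{i}=\emptyset$ for $i\le N$ (possible because $\mb{x}_{0}\notin U\supseteq\supp\varphi_{i}$, and each $\supp\varphi_{i}$ is closed). For $\mb{x}\in B_{X}(\mb{x}_{0},\eta)\cap U$ one then writes $\tilde h(\mb{x})-\tilde h(\mb{x}_{0})=\sum_{k>N}\varphi_{k}(\mb{x})(h_{k}(\mb{x})-h(\mb{x}))+\sum_{k>N}\varphi_{k}(\mb{x})(h(\mb{x})-h(\mb{x}_{0}))$, and the first sum is controlled by the tail of $\sum\theta_{k}$ while the second is controlled by $\lip(h)\eta$. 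Once continuity of $\tilde h$ on $Q$ is established this way, the appeal to Lemma~\ref{lemma:awkward_lipschitz_bound} is legitimate and the rest of your argument stands.
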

\begin{proof}
Observe that due to the local finiteness property of the partition of unity, for each point $\mb{x}\in U$ there is an open neighbourhood $V(\mb{x})\subseteq U$ such that
	the set 
	\begin{equation}\label{eq:defI(x)}
	I(\mb{x})=\set{k\ge1\colon  V(\mb{x})\cap\supp(\varphi_k)\ne\emptyset}
	\end{equation}
	is finite. Hence for any $\mb{x}\in U$ 
	\begin{equation}\label{eq:HisC1}
	\tilde{h}(\mb{y})=\sum_{k\in I(\mb{x})}\varphi_{k}(\mb{y})h_{k}(\mb{y}) 
	\quad \text{for all}\quad 
	\mb{y}\in V(\mb{x}),
	\end{equation}
	and thus $\tilde{h}|_{V(\mb{x})}$ is a finite sum of mappings $C^1$-smooth on $V(\mb{x})$.
	Hence by the arbitrariness of $\mb{x}\in U$ we conclude $\tilde{h}\in C^{1}(U,Y)$. 
	We now show that $\tilde{h}$ is continuous on $Q$. The only points of the domain at which continuity of $\tilde{h}$ is unclear are those in $\partial U$. Let $\mb{x}_{0}\in \partial U\cap Q$ and let $\theta>0$ be arbitrary. There exists $N\in\N$ large enough so that 
	\begin{equation}\label{eq:choice_N_new}
	\sum_{k=N+1}^{\infty}\theta_{k}<\theta/2. 
	\end{equation}
	Next, choose 
	\begin{equation}\label{eq:choice_eta_new}
	0<\eta<\frac{\theta}{2\br{\lip(h)+1}}
	\end{equation}
	sufficiently small so that 
	\begin{equation}\label{eq:evading_supports_new}
	B_{X}(\mb{x}_{0},\eta)\cap \supp (\varphi_{i})=\emptyset\qquad  \text{for $i=1,\ldots,N$,}
	\end{equation}
	which is possible because each $\supp(\varphi_{i})$ is contained in the open set $U$, whilst $\mb{x}_{0}\notin U$, so $\mb{x}_0\notin\supp(\varphi_i)$. Let $\mb{x}\in B_{X}(\mb{x}_{0},\eta)\cap Q$ be arbitrary. 
	
	If $\mb{x}\in Q\setminus U$ we have $\lnorm{Y}{\tilde{h}(\mb{x})-\tilde{h}(\mb{x}_{0})}=\lnorm{Y}{h(\mb{x})-h(\mb{x}_{0})}\leq \lip(h)\eta\leq \theta$. Now assume that $\mb{x}\in U$. Then, using~\eqref{eq:evading_supports_new} and that $(\varphi_{k})_{k\in\N}$ are a partition of unity on $U$, we get
	\begin{multline*}
	\tilde{h}(\mb{x})-\tilde{h}(\mb{x}_{0})=\sum_{k=1}^{\infty}\varphi_{k}(\mb{x})(h_{k}(\mb{x})-h(\mb{x}_{0}))=\sum_{k=N+1}^{\infty}\varphi_{k}(\mb{x})(h_{k}(\mb{x})-h(\mb{x}_{0}))\\
	=\sum_{k=N+1}^{\infty}\varphi_{k}(\mb{x})(h_{k}(\mb{x})-h(\mb{x}))+\sum_{k=N+1}^{\infty}\varphi_{k}(\mb{x})(h(\mb{x})-h(\mb{x}_{0})).
	\end{multline*}
	Using this with~\eqref{eq:choice_N_new} and~\eqref{eq:choice_eta_new}, as well as $h_k\in\SLA(h,A_k,Y,\theta_k)$ which implies  $\lnorm{Y}{h_{k}(\mb{x})-h(\mb{x})}\leq \theta_{k}$ for $\mb{x}\in \supp(\varphi_k)\subseteq A_k$,  we derive
	\begin{align*}
	\lnorm{Y}{\tilde{h}(\mb{x})-\tilde{h}(\mb{x}_{0})}
	&\leq\sum_{k=N+1}^{\infty}\varphi_{k}(\mb{x})\lnorm{Y}{h_{k}(\mb{x})-h(\mb{x})}+\sum_{k=N+1}^{\infty}\varphi_{k}(\mb{x})\lnorm{Y}{h(\mb{x})-h(\mb{x}_{0})}\\
	&\leq  \sum_{k=N+1}^{\infty}\theta_{k}+\lip(h)\eta\leq \theta.
	\end{align*}
	Thus, we have established $\lnorm{Y}{\tilde{h}(\mb{x})-\tilde{h}(\mb{x}_{0})}\leq \theta$ for all $\mb{x}\in B_{X}(\mb{x}_{0},\eta)\cap Q$. Since $\theta>0$ was arbitrary, this verifies the continuity of $\tilde{h}$ at $\mb{x}_{0}$. This completes the proof that $\tilde{h}$ is continuous on $Q$.
	
	For each $\mb{x}\in Q\setminus U$ we have $\lnorm{Y}{\tilde{h}(\mb{x})-h(\mb{x})}=0$ and for each $\mb{x}\in U$ we have, using that $(\varphi_{k})_{k\in\N}$ is a partition of unity on $U$, $\lnorm{Y}{h_{k}(\mb{x})-h(\mb{x})}\leq \theta_{k}$ for $\mb{x}\in\supp(\varphi_k)$ and $\sum\theta_k\le\theta$,
	\begin{align*}
	\lnorm{Y}{\tilde{h}(\mb{x})-h(\mb{x})}&=\lnorm{Y}{\sum_{k\in\N}\varphi_{k}(\mb{x})(h_{k}(\mb{x})-h(\mb{x}))}\\
	&\leq \sum_{k\in\N}\varphi_{k}(\mb{x})\lnorm{Y}{h_{k}(\mb{x})-h(\mb{x})}\leq \sum_{k\in\N}\theta_{k}\leq \theta.
	\end{align*}
	Therefore $\lnorm{Y}{\tilde{h}(\mb{x})-h(\mb{x})}\leq \theta$ for all $\mb{x}\in Q$.
	
	It only remains to verify the desired bound on the Lipschitz constant of $\tilde{h}$. Consider first $\mb{x}\in U$. Using that $(\varphi_{k})_{k\in\N}$ is a locally finite smooth partition of unity, so $\sum_{k\in\N}\varphi_k$ is identically $1$ on $U$, we conclude $\sum_{k\in\N}D\varphi_{k}(\mb{x})=0$. However $D\varphi_{k}(\mb{x})=0$ for each $k\notin I(\mb{x})$, see~\eqref{eq:defI(x)}, so we also conclude $\sum_{k\in I(\mb{x})}D\varphi_k(\mb{x})=0$.
	Recall that $\tilde{h}$ can be written as a finite sum of  $C^1$-smooth terms in an  open neighbourhood of $\mb{x}$, see~\eqref{eq:HisC1}; thus
	\begin{multline}\label{eq:DH}
	D\tilde{h}(\mb{x})=\sum_{k\in I(\mb{x})} \varphi_{k}(\mb{x})Dh_{k}(\mb{x})+\sum_{k\in I(\mb{x})}D\varphi_{k}(\mb{x})h_{k}(\mb{x})\\
	=\sum_{k\in I(\mb{x})}\varphi_{k}(\mb{x})Dh_{k}(\mb{x})+\sum_{k\in I(\mb{x})}D\varphi_{k}(\mb{x})h_k(\mb{x})-\sum_{k\in I(\mb{x})}D\varphi_{k}(\mb{x})h(\mb{x}).
	\end{multline}
	We deduce, using the properties of $h_{k}\in\SLA(h,A_k,Y,\theta_k)$ for each $k\in I(\mb{x})$, that
	\begin{align*}
	\opnorm{D\tilde{h}(\mb{x})}
	&\leq \sum_{k\in I(\mb{x})}\varphi_{k}(\mb{x}) \opnorm{Dh_{k}(\mb{x})} +\sum_{k\in I(\mb{x})}\lnorm{X^*}{D\varphi_{k}(\mb{x})}\lnorm{Y}{h_{k}(\mb{x})-h(\mb{x})}\\
	&\leq \sup_{k\in I(\mb{x})}\lip(h_{k}|_{A_k})+\sum_{k\in I(\mb{x})}
	\lip(\varphi_{k})\theta_{k}\leq \sup_{k\ge1}\lip(h_{k}|_{A_k})+\theta.
	\end{align*}
	Since this inequality has been established for an arbitrary point $\mb{x}$ in the open set $U$, we conclude that $\tilde{h}$ is locally $\br{\sup_{k\ge1}\lip(h_{k}|_{A_k})+\theta}$-Lipschitz on $U$, if this constant is finite. In summary, we have now established that  $\tilde{h}$ is continuous, $\tilde{h}$ is locally $\br{\sup_{k\ge1}\lip(h_{k}|_{A_k})+\theta}$-Lipschitz on $U$, whilst by definition of $\tilde{h}$, $\lip(\tilde{h}|_{Q\setminus U})\leq \lip (h)$. Therefore, $\tilde{h}$ is Lipschitz with $\lip(\tilde{h})\leq \max(\lip(h),\sup_{k\ge1}\lip(h_{k}|_{A_k})+\theta)$; see Lemma~\ref{lemma:awkward_lipschitz_bound}. This finishes the proof of~\eqref{SLAi}.
	
	To prove~\eqref{SLAii}, we use again~\eqref{eq:DH} to get, for every $\mb{x}\in U$, 
	\begin{align*}
	&\opnorm{
		D\tilde{h}(\mb{x})-P(\mb{x})
	}\\
	&\le 
	\sum_{k\in I(\mb{x})}\varphi_{k}(\mb{x})\opnorm{Dh_{k}(\mb{x})-P(\mb{x})}+\sum_{k\in I(\mb{x})}\lnorm{X^*}{D\varphi_{k}(\mb{x})}\lnorm{Y}{
		h_k(\mb{x})-h(\mb{x})
	}\\
	&\le 
	\eta(\mb{x})
	+\sum_{k\in\N}\lip(\varphi_{k})\1_{\supp (\varphi_{k})}(\mb{x})\theta_{k}.\qedhere
	\end{align*}
\end{proof}

\begin{define}\label{def:unif-gat}
	If $X$ and $Y$ are normed spaces, $U\subseteq X$ is open and $f\colon U\to Y$ is \Gat differentiable at every point of $U$, we say that $f$ is uniformly \Gat differentiable on $U$ if for each pair of $\mb{h}\in \Sph_{X}$ and $\varepsilon>0$ there exists $\delta=\delta(\mb{h},\varepsilon)>0$ such that
	\begin{equation}\label{eq:unif-gat}
		\lnorm{Y}{f(\mb{x}+t\mb{h})-f(\mb{x})-tD_{G}f(\mb{x})(\mb{h})}\leq \varepsilon\abs{t}
	\end{equation}
	whenever $\mb{x}\in U$ and  $\abs{t}\le \min\set{\dist(\mb{x},X\setminus U),\delta}$.
\end{define}	
\begin{lemma}\label{lemma:uni_gat_implies_smooth}
	Let $X$ and $Y$ be Banach spaces, $U\subseteq X$ be open and $f\colon U\to Y$ be uniformly \Gat differentiable. Then $f\in C^{1}(U,Y)$. 
\end{lemma}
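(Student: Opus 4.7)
The plan is to prove that $D_Gf\colon U\to \mc{L}(X,Y)$ is continuous in the operator-norm topology, and then deduce \Fre differentiability of $f$ with continuous derivative via a mean value theorem argument, giving $f\in C^1(U,Y)$.

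As preliminary observations I would note that $f$ is continuous, and in fact locally Lipschitz, on $U$: the uniform \Gat bound $\lnorm{Y}{f(\mb{x}+t\mb{h})-f(\mb{x})}\le|t|(\opnorm{D_Gf(\mb{x})}+\varepsilon)$ together with boundedness of each $D_Gf(\mb{x})\in\mc{L}(X,Y)$ and a uniform-boundedness argument yield local boundedness of $\opnorm{D_Gf(\cdot)}$. Next, for each fixed $\mb{h}\in\Sph_X$ the map $\mb{x}\mapsto D_Gf(\mb{x})(\mb{h})$ is continuous on $U$: it is the uniform (in $\mb{x}$) limit as $t\to0$ of the continuous difference quotients $\mb{x}\mapsto \tfrac{1}{t}(f(\mb{x}+t\mb{h})-f(\mb{x}))$, by uniform \Gat differentiability.

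The main obstacle is promoting this strong-operator continuity to operator-norm continuity of $D_Gf$ at a fixed $\mb{x}_0\in U$. For $\mb{x}=\mb{x}_0+s\mb{v}$ with $\mb{v}\in\Sph_X$ and any $\mb{h}\in\Sph_X$, I would use the split
\begin{equation*}
\lnorm{Y}{D_Gf(\mb{x})(\mb{h})-D_Gf(\mb{x}_0)(\mb{h})}\le 2\varepsilon+\tfrac{1}{|t|}\lnorm{Y}{f(\mb{x}+t\mb{h})-f(\mb{x}_0+t\mb{h})-f(\mb{x})+f(\mb{x}_0)},
\end{equation*}
valid for $|t|\le\delta(\mb{h},\varepsilon)$, and treat the middle ``rectangle'' term by applying uniform \Gat differentiability in the \emph{single} direction $\mb{v}$ at each of the base points $\mb{x}_0+t\mb{h}$ and $\mb{x}_0$. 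Since $\mb{v}$ depends only on $\mb{x}-\mb{x}_0$ and not on $\mb{h}$, the relevant threshold $\delta(\mb{v},\varepsilon)$ is uniform over $\mb{h}$, and after cancellation the middle term is controlled by $(s/|t|)\lnorm{Y}{D_Gf(\mb{x}_0+t\mb{h})(\mb{v})-D_Gf(\mb{x}_0)(\mb{v})}$ plus an $O(s\varepsilon/|t|)$ error. By the directional continuity of $D_Gf(\cdot)(\mb{v})$ at $\mb{x}_0$ from the first step, the operator difference above can be made small uniformly over $\mb{h}\in\Sph_X$ once $|t|$ is below a $\mb{v}$-dependent threshold, since the points $\mb{x}_0+t\mb{h}$ then all lie in a small ball around $\mb{x}_0$. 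The delicate point is the quantifier ordering: one fixes $\mb{v}$ and $\varepsilon$, then a threshold $\tau=\tau(\mb{v},\varepsilon)$ governing the $\mb{v}$-directional estimate, then picks $s$ comparable to $\tau$ so that $s/|t|$ remains bounded while the directional error stays small, producing an operator-norm bound $O(\varepsilon)$ that is independent of $\mb{h}$.

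Once continuity of $D_Gf\colon U\to(\mc{L}(X,Y),\opnorm{\cdot})$ is established, the remaining step is standard. For $\mb{u}\in X$ with $[\mb{x}_0,\mb{x}_0+\mb{u}]\subseteq U$, the map $t\mapsto D_Gf(\mb{x}_0+t\mb{u})(\mb{u})$ is a continuous function $[0,1]\to Y$, and the vector-valued fundamental theorem of calculus (using completeness of $Y$, together with the identity $\tfrac{d}{dt}f(\mb{x}_0+t\mb{u})=D_Gf(\mb{x}_0+t\mb{u})(\mb{u})$ coming from Gâteaux differentiability) yields
\begin{equation*}
f(\mb{x}_0+\mb{u})-f(\mb{x}_0)=\int_0^1 D_Gf(\mb{x}_0+t\mb{u})(\mb{u})\,dt.
\end{equation*}
Subtracting $D_Gf(\mb{x}_0)(\mb{u})$ and bounding gives
\begin{equation*}
\lnorm{Y}{f(\mb{x}_0+\mb{u})-f(\mb{x}_0)-D_Gf(\mb{x}_0)(\mb{u})}\le\lnorm{X}{\mb{u}}\sup_{t\in[0,1]}\opnorm{D_Gf(\mb{x}_0+t\mb{u})-D_Gf(\mb{x}_0)},
\end{equation*}
which is $o(\lnorm{X}{\mb{u}})$ as $\mb{u}\to\mb{0}_X$ by continuity of $D_Gf$. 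Hence $f$ is \Fre differentiable at $\mb{x}_0$ with $Df(\mb{x}_0)=D_Gf(\mb{x}_0)$, and the already established operator-norm continuity of $D_Gf$ yields $f\in C^1(U,Y)$.
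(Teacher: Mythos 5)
Your overall plan coincides with the paper's: establish operator-norm continuity of the \Gat derivative $D_{G}f$ at each point of $U$, then deduce \Fre differentiability with continuous derivative (the paper invokes~\cite[Prop~4.2]{benyamini1998geometric} at this last step, while you carry out the equivalent mean-value estimate by hand; that part is fine). The divergence, and the problem, is in the continuity estimate itself.

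The paper's estimate is a direct four-term split of $\delta\lnorm{Y}{\br{D_{G}f(\mb{x})-D_{G}f(\mb{x}_{0})}(\mb{u})}$: the two ``differential'' terms
$\lnorm{Y}{\delta D_{G}f(\mb{x})(\mb{u})-\br{f(\mb{x}+\delta\mb{u})-f(\mb{x})}}$ and
$\lnorm{Y}{f(\mb{x}_{0}+\delta\mb{u})-f(\mb{x}_{0})-\delta D_{G}f(\mb{x}_{0})(\mb{u})}$
are handled by uniform \Gat differentiability in the single direction $\mb{u}$ being tested, while the two ``shift'' terms $\lnorm{Y}{f(\mb{x}+\delta\mb{u})-f(\mb{x}_{0}+\delta\mb{u})}$ and $\lnorm{Y}{f(\mb{x})-f(\mb{x}_{0})}$ are each bounded simply by $\lip(f)\lnorm{X}{\mb{x}-\mb{x}_{0}}$. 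There is no second directional expansion and no ratio to control.

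You instead group the two shift terms into a ``rectangle'' and expand them a second time in the transverse direction $\mb{v}$ at base points $\mb{x}_{0}+t\mb{h}$ and $\mb{x}_{0}$, which produces the factor $s/|t|$ in front of both $\lnorm{Y}{D_{G}f(\mb{x}_{0}+t\mb{h})(\mb{v})-D_{G}f(\mb{x}_{0})(\mb{v})}$ and the $O(\varepsilon s)$ error, and this is where a genuine gap appears. You are already committed to $|t|\le\delta(\mb{h},\varepsilon)$ in order to replace the $\mb{h}$-direction difference quotients by $D_{G}f(\cdot)(\mb{h})$, and in Definition~\ref{def:unif-gat} the modulus $\delta(\mb{h},\varepsilon)$ is allowed to depend on $\mb{h}$ and need not be bounded away from $0$ as $\mb{h}$ ranges over $\Sph_{X}$; in the application via Theorem~\ref{thm:johanis} it really does degenerate with $\mb{h}$. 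Your proposed resolution --- ``pick $s$ comparable to $\tau(\mb{v},\varepsilon)$ so that $s/|t|$ remains bounded'' --- therefore does not close: for any fixed $s$, directions $\mb{h}$ with $\delta(\mb{h},\varepsilon)\ll s$ force $|t|\le\delta(\mb{h},\varepsilon)\ll s$, so $s/|t|$ is unbounded over $\mb{h}\in\Sph_{X}$ and the claimed $O(\varepsilon)$ bound ``independent of $\mb{h}$'' does not follow. The fix is to discard the second expansion and, as in the paper, bound the rectangle crudely by $2\lip(f)\lnorm{X}{\mb{x}-\mb{x}_{0}}$; then the only smallness requirement is $\lnorm{X}{\mb{x}-\mb{x}_{0}}\lesssim\varepsilon\delta$, and the competing $\mb{v}$-dependent threshold never enters.
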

\begin{proof}
	Fix $\mb{x}_{0}\in U$ and choose $r>0$ so that $B_{X}(\mb{x}_{0},r)\subseteq U$. We verify continuity of the \Gat derivative $D_{G}f$ of $f$ at $\mb{x}_{0}$. By~\cite[Prop~4.2]{benyamini1998geometric}, this will imply \Fre differentiability of $f$ on $U$ and therefore that $f\in C^{1}(U)$. Given $\mb{u}\in\Sph_{X}$ and $\varepsilon\in(0,1)$, exploit the uniform \Gat differentiability of $f$ on $U$, see~\eqref{eq:unif-gat} and let $\delta=\min(\delta(\mb{u},\varepsilon/4),r/2)$ 
	Then for all $\mb{x}\in B_{X}(\mb{x}_{0},r/2)$ and $t\in [-\delta,\delta]$,
	\begin{equation*}
		\lnorm{Y}{f(\mb{x}+t\mb{u})-f(\mb{x})-tD_{G}f(\mb{x})(\mb{u})}
		\leq \frac{\varepsilon \abs{t}}{4}
	\end{equation*}
	Then for every $\mb{x}\in U$ with $\lnorm{X}{\mb{x}-\mb{x}_{0}}\leq \frac{\varepsilon\delta}{4\br{\lip(f)+1}}$ we use the above inequality with $t=\delta$ to get
	\begin{align*}
		\delta&\lnorm{Y}{\br{D_{G}f(\mb{x})-D_{G}f(\mb{x}_{0})}(\mb{u})}\\ &\leq \lnorm{Y}{\delta D_{G}f(\mb{x})(\mb{u})-\br{f(\mb{x}+\delta\mb{u})-f(\mb{x})}}
		+\lnorm{Y}{f(\mb{x}+\delta\mb{u})-f(\mb{x}_{0}+\delta\mb{u})}\\&+\lnorm{Y}{f(\mb{x}_{0})-f(\mb{x})} +\lnorm{Y}{f(\mb{x}_{0}+\delta\mb{u})-f(\mb{x}_{0})-\delta D_{G}f(\mb{x}_{0})(\mb{u})}\\
		&\leq 2\cdot\frac{\varepsilon\delta}{4}+2\lip(f)\lnorm{X}{\mb{x}-\mb{x}_{0}}\leq \varepsilon\delta.
	\end{align*}
	Since $\mb{u}\in\Sph_{X}$ was arbitrary, this shows $\opnorm{D_{G}f(\mb{x})-D_{G}f(\mb{x}_{0})}\leq \varepsilon$.
\end{proof}

The next theorem is a generalisation of a smooth approximation result of Johanis~\cite{johanis2003approximation}. The difference to~\cite{johanis2003approximation} is that the following statement treats Lipschitz mappings defined only on a subset of a Banach space, whereas~\cite{johanis2003approximation} provides smooth approximations only for mappings defined on the whole space $X$, and Lipschitz mappings defined on subsets of Banach spaces may not necessarily be extended to the whole space. In other words, the result of~\cite{johanis2003approximation} is the special case of the following theorem, where we set $Q=X$. This local version of Johanis's smooth approximation is a useful statement, independent of the present paper. The proof is an adaptation of the argument of Johanis~\cite{johanis2003approximation}, see also~\cite[Theorem~3.1]{FWZ}, but for completeness we include the full argument.

\begin{thm}\label{thm:johanis}
	Let $X$ and $Y$ be Banach spaces, where $X$ is separable, $Q\subseteq X$ and $\varepsilon>0$ be such that
	\begin{equation}\label{eq:qeps}
	\emptyset\neq\inside{Q}{\varepsilon}\coloneq\set{\mb{x}\in Q\colon \dist_{\lnorm{X}{-}}(\mb{x},X\setminus Q)> \varepsilon}, 
	\end{equation}
	and let $f\colon Q\to Y$ be a Lipschitz mapping.
	Then there exists $g\in\lip(Q_{\varepsilon},Y)\cap C^{1}(Q_{\varepsilon},Y)$ such that 
	$g$ is uniformly \Gat differentiable on $\inside{Q}{\varepsilon}$, 
	$\lip(g)\leq \lip(f)$ and $\lnorm{Y}{g(\mb{x})-f(\mb{x})}\leq \varepsilon$ for all $\mb{x}\in Q_{\varepsilon}$. 
\end{thm}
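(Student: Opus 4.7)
The plan is to adapt Johanis's construction, originally for the case $Q=X$, to our local setting. The key observation is that every point $\mb{x}\in \inside{Q}{\varepsilon}$ has $B_{X}(\mb{x},\varepsilon)\subseteq Q$, so $f$ is defined throughout any $\varepsilon$-neighbourhood of such a point; this gives sufficient ``room'' to perform all of Johanis's local constructions without ever referring to values of $f$ outside $Q$.

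Concretely, I would fix a small parameter $\delta\in(0,\varepsilon/(1+\lip(f)))$ and choose a countable dense subset $(\mb{x}_{n})_{n\in\N}$ of $\inside{Q}{\varepsilon}$. Following Johanis, one builds a locally finite $C^{\infty}$-smooth partition of unity $(\varphi_{n})_{n\in\N}$ on $\inside{Q}{\varepsilon}$ with $\supp \varphi_{n}\subseteq B_{X}(\mb{x}_{n},\delta)\subseteq Q$ for each $n\in\N$, using smooth bumps obtained by composing a one-dimensional mollifier with a countable family of continuous linear functionals on $X$ that separates the points of $X$; such a family exists because $X$ is separable. One then defines
\begin{equation*}
	g(\mb{x}) := \sum_{n\in\N}\varphi_{n}(\mb{x})\,f(\mb{x}_{n}), \qquad \mb{x}\in \inside{Q}{\varepsilon}.
\end{equation*}
Local finiteness immediately gives $g\in C^{1}(\inside{Q}{\varepsilon},Y)$, and the support condition combined with $\sum_{n}\varphi_{n}\equiv 1$ yields $\lnorm{Y}{g(\mb{x})-f(\mb{x})}\leq \lip(f)\delta\leq \varepsilon$ by the standard estimate $\lnorm{Y}{g(\mb{x})-f(\mb{x})}\le\sum_{n}\varphi_{n}(\mb{x})\lnorm{Y}{f(\mb{x}_{n})-f(\mb{x})}$.

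The main obstacle is the bound $\lip(g)\leq \lip(f)$: a naive estimate of the operator norm of $Dg(\mb{x})$ is not controlled by $\lip(f)$ alone, as it produces a term of the form $\sum_{n}\lip(\varphi_{n})\lnorm{Y}{f(\mb{x}_{n})-f(\mb{x})}$ whose summands do not telescope against the gradients. Johanis's key technical device is to choose the partition of unity so that $\sum_{n}D\varphi_{n}(\mb{x})=0$ can be exploited to rewrite $Dg(\mb{x})(\mb{h})$ as a norm-bounded convex combination of finite differences $f(\mb{x}_{n})-f(\mb{x}_{m})$ with $\mb{x}_{n}$, $\mb{x}_{m}$ close together, each of norm at most $\lip(f)\lnorm{X}{\mb{x}_{n}-\mb{x}_{m}}$, and, after the appropriate rearrangement, at most $\lip(f)\lnorm{X}{\mb{h}}$ in total. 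The adaptation of his argument to our setting amounts to checking that every pair $(\mb{x}_{n},\mb{x}_{m})$ appearing in this rewrite satisfies $\lnorm{X}{\mb{x}_{n}-\mb{x}_{m}}<2\delta<\varepsilon$, so that both points lie in $Q$ and the Lipschitz inequality for $f$ applies unchanged. Uniform G\^ateaux differentiability on $\inside{Q}{\varepsilon}$ then follows from the uniform control on the supports and derivatives of the $\varphi_{n}$'s along any fixed direction $\mb{h}\in\Sph_{X}$, and Lemma~\ref{lemma:uni_gat_implies_smooth} re-derives $g\in C^{1}(\inside{Q}{\varepsilon},Y)$ once uniform G\^ateaux differentiability has been established.
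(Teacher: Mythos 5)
Your proposal takes a genuinely different route from the paper, and unfortunately the route has a gap at the central step.

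The paper's proof (following Johanis) does \emph{not} use a partition of unity at all. It fixes a countable dense set $(\mb{h}_i)_{i}$ of $\Sph_X$ and mollifiers $\varphi_i$ supported on tiny intervals $J_i$ shrinking geometrically, and defines $g$ as the uniform limit of the finite-dimensional convolutions
\begin{equation*}
g_n(\mb{x})=\int_{P_n}f\Bigl(\mb{x}-\sum_{i=1}^n t_i\mb{h}_i\Bigr)\prod_{i=1}^n\varphi_i(t_i)\,d\lambda_n .
\end{equation*}
Because each $g_n$ is a probability average of translates of $f$, the inequality $\lip(g_n)\le\lip(f)$ is immediate, and it passes to the uniform limit. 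The locality (the fact that $f$ need only be defined on $Q$, not $X$) is handled simply by restricting $\mb{x}$ to $\inside{Q}{\varepsilon}$ and choosing the supports $J_i$ small enough that the argument of $f$ never leaves $Q$. This is exactly the observation you make at the start, and it is the part of your proposal that is correct.

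The gap is in the construction of $g$ and the claimed bound $\lip(g)\le\lip(f)$. Defining $g(\mb{x})=\sum_n\varphi_n(\mb{x})f(\mb{x}_n)$ with a partition of unity subordinate to small balls is \emph{not} Johanis's construction, and the ``key technical device'' you attribute to him (rewriting $Dg(\mb{x})(\mb{h})=\sum_n D\varphi_n(\mb{x})(\mb{h})\,(f(\mb{x}_n)-f(\mb{x}_m))$ using $\sum_n D\varphi_n\equiv 0$) does not close the estimate. After the rewrite you obtain
\begin{equation*}
\lnorm{Y}{Dg(\mb{x})(\mb{h})}\ \le\ \Bigl(\sum_n\lnorm{X^*}{D\varphi_n(\mb{x})}\Bigr)\cdot 2\delta\,\lip(f)\,\lnorm{X}{\mb{h}},
\end{equation*}
and the quantity $\sum_n\lnorm{X^*}{D\varphi_n(\mb{x})}$ is typically of order (overlap multiplicity)$\times$(1/$\delta$), which gives $\lip(g)\le C\lip(f)$ for some $C>1$ that does not tend to $1$; in infinite dimensions the overlap multiplicity is not even bounded by a universal constant, and cylinder-type bumps built from finitely many functionals are supported on slabs rather than balls of radius $\delta$. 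No reorganisation of the finite differences produces the exact constant $\lip(f)$, because $g$ is an interpolation of discrete values of $f$ rather than an average of translates of $f$. The Lipschitz-constant-preserving property of Theorem~\ref{thm:johanis} comes precisely from the averaging structure of the convolution, and the partition-of-unity template does not have it. (The same concern infects the uniform G\^ateaux differentiability claim, since you would need uniform bounds on the derivatives of the $\varphi_n$ in a fixed direction.)

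If you want a local version by your method, the natural outcome is Lemma~\ref{lemma:smooth_PlayerI's_mapping_around_E}-type conclusions with $\lip(g)\le\lip(f)+\varepsilon$, not $\lip(g)\le\lip(f)$; the sharp constant requires the convolution construction.
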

\begin{proof}
	We follow the argument of~\cite{johanis2003approximation} 
	and make only small adjustments. Let $(\mb{h}_{i})_{i\in \N}$ be a countable dense subset of $\Sph_{X}$; for each $i\in\N$ let $\varphi_{i}\in C^{\infty}(\R)$ be such that $\varphi_{i}(t)\geq 0$ for all $t\in\R$, $\displaystyle\int_{\R}\varphi_{i}(t)\,dt=1$ and 
	\begin{equation*}
	\supp\varphi_{i}\subseteq J_{i}:= \sqbr{\frac{-\varepsilon}{2\br{\lip(f)+1}2^{i}},\frac{\varepsilon}{2\br{\lip(f)+1}2^{i}}}.
	\end{equation*}
	For each $n\in\N$ we let $P_{n}:=\prod_{i=1}^{n}J_{i}$
	and observe that $\mb{x}-\sum_{i=1}^{n}t_{i}\mb{h}_{i}\in Q$ whenever $\mb{x}\in \inside{Q}{\varepsilon}$ and $(t_{1},\ldots,t_{n})\in P_{n}$. For  $n\in\N$ define mappings $g_{n}\colon \inside{Q}{\varepsilon}\to Y$ by
	\begin{equation*}
	g_{n}(\mb{x})=\int_{P_{n}}f\bbr{\mb{x}-\sum_{i=1}^{n}t_{i}\mb{h}_{i}}\prod_{i=1}^{n}\varphi_{i}(t_{i})\,d\lambda_{n},
	\end{equation*}
	where the integral is the Bochner integral and $\lambda_{n}$ denotes the $n$-dimensional Lebesgue measure on $\R^{n}\supseteq P_{n}$. We note, for further reference, that for any $\mb{x}\in\inside{Q}{\varepsilon}$ and $m\ge n$
	\begin{equation}\label{eq:g_n_m}	
	g_{n}(\mb{x})
	=\int_{P_{m}}f\bbr{\mb{x}-\sum_{i=1}^{n}t_{i}\mb{h}_{i}}\prod_{i=1}^{m}\varphi_{i}(t_{i})\,d\lambda_{m}
	=\int_{\R^m}f\bbr{\mb{x}-\sum_{i=1}^{n}t_{i}\mb{h}_{i}}\prod_{i=1}^{m}\varphi_{i}(t_{i})\,d\lambda_{m}.		
	\end{equation}	
	For each $n\in\N$, the following inequalities show that $g_{n}\colon\inside{Q}{\varepsilon}\to Y$ is Lipschitz with $\lip(g_n)\leq \lip(f)$: whenever $\mb{x},\mb{y}\in \inside{Q}{\varepsilon}$,
	\begin{align*}
	\lnorm{Y}{g_{n}(\mb{x})-g_{n}(\mb{y})}&\leq \Jint \lnorm{Y}{f\bbr{\mb{x}-\sum_{i=1}^{n}t_{i}\mb{h}_{i}}-f\bbr{\mb{y}-\sum_{i=1}^{n}t_{i}\mb{h}_{i}}}\prod_{i=1}^{n}\varphi_{i}(t_{i})\,d\lambda_{n}\\
	&\leq \lip(f)\lnorm{X}{\mb{y}-\mb{x}}\Jint \prod_{i=1}^{n}\varphi_{i}(t_{i})\,d\lambda_{n}=\lip(f)\lnorm{X}{\mb{x}-\mb{y}}
	.
	\end{align*} 
	For any $m,n\in\N$ with $m>n$ and any $\mb{x}\in \inside{Q}{\varepsilon}$ we observe, using~\eqref{eq:g_n_m},
	\begin{multline*}
	\lnorm{Y}{g_{m}(\mb{x})-g_{n}(\mb{x})}\leq \Jintm \lnorm{Y}{f\br{\mb{x}-\sum_{i=1}^{m}t_{i}\mb{h}_{i}}-f\br{\mb{x}-\sum_{i=1}^{n}t_{i}\mb{h}_{i}}}\prod_{i=1}^{m}\varphi_{i}(t_{i})\,d\lambda_{m}\\
	\leq \lip(f)\Jintm \lnorm{X}{\sum_{i=n+1}^{m}t_{i}\mb{h}_{i}}\prod_{i=1}^{m}\varphi_{i}(t_{i})\,d\lambda_{m}\leq \lip(f)\sum_{i=n+1}^{\infty}\frac{\varepsilon}{2\br{\lip(f)+1}2^{i}}\leq \frac{\varepsilon}{2^{n}},
	\end{multline*}
	and conclude from this that the sequence of Lipschitz mappings $(g_{n})_{n\in\N}$ with $\lip(g_{n})\leq \lip(f)$ converges uniformly on $\inside{Q}{\varepsilon}$ to a Lipschitz mapping $g\colon \inside{Q}{\varepsilon}\to Y$ with $\lip(g)\le\lip(f)$ and such that  $\lnorm{Y}{g_n(\mb{y})-g(\mb{y})}\le\frac{\varepsilon}{2^{n}}$ for each $\mb{y}\in\inside{Q}{\varepsilon}$ and each $n\in\N$. To see that $g$ is a good approximation of $f$, observe for $\mb{x}\in Q_\varepsilon$ that
	
	\begin{multline*}
	\lnorm{Y}{f(\mb{x})-g(\mb{x})}\leq \lnorm{Y}{f(\mb{x})-g_{1}(\mb{x})}+\lnorm{Y}{g_{1}(\mb{x})-g(\mb{x})} \\
	\leq \int_{P_1} \lnorm{Y}{f(\mb{x})-f\br{\mb{x}-t_{1}\mb{h}_{1}}}\varphi_{1}(t_{1})\,d\lambda_{1}+\frac{\varepsilon}{2}
	\leq \lip(f)\int_{P_1} \abs{t_{1}}\varphi_{1}(t_{1})\,d\lambda_{1}+\frac{\varepsilon}{2}\leq \varepsilon.
	\end{multline*}
	We are now only left to check that $g\in C^1(\inside{Q}{\varepsilon},Y)$ and $g$ is uniformly \Gat differentiable on $\inside{Q}{\varepsilon}$. Note that by Lemma~\ref{lemma:uni_gat_implies_smooth}, the latter implies the former. We first show that $g$ is \Gat differentiable at every $\mb{x}\in\inside{Q}{\varepsilon}$, and then verify that condition~\eqref{eq:unif-gat} of Definition~\ref{def:unif-gat} is satisfied.
	
	Let us start by using~\eqref{eq:g_n_m} to compute the directional derivatives of $g_{n}$ at $\mb{x}\in\inside{Q}{\varepsilon}$ in the direction of the vectors $\mb{h}_{i}$ for $i,n\in \N$ with $n\geq i$ as follows:
	\begin{align}\label{eq:Dhign}
	\begin{split}
	g_n'(\mb{x};\mb{h}_i)
	&=\lim_{\tau\to 0}\frac{g_{n}(\mb{x}+\tau\mb{h}_{i})-g_{n}(\mb{x})}{\tau}\\
	&=\lim_{\tau\to 0}\frac{1}{\tau}\int_{\R^n}\Bbr{f\bbr{(\mb{x}-\sum_{j=1}^{n}t_{j}\mb{h}_{j})+\tau \mb{h}_{i}}-f\bbr{\mb{x}-\sum_{j=1}^{n}t_{j}\mb{h}_{j}}}\prod_{j=1}^{n}\varphi_{j}(t_{j})\,d\lambda_{n}\\
	&=\lim_{\tau\to 0}\int_{\R^n} f\bbr{\mb{x}-\sum_{j=1}^{n}t_{j}\mb{h}_{j}}\cdot \frac{\varphi_{i}(t_{i}+\tau)-\varphi_{i}(t_{i})}{\tau}\cdot \prod_{1\le j\neq i\le n}\varphi_{j}(t_{j})\,d\lambda_{n}\\
	&=\Jint f\bbr{\mb{x}-\sum_{j=1}^{n}t_{j}\mb{h}_{j}}\varphi_{i}'(t_{i})\prod_{1\le j\neq i\le n}\varphi_{j}(t_{j})\,d\lambda_{n},
	\end{split}
	\end{align}
	The penultimate equality is a standard application of the Dominated Convergence Theorem for the Bochner integral, and the last equality follows from $\supp\varphi_i'\subseteq J_i$. It is also important to observe that for a fixed pair of $\mb{x}\in\inside{Q}{\varepsilon}$ and $i\ge1$ the limits in~\eqref{eq:Dhign} are uniform with respect to $n$: this may be verified by applying the Mean Value Theorem to $\varphi_{i}$ and recalling that $\varphi_{i}'\in C^{\infty}(\R)$ with bounded support and is therefore Lipschitz. For any $\theta>0$ and $\mb{x},i$ as above we let $\tau_0=\tau_0(\theta,\mb{x},i)>0$  be such that $B_{X}(\mb{x},\tau_{0})\subseteq \inside{Q}{\varepsilon}$ and 
	\begin{equation}\label{eq:tau0}
	\lnorm{Y}{
		g_n'(\mb{x};\mb{h}_i)-\frac{{g_{n}(\mb{x}+\tau\mb{h}_{i})-g_{n}(\mb{x})}}{\tau}
	}\leq\theta
	\qquad \text{whenever $n\geq i$ and $0<\abs{\tau}<\tau_{0}$.}
	\end{equation}	
	
	We will now argue that for each $i\in\N$ and $\mb{x}\in \inside{Q}{\varepsilon}$, the sequence of directional derivatives $g_n'(\mb{x};\mb{h}_i)$, $n\geq i$, computed above, converges to the directional derivative $g'(\mb{x};\mb{h}_i)$. To this end, let $i\in\N$ and $\mb{x}\in \inside{Q}{\varepsilon}$ and observe, using~\eqref{eq:g_n_m}, for $m>n\geq i$
	\begin{align*}
	&\lnorm{Y}{g_m'(\mb{x};\mb{h}_i)-g_n'(\mb{x};\mb{h}_i)}\\
	&\leq \int_{P_{m}}\lnorm{Y}{f\br{\mb{x}-\sum_{j=1}^{m}t_{j}\mb{h}_{j}}-f\br{\mb{x}-\sum_{j=1}^{n}t_{j}\mb{h}_{j}}}\abs{\varphi_{i}'(t_{i})}\prod_{1\le j\neq i\le m}\varphi_{j}(t_{j})\,d\lambda_{m}\\
	&\leq \lip(f)\int_{P_{m}}\sum_{j=n+1}^{m}\abs{t_{j}}\abs{\varphi_{i}'(t_{i})}\prod_{1\le j\neq i\le m}\varphi_{j}(t_{j})\,d\lambda_{m}\\
	&\leq \lip(f)\sum_{j=n+1}^{m}\frac{\varepsilon}{2\br{\lip(f)+1}2^{j}}\int_{J_{i}}\abs{\varphi_{i}'(t_{i})}\,d\lambda_{1}(t_i)\leq \frac{\lip(\varphi_{i})\varepsilon}{2^{n}}.
	\end{align*}
	Hence, $(g_n'(\mb{x};\mb{h}_i))_{n\in\N}$ is a Cauchy sequence in $(Y,\lnorm{Y}{-})$; let $D_i(\mb{x})\coloneq\lim_{n\to\infty}g_n'(\mb{x};\mb{h}_i)$. Fix an arbitrary $\eta>0$, let $0<\abs{\tau}<\tau_0(\eta/3,\mb{x},i)$ and choose $N\ge i$ large enough such that 
	\begin{equation}\label{eq:N}
	\lnorm{Y}{g_{N}(\mb{y})-g(\mb{y})}
	\leq \frac{\eta\tau}{6}
	\text{ for all } \mb{y}\in\inside{Q}{\varepsilon}
	\qquad\text{ and }\qquad\lnorm{Y}{g_N'(\mb{x};\mb{h}_i)-D_i(\mb{x})}
	\le \frac{\eta}{3}
	.
	\end{equation}
	Then we may combine~\eqref{eq:tau0} and~\eqref{eq:N} to deduce
	\begin{align}\label{eq:lim_gn_g}
	\begin{split}	
	&\lnorm{Y}{\tfrac{g(\mb{x}+\tau \mb{h}_{i})-g(\mb{x})}{\tau}-D_i(\mb{x})}
	\leq \lnorm{Y}{\tfrac{g(\mb{x}+\tau \mb{h}_{i})-g(\mb{x})}{\tau}-\tfrac{g_{N}(\mb{x}+\tau \mb{h}_{i})-g_{N}(\mb{x})}{\tau}}\\
	&+\lnorm{Y}{\tfrac{g_{N}(\mb{x}+\tau \mb{h}_{i})-g_{N}(\mb{x})}{\tau}-g_N'(\mb{x};\mb{h}_i)}
	+\lnorm{Y}{g_N'(\mb{x};\mb{h}_i)-D_i(\mb{x})}\\
	&\leq \frac{\eta}{3}+\frac{\eta}{3}+\frac{\eta}{3}=\eta.
	\end{split}
	\end{align}
	This establishes that $g'(\mb{x};\mb{h}_i)$ exists and equals $D_i(\mb{x})=\lim_{n\to\infty} g_n'(\mb{x};\mb{h}_i)$ for all $\mb{x}\in\inside{Q}{\varepsilon}$ and $i\ge1$, and so inequality~\eqref{eq:tau0} also holds with $g_n$ replaced by $g$, for all $0<\abs{\tau}<\tau_0(\theta,\mb{x},i)$.
	
	Let an arbitrary $\mb{x}\in\inside{Q}{\varepsilon}$ be fixed. Since $g\in\lip(\inside{Q}{\varepsilon})$, the mapping $g'(\mb{x};\cdot)$ is Lipschitz on $\{\mb{h}_i\colon i\ge1\}$, as for any $\mb{h}_i\ne\mb{h}_j$ and any $\eta>0$
	\begin{equation*}
	\lnorm{Y}{g'(\mb{x};\mb{h}_{j})-g'(\mb{x};\mb{h}_{i})}
	\leq 
	2\eta\lnorm{X}{\mb{h}_{j}-\mb{h}_{i}}
	+
	\lip(g)\lnorm{X}{\mb{h}_{j}-\mb{h}_{i}}
	\end{equation*}
	using~\eqref{eq:tau0} with $0<\abs{\tau}<\tau_0(\eta\lnorm{X}{\mb{h}_{j}-\mb{h}_{i}},\mb{x},i)$ and $g$ instead of $g_n$. 
	
	Let $\Phi_{\mb{x}}\colon \Sph_{X}\to Y$ denote the unique Lipschitz extension to $\Sph_{X}$ of the mapping $g'(\mb{x};\cdot)\colon\set{\mb{h}_i\colon i\ge1}\to Y$.  We now verify that the directional derivative $g'(\mb{x};\mb{h})$ exists and equals $\Phi_{\mb{x}}(\mb{h})$ for all $\mb{h}\in \Sph_{X}$ and $\mb{x}\in \inside{Q}{\varepsilon}$. Indeed, given $\mb{h}\in \Sph_{X}$ and $\eta>0$, choose $i\in\N$ such that $\lnorm{X}{\mb{h}_{i}-\mb{h}}\leq \eta/3\br{\lip(\Phi_{\mb{x}})+\lip(g)+1}$. Then 
	\begin{multline}\label{eq:Dhgx}
	\lnorm{Y}{\frac{g(\mb{x}+\tau \mb{h})-g(\mb{x})}{\tau}-\Phi_{\mb{x}}(\mb{h})}\leq \lip(g)\lnorm{X}{\mb{h}-\mb{h}_{i}}+\frac{\eta}{3}
	+\lip(\Phi_{\mb{x}})\lnorm{X}{\mb{h}_{i}-\mb{h}}\leq \eta,
	\end{multline}
	whenever $0<\abs{\tau}<\tau_0(\eta/3,\mb{x},i)$, using~\eqref{eq:tau0} for $g$ instead of $g_n$ and $\Phi_{\mb{x}}(\mb{h}_i)=g'(\mb{x};\mb{h}_i)$. Extending $\Phi_{\mb{x}}$ now to the whole of $X$ via the formula $\Phi_{\mb{x}}(t\mb{h})=t\Phi_{\mb{x}}(\mb{h})$, $t\in\R$, $\mb{h}\in \Sph_{X}$, it is readily verified that $\Phi_{\mb{x}}$ remains Lipschitz and we get
	\begin{equation}\label{eq:Phix}
	g'(\mb{x};\mb{v})=\Phi_{\mb{x}}(\mb{v}) \qquad \text{for all }\mb{x},\mb{v}\in X.
	\end{equation} 
	We finally verify that $\Phi_\mb{x}$ is a linear operator for each $\mb{x}\in\inside{Q}{\varepsilon}$. Together with~\eqref{eq:Phix} and Lipschitzness of $g$ this will establish that $g$ is \Gat differentiable on $\inside{Q}{\varepsilon}$, with \Gat derivative $Dg(\mb{x})=\Phi_\mb{x}$ of norm $\opnorm{\Phi_\mb{x}}\le\lip(g)$ at every $\mb{x}\in\inside{Q}{\varepsilon}$. To show that $\Phi_\mb{x}$ is a linear operator, it is enough to check linearity of $\Phi_{\mb{x}}$ on $\{\mb{h}_i\colon i\ge1\}$. For this, note that a
	calculation similar to~\eqref{eq:Dhign} shows, for $i,j\in\N$ and $\alpha_{i},\alpha_{j}\in\R$, that for $n\ge i,j$ the directional derivative $g_{n}'(\mb{x};{\alpha_{i}\mb{h}_{i}+\alpha_{j}\mb{h}_{j}})$ exists and equals
	\begin{equation*}	
	\Jint f\bbr{\mb{x}-\sum_{k=1}^{n}t_{j}\mb{h}_{j}}\Bbr{\alpha_{i}\varphi_{i}'(t_{i})\prod_{k\neq i}\varphi_{k}(t_{k})+\alpha_{j}\varphi_{j}'(t_{j})\prod_{k\neq j}\varphi_{k}(t_{k})}\,d\lambda_{n},
	\end{equation*}
	and so 
	$g_{n}'(\mb{x};{\alpha_{i}\mb{h}_{i}+\alpha_{j}\mb{h}_{j}})=
	\alpha_{i}g_{n}'(\mb{x};{\mb{h}_{i}})+\alpha_{j}g_{n}'(\mb{x};{\mb{h}_{j}})$.
	Taking limits in this identity  as $n\to\infty$ gives, similarly to~\eqref{eq:lim_gn_g},
	\begin{equation*}
	\Phi_{\mb{x}}(\alpha_{i}\mb{h}_{i}+\alpha_{j}\mb{h}_{j})=\alpha_{i}\Phi_{\mb{x}}(\mb{h}_{i})+\alpha_{j}\Phi_{\mb{x}}(\mb{h}_{j}).
	\end{equation*}

	The only thing left is to check that $g$ is uniformly \Gat differentiable on $\inside{Q}{\varepsilon}$. 
	Assume that $\mb{h}\in\Sph_{X}$ and $\varepsilon>0$ are fixed. Choose $i\ge1$ such that $\lnorm{X}{\mb{h}_i-\mb{h}}<\varepsilon/(4\lip(g)+1)$.

	Then, for each $n\geq i$ and all $\mb{y}_1,\mb{y}_2\in \inside{Q}{\varepsilon}$ we can use~\eqref{eq:Dhign} to derive
	\begin{equation*}
	\lnorm{Y}{g_n'(\mb{y}_1;\mb{h}_{i})-g_n'(\mb{y}_2;\mb{h}_{i})}\leq \lip(f)\int_{J_{i}} \abs{\varphi_{i}'(t)}\,d\lambda_{1}(t)\cdot \lnorm{X}{\mb{y}_1-\mb{y}_2}
	=:L_i\lnorm{X}{\mb{y}_1-\mb{y}_2}
	.
	\end{equation*}
	Taking a limit of the above inequality as $n\to\infty$ we obtain, for any $\mb{y}_1,\mb{y}_2\in\inside{Q}{\varepsilon}$,  
	\begin{equation*}
	\lnorm{Y}{g'(\mb{y}_1;\mb{h}_{i})-g'(\mb{y}_2;\mb{h}_{i})}
	\le 
	L_i\lnorm{X}{\mb{y}_1-\mb{y}_2}
	,
	\end{equation*}
	
	which implies for every $\mb{x}\in\inside{Q}{\varepsilon}$ and any $0<\abs{\tau}<\dist(\mb{x},X\setminus \inside{Q}{\varepsilon})$, 
	\begin{multline*}
	\lnorm{Y}{\frac{g(\mb{x}+\tau \mb{h}_{i})-g(\mb{x})}{\tau}-g'(\mb{x};{\mb{h}_{i}})}
	=\frac{1}{\abs{\tau}}\lnorm{Y}{\int_{(0,\tau)} g'(\mb{x}+t\mb{h}_{i};{\mb{h}_{i}})-g'(\mb{x};{\mb{h}_{i}})\,d\lambda_{1}(t)}\\
	\leq \frac{L_{i}}{\abs{\tau}}\int_{(0,\tau)}\abs{t}\,d\lambda_{1}(t)= L_{i}\abs{\tau}/2.
	\end{multline*}
	Let $\delta=\varepsilon/(L_i+1)$; consider any $\mb{x}\in\inside{Q}{\varepsilon}$ and $0<\abs{\tau}<\min(\delta,\dist(\mb{x},X\setminus\inside{Q}{\varepsilon}))$. We now verify that condition~\eqref{eq:unif-gat} of Definition~\ref{def:unif-gat} is satisfied. Indeed, we readily have $\lnorm{Y}{\frac{g(\mb{x}+\tau \mb{h}_{i})-g(\mb{x})}{\tau}-g'(\mb{x};{\mb{h}_{i}})}\le\varepsilon/2$ from above, 
	\begin{equation*}
	\lnorm{Y}{g'(\mb{x};\mb{h})-g'(\mb{x};\mb{h}_i)}
	=\lnorm{Y}{\Phi_{\mb{x}}(\mb{h}-\mb{h}_i)}
	\le\opnorm{\Phi_{\mb{x}}}\lnorm{X}{\mb{h}-\mb{h}_i}
	\le\lip(g)\lnorm{X}{\mb{h}-\mb{h}_i}
	<\varepsilon/4
	\end{equation*}
	and
	\begin{equation*}
	\lnorm{Y}{
		\frac{g(\mb{x}+\tau \mb{h})-g(\mb{x})}{\tau}-
		\frac{g(\mb{x}+\tau \mb{h}_{i})-g(\mb{x})}{\tau}
	}
	\le
	\lip(g)\lnorm{X}{\mb{h}-\mb{h}_i}
	<\varepsilon/4.
	\end{equation*}
	
\end{proof}
\begin{lemma}\label{lemma:smooth_PlayerI's_mapping_around_E}
	Let $X$ and $Y$ be Banach spaces, where $X$ is separable. Let $Q\subseteq X$, $\emptyset\neq E\subseteq X$ and $r>\rho>0$ be such that $B_{X}(E,r)\subseteq Q$ and $H=B_{X}(E,\rho)$ admits a locally finite $C^{1}$-smooth partition of unity with supports in $H$. Let $f\in\lip(Q,Y)$ and $\varepsilon>0$. Then there exists a mapping $g\in \lip(Q,Y)\cap C^{1}(H,Y)$ such that $g|_{Q\setminus H}=f|_{Q\setminus H}$,  $\lnorm{Y}{g(\mb{y})-f(\mb{y})}\leq \varepsilon$ for all $\mb{y}\in Q$ and $\lip(g)\leq \lip(f)+\varepsilon$.
\end{lemma}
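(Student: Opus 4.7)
The plan is to combine the smoothing theorem of Johanis (Theorem~\ref{thm:johanis}) with the patching device (Lemma~\ref{lemma:SLA}). Theorem~\ref{thm:johanis} produces a smooth Lipschitz approximation of $f$ on any sufficiently interior layer $Q_\delta$, while Lemma~\ref{lemma:SLA} glues countably many such pieces to $f$ itself outside $H$, using the partition of unity given in the hypothesis.

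First I would reduce to the case of closed $Q$: since $f$ is Lipschitz and $Y$ is Banach, $f$ extends uniquely by uniform continuity to a Lipschitz mapping on $\overline{Q}$ with the same Lipschitz constant; the inclusion $B_X(E,r)\subseteq \overline{Q}$ is preserved, so it suffices to prove the lemma for $\overline{Q}$ and then restrict to $Q$. Moreover, $B_X(E,r)\subseteq Q$ together with $r>\rho$ gives $\dist_X(H,X\setminus Q)\ge r-\rho$, hence $H\subseteq Q_\eta$ whenever $0<\eta\le (r-\rho)/2$.

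Next I would choose, for each $k\in\N$,
\[
\theta_k:=\min\set{\tfrac{r-\rho}{2},\ \tfrac{\varepsilon}{2^{k}(1+\lip(\varphi_k))}}>0,
\]
so that $\sum_{k\ge1}(1+\lip(\varphi_k))\theta_k\le \varepsilon$ and $H\subseteq Q_{\theta_k}$ for every $k$. Applying Theorem~\ref{thm:johanis} to $f$ with parameter $\theta_k$ produces
\[
\tilde f_k\in\lip(Q_{\theta_k},Y)\cap C^{1}(Q_{\theta_k},Y)
\]
with $\lip(\tilde f_k)\le\lip(f)$ and $\sup_{Q_{\theta_k}}\lnorm{Y}{\tilde f_k-f}\le\theta_k$. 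The restriction $h_k:=\tilde f_k|_H$ therefore lies in $\SLA(f,H,Y,\theta_k)$ with the uniform bound $\lip(h_k)\le\lip(f)$.

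Finally, I would apply Lemma~\ref{lemma:SLA} with data $Q$, $h=f$, $U=H$, the given partition of unity $(\varphi_k)_{k\in\N}$, sets $A_k:=H$, error $\theta:=\varepsilon$, and the numbers and mappings $(\theta_k,h_k)_{k\in\N}$ constructed above. The output $g:=\tilde h$ produced by formula~\eqref{eq:def_H} is continuous on $Q$ and $C^{1}$ on $H$, equals $f$ on $Q\setminus H$, satisfies $\sup_Q\lnorm{Y}{g-f}\le\varepsilon$, and by Lemma~\ref{lemma:SLA}\eqref{SLAi},
\[
\lip(g)\le\max\bbr{\lip(f),\ \varepsilon+\sup_{k\ge1}\lip(h_k|_H)}\le\lip(f)+\varepsilon,
\]
which is exactly the required bound. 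The only technical point to monitor is the finiteness of $\sup_k\lip(h_k|_{A_k})$ demanded by Lemma~\ref{lemma:SLA} for global Lipschitzness of $\tilde h$; this is automatic here since $\lip(h_k)\le\lip(\tilde f_k)\le\lip(f)$ for every $k$, so no substantive obstacle is expected.
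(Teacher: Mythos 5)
Your proposal is correct and follows essentially the same route as the paper's own proof: reduce to closed $Q$, observe that $H$ is uniformly interior to $Q$ (so Theorem~\ref{thm:johanis} applies on inner layers), pull back the resulting smooth Lipschitz approximations to $H$ with the uniform Lipschitz bound $\lip(h_k)\le\lip(f)$, and feed them into the patching Lemma~\ref{lemma:SLA} with $U=A_k=H$, obtaining $\lip(g)\le\max\set{\lip(f),\varepsilon+\lip(f)}=\lip(f)+\varepsilon$. The only cosmetic difference is your explicit choice of $\theta_k$ (the paper fixes a shrunken $\varepsilon<(r-\rho)/2$ and picks $\varepsilon_k\in(0,\varepsilon)$ with $\sum(1+\lip(\varphi_k))\varepsilon_k\le\varepsilon$, which achieves the same thing).
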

\begin{proof}
	Since $Y$ is Banach, we may assume without loss of generality that $Q$ is closed. We may assume that $\varepsilon<(r-\rho)/2$ so that $E\subseteq H\subseteq Q_{2\varepsilon}$, where $Q_{2\varepsilon}$ is defined by~\eqref{eq:qeps} in Theorem~\ref{thm:johanis}. Let $(\varphi_{k})_{k\in\N}$ be a smooth, locally finite partition of unity on $H$ with $\supp(\varphi_k)\subseteq H$ for each $k\ge1$. Choose any $\varepsilon_k\in(0,\varepsilon)$ such that $\sum_{k\ge1}(1+\lip(\varphi_k))\varepsilon_k\le\varepsilon$.
	
	By Theorem~\ref{thm:johanis} we have that $\SLA(f,H,Y,\varepsilon_k)\supseteq \SLA(f,Q_{\varepsilon_k},Y,\varepsilon_k)\ne\emptyset$ for each $k\ge1$ and, moreover, for each $k\ge1$ the set $\SLA(f,Q_{\varepsilon_k},Y,\varepsilon_k)$ contains a mapping $h_{k}$ with $\lip(h_{k})\leq\lip(f)$. To complete the proof, we let $\theta=\varepsilon$, $\theta_{k}=\varepsilon_{k}$, $U=H$ and $A_k=H$ for all $k\ge1$, and finally take $g$ as the mapping $\tilde{h}$ given by the conclusion of Lemma~\ref{lemma:SLA}.
\end{proof}
\begin{appendices}
\numberwithin{equation}{section} 
\renewcommand{\theequation}{\Alph{section}.\arabic{equation}}

	\appendixpage
\section{Local to global Lipschitz estimates.}
\begin{lemma}\label{lem:lip-loc_glob} 
	Let $X,Y$ be normed spaces, $F\subseteq U\subseteq X$ where $U$ is open and convex, and suppose that for any $\varepsilon>0$ and any $\mb{x},\mb{y}\in U$  there exist $\mb{x}',\mb{y}'\in U$ such that $\lnorm{X}{\mb{x}-\mb{x}'},\lnorm{X}{\mb{y}-\mb{y}'}<\varepsilon$ and $[\mb{x}',\mb{y}']\cap F$ has $1$-dimensional Hausdorff measure $0$.
	Let
	$g\colon U\to Y$ be locally Lipschitz on $U$ and suppose that $g$ has at least one of the following properties:
		\begin{enumerate}[(i)]
			\item $g$ is locally $L$-Lipschitz on $U\setminus F$.
			\item \label{it:lip-loc_glob:der} 
for every $\mb{x}\in U\setminus F$, the derivative $Dg(\mb{x})$ exists and satisfies 
			$\opnorm{Dg(\mb{x})}\leq L$.
		\end{enumerate}
		Then $g\colon U\to Y$ is $L$-Lipschitz.
\end{lemma}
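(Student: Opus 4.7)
The plan is to reduce to a one-dimensional question via Hahn--Banach and the approximation hypothesis on $F$, and then apply the fundamental theorem of calculus along a suitable line segment.

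First, by Hahn--Banach it suffices to prove $\abs{\mb{y}^*\br{g(\mb{y})-g(\mb{x})}}\le L\lnorm{X}{\mb{y}-\mb{x}}$ for every $\mb{x},\mb{y}\in U$ and every $\mb{y}^*\in \Balli{Y^*}$. Fix such $\mb{y}^*$ and set $h:=\mb{y}^*\circ g\colon U\to\R$, which is locally Lipschitz and inherits from $g$ the corresponding variant of (i) or (ii). Since $h$ is continuous, the approximation hypothesis on $F$ allows one to pass to the limit as $\mb{x}',\mb{y}'\to\mb{x},\mb{y}$, reducing the task to showing $\abs{h(\mb{y}')-h(\mb{x}')}\le L\lnorm{X}{\mb{y}'-\mb{x}'}$ whenever $\mb{x}',\mb{y}'\in U$ satisfy $\mc{H}^{1}\br{[\mb{x}',\mb{y}']\cap F}=0$.

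For such a pair, assume $\mb{x}'\ne \mb{y}'$ and parametrise the segment by $\gamma(t)=(1-t)\mb{x}'+t\mb{y}'$, $t\in[0,1]$. By convexity $\gamma([0,1])\subseteq U$, and since this image is compact and $g$ is locally Lipschitz on $U$, the composition $\phi:=h\circ\gamma$ is Lipschitz, hence absolutely continuous, on $[0,1]$. Thus $\phi(1)-\phi(0)=\int_{0}^{1}\phi'(t)\,dt$ with $\phi'$ existing almost everywhere. Because $\gamma$ scales $\mc{H}^{1}$ by $\lnorm{X}{\mb{y}'-\mb{x}'}$, the set $A:=\gamma^{-1}(F)\cap[0,1]$ has Lebesgue measure zero. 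For a.e.\ $t\in[0,1]\setminus A$ at which $\phi'(t)$ exists I will show $\abs{\phi'(t)}\le L\lnorm{X}{\mb{y}'-\mb{x}'}$: in case (ii) this is immediate from $\phi'(t)=Dh(\gamma(t))(\mb{y}'-\mb{x}')$ and $\opnorm{Dh(\gamma(t))}\le L$; in case (i), a ball $B(\gamma(t),r)\subseteq U$ on which $h$ is $L$-Lipschitz over $B(\gamma(t),r)\cap(U\setminus F)$ yields the slope bound $L\lnorm{X}{\mb{y}'-\mb{x}'}$ for every $s\in[0,1]\setminus A$ with $\abs{s-t}$ sufficiently small. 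Integration then gives the desired estimate $\abs{\phi(1)-\phi(0)}\le L\lnorm{X}{\mb{y}'-\mb{x}'}$.

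The only mildly delicate point is the derivative bound in case (i): the hypothesis controls slopes only between pairs of points in $U\setminus F$, while $\phi'(t)$ is a two-sided limit over all nearby $s$, including those with $\gamma(s)\in F$. The resolution is that $[0,1]\setminus A$ is dense in $[0,1]$, so whenever $\phi'(t)$ exists its value equals the limit taken along $s\in[0,1]\setminus A$, and this is the limit for which the local $L$-Lipschitz hypothesis on $U\setminus F$ supplies the required bound.
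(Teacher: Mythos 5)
Your proof is correct and takes essentially the same route as the paper's: reduce to scalar-valued via Hahn--Banach, parametrise the approximating segment $[\mb{x}',\mb{y}']$, apply the fundamental theorem of calculus to the (locally Lipschitz, hence absolutely continuous) composition, bound the derivative $\mc{H}^1$-a.e.\ using that a.e.\ point of the segment lies in $U\setminus F$, and pass to the limit. You spell out more explicitly than the paper the subtlety in case (i) — that the two-sided derivative, when it exists, can be computed along the dense set of parameters mapping into $U\setminus F$ — but this is a clarification of, not a departure from, the paper's argument.
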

\begin{proof}
		In order to show that $g$ is $L$-Lipschitz, we fix an arbitrary $\mb{w}^*\in \Sph_{Y^*}$ and show that the function $g_{\mb{w}^*}=\mb{w}^*\circ g\colon U\to\R$ is $L$-Lipschitz.
		Let $\mb{x},\mb{y}\in U$ be any pair of distinct points. Let $\varepsilon\in(0,\lnorm{X}{\mb{y}-\mb{x}}/2)$ be arbitrary; find $\mb{x}',\mb{y}'\in U$ as guaranteed by the hypothesis of the lemma. Then, since $[\mb{x}',\mb{y}']\subseteq U$ and $g_{\mb{w}^*}|_{[\mb{x}',\mb{y}']}$ is locally Lipschitz as a mapping $[\mb{x}',\mb{y}']\to Y$,
		\begin{equation*}
		g_{\mb{w}^*}(\mb{y}')-g_{\mb{w}^*}(\mb{x}')
		=\int_0^{\lnorm{X}{\mb{y}'-\mb{x}'}} g_{\mb{w}^*}'(\mb{x}'+t\mb{v};\mb{v})\,dt,
		\end{equation*}
		where $\mb{v}=\frac{\mb{y}'-\mb{x}'}{\lnorm{X}{\mb{y}'-\mb{x}'}}$, $g_{\mb{w}^*}'(\mb{x}'+t\mb{v};\mb{v})$ is the directional derivative of $g_{\mb{w}^*}$ at $\mb{x}'+t\mb{v}$ in the direction of $\mb{v}$ which exists for Lebesgue almost all $t\in[0,\lnorm{X}{\mb{y}'-\mb{y}}]$.
		Recall that $\mc{H}^{1}$-almost every point of $[\mb{x}',\mb{y}']$ belongs to $U\setminus F$, hence $|g_{\mb{w}^*}'(\mb{x}'+t\mb{v};\mb{v})|\le L$ for almost all $t\in[0,\lnorm{X}{\mb{y}'-\mb{x}'}]$, implying $\abs{		g_{\mb{w}^*}(\mb{y}')-g_{\mb{w}^*}(\mb{x}')
		}\le L\lnorm{X}{\mb{y}'-\mb{x}'}$. 
		Passing to a limit when $\varepsilon\to0$ gives  $\abs{g_{\mb{w}^*}(\mb{y})-g_{\mb{w}^*}(\mb{x})
		}\le L\lnorm{X}{\mb{y}-\mb{x}}$ which, in turn, due to arbitrariness of $\mb{x},\mb{y}\in U$ and $\mb{w}^*\in\Sph_{Y^*}$ implies the statement.
\end{proof}
\begin{cor}\label{cor:lip-loc_glob}
		Let $X$, $Y$ be normed spaces, where $X$ is finite-dimensional, let $U\subseteq X$ be open and convex, $g\colon U\to Y$ be locally Lipschitz on $U$ and suppose that 
$\opnorm{Dg(\mb{x})}\leq L$ for Lebesgue a.e. $\mb{x}\in U$.
		Then $g\colon U\to Y$ is $L$-Lipschitz.
\end{cor}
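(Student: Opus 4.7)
The plan is to apply Lemma~\ref{lem:lip-loc_glob} with the exceptional set
\begin{equation*}
F := \{\mb{x}\in U : Dg(\mb{x})\text{ does not exist, or }\opnorm{Dg(\mb{x})}>L\}.
\end{equation*}
Since $X$ is finite-dimensional and $g$ is locally Lipschitz, Rademacher's theorem guarantees that $Dg$ exists Lebesgue almost everywhere on $U$, so combining this with the hypothesis gives $\leb(F)=0$. Condition~\eqref{it:lip-loc_glob:der} of Lemma~\ref{lem:lip-loc_glob} holds by the very definition of $F$. Thus the task reduces to verifying the perturbation hypothesis of Lemma~\ref{lem:lip-loc_glob}.

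To that end, given $\mb{x},\mb{y}\in U$ and $\varepsilon>0$, I would argue by a standard Fubini-type argument. If $\mb{x}=\mb{y}$ take $\mb{x}'=\mb{y}'=\mb{x}$. Otherwise, set $d=\dim X$, fix a linear complement $V$ of $\R(\mb{y}-\mb{x})$ in $X$, and equip $X$ with a Lebesgue measure coming from any basis adapted to the splitting $X=V\oplus\R(\mb{y}-\mb{x})$. Since $\leb(F)=0$, Fubini's theorem implies that for $\leb_{V}$-almost every $\mb{z}\in V$ the slice $F\cap(\mb{z}+\R(\mb{y}-\mb{x}))$ has $1$-dimensional Lebesgue measure zero on the corresponding line. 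Along any line $1$-dimensional Lebesgue measure and $\mc{H}^1$ differ only by a positive constant (depending on $\lnorm{X}{\cdot}$ and the parametrisation), so this slice is also $\mc{H}^1$-null.

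The perturbation is then produced by choosing $\mb{p}\in X$ with $\lnorm{X}{\mb{p}}<\varepsilon$, small enough so that $\mb{x}+\mb{p},\mb{y}+\mb{p}\in U$, which is possible since $U$ is open, and with the $V$-component of $\mb{p}$ lying in the full $\leb_V$-measure "good" subset of $V$ identified above. Such a $\mb{p}$ exists because the projection onto $V$ of a sufficiently small open ball about $\mb{0}_X$ has positive $(d-1)$-dimensional Lebesgue measure. Setting $\mb{x}'=\mb{x}+\mb{p}$ and $\mb{y}'=\mb{y}+\mb{p}$, the segment $[\mb{x}',\mb{y}']$ is a sub-segment of the good line $\pi(\mb{x}')+\R(\mb{y}-\mb{x})$, so $\mc{H}^1([\mb{x}',\mb{y}']\cap F)=0$. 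Lemma~\ref{lem:lip-loc_glob} then delivers the conclusion. The only real work is the Fubini step, but this is routine in finite dimensions; no further subtlety arises because the hypothesis of local Lipschitzness on $U$ is directly inherited from the assumption.
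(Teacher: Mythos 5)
Your proposal is correct and follows essentially the same route as the paper: take $F$ to be the Lebesgue-null set where $Dg$ fails to exist or has operator norm exceeding $L$ (the paper wraps this in a Borel envelope, but as you use it the set is already Borel), and verify the perturbation hypothesis of Lemma~\ref{lem:lip-loc_glob}\eqref{it:lip-loc_glob:der} by a Fubini slicing argument in the direction $\mb{y}-\mb{x}$. The paper states this in a single sentence; your write-up fills in the routine Fubini details, with the only cosmetic slip being that the ``good'' point should be $\pi_V(\mb{x}+\mb{p})$ rather than $\pi_V(\mb{p})$, which is a trivial translation.
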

	\begin{proof}
		Defining $F$ as a Borel Lebesgue null set containing the Lebesgue null set $U\setminus S$, where $S$ is the set of $\mb{x}\in U$ for which 
		$\opnorm{Dg(\mb{x})}\leq L$.
		A standard application of Fubini's Theorem shows that the conditions of Lemma~\ref{lem:lip-loc_glob} with~\eqref{it:lip-loc_glob:der} are met.
	\end{proof}
	
	\begin{lemma}\label{lemma:awkward_lipschitz_bound}
		Let $X$ and $Y$ be normed spaces, $U\subseteq Q\subseteq X$ be sets where $U$ is open and $Q$ is closed, let $f\colon Q\to Y$ be a continuous function, which is locally $L$-Lipschitz on $U$ and is Lipschitz on $Q\setminus U$. Then $f\in\lip(Q,Y)$ and
		\begin{equation*}
		\lip(f)\leq \max\set{L,\lip(f|_{Q\setminus U})}.
		\end{equation*} 	
	\end{lemma}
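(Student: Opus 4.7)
My plan is to reduce Lemma~\ref{lemma:awkward_lipschitz_bound} to a one-dimensional Lipschitz estimate by restricting $f$ to the straight-line segment between two arbitrary points of $Q$. Setting $K:=\max\{L,\lip(f|_{Q\setminus U})\}$, I shall fix $\mb{x},\mb{y}\in Q$ and parameterize by $\gamma(t):=\mb{x}+t(\mb{y}-\mb{x})$ for $t\in\R$. Let $Q^{*}:=\gamma^{-1}(Q)$, which is closed in $\R$ and contains $0,1$, and $U^{*}:=\gamma^{-1}(U)$, which is open in $\R$ and contained in $Q^{*}$; the composition $h:=f\circ\gamma\colon Q^{*}\to Y$ is continuous. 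By the local $L$-Lipschitz hypothesis on $U$, each $t\in U^{*}$ admits an open interval neighbourhood inside $U^{*}$ on which $h$ is $(L\lnorm{X}{\mb{y}-\mb{x}})$-Lipschitz, and for any $t_1,t_2\in Q^{*}\setminus U^{*}$ one has $\lnorm{Y}{h(t_1)-h(t_2)}\le\lip(f|_{Q\setminus U})\lnorm{X}{\mb{y}-\mb{x}}|t_1-t_2|$. It will therefore suffice to show $\lnorm{Y}{h(b)-h(a)}\le K\lnorm{X}{\mb{y}-\mb{x}}(b-a)$ for arbitrary $a<b$ in $Q^{*}$ and apply this at $a=0,b=1$.

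The crucial step of the one-dimensional argument will be the following observation: whenever $c<d$ both lie in $Q^{*}$ and $[c,d)\cap(Q^{*}\setminus U^{*})=\emptyset$, then $[c,d)\subseteq U^{*}$ (and a symmetric statement holds for $(c,d]$). To prove it I shall suppose for contradiction that some $t\in[c,d)$ lies outside $U^{*}$; by the hypothesis $t\notin Q^{*}\setminus U^{*}$, so necessarily $t\notin Q^{*}$, whence $t$ belongs to a bounded component $(\alpha,\beta)$ of $\R\setminus Q^{*}$ with endpoints $\alpha,\beta\in Q^{*}$ by closedness of $Q^{*}$. The inequalities $c\le\alpha<t<\beta\le d$ follow from $c,d\in Q^{*}$ together with $c\le t<d$, so $\alpha\in[c,d)\cap Q^{*}$, forcing $\alpha\in U^{*}$ by hypothesis; but then the openness of $U^{*}$ places a neighbourhood of $\alpha$ inside $U^{*}\subseteq Q^{*}$, contradicting $(\alpha,\beta)\cap Q^{*}=\emptyset$.

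Equipped with this observation, I treat two cases. When $E^{*}:=[a,b]\cap(Q^{*}\setminus U^{*})=\emptyset$, both $a,b$ lie in $U^{*}$, and the observation gives $[a,b]\subseteq U^{*}$; hence $[a,b]$ sits inside a single open interval component of $U^{*}$, on which $h$ is locally $(L\lnorm{X}{\mb{y}-\mb{x}})$-Lipschitz, and Corollary~\ref{cor:lip-loc_glob} applied in dimension one yields the desired bound. When $E^{*}\ne\emptyset$, I set $s:=\min E^{*}$ and $r:=\max E^{*}$, apply the observation and its symmetric counterpart to obtain $[a,s)\subseteq U^{*}$ and $(r,b]\subseteq U^{*}$, and invoke Corollary~\ref{cor:lip-loc_glob} together with continuity of $h$ at $s$ and $r$ to deduce $(L\lnorm{X}{\mb{y}-\mb{x}})$-Lipschitz bounds for $h|_{[a,s]}$ and $h|_{[r,b]}$ (trivially if $s=a$ or $r=b$); combined with $\lnorm{Y}{h(s)-h(r)}\le\lip(f|_{Q\setminus U})\lnorm{X}{\mb{y}-\mb{x}}(r-s)$ from $s,r\in Q^{*}\setminus U^{*}$, the triangle inequality closes the estimate. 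The principal subtlety throughout is that the segment $[\mb{x},\mb{y}]$ need not lie in $Q$, so $Q^{*}$ may be a complicated closed subset of $\R$ riddled with gaps; the key observation above is precisely what bridges those gaps and makes the case analysis go through.
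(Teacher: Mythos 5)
Your proof follows essentially the same route as the paper: restrict $f$ to the line through the two given points and decompose the segment into pieces lying in $U$ (where the local $L$-Lipschitz bound applies) and pieces lying in $Q\setminus U$. The paper breaks the segment at a single point, the right endpoint $\mb{b}$ of the interval component of $U\cap(\mb{x}_1+\R\mb{e})$ containing $\mb{x}_1$, and recycles the case of one point in $U_1$ and one in $Q_1\setminus U_1$; you instead break at the first and last transition points $s,r$ of $[a,b]\cap(Q^*\setminus U^*)$, and your gap observation cleanly handles the possibility that the segment leaves $Q$. Both variants are fine and the underlying idea is identical.

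The one thing to fix is the citation of Corollary~\ref{cor:lip-loc_glob}. That corollary's hypothesis is $\opnorm{Dg(\mb{x})}\le L$ for Lebesgue a.e.\ $\mb{x}$, which presupposes that the Lipschitz map $h=f\circ\gamma\colon\R\to Y$ is differentiable a.e. Here $Y$ is merely a normed space and is not assumed to have the Radon--Nikodym property, so $h$ need not be a.e.\ differentiable and the corollary cannot be invoked. What you actually need is the elementary fact that a map which is locally $L$-Lipschitz on a convex open set is $L$-Lipschitz; this is Lemma~\ref{lem:lip-loc_glob} with hypothesis~(i) and $F=\emptyset$, whose proof scalarises with functionals in $\Sph_{Y^*}$ and so works for arbitrary $Y$. (A compactness/chaining argument along the interval would serve equally well.) With that substitution your argument is complete.
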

	\begin{proof}
		Fix distinct points $\mb{x}_{1},\mb{x}_{2}\in Q$ and set $L_1:=\max\set{L,\lip(f|_{Q\setminus U})}$. We show that 
		\begin{equation}\label{eq:to_show}
		\lnorm{Y}{f(\mb{x}_{2})-f(\mb{x}_{1})}\leq L_1\lnorm{X}{\mb{x}_{2}-\mb{x}_{1}}.
		\end{equation}
		This inequality is clear if both $\mb{x}_1,\mb{x}_2\in Q\setminus U$. Assume without loss of generality $\mb{x}_1\in U$.
		Let $\mb{e}:=\mb{x}_{2}-\mb{x}_{1}$, $U_1=U\cap \br{\mb{x}_1+\R\mb{e}}$ and $Q_1=Q\cap \br{\mb{x}_1+\R\mb{e}}$. Then $\mb{x}_1\in U_1\subseteq Q_1\subseteq \br{\mb{x}_1+\R\mb{e}}$ and $U_1$ is a relatively open subset of the line $\br{\mb{x}_1+\R\mb{e}}$, hence can be written as a disjoint union of open intervals. Let $I$ be the open interval containing $\mb{x}_1$. If $\mb{x}_2\in I\subseteq U_1$, then~\eqref{eq:to_show} is trivially satisfied, even with $L$ instead of $L_1$ in the right-hand side. Hence assume $I$ has a right endpoint $\mb{b}\in \mb{x}_{1}+\R\mb{e}$ lying between $\mb{x}_{1}$ and $\mb{x}_{2}$, implying $\lnorm{X}{\mb{x}_{2}-\mb{b}}+\lnorm{X}{\mb{b}-\mb{x}_{1}}=\lnorm{X}{\mb{x}_{2}-\mb{x}_{1}}$ and $\mb{b}\in \cl{U_1}\setminus U_{1}\subseteq Q_1\setminus U_{1}$. If $\mb{x}_2\notin U_1$, then~\eqref{eq:to_show} follows from 
		\begin{equation*}
		\lnorm{Y}{f(\mb{x}_{2})-f(\mb{b})}
		+	
		\lnorm{Y}{f(\mb{b})-f(\mb{x}_{1})}
		\leq
		\lip(f|_{Q\setminus U})\lnorm{X}{\mb{x}_2-\mb{b}}+
		L\lnorm{X}{\mb{b}-\mb{x}_1},
		\end{equation*}
		establishing the $L_1$-Lipschitzness of $f$ between points from $U_1$ and $Q_1\setminus U_1$.
		Therefore if
$\mb{x}_2\in U_1\setminus I$, 
then~\eqref{eq:to_show}  follows from
		\begin{equation*}
		\lnorm{Y}{f(\mb{x}_{2})-f(\mb{b})}
		+	
		\lnorm{Y}{f(\mb{b})-f(\mb{x}_{1})}
		\leq
		L_1\lnorm{X}{\mb{x}_2-\mb{b}}+
		L\lnorm{X}{\mb{b}-\mb{x}_1},
		\end{equation*}
which
holds due to $\mb{x}_1,\mb{x}_2\in U_1$, $\mb{b}\in Q_1\setminus U_1$. 
\end{proof}
\section{Derivatives of Lipschitz mappings.}
\begin{thm}\label{lemma:dsty_pt}
	Let $(X,\lnorm{X}{-})$ and $(Y,\lnorm{Y}{-})$ be normed spaces, where $X$ is finite-dimensional, $\myV\subseteq X$ be open, $f,g\colon \myV\to Y$ be locally Lipschitz mappings and $A=\set{\mb{x}\in \myV\colon f(\mb{x})=g(\mb{x})}$. Then the following statements hold:
	\begin{enumerate}[(i)]
		\item\label{diff_dnsty_pt} At each Lebesgue density point $\mb{x}$ of $A$ the mapping $f$ is Fr\'echet differentiable if and only if the mapping $g$ is Fr\'echet differentiable and $Df(\mb{x})=Dg(\mb{x})$.
		\item\label{dervs_equal_ae} If $Y$ is finite-dimensional then $Df(\mb{x})$ and $Dg(\mb{x})$ exist and are equal for Lebesgue almost every $\mb{x}\in A$.
	\end{enumerate}
\end{thm}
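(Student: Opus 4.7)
The plan for part~\eqref{diff_dnsty_pt} is to exploit the density hypothesis to show that every nearby point $\mb{y}$ can be replaced by a close point $\mb{z}\in A$, where $f$ and $g$ are known to coincide, and then use local Lipschitz control of $g$ to absorb the small displacement. More precisely, fix a density point $\mb{x}$ of $A$ and assume $f$ is \Fre differentiable at $\mb{x}$. Choose $r_0>0$ small enough so that $B_{X}(\mb{x},2r_0)\subseteq \myV$ and both $f$ and $g$ are $L$-Lipschitz on $B_{X}(\mb{x},2r_0)$ for some constant $L<\infty$. The first key step is the standard Vitali-type claim: for every $\varepsilon\in(0,1)$ there exists $r_1=r_1(\varepsilon)\in(0,r_0)$ such that for every $\mb{y}\in B_{X}(\mb{x},r_1)\setminus\{\mb{x}\}$, setting $r=\lnorm{X}{\mb{y}-\mb{x}}$, the ball $B_{X}(\mb{y},\varepsilon r)$ meets $A$. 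This follows by identifying $X$ with $\R^{\dim X}$ via a fixed basis, comparing Lebesgue volumes of $B_{X}(\mb{y},\varepsilon r)\subseteq B_{X}(\mb{x},(1+\varepsilon)r)$ and using the density of $A$ at $\mb{x}$ together with the equivalence of the norm $\lnorm{X}{\cdot}$ with the Euclidean norm.

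Given this, pick any $\mb{z}=\mb{z}(\mb{y})\in A\cap B_{X}(\mb{y},\varepsilon r)$ and estimate, using $g(\mb{z})=f(\mb{z})$ and $g(\mb{x})=f(\mb{x})$,
\begin{align*}
\lnorm{Y}{g(\mb{y})-g(\mb{x})-Df(\mb{x})(\mb{y}-\mb{x})}
&\le \lnorm{Y}{g(\mb{y})-g(\mb{z})}
+\lnorm{Y}{f(\mb{z})-f(\mb{x})-Df(\mb{x})(\mb{z}-\mb{x})}\\
&\quad+\opnorm{Df(\mb{x})}\lnorm{X}{\mb{z}-\mb{y}}\\
&\le (L+\opnorm{Df(\mb{x})})\varepsilon r
+o\bigl(\lnorm{X}{\mb{z}-\mb{x}}\bigr),
\end{align*}
with $\lnorm{X}{\mb{z}-\mb{x}}\le(1+\varepsilon)r$, so the error term is $o(r)$ as $\mb{y}\to\mb{x}$. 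Dividing by $r=\lnorm{X}{\mb{y}-\mb{x}}$ and letting $\mb{y}\to\mb{x}$ followed by $\varepsilon\to0$ yields that $g$ is \Fre differentiable at $\mb{x}$ with $Dg(\mb{x})=Df(\mb{x})$. The reverse implication is symmetric.

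For part~\eqref{dervs_equal_ae}, when $Y$ is finite-dimensional Rademacher's theorem applies to both locally Lipschitz mappings $f$ and $g$, providing full-measure subsets $S_f,S_g\subseteq \myV$ where they are respectively \Fre differentiable. The Lebesgue density theorem (again applied after identifying $X$ with $\R^{\dim X}$) yields a full-measure subset $A_0\subseteq A$ of density points of $A$ (the set $A$ is Borel as the zero set of the continuous function $f-g$). Intersecting with $S_f\cap S_g$ produces a full-measure subset of $A$ to which part~\eqref{diff_dnsty_pt} applies, delivering the conclusion.

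The main technical obstacle is the density argument used in the first step above: one must be careful to translate ``density point'' in the normed space setting to the Euclidean one via norm equivalence, and to verify the Vitali-type claim by comparing Lebesgue volumes of $B_{X}(\mb{y},\varepsilon r)$ and $B_{X}(\mb{x},(1+\varepsilon)r)$. Once this is in hand, the rest is a routine triangle inequality controlled by Lipschitz constants and the definition of \Fre differentiability.
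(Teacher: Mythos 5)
Your proposal is correct and follows essentially the same route as the paper's proof: at a density point, use a volume comparison (the ``Vitali-type claim'') to find a point $\mb{z}\in A$ within distance $\varepsilon\lnorm{X}{\mb{y}-\mb{x}}$ of $\mb{y}$, then control $g(\mb{y})-g(\mb{x})-Df(\mb{x})(\mb{y}-\mb{x})$ by a triangle inequality using the local Lipschitz constants, the differentiability estimate at $\mb{x}$, and $\opnorm{Df(\mb{x})}$. The only notable cosmetic difference is that for part~\eqref{dervs_equal_ae} the paper invokes Stepanov's theorem while you use Rademacher's theorem; since $f,g$ are locally Lipschitz, Rademacher already suffices and your choice is if anything the more elementary one. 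Your explicit remark that $A$ is Borel (being the zero set of the continuous $f-g$) is a useful small point that the paper leaves implicit.
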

\begin{proof}
	Statement~\eqref{dervs_equal_ae} is a consequence of~\eqref{diff_dnsty_pt}, Stepanov's Theorem and the Lebesgue Density Theorem. Indeed, by~\eqref{diff_dnsty_pt}, we have $Df(\mb{x})=Dg(\mb{x})$ everywhere in the set
	\begin{equation*}
	\set{\mb{x}\in A\colon \text{$\mb{x}$ is a Lebesgue denisty point of $A$}}\cap  \set{\mb{x}\in A\colon \text{$f$ is Fr\'echet differentiable at $\mb{x}$}}.
	\end{equation*}
	The first set in this intersection has full Lebesgue measure in $A$ by the Lebesgue Density Theorem and, for finite-dimensional $Y$, the second set in the intersection is also of full Lebesgue measure in $A$, by Stepanov's Theorem. 
	
	We now prove~\eqref{diff_dnsty_pt}: Let $\mb{x}$ be a Lebesgue density point of $A$, choose $r>0$ such that $B:=B_{X}(\mb{x},r)\subseteq \myV$ and $f|_{B}$ and $g|_{B}$ are Lipschitz and assume that $f$ is differentiable at $\mb{x}$. Fix $\varepsilon\in (0,1)$. Let $d:=\dim(X)$, set 
	\begin{equation}\label{eq:eta}
	\eta=\br{\frac{\varepsilon}{16\br{\lip(f|_{B})+\lip(g|_{B})+1}}}^{d}
	\end{equation}
	and choose $\delta\in (0,r)$ so that 
	\begin{equation}\label{eq:Df_threshold}
	\lnorm{Y}{f(\mb{x}+\mb{y})-f(\mb{x})-Df(\mb{x})(\mb{y})}\leq \eta\lnorm{X}{\mb{y}}\qquad \text{for all $\mb{y}\in \cl{B}_{X}(\mb{0},\delta)$,}
	\end{equation}
	and
	\begin{equation}\label{eq:leb_density_threshold}
	\leb_{X}\br{B_{X}(\mb{x},t)\cap A}\geq (1-\eta)\leb_{X}\br{B_{X}(\mb{x},t)} \qquad \text{for all $t\in [0, \delta]$.}
	\end{equation}
	Let $\mb{y}\in B_{X}(\mb{0},\delta/2)\setminus\set{\mb{0}}$ and set $t=2\lnorm{X}{\mb{y}}$. Observe, using $\eta<2^{-d-1}$ and $\delta<r$, that
	\begin{multline*}
	B_{X}(\mb{x+\mb{y}},2^{1/d}\eta^{1/d}t)\subseteq B_{X}(\mb{x},t)\subseteq B\subseteq \myV\quad \text{and}\\ \leb_{X}\br{B_{X}(\mb{x+\mb{y}},2^{1/d}\eta^{1/d}t)}=2\eta\leb\br{B_{X}(\mb{x},t)}.
	\end{multline*}
	Therefore, by~\eqref{eq:leb_density_threshold} we have that $B_{X}(\mb{x}+\mb{y},2^{1/d}\eta^{1/d}t)\cap A\cap B\neq \emptyset$. Let $\mb{z}\in B_{X}(\mb{x}+\mb{y},2^{1/d}\eta^{1/d}t)\cap A\cap B$ and set $\mb{y}':=\mb{z}-\mb{x}$. Then we have 
	\begin{align}\label{eq:y'}
	\lnorm{X}{\mb{y}'-\mb{y}}
	&=\lnorm{X}{\mb{z}-(\mb{x}+\mb{y})}\leq 2^{1/d}\eta^{1/d}t=2^{1+\frac{1}{d}}\eta^{1/d}\lnorm{X}{\mb{y}}\leq 4\eta^{1/d}\lnorm{X}{\mb{y}} \quad \text{so that}\nonumber\\ \lnorm{X}{\mb{y'}}&\leq \br{1+4\eta^{1/d}}\lnorm{X}{\mb{y}}\leq2\lnorm{X}{\mb{y}}<\delta.
	\end{align}
	We may now use the hypothesis $f|_{A}=g|_{A}$, \eqref{eq:y'}, \eqref{eq:Df_threshold} and  $\mb{x}+\mb{y}'=\mb{z},\mb{x}\in A\cap B$, $\mb{x}+\mb{y}\in B$ to write
	\begin{multline*}
	\lnorm{Y}{g(\mb{x}+\mb{y})-g(\mb{x})-Df(\mb{x})(\mb{y})}\leq\\ \lnorm{Y}{g(\mb{x}+\mb{y})-g(\mb{x}+\mb{y}')}+\lnorm{Y}{f(\mb{x}+\mb{y}')-f(\mb{x})-Df(\mb{x})(\mb{y}')}+\lnorm{Y}{Df(\mb{x})(\mb{y}'-\mb{y})}\\
	\leq \lip(g|_{B})4\eta^{1/d}\lnorm{X}{\mb{y}}+\eta\br{1+4\eta^{1/d}}\lnorm{X}{\mb{y}}+\lip(f|_{B})4\eta^{1/d}\lnorm{X}{\mb{y}}\leq \varepsilon\lnorm{X}{\mb{y}},
	\end{multline*}
	where we apply~\eqref{eq:eta} to get the final inequality (bound the three coefficients in order by $1/4$, $1/2$ and $1/4$). Since $\varepsilon>0$ and $\mb{y}\in B_{X}(\mb{0},\delta/2)\setminus\set{\mb{0}}$ were arbitrary, this establishes the Fr\'echet differentiability of $g$ at $\mb{x}$ with $Dg(\mb{x})=Df(\mb{x})$. Since the roles of $f$ and $g$ in the above argument are symmetric, this proves the if and only if statement of~\eqref{diff_dnsty_pt}.
\end{proof}

\begin{lemma}\label{lemma:closure_good_directions}
	Let $(X,\lnorm{X}{-})$, $(Y,\lnorm{Y}{-})$ be normed spaces, $H\subseteq X$ be open, $L\in\mc{L}(X,Y)$, $f\colon H\to Y$ be a Lipschitz mapping and $\mb{z}\in H$. 
	
	Then the set $D:=\set{\mb{u}\in \Sph_{X}\colon f'(\mb{z},\mb{u})\text{ exists and equals }L(\mb{u})}$ is closed.
\end{lemma}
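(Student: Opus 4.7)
The plan is to verify closedness of $D$ directly by a sequential argument combined with a standard three-epsilon estimate.  Suppose $(\mb{u}_n)_{n\in\N}\subseteq D$ is a sequence converging to some $\mb{u}_0\in \Sph_X$; the goal is to show $f'(\mb{z},\mb{u}_0)$ exists and equals $L(\mb{u}_0)$.  Since $H$ is open and contains $\mb{z}$, fix $r>0$ with $B_X(\mb{z},r)\subseteq H$, so that for every $\mb{u}\in\Sph_X$ and $|t|<r$ the point $\mb{z}+t\mb{u}$ lies in $H$, where $f$ is (by hypothesis) Lipschitz with some constant $M:=\lip(f)$.

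Given $\varepsilon>0$, I will pick $n\in\N$ large enough so that $(M+\opnorm{L})\lnorm{X}{\mb{u}_n-\mb{u}_0}<\varepsilon/2$.  For this fixed $n$, since $\mb{u}_n\in D$, there exists $\delta\in(0,r)$ such that
\begin{equation*}
\lnorm{Y}{\tfrac{f(\mb{z}+t\mb{u}_n)-f(\mb{z})}{t}-L(\mb{u}_n)}<\frac{\varepsilon}{2}\qquad\text{whenever }0<|t|<\delta.
\end{equation*}
The triangle inequality
\begin{align*}
\lnorm{Y}{\tfrac{f(\mb{z}+t\mb{u}_0)-f(\mb{z})}{t}-L(\mb{u}_0)}
&\leq \lnorm{Y}{\tfrac{f(\mb{z}+t\mb{u}_0)-f(\mb{z}+t\mb{u}_n)}{t}}\\
&\quad+\lnorm{Y}{\tfrac{f(\mb{z}+t\mb{u}_n)-f(\mb{z})}{t}-L(\mb{u}_n)}+\opnorm{L}\lnorm{X}{\mb{u}_n-\mb{u}_0}
\end{align*}
then yields, via the Lipschitz estimate $\lnorm{Y}{f(\mb{z}+t\mb{u}_0)-f(\mb{z}+t\mb{u}_n)}\leq M|t|\lnorm{X}{\mb{u}_0-\mb{u}_n}$ for the first term, that the whole expression is bounded above by $\varepsilon$ for all $0<|t|<\delta$.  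Since $\varepsilon>0$ was arbitrary, this shows that the directional derivative $f'(\mb{z},\mb{u}_0)$ exists and equals $L(\mb{u}_0)$, so $\mb{u}_0\in D$.

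There is no real obstacle here: the argument is entirely a routine three-term triangle estimate enabled by the uniform control provided by the Lipschitz constant of $f$ near $\mb{z}$ and the operator norm of $L$.  The only minor point to observe is that $H$ being open ensures all relevant evaluation points $\mb{z}+t\mb{u}_n$ and $\mb{z}+t\mb{u}_0$ lie in the domain of $f$ once $|t|$ is smaller than some uniform threshold, so that the Lipschitz bound on $f$ may be applied throughout.
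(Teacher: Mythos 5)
Your argument is correct and is essentially the same as the paper's: both prove closedness by a three-term triangle inequality, splitting the difference quotient error against $L(\mb{u}_0)$ into a Lipschitz term for $f$, the known convergence for $\mb{u}_n\in D$, and an $\opnorm{L}$-term, with only a cosmetic difference in how $\varepsilon$ is apportioned.
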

\begin{proof}
	Let $(\mb{u}_{j})_{j\in\N}$ be a sequence in $D$ with limit $\mb{u}=\lim_{j\to\infty}\mb{u}_{j}\in X$. We show that $\mb{u}\in D$. Given $\varepsilon>0$ we choose choose $k\in\N$ large enough so that 
	\begin{equation*}
	\lnorm{X}{\mb{u}_{k}-\mb{u}}\leq \frac{\varepsilon}{3\br{\lip(f)+\opnorm{L}+1}}.
	\end{equation*}
	Next choose $\delta>0$ small enough so that 
	\begin{equation*}
	\lnorm{Y}{f(\mb{z}+t\mb{u}_{k})-f(\mb{z})-tL(\mb{u}_{k})}\leq \frac{\varepsilon\abs{t}}{3}\qquad \text{ for all $t\in (0,\delta)$.}
	\end{equation*}
	Then, for all $t\in (0,\delta)$ we have
	\begin{multline*}
	\lnorm{Y}{f(\mb{z}+t\mb{u})-f(\mb{z})-tL(\mb{u})}\leq \lnorm{Y}{f(\mb{z}+t\mb{u})-f(\mb{z}+t\mb{u}_{k})}\\
	+\lnorm{Y}{f(\mb{z}+t\mb{u}_{k})-f(\mb{z})-tL(\mb{u}_{k})}+\abs{t}\lnorm{Y}{L(\mb{u}_{k})-L(\mb{u})}\\
	\leq \frac{\varepsilon\abs{t}}{3}+\frac{\varepsilon\abs{t}}{3}+\frac{\varepsilon\abs{t}}{3}=\varepsilon\abs{t}.
	\end{multline*}
	We conclude that $\mb{u}\in D$.
\end{proof}

\begin{thm}\label{lemma:der_equal_cts_ae_implies_evrywh}
	Let $(X,\lnorm{X}{-})$, $(Y,\lnorm{Y}{-})$ be normed spaces, where $X$ is finite-dimensional, $H\subseteq X$ be open, $\Phi\colon H\to \mc{L}(X,Y)$ be continuous and $f\colon H\to Y$ be a Lipschitz mapping such that $Df(\mb{x})=\Phi(\mb{x})$ for Lebesgue almost every $\mb{x}\in H$. Then $f\in C^{1}(H,Y)$ and $Df(\mb{x})=\Phi(\mb{x})$ for every $\mb{x}\in H$.
\end{thm}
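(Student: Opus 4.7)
The plan is to upgrade the hypothesis ``$Df(\mathbf{x}) = \Phi(\mathbf{x})$ a.e.'' to ``everywhere'' pointwise via a direct application of Corollary~\ref{cor:lip-loc_glob}, exploiting the continuity of $\Phi$ to convert a.e.~control on the operator norm of $Df$ into a genuine Lipschitz estimate on a small ball. The continuity of $\Phi$ together with $Df=\Phi$ everywhere then yields $f\in C^{1}(H,Y)$ for free.

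In detail, fix an arbitrary $\mathbf{x}_{0}\in H$ and $\varepsilon>0$. By the continuity of $\Phi$ at $\mathbf{x}_{0}$, choose $\delta>0$ so that $B:=B_{X}(\mathbf{x}_{0},\delta)\subseteq H$ and $\opnorm{\Phi(\mathbf{x})-\Phi(\mathbf{x}_{0})}\leq\varepsilon$ for every $\mathbf{x}\in B$. Define $g\colon B\to Y$ by $g(\mathbf{x})=f(\mathbf{x})-\Phi(\mathbf{x}_{0})(\mathbf{x}-\mathbf{x}_{0})$. Since $f$ is Lipschitz on $H$ and $\Phi(\mathbf{x}_{0})\in\mc{L}(X,Y)$, the mapping $g$ is Lipschitz, hence locally Lipschitz, on the open convex set $B$. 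Moreover, by hypothesis $Df(\mathbf{x})$ exists and equals $\Phi(\mathbf{x})$ for Lebesgue almost every $\mathbf{x}\in H$, so for almost every $\mathbf{x}\in B$ we have $Dg(\mathbf{x})=\Phi(\mathbf{x})-\Phi(\mathbf{x}_{0})$, and therefore $\opnorm{Dg(\mathbf{x})}\leq\varepsilon$ for almost every $\mathbf{x}\in B$. Since $X$ is finite-dimensional, Corollary~\ref{cor:lip-loc_glob} applies and gives that $g|_{B}$ is $\varepsilon$-Lipschitz. Written in terms of $f$, this yields
\[
\lnorm{Y}{f(\mathbf{x}_{0}+\mathbf{u})-f(\mathbf{x}_{0})-\Phi(\mathbf{x}_{0})(\mathbf{u})}\leq\varepsilon\lnorm{X}{\mathbf{u}}
\]
for all $\mathbf{u}\in X$ with $\lnorm{X}{\mathbf{u}}<\delta$.

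Since $\varepsilon>0$ was arbitrary, $f$ is \Fre differentiable at $\mathbf{x}_{0}$ with $Df(\mathbf{x}_{0})=\Phi(\mathbf{x}_{0})$, and since $\mathbf{x}_{0}\in H$ was arbitrary, the equality $Df=\Phi$ holds at every point of $H$. The continuity of $Df=\Phi$ on $H$ then gives $f\in C^{1}(H,Y)$. No step of this plan presents a real obstacle: the whole argument boils down to recognising that Corollary~\ref{cor:lip-loc_glob} does all of the work, since it already converts an a.e.~operator-norm bound on a derivative into a global Lipschitz bound; the only thing one needs to arrange is, by passing to the auxiliary mapping $g$ and using continuity of $\Phi$, that the relevant bound can be made arbitrarily small in a neighbourhood of any chosen point.
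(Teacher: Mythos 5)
Your proof is correct, and it takes a genuinely different (and shorter) route from the paper's. The paper argues by induction on $\dim X$: the one-dimensional base case is the fundamental theorem of calculus, and the inductive step uses Fubini's theorem to slice $H$ by hyperplanes, applies the induction hypothesis on each slice, then invokes Lemma~\ref{lemma:closure_good_directions} to pass from a dense set of directions to all of $\Sph_X$ on almost every line through $\mb{x}$ parallel to $\mb{v}$, and finally applies the one-dimensional statement along that line. Your argument instead works entirely at a single point: fix $\mb{x}_0$, subtract the affine map $\mb{x}\mapsto\Phi(\mb{x}_0)(\mb{x}-\mb{x}_0)$ to obtain $g$ with $\opnorm{Dg}\le\varepsilon$ a.e.~on a small ball $B$ around $\mb{x}_0$ (using the continuity of $\Phi$ to choose the ball), and then applies Corollary~\ref{cor:lip-loc_glob} to conclude $g|_B$ is $\varepsilon$-Lipschitz, which unwinds to the \Fre estimate at $\mb{x}_0$. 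Both arguments ultimately rest on the same Fubini-type one-dimensional-slicing principle, but yours packages it once through Corollary~\ref{cor:lip-loc_glob} and avoids the dimensional induction, the dense-directions argument, and Lemma~\ref{lemma:closure_good_directions} altogether. One small point worth making explicit: Corollary~\ref{cor:lip-loc_glob} requires $Dg$ to \emph{exist} a.e.~on $B$, not just to be bounded a.e.~where it exists; this is indeed guaranteed here because $Df(\mb{x})=\Phi(\mb{x})$ exists for a.e.~$\mb{x}\in H$ by hypothesis, and $g-f$ is affine, so $Dg(\mb{x})=\Phi(\mb{x})-\Phi(\mb{x}_0)$ exists and has norm at most $\varepsilon$ at the same full-measure set of points in $B$. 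With that noted, your proof is complete and is a cleaner way to establish the result given the machinery already present in the paper's appendix.
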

\begin{proof} The proof goes by induction on $d=\dim X$. For $d=1$ it suffices to observe that
	\begin{equation*}
	f(y+t)=f(t)+\int_{0}^{t}\Phi(y+s)(1)\,ds
	\end{equation*}
	for all $y\in H$ and $t\in (0,\dist(y,X\setminus H))$.
	
	Assume now that $d\geq 2$ and the theorem is valid whenever the domain space has dimension less than $d$. Suppose $\dim X=d$, fix $\mb{x}\in H$ and $\mb{v}\in \Sph_{X}$. We complete the proof by showing that the directional derivative $f'(\mb{x},\mb{v})$ exists and equals $\Phi(\mb{x})(\mb{v})$. Given any $(d-1)$-dimensional subspace $U$ of $X$, not containing $\mb{v}$ we have, by Fubini's Theorem, that $\mc{H}^{1}$-a.e. $\mb{z}\in \br{\mb{x}+\R\mb{v}}\cap H$ has the property that for $\mc{H}^{d-1}$-a.e. $\mb{y}\in \br{\mb{z}+U}\cap H$ the mapping $f$ is differentiable at $\mb{y}$ and $Df(\mb{y})=\Phi(\mb{y})$. By the induction hypothesis, we get that $\mc{H}^{1}$-a.e. $\mb{z}\in \br{\mb{x}+\R\mb{v}}\cap H$ is such that the directional derivatives $f'(\mb{y},\mb{u})$ exist and equals $\Phi(\mb{y})(\mb{u})$ for every $\mb{y}\in \br{\mb{z}+U}\cap H$ and every $\mb{u}\in U$, in particular for $\mb{y}=\mb{z}$. We conclude, by applying this argument to each $U$ from a countable dense subset of the set of $(d-1)$-dimensional subspaces of $X$ not containing $\mb{v}$, that $\mc{H}^{1}$-a.e. $\mb{z}\in \br{\mb{x}+\R\mb{v}}\cap H$ has the property that all directional derivatives $f'(\mb{z},\mb{u}_{j})$ for a dense sequence
	$(\mb{u}_{j})_{j\in\N}$ in $\Sph_{X}\setminus\set{\mb{v},-\mb{v}}$ exist and are given by the formula $f'(\mb{z},\mb{u}_{j})
	=\Phi(\mb{z})(\mb{u}_{j})$. By Lemma~\ref{lemma:closure_good_directions} this formula extends to all $\mb{u}\in \cl{\set{\mb{u}_{j}\colon j\in\N}}=\Sph_{X}$. In particular, it extends to $\mb{u}=\mb{v}$, giving $f'(\mb{z},\mb{v})=\Phi(\mb{z})(\mb{v})$ for $\mc{H}^{1}$-a.e. $\mb{z}\in \br{\mb{x}+\R\mb{v}}\cap H$. Finally, by the induction hypothesis, this implies $f'(\mb{z},\mb{v})=\Phi(\mb{z})(\mb{v})$ for all $\mb{z}\in \br{\mb{x}+\R\mb{v}}\cap H$, in particular for $\mb{z}=\mb{x}$.
\end{proof}
\section{Miscellaneous.}\label{sec:misc}
The following lemma verifies that the minimum in the definition of $\cyl(T)$, see Definition~\ref{def:cyl_constant}, is attained.
\begin{lemma}\label{lemma:cyl_constant}
	Let $X$ and $Y$ be normed vector spaces and $T\in\mc{L}(X,Y)\setminus\{\mb{0}_{\mc{L}(X,Y)}\}$ be of finite rank $l$. Then the infimum
	\begin{equation*}
	\cyl(T):=\inf\set{\max_{1\leq j\leq l}\opnorm{\sum_{i=1}^{j}\mb{w}_{i}^{*}\circ T(\cdot)\,\mb{w}_{i}}
		\colon 
		\begin{array}{c}
		W=(\mb{w}_{1},\ldots,\mb{w}_{l})\text{ is a basis of $T(X)$},\\
		\mb{w}_{1}^{*},\ldots,\mb{w}_{l}^{*}\in T(X)^{*}\text{ is dual to }W
		\end{array}}
	\end{equation*}
	is attained, so it is in fact a minimum.
\end{lemma}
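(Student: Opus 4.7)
To prove Lemma~\ref{lemma:cyl_constant} I plan to reduce to the minimisation of a continuous function on a compact set via a normalisation of bases together with a boundary blow-up argument. Throughout, denote $F(W) := \max_{1\le j \le l}\opnorm{S_j^W}$, where $W = (\mb{w}_{1},\ldots,\mb{w}_{l})$ is an ordered basis of $T(X)$, $S_j^W := \sum_{i=1}^{j}\mb{w}_i^{*}\circ T(\cdot)\,\mb{w}_i$, and $\mb{w}_1^*,\ldots,\mb{w}_l^*$ is the dual basis.

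The first step is to observe that $F$ is invariant under the rescaling $\mb{w}_i \mapsto \alpha_i \mb{w}_i$ for $\alpha_i \in \R\setminus\{0\}$, because the dual vector $\mb{w}_i^*$ gets rescaled by $1/\alpha_i$, so each term $\mb{w}_i^*\circ T(\cdot)\,\mb{w}_i$ is preserved. Hence the infimum may be taken over ordered bases $W$ with $\lnorm{Y}{\mb{w}_i}=1$ for every $i$. Since $T(X)$ is finite-dimensional, its unit sphere $\Sph_{T(X)}$ is compact, and the set $\mathcal{B}_1$ of such normalised bases is an open subset of the compact product $(\Sph_{T(X)})^{l}$---the complement (the set of linearly dependent $l$-tuples) being closed.

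On $\mathcal{B}_1$ the function $F$ is continuous: after identifying $T(X)$ with $\R^l$ via a fixed reference basis, the passage from $W$ to its dual basis coincides with matrix inversion, a continuous operation on invertible matrices, and the operator norm is continuous on $\mc{L}(X,Y)$. The plan is then: given any minimising sequence $(W^{(n)})_{n\in\N}\subseteq \mathcal{B}_1$, extract a subsequence converging in $(\Sph_{T(X)})^{l}$ to some $W^{(\infty)}$; if $W^{(\infty)}\in\mathcal{B}_1$, continuity yields $F(W^{(\infty)})=\lim_n F(W^{(n)})=\cyl(T)$, realising the infimum. Everything thus reduces to ruling out the alternative, that $W^{(\infty)}$ is linearly dependent.

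The main obstacle and the technical heart of the argument is this boundary blow-up: one must show $F(W^{(n)})\to+\infty$ whenever $W^{(\infty)}$ is linearly dependent. Let $j\ge 2$ be the smallest index with $(\mb{w}_1^{(\infty)},\ldots,\mb{w}_j^{(\infty)})$ linearly dependent, and write $\mb{w}_j^{(\infty)} = \sum_{i<j} c_i\, \mb{w}_i^{(\infty)}$. The biorthogonality relation $(\mb{w}_j^{(n)})^*(\mb{w}_k^{(n)})=\delta_{jk}$ gives $(\mb{w}_j^{(n)})^*\bbr{\mb{w}_j^{(n)}-\sum_{i<j}c_i\mb{w}_i^{(n)}}=1$ while the argument tends to $\mb{0}_{Y}$ in norm, forcing $\lnorm{T(X)^*}{(\mb{w}_j^{(n)})^*}\to\infty$. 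Because $T(X)$ is finite-dimensional, a standard quotient argument supplies $c>0$ with $c\,\Balli{T(X)}\subseteq T(\Balli{X})$, yielding $\opnorm{(\mb{w}_j^{(n)})^*\circ T}\ge c\,\lnorm{T(X)^*}{(\mb{w}_j^{(n)})^*}\to\infty$. Since $\lnorm{Y}{\mb{w}_j^{(n)}}=1$, the consecutive difference $S_j^{W^{(n)}}-S_{j-1}^{W^{(n)}} = (\mb{w}_j^{(n)})^*\circ T(\cdot)\,\mb{w}_j^{(n)}$ has operator norm $\opnorm{(\mb{w}_j^{(n)})^*\circ T}\to\infty$, and the triangle inequality then forces $F(W^{(n)})\ge\max\{\opnorm{S_j^{W^{(n)}}},\opnorm{S_{j-1}^{W^{(n)}}}\}\to\infty$, contradicting the minimising property. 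The crucial ingredient is the surjectivity of $T$ onto its image, which converts an exploding dual norm into an exploding operator norm on partial sums.
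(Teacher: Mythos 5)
Your proof is correct and follows the same overall strategy as the paper: normalise bases to the unit sphere, extract a convergent subsequence of a minimising sequence, and show that a linearly dependent limit forces the objective to blow up, contradicting minimality. The main difference is in the blow-up step. After establishing that the dual vector norm $\lnorm{T(X)^*}{(\mb{w}_j^{(n)})^*}\to\infty$ (which you both obtain from the biorthogonality relation applied to a vector tending to $\mb{0}_{Y}$), the paper introduces an auxiliary minimal index $s\le j$ for which the dual norm is unbounded, passes to a further subsequence, and argues that the individual terms of $S_{s-1}$ stay bounded while the $s$-th term explodes, so $\opnorm{S_s}\to\infty$. You avoid this bookkeeping entirely by observing that $S_j-S_{j-1}=(\mb{w}_j^{(n)})^*\circ T(\cdot)\,\mb{w}_j^{(n)}$, whose operator norm equals $\opnorm{(\mb{w}_j^{(n)})^*\circ T}$ (via the quotient/open-mapping argument for $T$ onto $T(X)$), and then deploying the triangle inequality $\opnorm{S_j-S_{j-1}}\le 2\max\{\opnorm{S_j},\opnorm{S_{j-1}}\}$. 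This sidesteps the need to control the other partial sums and the extra subsequence extraction, giving a cleaner finish. You also make explicit the continuity of $W\mapsto F(W)$ on the space of normalised bases, which the paper leaves implicit when passing from the nondegeneracy of the limit to attainment of the infimum; both points of divergence represent genuine, if modest, streamlining.
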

\begin{proof}
	Whenever $(\mb{w}_{1},\ldots,\mb{w}_{l})$ and $\mb{w}_{1}^{*},\ldots,\mb{w}_{l}^{*}$ contribute to the set over which the infimum defining $\cyl(T)$ is defined, the operators $\mb{w}_{i}^{*}\circ T(\cdot)\mb{w}_{i}\in \mc{L}(X,Y)$, for $1\leq i\leq l$, are invariant under rescaling of $\mb{w}_{i}$. Therefore, the set in the definition of $\cyl(T)$ is unchanged if we only allow contributions from bases $W$ with all vectors $\mb{w}_{i}$ of norm $1$ for all $1\leq i\leq l$. We will work with this equivalent definition of $\cyl(T)$ in the present proof.
	Let $Z:=T(X)\subseteq Y$ and $\bas\subseteq(\Sph_{Z})^{l}$ be the collection of ordered bases of $Z$, consisting of vectors of norm $1$.
	For each $1\le s\le l$ and
	$W=(\mb{w}_{1},\ldots,\mb{w}_{l})\in\bas$, let 
	\begin{equation*}
	\cyl(T,W,s)=\max_{1\leq j\leq s}\opnorm{\sum_{i=1}^{j}\mb{w}_{i}^{*}\circ T(\cdot)\,\mb{w}_{i}}
	.
	\end{equation*}
	Then $\cyl(T)=\inf_{W\in\bas} \cyl(T,W,l)$.
	Let $W_n\in\bas$ be such that $\cyl(T,W_n,l)\to\cyl(T)$. Let $W=(\mb{u}_1,\dots,\mb{u}_l)$ be the limit, in ${(\Sph_{Z})}^{l}$, of a convergent subsequence of $W_n$.
	Assume  $W\notin\bas$, i.e. the vectors are linearly dependent. Let $k\le l$ be the smallest index such that $\mb{u}_1,\dots,\mb{u}_k$ are linearly dependent, $a_{1},\ldots,a_{k-1}\in \R$ be such that $\mb{u}_k=\sum_{1\le i\le k-1} a_i\mb{u}_i$ and denote $A=\sum_{1\le i\le k-1}|a_i|$. Note that $k\ge2$ as $\lnorm{Y}{\mb{u}_1}=1$.
	For each $\alpha>0$, let $W\upp{\alpha}=(\mb{w}_i\upp\alpha)\in\bas$ be such that
	\begin{equation}\label{eq:est-cyl}
	\cyl(T,W\upp\alpha,l)<\cyl(T)+\alpha 
	\end{equation}
	and
	$\lnorm{Y}{\mb{w}_i\upp\alpha-\mb{u}_i}<\alpha$ for all $1\le i\le l$. Note that each $W^{(\alpha)}$ may be chosen from the sequence $(W_{n})$. Letting $C_{\alpha}=\lnorm{Z^*}{(\mb{w}_k\upp\alpha)^*}$, we get
	\begin{align*}
	1&=(\mb{w}_k\upp\alpha)^*(\mb{w}_k\upp\alpha)
	\le(\mb{w}_k\upp\alpha)^*(\mb{u}_k)+\alpha C_{\alpha}
	=(\mb{w}_k\upp\alpha)^*(\sum_{1\le i\le k-1} a_i\mb{u}_i)+\alpha C_{\alpha}\\
	&\le (\mb{w}_k\upp\alpha)^*(\sum_{1\le i\le k-1} a_i\mb{w}_i\upp\alpha)+\alpha A C_{\alpha}+\alpha C_{\alpha}
	=0+\alpha(A+1)C_{\alpha}=\alpha(A+1)C_{\alpha}.
	\end{align*}
	Hence $\lnorm{Z^*}{(\mb{w}_k\upp\alpha)^*}=C_{\alpha}\to\infty$ as $\alpha\to0$. 
	
	Fix a null sequence $\alpha_n\in(0,1)$, let $s\le k$ be the smallest index such that $\lnorm{Z^*}{(\mb{w}_{s}\upp{\alpha_{n}})^*}$ is unbounded and let $\beta_n\to0$ be a subsequence of $(\alpha_n)$ such that $\lnorm{Z^*}{(\mb{w}_{s}\upp{\beta_n})^*}\to\infty$. Then $\opnorm{{(\mb{w}_{s}\upp{\beta_n})^*}\circ T(\cdot) \mb{u}_s}=\lnorm{X^*}{(\mb{w}_{s}\upp{\beta_n})^*\circ T(\cdot)}\to\infty$, while $\opnorm{{(\mb{w}_{i}\upp{\beta_n})^*}\circ T(\cdot) \mb{u}_i}$ are bounded for each $1\le i\le s-1$, implying $\cyl(T,W\upp{\beta_{n}},s)\to \infty$. Thus, by~\eqref{eq:est-cyl}, 
	\begin{equation*}
	\cyl(T)\ge
	\cyl(T,W\upp{\beta_n},l)-\beta_n 
	\ge \cyl(T,W\upp{\beta_n},s)-1 
	\to\infty
	\end{equation*} 
	a contradiction. 
\end{proof}
The following theorem is a version of an observation in~\cite{alberti2016differentiability}. However, there are several differences in notation and terminology in~\cite{alberti2016differentiability} compared to the present paper and it requires some careful reading in order to obtain Theorem~\ref{thm:compact_pu} from what is written in~\cite{alberti2016differentiability}. Therefore, in this section we explain how to navigate~\cite{alberti2016differentiability} in order to verify Theorem~\ref{thm:compact_pu}.
\begin{thm}\label{thm:compact_pu}
	Let $X$ be a finite-dimensional normed space, $E\subseteq X$ be a compact purely unrectifiable set and $T\in X^{*}\setminus\set{0}$. Then for every $\varepsilon>0$ there exists an open set $G\subseteq X$ such that $E\subseteq G$ and 
	\begin{multline*}
	\sup\left\{\mc{H}^{1}(G\cap \gamma(\R))\colon \gamma\in\lip(\R,X),\right.\\
	\left. T(\gamma'(t))\geq \varepsilon\lnorm{X}{\gamma'(t)}\lnorm{X^{*}}{T}\text{for Lebesgue a.e.\ $t\in\R$}\right\}\leq \varepsilon.
	\end{multline*}
\end{thm}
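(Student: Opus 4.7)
The plan is to argue by contradiction, using a compactness argument to produce a Lipschitz curve that has positive $\mathcal{H}^1$-measure in $E$, contradicting pure unrectifiability. The key observation is that the cone condition $T(\gamma'(t))\ge\varepsilon\|T\|\lnorm{X}{\gamma'(t)}$ forces any admissible curve $\gamma$, after an obvious reparametrization, to be a globally Lipschitz ``graph'' over a line in $X$, making it tractable for Arzel\`a--Ascoli.

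\textbf{Reparametrization.} Since $X$ is finite-dimensional, $T$ is norm-attaining; fix $\mathbf{v}\in X$ with $T(\mathbf{v})=1$. For any admissible $\gamma\in\lip(\R,X)$, the map $t\mapsto T(\gamma(t))$ is non-decreasing, and strictly increasing outside the set $\{\gamma'(t)=\mathbf{0}\}$. Collapsing constant arcs and setting $s=T(\gamma(t))$ yields an injective Lipschitz reparametrization $\tilde\gamma\colon J\to X$ defined on an interval $J\subseteq\R$, satisfying $T(\tilde\gamma(s))=s$ and therefore $\tilde\gamma(s)=s\mathbf{v}+\beta(s)$ with $\beta(s)\in\ker T$. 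From the cone condition one gets $\lnorm{X}{\tilde\gamma'(s)}\le 1/(\varepsilon\lnorm{X^*}{T})$, while $T(\tilde\gamma'(s))=1$ forces $\lnorm{X}{\tilde\gamma'(s)}\ge 1/\lnorm{X^*}{T}$. Crucially $\tilde\gamma(J)=\gamma(\R)$, so the intersection with any set is preserved under this reparametrization.

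\textbf{Compactness and contradiction.} Suppose the conclusion fails for some $\varepsilon>0$. Set $G_n=\{x\in X\colon \mathrm{dist}(x,E)<1/n\}$ and choose admissible $\gamma_n$ with $\mc{H}^1(G_n\cap\gamma_n(\R))>\varepsilon$. Pass to the reparametrizations $\tilde\gamma_n$ on intervals $J_n$, and let $A_n=\{s\in J_n\colon \tilde\gamma_n(s)\in G_n\}$. The area formula for injective Lipschitz maps gives $\int_{A_n}\lnorm{X}{\tilde\gamma_n'(s)}\,ds=\mc{H}^1(G_n\cap\gamma_n(\R))>\varepsilon$, and the upper Lipschitz bound yields $|A_n|\ge\varepsilon^2\lnorm{X^*}{T}$. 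Since $s=T(\tilde\gamma_n(s))$ for $s\in A_n$, one has $A_n\subseteq T(\overline{G_1})$, a fixed bounded interval. Restricting each $\tilde\gamma_n$ to $I_n:=J_n\cap T(\overline{G_1})$ and using that some $s_n\in A_n$ satisfies $\tilde\gamma_n(s_n)\in\overline{G_1}$ (compact), the Lipschitz bound confines $\tilde\gamma_n(I_n)$ to a common bounded subset of $X$. Passing to a subsequence so the endpoints of $I_n$ converge to some interval $J_\infty$ with $|J_\infty|\ge\varepsilon^2\lnorm{X^*}{T}$, Arzel\`a--Ascoli provides a further subsequence $\tilde\gamma_{n_k}\to\tilde\gamma$ uniformly on compact subsets of $J_\infty$, with $\tilde\gamma$ Lipschitz and still satisfying $T(\tilde\gamma(s))=s$ (hence injective, with $\lnorm{X}{\tilde\gamma'(s)}\ge 1/\lnorm{X^*}{T}$).
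Let $S=\limsup_k A_{n_k}$; by the reverse Fatou inequality for measurable sets in the bounded interval $T(\overline{G_1})$, $|S|\ge\limsup_k|A_{n_k}|\ge\varepsilon^2\lnorm{X^*}{T}$. For each $s\in S$, $\tilde\gamma_{n_k}(s)\in G_{n_k}$ infinitely often, so uniform convergence together with closedness of $E$ gives $\tilde\gamma(s)\in E$. The area formula then yields
\[
\mc{H}^1(E\cap\tilde\gamma(\overline{J_\infty}))\;\ge\;\mc{H}^1(\tilde\gamma(S))\;=\;\int_S\lnorm{X}{\tilde\gamma'(s)}\,ds\;\ge\;|S|/\lnorm{X^*}{T}\;\ge\;\varepsilon^2\;>\;0,
\]
contradicting pure unrectifiability of $E$. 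The main obstacle is the bookkeeping in the compactness step: ensuring the domains $I_n$ converge to a common interval, that $\tilde\gamma_n$ stays in a common bounded set after the reparametrization shift is fixed, and that the set $S$ obtained via reverse Fatou retains enough measure to survive the passage $|S|\to\mc{H}^1(\tilde\gamma(S))$.
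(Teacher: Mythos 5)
Your proposal is correct and takes a genuinely different route from the paper. The paper's proof is essentially a translation: it fixes a norm-equivalence constant $M$ between $\lnorm{X}{\cdot}$ and the Euclidean norm, identifies the admissibility condition with membership of $\gamma'$ in a cone $C(e,\alpha)$ in Alberti--Marchese's notation, observes that pure unrectifiability implies $C$-nullness, and then cites Step~1 of the proof of~\cite[Lemma~4.12]{alberti2016differentiability} to obtain the open set $G$. You instead give a self-contained compactness argument: reparametrize each admissible curve by $s=T(\gamma(t))$ (which is legitimate because the cone condition forces $\gamma$ to be constant on any interval where $T\circ\gamma$ is constant), obtain a uniformly Lipschitz, injective family $\tilde\gamma_n$ with $T\circ\tilde\gamma_n=\mathrm{id}$ and $\lnorm{X}{\tilde\gamma_n'}$ bounded above and below, localise to the compact interval $T(\cl{G_1})$, pass to an Arzel\`a--Ascoli limit, and use reverse Fatou plus the one-dimensional area formula (valid for the $X$-norm and $\mc{H}^1_X$) to exhibit a Lipschitz curve meeting $E$ in positive $\mc{H}^1$-measure, contradicting pure unrectifiability. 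What your approach buys is independence from~\cite{alberti2016differentiability}: you are effectively re-deriving the compactness/uniformity step that lies at the heart of their Lemma~4.12 (the passage from ``each admissible curve meets $E$ in a null set'' to ``a single neighbourhood works uniformly''). What the paper's approach buys is brevity and the fact that Alberti--Marchese's formulation in terms of $C$-null sets is strictly more flexible (it would work under hypotheses weaker than pure unrectifiability), although that generality is not needed here. The only points worth tidying in a final write-up are the ones you already flag: that $I_n=J_n\cap T(\cl{G_1})$ may be taken closed (so that $\tilde\gamma_n$ extends continuously to it), that the uniform convergence is first obtained on compact subsets of the open interval $(\alpha,\beta)=\inter J_\infty$ and the two endpoints are $\mc{H}^1$-null, and that $\mc{H}^1$ of a rectifiable curve with respect to $\lnorm{X}{\cdot}$ equals its $\lnorm{X}{\cdot}$-length, which is the form of the area formula you are invoking.
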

\begin{proof}

	The statement is obtained by applying~\cite[Step~1 (inside the proof of Lemma~4.12)]{alberti2016differentiability} to $K=E$. We let $n:=\dim X$, identify $X$ and $X^*$ with $\R^{n}$ and write $\mb{e}_{1},\ldots,\mb{e}_{n}$ for the standard basis vectors of $X$.  Here, we identify $L\in X^{*}$ with the vector $L\in\R^{n}$ satisfying $L\mb{x}=\skp{L,\mb{x}}$ for all $\mb{x}\in X$. Let $\lnorm{E}{-}$ denote the Euclidean norm on $X\leftrightarrow \R^{n}\leftrightarrow X^{*}$. Then, by equivalence of norms on finite-dimensional spaces, there is a constant $M>0$ such that
	\begin{align*}
	\frac{1}{M}\lnorm{E}{\mb{x}}&\leq \lnorm{X}{\mb{x}}\leq M\lnorm{E}{\mb{x}},\qquad \text{for all $\mb{x}\in X$, and}\\
	\frac{1}{M}\lnorm{E}{L}&\leq \lnorm{X^{*}}{L}\leq M\lnorm{E}{L} \qquad \text{for all $L\in X^{*}$.}
	\end{align*}
	We may assume that $\varepsilon<1/M^{2}$. In the notation of~\cite{alberti2016differentiability} we take $\alpha=\cos^{-1}\br{M^{2}\varepsilon}$ and $e=\frac{1}{\lnorm{E}{T}}(T\mb{e}_{1},T\mb{e}_{2},\ldots,T\mb{e}_{n})$. We also note that the notion of $C$-null for $C=C(e,\alpha)$, in~\cite[4.11, Lemma~4.12]{alberti2016differentiability}, is weaker than pure unrectifiability. Applying~\cite[Step 1, Proof of L.~4.12]{alberti2016differentiability} we obtain an open set $G\subseteq X$ such that $E\subseteq G$ and $\mc{H}^{1}(G\cap \gamma(J))\leq \varepsilon$ for every compact interval $J\subseteq \R$ and $\gamma\in\lip(J,X)$ with $T(\gamma'(t))\geq M^{2}\varepsilon \lnorm{E}{\gamma'(t)}\lnorm{E}{T}\geq \varepsilon \lnorm{X}{\gamma'(t)}\lnorm{X^*}{T}$. This implies the conclusion of the theorem.
\end{proof}
\begin{lemma}\label{lemma:C1_compact_uniform}
	Let $X$ and $Y$ be normed spaces, $E\subseteq U\subseteq X$ be sets where $E$ is compact and $U$ is open, and let $g\in C^{1}(U,Y)$ and $\theta>0$. Then there exists $\delta\in (0,\theta)$ such that for every $\mb{x}\in E$ and every $\mb{y}\in X$ with $\lnorm{X}{\mb{y}}\leq \delta$ we have
	\begin{equation*}
		\lnorm{Y}{g(\mb{x}+\mb{y})-g(\mb{x})-Dg(\mb{x})(\mb{y})}\leq \theta \lnorm{X}{y}.
	\end{equation*}
\end{lemma}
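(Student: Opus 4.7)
The plan is to combine continuity of $Dg$ with compactness of $E$ to obtain a uniform modulus of continuity of $Dg$ on a neighbourhood of $E$, and then to apply the mean value inequality.

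First, fix $\mb{x} \in E$. Since $g \in C^1(U, Y)$, the map $Dg\colon U \to \mc{L}(X,Y)$ is continuous at $\mb{x}$, so there exists $r_{\mb{x}} \in (0,\theta/2)$ such that $\cl{B}_X(\mb{x}, 2r_{\mb{x}}) \subseteq U$ and $\opnorm{Dg(\mb{z}) - Dg(\mb{x})} < \theta/2$ for every $\mb{z} \in B_X(\mb{x}, 2r_{\mb{x}})$. The open balls $\{B_X(\mb{x}, r_{\mb{x}})\colon \mb{x} \in E\}$ cover the compact set $E$, so we may extract a finite subcover corresponding to points $\mb{x}_1, \ldots, \mb{x}_N \in E$ and radii $r_1 := r_{\mb{x}_1}, \ldots, r_N := r_{\mb{x}_N}$. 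Set
\begin{equation*}
\delta := \min\{r_1, \ldots, r_N\} \in (0, \theta/2).
\end{equation*}

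Next I would verify the required inequality. Let $\mb{x} \in E$ and $\mb{y} \in X$ with $\lnorm{X}{\mb{y}} \leq \delta$. Choose $i \in \{1,\ldots,N\}$ with $\mb{x} \in B_X(\mb{x}_i, r_i)$. For every $t \in [0,1]$, we have $\lnorm{X}{\mb{x} + t\mb{y} - \mb{x}_i} \leq \lnorm{X}{\mb{x}-\mb{x}_i} + \lnorm{X}{\mb{y}} < r_i + \delta \leq 2r_i$, so both $\mb{x}$ and $\mb{x} + t\mb{y}$ belong to $B_X(\mb{x}_i, 2r_i) \subseteq U$. By the triangle inequality and the choice of $r_i$,
\begin{equation*}
\opnorm{Dg(\mb{x}+t\mb{y}) - Dg(\mb{x})} \leq \opnorm{Dg(\mb{x}+t\mb{y}) - Dg(\mb{x}_i)} + \opnorm{Dg(\mb{x}_i) - Dg(\mb{x})} < \theta.
\end{equation*}

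Finally, consider the auxiliary map $\phi\colon [0,1] \to Y$ defined by $\phi(t) = g(\mb{x} + t\mb{y}) - t\,Dg(\mb{x})(\mb{y})$, which is continuous on $[0,1]$ and differentiable on $(0,1)$ with $\phi'(t) = (Dg(\mb{x}+t\mb{y}) - Dg(\mb{x}))(\mb{y})$, hence $\lnorm{Y}{\phi'(t)} \leq \theta \lnorm{X}{\mb{y}}$ for all $t \in (0,1)$. The classical mean value inequality for vector-valued functions on an interval (valid in any normed space, without requiring completeness of $Y$) then gives
\begin{equation*}
\lnorm{Y}{g(\mb{x}+\mb{y}) - g(\mb{x}) - Dg(\mb{x})(\mb{y})} = \lnorm{Y}{\phi(1) - \phi(0)} \leq \theta \lnorm{X}{\mb{y}},
\end{equation*}
which is the desired inequality. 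The only mild subtlety is that, because $Y$ is not assumed Banach, one should invoke the mean value inequality directly (proved via Hahn--Banach reduction to the scalar case or by the standard $\varepsilon$-argument), rather than writing $\phi(1) - \phi(0)$ as a Bochner integral; this is routine and the rest of the argument is purely a compactness-plus-modulus-of-continuity exercise.
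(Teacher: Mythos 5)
Your proof is correct and follows essentially the same route as the paper's: cover $E$ by balls on which $Dg$ oscillates little, pass to a finite subcover, let $\delta$ be the smallest radius, and bound $g(\mb{x}+\mb{y})-g(\mb{x})-Dg(\mb{x})(\mb{y})$ by the mean-value inequality along the segment $[\mb{x},\mb{x}+\mb{y}]$ (the paper phrases this as a Hahn--Banach reduction to a scalar integral, which is the same tool). If anything your constant bookkeeping is tighter: you require the oscillation of $Dg$ to be $<\theta/2$ because the comparison of $Dg(\mb{x}+t\mb{y})$ with $Dg(\mb{x})$ passes through the ball centre $\mb{x}_i$ via a triangle inequality, whereas the paper's proof posits oscillation $\leq\theta$ and so, read literally, produces the harmless but imprecise bound $2\theta\lnorm{X}{\mb{y}}$.
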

\begin{proof}
	For each $\mb{x}\in E$ choose $\delta_{\mb{x}}>0$ small enough so that
	\begin{equation*}
		\opnorm{Dg(\mb{z})-Dg(\mb{x})}\leq \theta \qquad \text{for all }\mb{z}\in B_{X}(\mb{x},2\delta_{\mb{x}})\subseteq U.
	\end{equation*}
	The collection of sets $(B_{X}(\mb{x},\delta_{\mb{x}}))_{\mb{x}\in E}$ is an open cover of the compact set $E$; it therefore admits a finite subcover $B_{X}(\mb{x}_{1},\delta_{1}),B_{X}(\mb{x}_{2},\delta_{2}),\ldots,B_{X}(\mb{x}_{N},\delta_{N})$ for some $N\in\N$, where for $j=1,\ldots,N$ we relabel $\delta_{\mb{x}_{j}}$ as $\delta_{j}$. 
	
	Let $\delta:=\min\set{\delta_{1},\ldots,\delta_{N}}>0$, $\mb{x}\in E$ and $\mb{y}\in X$ with $0<\lnorm{X}{\mb{y}}\leq \delta$. Then there exists $i\in\set{1,\ldots,N}$ such that $\mb{x}\in B_{X}(\mb{x}_{i},\delta_{i})$ and so $[\mb{x},\mb{x}+\mb{y}]\subseteq B_{X}(\mb{x}_{i},2\delta_{i})\subseteq U$. Set $\mb{e}:=\frac{\mb{y}}{\lnorm{X}{\mb{y}}}$ and let $\varphi\in Y^{*}$ be a functional with $\lnorm{Y^*}{\varphi}=1$. Then
	\begin{multline*}
		\abs{\varphi\Bigl(g(\mb{x}+\mb{y})-g(\mb{x})-Dg(\mb{x})(\mb{y})\Bigr)}\\=\abs{\int_{0}^{\lnorm{X}{\mb{y}}}D(\varphi\circ g)(\mb{x}+t\mb{e})(\mb{e})-D(\varphi\circ g)(\mb{x})(\mb{e})\,dt}\\
		\leq \int_{0}^{\lnorm{X}{\mb{y}}}\opnorm{Dg(\mb{x}+t\mb{e})-Dg(\mb{x})}\,dt\leq \theta \lnorm{X}{\mb{y}}.
	\end{multline*}
	Taking supremums in the above inequality over all $\varphi\in Y^{*}$ completes the proof.
\end{proof}

\begin{remark}\label{rem:lip-loc_glob}
		If $X$ is a finite-dimensional normed space, $E\subseteq U\subseteq X$ are such that $E$ is compact and $U$ is open, then then there exists an open $U_0$ such that $E\subseteq U_0\subseteq\cl{U_0}\subseteq U$ and the 	Lebesgue measure of $\partial U_0$ is zero. The latter could be obtained by choosing $U_0$ in the form of a finite union of open balls with centres in $E$.
\end{remark}

\begin{lemma}\label{lemma:commpact_partition_unity}	
	Let $X$ be a normed space, $E\subseteq V\subseteq X$ be sets where $E$ is compact and $V$ is open, $(G_{k})_{k\in\N}$ be a sequence of open subsets of $X$ which contain $E$ and let $(\varphi_{k})_{k\in\N}$ be a smooth, locally finite partition of unity with supports contained in $V$. Then there is a number $K\in\N$ and an open set $U\subseteq X$ such that
	\begin{equation*}
		E\subseteq U\subseteq \cl{U}\subseteq V\cap \bigcap_{k=1}^{K}G_{k} 
	\end{equation*}	
	and
	\begin{equation*}
		\supp(\varphi_k)\cap U=\emptyset
		\quad\text{for all}	\quad
		k>K.
	\end{equation*}
	In the case that $X$ is finite-dimensional, $U$ may be chosen so that additionally $\partial U$ has Lebesgue measure zero.
\end{lemma}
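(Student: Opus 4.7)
The key idea is to combine compactness of $E$ with the local finiteness of the partition of unity to reduce the infinite collection of constraints to a finite one, and then to separate the compact set $E$ from the complement of the resulting open neighbourhood by a finite union of small balls.

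\medskip

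\emph{Step 1 (reducing to finitely many supports).} For each $\mb{x}\in E$, local finiteness of $(\varphi_{k})_{k\in\N}$ provides an open neighbourhood $W_{\mb{x}}$ such that the set $\{k\in\N\colon W_{\mb{x}}\cap\supp(\varphi_{k})\neq\emptyset\}$ is finite. Since $E$ is compact, finitely many $W_{\mb{x}_{1}},\ldots,W_{\mb{x}_{n}}$ cover $E$; set $W:=\bigcup_{i=1}^{n}W_{\mb{x}_{i}}$. Then $W$ is an open neighbourhood of $E$ meeting only finitely many supports, and we define $K\in\N$ as the maximum index occurring, so that
\[
\supp(\varphi_{k})\cap W=\emptyset\qquad\text{for all }k>K.
\]

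\emph{Step 2 (constructing the open neighbourhood $U$).} The set $O:=W\cap V\cap \bigcap_{k=1}^{K}G_{k}$ is open and contains $E$ (since $E\subseteq V\cap W$ and $E\subseteq G_{k}$ for every $k$). For each $\mb{x}\in E$, openness of $O$ gives $r_{\mb{x}}>0$ with $\cl{B}_{X}(\mb{x},r_{\mb{x}})\subseteq O$; by compactness, finitely many of the open balls $B_{X}(\mb{x},r_{\mb{x}})$, say $B_{X}(\mb{x}_{j}',r_{j})$ for $j=1,\ldots,m$, cover $E$. Taking $U:=\bigcup_{j=1}^{m}B_{X}(\mb{x}_{j}',r_{j})$, we obtain
\[
E\subseteq U\subseteq \cl{U}=\bigcup_{j=1}^{m}\cl{B}_{X}(\mb{x}_{j}',r_{j})\subseteq O\subseteq V\cap\bigcap_{k=1}^{K}G_{k}.
\]
Since $U\subseteq W$, also $\supp(\varphi_{k})\cap U=\emptyset$ for every $k>K$, completing the main statement.

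\medskip

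\emph{Step 3 (finite-dimensional case).} When $\dim X<\infty$, $U$ is a finite union of open balls, so
\[
\partial U\subseteq \bigcup_{j=1}^{m}\partial B_{X}(\mb{x}_{j}',r_{j}),
\]
and each sphere $\partial B_{X}(\mb{x}_{j}',r_{j})$, being the boundary of a bounded convex body in a finite-dimensional space, has Lebesgue measure zero. Hence $\partial U$ is Lebesgue null.

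\medskip

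No serious obstacle is anticipated: the essential observation is that compactness upgrades local finiteness of the partition of unity to a \emph{uniform} finiteness condition in a neighbourhood of $E$, after which separation of $E$ from the complement of the open set $O$ by a finite union of balls is a standard construction that automatically yields a boundary of measure zero in the finite-dimensional setting.
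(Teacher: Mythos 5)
Your proof is correct and follows essentially the same route as the paper's: exploit compactness to pass from local finiteness to a uniform bound $K$ on the indices of supports meeting a neighbourhood of $E$, then cover $E$ by finitely many closed balls inside the resulting open set $O$ and take $U$ to be the union of the corresponding open balls, observing that in finite dimensions the finite union of spheres is Lebesgue null. The only cosmetic difference is that the paper chooses the neighbourhoods $W_{\mb x}$ to be balls from the outset (so that the first covering and the second one coincide), and delegates the finite-dimensional boundary argument to a separate remark.
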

\begin{proof}
	For each point $\mb{p}\in E$ we may choose a radius $r_{\mb{p}}>0$ such that $\cl{B}_{X}(\mb{p},r_{\mb{p}})\subseteq V$ and the set
	\begin{equation*}
		M_{\mb{p}}:=\set{k\in\N\colon \supp\varphi_{k}\cap B_{X}(\mb{p},r_{\mb{p}})\neq \emptyset}
	\end{equation*}
	is finite. The collection of balls $(B_{X}(\mb{p},r_{\mb{p}}))_{\mb{p}\in E}$ is then an open cover of the compact set $E$. Accordingly, it has a finite subcover. In other words, there exists a finite subset $F$ of $E$ such that 
	\begin{equation*}
		E\subseteq \bigcup_{\mb{p}\in F}B_{X}(\mb{p},r_{\mb{p}}).
	\end{equation*}
	Let 
	\begin{equation*}
		K:=\max\bigcup_{\mb{p}\in F}M_{\mb{p}}
	\end{equation*}
	and
	\begin{equation*}
		V':=\br{\bigcup_{\mb{p}\in F}B_{X}(\mb{p},r_{\mb{p}})}\cap \bigcap_{k=1}^{K}G_{k}.
	\end{equation*}
	Finally, we apply Remark~\ref{rem:lip-loc_glob} to define $U\subseteq V'$ with the required properties. The assertions of the lemma for $K$ and $U$ are now readily verified. 
\end{proof}
\end{appendices}
\bibliographystyle{plain}
\bibliography{biblio}
\end{document}